\newcommand{\BBB}{\color{black}}
\newcommand{\EEE}{\color{black}}
\newcommand \Hn{\mathbb{H}^n}
\newcommand \F{\mathcal{F}}
\newcommand \A{\mathcal{A}}
\newcommand \N{\mathbb{N}}
\newcommand \Om{\Omega}
\newcommand \lb{\lambda}
\newcommand \Sol{\mathcal{S}}
\newcommand \om{\omega}
\newcommand \eps{\epsilon}
\newcommand \loc{\text{loc}}
\newcommand \ov{\overline}
\newcommand \un{\underline}
\newcommand \R{\mathbb{R}}
\newcommand \Rn{\mathbb{R}^n}
\newcommand\B{\mathbb{B}}
\newcommand\C{\mathcal{C}}
\newcommand\Id{\mathrm{Id}}
\newcommand{\cvg}{\;\xrightarrow{\hspace*{1cm}}\;}
\newtheorem{theorem}{Theorem}[section] 
\newtheorem{corollary}[theorem]{Corollary} 
\newtheorem{lemma} [theorem]{Lemma} 
\newtheorem{remark} [theorem]{Remark} 
\newtheorem{proposition} [theorem]{Proposition} 
\newtheorem{definition}[theorem] {Definition}
\title{Sharp Quantitative Stability of the Dirichlet spectrum near the ball}
\author[D. Bucur]
{Dorin Bucur}
\address[Dorin Bucur]{Univ. Savoie Mont Blanc, CNRS, LAMA \\
73000 Chamb\'ery, France
}
\email{  dorin.bucur@univ-savoie.fr}
\author[J. Lamboley]{Jimmy Lamboley}
\address[Jimmy Lamboley]{\'Ecole Normale Supérieure, CNRS, PSL University, DMA and Sorbonne Université, Universit\'e Paris Cit\'e, IMJ-PRG, F-75005 Paris, France
}
\email{  jimmy.lamboley@ens.psl.eu}
\author[M. Nahon]{Micka\"{e}l Nahon}
\address[Micka\"{e}l Nahon]{ Univ. Grenoble Alpes, CNRS, Grenoble INP, LJK, 38000 Grenoble, France.}
\email{mickael.nahon@univ-grenoble-alpes.fr}
\author[R. Prunier]{Raphaël Prunier}
\address[Raphaël Prunier]{CEREMADE, UMR CNRS 7534, Universit\'e Paris-Dauphine, Universit\'e PSL, Place du Mar\'echal De Lattre
De Tassigny, 75775 Paris cedex 16, France.
}
\email{prunier@ceremade.dauphine.fr}
\begin{document}

\begin{abstract}
Let $\Om\subset\Rn$ be an open set with the same volume as the unit ball $B$ and let $\lambda_k(\Om)$ be the $k$-th eigenvalue of the Laplace operator of $\Om$ with Dirichlet boundary conditions on $\partial\Om$. In this work, we answer the following question:
\begin{center}
\textit{If $\lambda_1(\Om)-\lambda_1(B)$ is small, how large can $|\lambda_k(\Om)-\lambda_k(B)|$ be ?}
\end{center}
We establish quantitative bounds of the form $|\lambda_k(\Om)-\lambda_k(B)|\le C (\lambda_1(\Om)-\lambda_1(B))^\alpha$ with sharp exponents $\alpha$ depending on the multiplicity of $\lambda_k(B)$.
We first show that such an inequality is valid with $\alpha=1/2$ for any $k$, improving previous known results and providing the sharpest possible exponent. Then, through the study of a vectorial free boundary problem, we show that one can achieve the better exponent $\alpha=1$ if $\lambda_{k}(B)$ is simple. We also obtain  a similar result for the whole cluster of eigenvalues when $\lambda_{k}(B)$ is multiple, thus providing  a complete answer to the question above.
As a consequence of these results,  we obtain the persistence of the ball as the minimizer for a large class of spectral functionals which are small perturbations of the fundamental eigenvalue on the one hand,  and a full reverse Kohler-Jobin inequality on the other hand, solving an open problem formulated by M. Van Den Berg, G. Buttazzo and A. Pratelli. 
\end{abstract}

\keywords{vectorial free boundary problem, spectral optimization, Dirichlet Laplacian}

\maketitle

\section{Introduction}

\subsection{Presentation of the problem}
Let  $\Rn$ be  Euclidean space, for some $n\geq 2$ and let $\om_n$ denote the measure of the unit ball in $\Rn$. We set
\[\A=\left\{\Om\subset\Rn\text{ open  set of measure }\om_n\right\},\]
and $B(=B_1)$ the unit ball of $\Rn$ centered at the origin. For $\Om$ an open set of finite volume, we write
\[\lambda_k(\Om):=\inf\left\{\sup_{v\in V}\frac{\int_{\Om}|\nabla v|^2}{\int_\Om v^2},\ V\subset H^1_0(\Om)\text{ of dimension }k\right\}\]
the $k$-th eigenvalue of the Laplacian on $\Om$ with Dirichlet boundary conditions on $\partial\Om$ (counting multiplicities). The associated eigenfunctions, normalized in $L^2(\Om)$, are denoted  $(u_k)_{k\geq 1}$ and verify
\[u_k\in H^1_0(\Om),\ -\Delta u_k=\lambda_k(\Om) u_k  \text{ in }\Om.\]

For every $\Om \in \A$,  the Faber-Krahn inequality implies $\lambda_1(\Om)\geq \lambda_1(B)$, with equality if and only if $\Om$ coincides with a ball (up to a set of zero capacity). Several recent works point out that $\Om$ must,  in some sense,  be close to $B$ when $\lambda_1(\Om)$ is close to $\lambda_1(B)$. We refer to \cite{BDV15,AKN21} and the references therein for the most recent results and a history of the quantitative Faber-Krahn inequality. Roughly speaking, the variation of the first eigenvalue $\lambda_1(\Om)-\lambda_1(B)$ controls both (the square of) the Fraenkel asymmetry of $\Om$ and the $L^2$ norm of the variation of the eigenfunction. 
\smallskip

The main purpose of this paper is to get a sharp control by $\lambda_1(\Om)-\lambda_1(B)$ of the variation of the full spectrum. Precisely, {\BBB given $k$ and $l$ such that $\lb_{k-1}(B)< \lb_k(B) =\dots= \lb_l(B) <\lb_{l+1}(B)$,} we seek inequalities  of the form
\begin{equation}\label{ex1.03}
\left|\sum_{i=k}^l\big( \lambda_i(\Om)-\lambda_i(B)\big) \right|\le C_{n,k} \lb_1(\Om) ^{1-\alpha} (\lb_1(\Om)-\lb_1(B))^\alpha,
\end{equation}
for which the power $\alpha$ is sharp (note that this formulation with $\lambda_{1}(\Om)^{1-\alpha}$ is just a way to avoid assuming that $\lambda_{1}(\Om)$ is bounded from above as in the previous references). 

\smallskip

\noindent {\bf Heuristics about sharp power $\alpha$}. In some particular cases, inequality \eqref{ex1.03} has already been studied in the literature. In a first paper \cite{BC06}, in 2006,    Bertrand and Colbois established
$$|\lambda_k(\Om)-\lambda_k(B)|\leq C_{n,k} \left(\lambda_1(\Om)-\lambda_1(B)\right)^\frac{1}{80n},$$
valid for $\lambda_1(\Om)$  bounded from above. Later, in 2019, relying on the quantitative Faber-Krahn inequality from \cite{BDV15}, Mazzoleni and Pratelli  improved the exponents into  (see \cite{MP19})
\begin{equation}\label{eq:MP19}
-c_{n,k}\left(\lambda_1(\Om)-\lambda_1(B)\right)^{\frac{1}{6}-\eps}\leq\lambda_k(\Om)-\lambda_k(B)\leq C_{n,k} \left(\lambda_1(\Om)-\lambda_1(B)\right)^{\frac{1}{12}-\eps},
\end{equation}
for any $\eps>0$ (and with better exponents in dimension $n=2$) when $\lambda_1(\Om)$ is bounded from above, but the authors naturally expected these exponents not to be optimal. 

Indeed, the following observation is in order. When looking at domains which are volume-preserving smooth perturbations of the ball, one may see $B$ as a non-degenerate stable critical point of $\lambda_1$ under volume constraint. On the other hand, for $k\geq 2$, the condition for $B$ to be a critical point of $\lambda_k$ is that the associated eigenfunction $u_k$ has constant gradient on the boundary. This is  \BBB  the case for eigenvalues associated  \EEE  to radial eigenfunctions, which precisely correspond to the simple eigenvalues. In conclusion, when $\lambda_k(B)$ is simple one may expect 
a {\it sharp} bound of the type
\begin{equation}\label{eq:sharpsimple}
|\lambda_k(\Om)-\lambda_k(B)|\leq C_{n,k} (\lambda_1(\Om)-\lambda_1(B)).
\end{equation}
{\BBB However, }when $\lambda_k(B)$ is degenerate (multiple), then $\lambda_k$  has only directional derivatives at $B$ which, in general, are non-zero. Consequently, we cannot expect a better bound than
\begin{equation}\label{eq:main12}
|\lambda_k(\Om)-\lambda_k(B)|\leq C_{n,k} \lambda_1(\Om)^\frac 12 (\lambda_1(\Om)-\lambda_1(B))^\frac 12.
\end{equation}
Nevertheless, as observed in \cite{MP19}, while $\lambda_{2}(B)$ is multiple, one can still get {\BBB the one-sided estimate}
\begin{equation}\label{eq:Ash-Ben}
\lambda_{2}(\Om)-\lambda_{2}(B) \leq C(\lambda_{1}(\Om)-\lambda_{1}(B))
\end{equation}
as a consequence of Ashbaugh-Benguria's inequality which asserts that the ball maximizes the ratio $\lambda_{2}/\lambda_{1}$.
{\BBB The proper generalization of this observation is the following (see also Remark \ref{rk:oneside})}: 
for a whole cluster associated to  a multiple eigenvalue 
$$\lambda_{k-1}(B)<\lambda_k(B)=\dots =\lambda_l(B)<\lambda_{l+1}(B),$$ while each individual $\lambda_i$ is not differentiable at $B$ (for $k\leq i\leq l$) any smooth symmetric function of $(\lambda_k,\hdots,\lambda_l)$ is differentiable and has a critical point at $B$. Consequently, one can indeed still hope for a result better than \eqref{eq:main12}, namely a linear bound on the sum
\begin{equation}\label{eq:sharpmultiple}\left|\sum_{i=k}^{l}\left[\lambda_i(\Om)-\lambda_i(B)\right]\right|\leq C_{n,k}( \lambda_1(\Om)-\lambda_1(B))
\end{equation}
generalizing the estimate for simple eigenvalues.

The goal of this paper is to show that these inequalities \eqref{eq:sharpsimple}, \eqref{eq:main12}, \eqref{eq:sharpmultiple}  indeed hold, and that the above observations turn out to provide the sharp exponents in \eqref{ex1.03}.

\smallskip

\noindent {\bf Strategy.}  As a first result (see Theorem \ref{main_sqrt} below), we will show that one can obtain \eqref{eq:main12} where $\alpha=\frac 12$ (valid for simple and for multiple eigenvalues) with suitable choices of test functions and the quantitative Saint-Venant inequality. This improves the previous results from \cite{BC06,MP19}. \BBB Let us briefly recall that the Saint-Venant inequality states that for every $\Om\in\A$ it holds
$$\int_\Om w_\Om \le \int_B w_B,$$
with equality if and only if $\Om$ and $B$ differ by a set of zero capacity, and where 
$w_\Om$ denotes the torsion function, defined as the (weak) solution of 
\begin{equation}\label{eq:torsion}\begin{cases}
-\Delta w_\Om=1&\text{ in }\Om,\\w_\Om=0&\text{ on }\partial\Om.
\end{cases}\end{equation}  
\EEE

To obtain a sharper result with exponent $\alpha=1$ (the case of a simple eigenvalue or of a whole cluster of a multiple eigenvalue) our proof appeals to the analysis of a new type of vectorial free boundary problem which falls outside from the situations already studied in the literature. 

Indeed, let us consider $\lambda_{k-1}(B)<\lambda_k(B)=\dots =\lambda_l(B)<\lambda_{l+1}(B)$:
inequality \eqref{ex1.03} with $\alpha=1$ becomes
\[\left|\sum_{i=k}^{l}\left[\lambda_i(\Om)-\lambda_i(B)\right]\right|\leq C_{n,k}( \lambda_1(\Om)-\lambda_1(B)).\]
Its proof is equivalent to the fact that, for some $\eps>0$ small enough, the ball is the unique solution of both shape optimization problems below (i.e. for both signs $+ $ and $- $)
\begin{equation}\label{ex1.04}
\min\left\{ \lb_1(\Om)-\eps \sum_{i=k}^l \lb_i(\Om) : \Om \in \A\right\}, \qquad\textrm{ \BBB and }\qquad \min\left\{ \lb_1(\Om)+\eps \sum_{i=k}^l \lb_i(\Om) : \Om \in \A\right\}.
\end{equation}

This assertion is proved through the following strategy, based on regularity theory. We prove first the existence of an optimal domain and  the Lipschitz regularity of the associated {\BBB torsion function and} eigenfunctions {\BBB (although we will prove higher regularity of these functions inside $\Om$, this regularity is already optimal when the functions are seen as extended by 0 in $\R^n\setminus\Om$)}. In a second step, we prove the regularity of the boundary  and that, in some strong  $\C^{3,\gamma}$ sense, the optimal domain is close to the ball. Finally, we use a second order shape derivative argument to conclude that the optimal domain is the ball, provided $\eps$ is small enough.  Those steps have been followed for example by 
Kn\"upfer, Muratov for the study of Gamow's model, which is a perturbation of the classical isoperimetric problem, see \cite{KM1,KM2}. Similar ideas can be found in the work by Cicalese and Leonardi  \cite{CL12}, where the authors prove the quantitative isoperimetric inequality. On the other hand, we are dealing here with a perturbation of $\lambda_{1}$ instead of the perimeter functional, so we use the theory of regularity of free boundary problems, as was done in \cite{BL09} and in the proof of the quantitative Faber-Krahn inequality by Brasco, De Phillipis and Velichkov in \cite{BDV15}.

Although 
the strategy to solve the shape optimization problems \eqref{ex1.04} follows the same main lines as \cite{BDV15}, the nature of our problem raises a series of new technical difficulties, mostly in the case of the negative sign. First, in this case the shape functional is not decreasing for inclusions, so that the existence of a solution is not guaranteed from the general result of Buttazzo-Dal Maso \cite{BDM93}. Second, the optimality condition reads formally
$$\left(\frac{\partial u_1}{\partial \nu_\Om}\right )^2-\eps \sum_{i=k}^l \left(\frac{\partial u_i}{\partial \nu_\Om}\right)^2 = \mbox{constant on } \; \partial \Om,$$
where $\nu_\Om$ is the outward normal vector at the boundary $\partial \Om$. 
The presence of the negative sign falls out from all the situations studied in the literature \cite{KL18,MTV17,CSY18,MTV17} including the degenerate case from  \cite{KL19}. The regularity analysis of this situation requires  most of the technicalities. We will use some key ideas from \cite{MTV21} for the analysis of our problem:  more precisely, when $k=l$ (case of simple eigenvalues), we will be able to apply some results of \cite{MTV21} (see Section \ref{sect:min_are_NS}), but when $k<l$ (case of multiple eigenvalues), we will have to prove the same results in more general situations (see Sections \ref{ssect:harnack} and \ref{ssect:flatness}).

As a global picture, our   analysis will require the analysis of a generalization of the vectorial Alt-Caffarelli problems in the wider setting
\[\begin{cases}
-\Delta v_i=f_i&\text{ in }\Om,\ \forall i=1,\hdots,m,\\
v_i=0 &\text{ in }\partial\Om,\ \forall i=1,\hdots,m,\\
q\left(\frac{\partial v_1}{\partial \nu_\Om},\hdots,\frac{\partial v_m}{\partial \nu_\Om}\right)=1&\text{ in }\partial\Om,
\end{cases}\]
where $\Om$ is the common domain of $(v_i)_{i=1,\hdots,m}$, $f_i\in L^\infty_\loc(\Om)$ and  $q$ is a quadratic form on $\R^m$. The outward normal derivatives  $\frac{\partial v_i}{\partial \nu_\Om}$ at the boundary are  understood in some weak sense - variational or viscosity - and the states $(v_i)$ are assumed to be ``flat'' (in a sense defined in Sections \ref{sec_lin} and \ref{sec_linvect} and   Corollary \ref{prop_flatness}):
\begin{itemize}[label=\textbullet]
\item The case $m=1$, $q(x)=x^2$ corresponds to  the classical Alt-Caffarelli problem of \cite{AC81}.
\item The case $m\geq 2$, $q(x_1,\hdots,x_m)=\sum_{i=1}^{m}c_i x_i^2$ is the one treated in \cite{KL18,KL19} with uniform estimates in $(c_i)$ as long as $c_i\geq 0$, $\sum_{i=1}^{m}c_i=1$. Similar results (obtained through different methods) may also be found in \cite{CSY18,MTV17} in the case $c_i=1$.
\item The case $m=2$, $q(x_1,x_2)=x_1x_2$ under the additional hypothesis that $u_1,u_2$ are positive is treated in \cite{MTV21}.
\end{itemize}
Our problem may be seen as  
\begin{itemize}[label=\textbullet]
\item 
$m\geq 2$ with $q(x_1,\hdots,x_m)=x_1^2+b(x_2,\hdots,x_m)$,
\end{itemize}
where $b$ is a quadratic form on $\R^{m-1}$ with no positivity assumption, with the additional hypothesis that   the function $v_1$ ``dominates'' all the others, meaning $\displaystyle \left|\frac {v_i}{v_1}\right|$ is not too large for every $i\geq 2$ (for   precise statements we refer to Definition \ref{def_Sol}). This hypothesis holds for free in some situations, for instance  if $v_1$ is the torsion function $w_\Om$ defined below and $v_i$ is a small multiple of the eigenfunction $u_i$ of $\Om$, so that as we shall see this will be true in the cases we are interested in.

\smallskip

\noindent {\bf Applications in spectral geometry and a reverse Kohler-Jobin inequality.}   It has been observed numerically in \cite[Fig 5.4]{KO13} that the set minimizing $\lb_k(\Om)$ in $\A$ is also minimizing $\lb_k(\Om)+\eps \lb_{k-1}(\Om)$, provided $\eps >0$ is small (the computations were performed for $3\le k \le 6$). 

This phenomenon of persistence of minimizers for perturbed functionals has also been conjectured in \cite{BBP21} for a functional involving the first Dirichlet eigenvalue and the torsional rigidity  which are interacting in a competing way. Recall that the torsional ridigidy is defined by 

$$T(\Om):= \int_\Om w_\Om=2\int_{\Om}w_{\Om}-\int_{\Om}|\nabla w_{\Om}|^2=\max\left\{2\int_{\Om}v-\int_{\Om}|\nabla v|^2, v\in H^1_{0}(\Om)\right\},$$where $w_\Om$ is the torsion function{\BBB, \textit{i.e.} the weak solution of \eqref{eq:torsion}}. While the Saint-Venant inequality states that the set with {\it maximal} torsional rigidity in $\A$ is the ball,
the conjecture from \cite{BBP21} reads
\begin{equation}\label{ex1.02} \exists p_n >0, \forall \Om \in \A, \quad \quad 
T(\Om)\lambda_1(\Om)^{\frac{1}{p_n}}\le T(B)\lambda_1(B)^{\frac{1}{p_n}}.
\end{equation}
As this inequality becomes Saint-Venant inequality when $p_{n}\to\infty$, the challenge is to prove that the ball $B$ remains a maximizer of $T(\Om)\lambda_{1}(\Om)^{1/p}$ for some finite values of $p$.

If $p_n=\frac{2}{n+2}$, the inequality above occurs in the opposite sense and is due to Kohler-Jobin  \cite{K78}. This is why, for $p_n$ large, inequality \eqref{ex1.02} can be seen as a reverse Kohler-Jobin inequality. In \cite{BBG22} it has been proved to hold locally for some $p_n$ large, in the class of  $\C^{2,\gamma}$  nearly spherical domains.

The main consequence of our analysis is the occurence of the persistance phenomenon of the ball as the minimizer for spectral functionals which are either  small perturbations of the first Dirichlet eigenvalue (for instance as in \eqref{ex1.04}) or of the \BBB (inverse  of the) \EEE torsional rigidity. In  particular we will prove the validity of the full reverse Kohler-Jobin inequality \eqref{ex1.02}, see Corollary \ref{cor_reversekohlerjobin}.

\subsection{Main results}

Inequalities \eqref{ex1.03}  for sharp exponents $\alpha$  will actually be proved in a stronger version, with the torsional deviation $T(\Om)^{-1}-T(B)^{-1}$ on the right-hand side in place of the eigenfrequency deviation $\lambda_1(\Om)-\lambda_1(B)$.

Indeed, as noted just above, Kohler-Jobin's inequality \cite{K78} states that $\A\ni \Om \mapsto T(\Om)\lambda_1(\Om)^{\frac{n+2}{2}}$ is minimal on the ball; this implies the {\BBB following bound,}  for all $\Om\in\A$:
\begin{equation}
\label{eq_kohlerjobin}
\begin{split}
T(\Om)^{-1}-T(B)^{-1}&\leq \lambda_1(B)^{-\frac{n+2}{2}}T(B)^{-1}\left(\lambda_1(\Om)^\frac{n+2}{2}-\lambda_1(B)^\frac{n+2}{2}\right)\\
&\leq \frac{n+2}{2}\lambda_1(B)^{-\frac{n+2}{2}}T(B)^{-1}\lambda_1(\Om)^\frac{n}{2}\left(\lambda_1(\Om)-\lambda_1(B)\right).
\end{split}
\end{equation}
Relying on this inequality when $\lambda_1(\Om)\leq 2\lambda_1(B)$,  and on growth estimates of the type $\lambda_k(\Om)\leq C_{n,k} \lambda_1(\Om)$ (see Proposition \ref{prop:growth_egv}) when $\lambda_1(\Om)\geq 2\lambda_1(B)$,  it is {\BBB therefore} enough to prove \eqref{ex1.03}   for the torsional deviation in the right hand side instead.

One of the reasons why we replace the first eigenvalue with the torsion energy  is  of a technical nature. In our problem, which involves simultaneously several eigenfunctions, we have a clear advantage to do this, since some uniform regularity estimates on those eigenfunctions may be directly deduced from the same estimates on the torsion function (see for instance Lemma \ref{lem_prelim}).  \BBB As a second advantage,  replacing $(\lb_1(\Om)-\lb_1(B))$ by $(T(\Omega)^{-1}-T(B)^{-1})$ in the right hand side of inequality \eqref{ex1.03} and setting $k=1$ in the left hand side, we obtain a nontrivial conjectured inequality,  reverse of  \eqref{eq_kohlerjobin}.\EEE

For the sake of clarity, we split inequality \eqref{ex1.03}  with sharp exponents $\alpha$  in three results.  The first one applies to every eigenvalue, and is sharp when $\lambda_k(B)$ is degenerate (see Proposition \ref{prop:sharp}). 

\BBB Theorems \ref{main_sqrt}, \ref{main_lin} and \ref{main_linvect} are stated in a scale-invariant way among every open sets $\Om \subset\R^n$ of finite measure. In practice, we will restrict to sets $\Om\in\A$, meaning open sets with measure $|B|$.
\begin{theorem}\label{main_sqrt}
There exists $C_n>0$ such that for any \BBB open set $\Om\subset\R^n$ with finite measure,

\[\left|\lambda_k(\Om)-\lambda_k(B_\Om)\right|\leq C_nk^{2+\frac{4}{n}}\lambda_1(\Om)^{\frac{1}{2}}|\Om|^\frac 12\left(T(\Om)^{-1}-T(B_\Om)^{-1}\right)^\frac{1}{2}, \]
where $B_\Om$ is a ball in $\R^n$ with the same measure as $\Om$.\EEE

\end{theorem}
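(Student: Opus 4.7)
The plan is to prove the two-sided bound via the min-max characterization of $\lambda_k$, transporting eigenfunctions between $H^1_0(B)$ and $H^1_0(\Om)$ through multiplication by the torsion functions $w_\Om, w_B$, and controlling the errors through a quantitative Saint-Venant inequality.

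\emph{Reductions.} By Proposition \ref{prop:growth_egv} combined with Weyl's law on the ball one has $\lambda_k(\Om)+\lambda_k(B)\lesssim_n k^{2/n}\lambda_1(\Om)$, so the statement is automatic unless $T(\Om)^{-1}-T(B)^{-1}$ is small. In that regime, after translating $\Om$ so that $w_\Om$ optimally approximates $w_B$, expanding $\int_{\Rn}|\nabla(w_\Om-w_B)|^2$ via the identities $\int|\nabla w_\Om|^2=\int w_\Om=T(\Om)$ and the weak formulation of $-\Delta w_B=1$, combined with a quantitative Saint-Venant inequality, yields an estimate of the form
$$\|w_\Om-w_B\|_{H^1(\Rn)}^2 \lesssim_n |\Om|\bigl(T(\Om)^{-1}-T(B)^{-1}\bigr).$$

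\emph{Upper bound via test functions.} Let $\varphi_1,\dots,\varphi_k$ be the $L^2(B)$-orthonormal Dirichlet eigenfunctions of $B$. By Hopf's lemma each ratio $f_i:=\varphi_i/w_B$ extends to a $C^1$ function in a neighbourhood of $\overline B$, with $\|f_i\|_{C^1}$ controlled by standard elliptic estimates by a polynomial in $\lambda_i(B)$. I would set $\widetilde\varphi_i:=f_i\cdot w_\Om\in H^1_0(\Om)$; for small enough deviations these span a $k$-dimensional subspace. For any combination $\widetilde\varphi=\sum_i a_i\widetilde\varphi_i$ with companion $\varphi=\sum_i a_i\varphi_i$, writing $\widetilde\varphi=\varphi+f(w_\Om-w_B)$ with $f=\sum_i a_if_i$ and expanding directly, the difference $R_\Om(\widetilde\varphi)-R_B(\varphi)$ is bounded by a bilinear form in $w_\Om-w_B$ with coefficients involving $\max_i\|f_i\|_{C^1}^2$ and $\lambda_k(B)^{1/2}$. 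The polynomial-in-$k$ bounds coming from Weyl combine to produce the $k^{2+4/n}$ factor, and the min-max principle yields the one-sided inequality.

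\emph{Reverse bound and main obstacle.} The inequality $\lambda_k(B)-\lambda_k(\Om)\le\ldots$ follows from the symmetric construction: transporting the eigenfunctions $\psi_i$ of $\Om$ into $H^1_0(B)$ via $(\psi_i/w_\Om)\cdot w_B$. The main obstacle is that on a general $\Om\in\A$ no classical regularity of $\psi_i/w_\Om$ is available a priori; this must be replaced by universal pointwise bounds of the form $|\psi_i|\lesssim\lambda_i^{\alpha}w_\Om$ coming from torsion-based Lipschitz estimates on eigenfunctions (of the type underlying Lemma \ref{lem_prelim}). Once these are in place, the algebraic computation is symmetric and yields the same bound. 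The sharpness of the exponent $1/2$ — strictly improving the $1/6$ and $1/12$ of \cite{MP19} — is precisely the payoff of replacing the $L^1$-based Fraenkel-asymmetry controls of previous approaches by the $H^1$-based torsion control used here.
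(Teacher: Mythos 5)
Your proposal hinges on the estimate $\Vert w_\Om-w_B\Vert_{H^1(\Rn)}^2\lesssim_n |\Om|\,(T(\Om)^{-1}-T(B)^{-1})$, and this is where the argument breaks. Expanding as you suggest (using $\int|\nabla w_\Om|^2=T(\Om)$, $\int|\nabla w_B|^2=T(B)$ and integrating the cross term by parts against $-\Delta w_B=1$) gives the exact identity
\[\int_{\Rn}|\nabla(w_\Om-w_B)|^2=T(B)-T(\Om)+\frac{2}{n}\int_{\partial B}w_\Om+2\int_{\Om\setminus B}w_\Om,\]
because $w_\Om$ does not vanish on $\partial B$. The last term is $\le \frac1n|\Om\setminus B|$, but the boundary term can only be bounded by $\int_{\Om\setminus B}|\nabla w_\Om|\le |\Om\setminus B|^{1/2}T(\Om)^{1/2}$ for a general open set (no Lipschitz bound on $w_\Om$ is available in $\A$), and the quantitative Saint--Venant inequality \eqref{eq:QSV} only gives $|\Om\setminus B|\lesssim (T(\Om)^{-1}-T(B)^{-1})^{1/2}$ after translation. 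So the expansion yields at best $\Vert\nabla(w_\Om-w_B)\Vert_{L^2}^2\lesssim (T(\Om)^{-1}-T(B)^{-1})^{1/4}$. Since the Rayleigh-quotient error for your test functions $\widetilde\varphi=\varphi+f(w_\Om-w_B)$ is \emph{linear} in $\Vert w_\Om-w_B\Vert_{H^1}$ (the cross term $\int\nabla\varphi\cdot\nabla(f(w_\Om-w_B))$ does not vanish), this degrades the final exponent to $1/8$, worse even than \cite{MP19}. The $H^1$ control you need is a genuinely stronger statement than \eqref{eq:QSV} and would require its own proof. A second, independent gap is the reverse inequality: transporting eigenfunctions $\psi_i$ of $\Om$ into $H^1_0(B)$ via $(\psi_i/w_\Om)w_B$ requires control of $\nabla(\psi_i/w_\Om)$ near $\partial\Om$, and the pointwise bounds $|\psi_i|\lesssim \lambda_i^{\alpha}w_\Om$ of Lemma \ref{lem_prelim} give no such gradient control; for an arbitrary $\Om\in\A$ (possibly with fractal boundary, cusps, infinitely many components) the quotient $\psi_i/w_\Om$ need not even be in $H^1$ after multiplication by $w_B$. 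So as written you obtain at most a one-sided inequality with a strictly worse exponent.

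The paper avoids both obstacles by never transporting eigenfunctions of the irregular set and never needing $H^1$ control of $w_\Om-w_B$: it compares $\Om$ and $B$ through their intersection $\Om\cap B$ using the nested-domain inequality of Lemma \ref{lem_bucur} (from \cite{B03}), which bounds $|\lambda_k(\Om)^{-1}-\lambda_k(\Om\cap B)^{-1}|$ and $|\lambda_k(B)^{-1}-\lambda_k(\Om\cap B)^{-1}|$ by the corresponding torsion differences; Lemma \ref{lem_torsioninter} then gives the \emph{linear} bound $T(\Om)-T(\Om\cap B)\lesssim|\Om\setminus B|$, and a single application of \eqref{eq:QSV} converts $|\Om\setminus B|$ into $(T(\Om)^{-1}-T(B)^{-1})^{1/2}$, which is exactly where the sharp exponent $1/2$ comes from. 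If you want to salvage a min--max approach, you would need either a proof of the sharp $H^1$ torsion stability estimate, or a construction of test functions whose Rayleigh-quotient error is controlled by $|\Om\Delta B|$ to the first power rather than by $\Vert w_\Om-w_B\Vert_{H^1}$.
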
 

\EEE
Thanks to \eqref{eq_kohlerjobin}, this result improves the previous best known result \eqref{eq:MP19} from Mazzoleni-Pratelli \cite{MP19} into
\[\left|\lambda_k(\Om)-\lambda_k(B)\right|\leq C_nk^{2+\frac{4}{n}}\lambda_1(\Om)^{\frac{1}{2}}\left(\lambda_1(\Om)-\lambda_1(B)\right)^\frac{1}{2}. \]
{\BBB Note that this inequality} is only relevant when $\lambda_1(\Om)$ is close to  $\lambda_1(B)$; when $\lambda_1(\Om)\geq 2\lambda_1(B)$ we may apply more directly the inequality $\lambda_k(\Om)\leq C_nk^\frac{2}{n}\lambda_1(\Om)$ from \cite[Theorem 3.1] {CY07} to obtain $2C_nk^\frac{2}{n}\lambda_1(\Om)$ on the right-hand side.

\BBB We also point out that in the constant appearing in the right-hand side of the inequality we keep track of the dependence on $k$. This will be exploited in Section \ref{ssect:stability} in order to study the stability of more general functionals. Even if we cannot prove it, we do not expect the exponent $2 +\frac 4n$ to be optimal. We know nevertheless that the optimal power cannot be lower than $\frac 1n$, thanks to the Weyl asymptotic formula.\EEE

{\BBB As a useful tool for following the dependency of the constants on $k$} we thus introduce for any $k\geq 1$ the spectral gap

\begin{equation}\label{eq:gnk}
g_n(k)=\min\Big\{1,\inf_{i:\lambda_i(B)\neq \lambda_k(B)}\left|\lambda_i(B)-\lambda_k(B)\right|\Big\}.
\end{equation}

It is a positive bounded function of $k$. 
We state first  the case of a simple eigenvalue of the ball which gives a sharper estimate than the one from Theorem \ref{main_sqrt}.

\begin{theorem}\label{main_lin}
There exists $C_{n}>0$ such that  for every $k\in\N^*$ with   $\lambda_k(B)$  simple   and for any open set \BBB $\Om\subset\R^n$ with finite measure,
\[\left|\lambda_k(\Om)-\lambda_k(B_\Om)\right|\leq  C_n \frac{k^{4+\frac{8}{n}}}{g_n(k)} |\Om| \left(T(\Om)^{-1}-T(B_\Om)^{-1}\right),\]
where $B_\Om$ is a ball in $\R^n$ with the same measure as $\Om$.\EEE
\end{theorem}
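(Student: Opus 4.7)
The plan is to prove that there exists a threshold
$\varepsilon_{n,k} \ge c_n g_n(k) k^{-(4+8/n)}$ such that for every
$0<\varepsilon\le\varepsilon_{n,k}$, the ball $B$ is the unique minimizer in $\A$
of each of the two shape functionals
\begin{equation*}
\F_{\pm\varepsilon}(\Om) := T(\Om)^{-1} \pm \varepsilon\, \lambda_k(\Om).
\end{equation*}
Once this is established, expanding the inequalities $\F_{\pm\varepsilon}(\Om) \ge \F_{\pm\varepsilon}(B)$
for the two signs yields the two-sided estimate of the theorem with $C = \varepsilon_{n,k}^{-1}$ (the
factor $|\Om|$ being dictated by scale invariance). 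The global strategy follows the blueprint
of \cite{BDV15}: prove existence and Lipschitz regularity of a minimizer, upgrade it to a nearly
spherical domain, and then contradict non-sphericity via a second-order shape-derivative argument.

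First I would establish the existence and Lipschitz regularity of a minimizer $\Om_\varepsilon$.
The plus-sign case is monotone decreasing under inclusions and existence in the quasi-open class
follows from Buttazzo--Dal Maso \cite{BDM93}. The minus-sign case is not monotone; I would obtain
existence by a volume penalization combined with a priori measure bounds derived from the
Saint-Venant inequality. Then, viewing $(w_{\Om_\varepsilon},\sqrt\varepsilon\,u_k)$ as the
state vector of the generalized vectorial free boundary problem described in the introduction,
with quadratic form $q(x_1,x_2)=x_1^2 \pm x_2^2$ and source $(1,\lambda_k(\Om_\varepsilon)u_k)$,
Lipschitz regularity of both components follows from the analysis of this free boundary system.

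Next I would prove strong $\C^{2,\gamma}$-closeness of $\Om_\varepsilon$ to $B$. Comparing
$\F_{\pm\varepsilon}(\Om_\varepsilon)\le\F_{\pm\varepsilon}(B)$ and invoking Theorem \ref{main_sqrt}
gives $T(\Om_\varepsilon)^{-1}-T(B)^{-1}=O(\varepsilon^2)$, hence Hausdorff closeness to $B$ via the
quantitative Saint-Venant inequality. The $\varepsilon$-flatness theorem for the vectorial free
boundary problem then upgrades this to $\C^{1,\gamma}$ regularity of $\partial\Om_\varepsilon$, and
Schauder estimates bootstrap to $\C^{2,\gamma}$. Thus $\Om_\varepsilon$ is a nearly spherical
domain parametrized by a small normal graph $h\in\C^{2,\gamma}(\partial B)$ of prescribed volume.
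Writing the second-order expansion
\begin{equation*}
\F_{\pm\varepsilon}(\Om_h) - \F_{\pm\varepsilon}(B)
= \tfrac{1}{2}\bigl(Q_{T^{-1}}(h) \pm \varepsilon\, Q_{\lambda_k}(h)\bigr) + o(\|h\|_{H^{1/2}}^2),
\end{equation*}
the first-order terms vanish because simplicity of $\lambda_k(B)$ forces its eigenfunction to be
radial, so $|\nabla u_k|^2$ is constant on $\partial B$. Fuglede's analysis provides coercivity of
$Q_{T^{-1}}$ on volume-preserving deformations, while standard resolvent/perturbation theory for
simple eigenvalues bounds $|Q_{\lambda_k}|\lesssim k^{4+8/n}/g_n(k)$, the gap $g_n(k)$ entering
through the inverse resolvent. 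Choosing $\varepsilon_{n,k}$ so that $\varepsilon\,|Q_{\lambda_k}|$
is strictly dominated by the coercivity constant of $Q_{T^{-1}}$ forces $h=0$, hence
$\Om_\varepsilon=B$.

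The hardest step is the boundary regularity in the minus-sign case: the Euler--Lagrange condition
formally reads $|\nabla w_{\Om_\varepsilon}|^2-\varepsilon|\nabla u_k|^2=\text{const}$ on
$\partial\Om_\varepsilon$, and the indefinite quadratic form puts this free boundary problem
outside all currently available vectorial Alt--Caffarelli theories \cite{AC81,KL18,CSY18,MTV17,KL19}.
The remedy is to exploit the a priori domination $|u_k|\lesssim w_{\Om_\varepsilon}$
(coming from $u_k$ being a bounded multiple of $w_{\Om_\varepsilon}/\|w_{\Om_\varepsilon}\|_\infty$
via Lemma~\ref{lem_prelim}) and to adapt the Harnack-type estimates and flatness-improvement
scheme of \cite{MTV21}, originally designed for the product form $q(x_1,x_2)=x_1 x_2$, to
quadratic forms with a distinguished positive leading coordinate and an indefinite remainder.
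This is precisely the content of the new Sections \ref{ssect:harnack}--\ref{ssect:flatness},
whose application closes the argument.
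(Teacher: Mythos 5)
Your proposal follows essentially the same route as the paper: reduce to showing that $B$ uniquely minimizes $T^{-1}\pm\varepsilon\lambda_k$ for $\varepsilon\lesssim k^{-(4+8/n)}g_n(k)$, prove existence and Lipschitz/non-degeneracy estimates, upgrade to a nearly spherical $\C^{2,\gamma}$ domain via free-boundary regularity, and conclude by a Fuglede-type second-order expansion using coercivity of the torsion Hessian against an $H^{1/2}$ bound of order $k^{O(1)}/g_n(k)$ on the $\lambda_k$ Hessian. The only minor inaccuracy is the attribution at the end: for a simple eigenvalue ($m=1$) the indefinite boundary condition $|\nabla w|^2-\varepsilon|\nabla u_k|^2=Q$ is handled by factoring it as $\partial_\nu(w+\sqrt{\varepsilon}\,T u_k)\cdot\partial_\nu(w-\sqrt{\varepsilon}\,T u_k)=Q$ and directly applying \cite{MTV21}, while the new Harnack and flatness-improvement sections are needed only for the vectorial (multiple-eigenvalue) case of Theorem \ref{main_linvect}.
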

The constant $C_n$ is not explicitly known since there are two   implicit arguments in the proof (the flatness improvement used in Proposition \ref{lem_C1gamma} which is obtained by contradiction, and the application of the quantitative Saint-Venant inequality of \cite{BDV15}).  Combining this result with the Kohler-Jobin inequality \eqref{eq_kohlerjobin} we thus obtain, for any such $k$,
\[\left|\lambda_k(\Om)-\lambda_k(B)\right|\leq  C_n \frac{k^{4+\frac{8}{n}}}{g_n(k)}  \left(\lambda_1(\Om)-\lambda_1(B)\right).\]

In dimension $2$ the valid choices of $k$ are
\[k=1,6, 15, 30, 51, 74, 105, 140, 175, 222, 269, 326, 383, 446, 517, 588,...\]
Let us mention again here that the crucial argument making the previous result work is that the ball is a critical point of $\lambda_k$ when $\lambda_k(B)$ is simple.

Consider now $k\leq l$ such that
\[\lambda_{k-1}(B)<\lambda_k(B)=\dots =\lambda_l(B)<\lambda_{l+1}(B).\]
The function $\Om\mapsto \sum_{i=k}^{l}\lambda_i(\Om)$ has a critical point at the ball (see for instance \cite[Proposition 2.30]{LamLan06}) and a result analogous to Theorem \ref{main_lin} holds:

\begin{theorem}\label{main_linvect}
There exists $C_{n}>0$ such that for every $k,l\in\N^*$ with $k\leq l$ satisfying
\[\lambda_{k-1}(B)<\lambda_k(B)=\dots=\lambda_{l}(B)<\lambda_{l+1}(B),\]
and for any open set \BBB $\Om\subset\R^n$ with finite measure,
\[\left|\sum_{i=k}^{l}\bigg[\lambda_i(\Om) - \lambda_i(B_\Om)\bigg]\right|\leq C_{n} \frac{k^{6+\frac{10}{n}}}{g_n(k)}|\Om| \left(T(\Om)^{-1}-T(B_\Om)^{-1}\right),\]
where $B_\Om$ is a ball in $\R^n$ with the same measure as $\Om$.\EEE
\end{theorem}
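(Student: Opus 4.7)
The plan is to reduce the inequality to a spectral persistence statement and then exploit the vectorial free boundary analysis which is the technical heart of the paper. Precisely, the bound for each sign of the sum is equivalent, after dividing by $\eps$, to asserting that $B$ is the unique minimizer in $\A$ of
\begin{equation*}
F_{\pm,\eps}(\Om) := T(\Om)^{-1} \pm \eps \sum_{i=k}^{l}\lambda_i(\Om)
\end{equation*}
for all $\eps \in (0,\eps_0(n,k))$, and the dependence on $k$ and $g_n(k)$ will emerge by tracking constants in the later steps. I would treat the two signs in parallel, with extra care for the minus sign where monotonicity under inclusion fails.

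First I would establish existence of a minimizer $\Om_\eps \in \A$. For $F_{+,\eps}$ this is immediate from Buttazzo--Dal Maso since the functional is $\gamma$-lower semicontinuous and monotone for inclusions. For $F_{-,\eps}$, the monotonicity is lost, and existence requires a concentration-compactness argument for quasi-open sets together with an a priori bound on $\lambda_1(\Om_\eps)$, itself a consequence of Theorem \ref{main_sqrt} applied to any near-minimizer. I would then write the Euler--Lagrange condition: for a suitably rescaled multiple $v_0$ of the torsion function and $L^2$-orthonormal representatives $v_k,\dots,v_l$ of the cluster eigenspace, the boundary condition reads
\begin{equation*}
\left(\frac{\partial v_0}{\partial\nu_{\Om_\eps}}\right)^{\!2} \pm \eps\sum_{i=k}^{l} c_i \left(\frac{\partial v_i}{\partial\nu_{\Om_\eps}}\right)^{\!2} = \text{const.}
\end{equation*}
This places $\Om_\eps$ into the vectorial free boundary setting with quadratic form $q(x_0,x_1,\dots)=x_0^2 + b(x_1,\dots)$, where $b$ is positive definite for the $+$ sign and negative definite for the $-$ sign; moreover $v_0$ dominates the $v_i$ as $\eps\to 0$, so the states belong to the class $\Sol$ of Definition \ref{def_Sol}.

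The core step is then to apply the regularity theory of Sections \ref{ssect:harnack} and \ref{ssect:flatness}: non-degeneracy and Lipschitz continuity of $(v_0,\dots,v_l)$, followed by a flatness-to-$\C^{1,\gamma}$ decay valid uniformly in the coefficients of $b$, then elliptic bootstrap up to $\C^{2,\gamma}$. Combining this regularity with Theorem \ref{main_sqrt} (forcing $\lambda_1(\Om_\eps)\to\lambda_1(B)$ hence, via the quantitative Saint-Venant inequality of \cite{BDV15}, Fraenkel asymmetry $\to 0$), I would upgrade the convergence to $\C^{2,\gamma}$-convergence of $\partial\Om_\eps$ to $\partial B$. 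In particular, for small $\eps$, $\Om_\eps$ is a nearly spherical domain $\{(1+h_\eps(\sigma))\sigma : \sigma \in \partial B\}$ with $\|h_\eps\|_{\C^{2,\gamma}}\to 0$.

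Finally I would conclude by a second-order shape derivative argument at $B$. The functional $T^{-1}$ is coercive at $B$ in the Fuglede $H^{1/2}$-norm on $\partial B$ modulo translations, while the symmetric function $\sum_{i=k}^{l}\lambda_i$ is critical at $B$ with bounded Hessian (even though the individual $\lambda_i$ are only directionally differentiable, as noted in the introduction). Hence the Hessian of $F_{\pm,\eps}$ inherits strict coercivity for $\eps<\eps_0$, and a Taylor expansion in $h_\eps$ combined with volume-preserving orthogonality to translations yields $F_{\pm,\eps}(\Om_\eps)>F_{\pm,\eps}(B)$ unless $h_\eps\equiv 0$, contradicting the minimality of $\Om_\eps$ unless $\Om_\eps=B$. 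The factors $k^{6+10/n}$ and $g_n(k)^{-1}$ arise from the $L^\infty$-bounds on cluster eigenfunctions and from the spectral gap needed to separate the cluster from the rest of the spectrum when perturbing. The main obstacle I anticipate is the flatness improvement in the sign-indefinite case: neither the positivity used in \cite{KL18,MTV17} nor the cooperative two-phase structure of \cite{MTV21} applies, so new Harnack and viscosity-style decay estimates must be established uniformly in the signature of $b$.
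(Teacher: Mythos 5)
Your proposal follows essentially the same route as the paper: reduction to the persistence of the ball as unique minimizer of $T^{-1}\pm\eps\sum_{i=k}^l\lambda_i$, existence via concentration-compactness, the overdetermined viscosity boundary condition, the vectorial Harnack/flatness-improvement regularity scheme of Sections \ref{ssect:harnack}--\ref{ssect:flatness}, and a Fuglede-type second-order expansion exploiting criticality of the symmetric cluster sum. The one obstacle you flag as open (the sign-indefinite flatness improvement) is resolved in the paper by substituting $\ov v_i=w+|\delta|^{1/4}u_i$, $\un v_i=w-|\delta|^{1/4}u_i$, which are positive since $w$ dominates, so the indefinite boundary condition becomes the product condition $\partial_\nu\ov v_i\,\partial_\nu\un v_i$ and the cooperative structure of \cite{MTV21} is recovered (with $\sqrt{\ov v_i\un v_i}$ a supersolution and arithmetic means subsolutions), proved vectorially and uniformly in the multiplicity.
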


Using again the Kohler-Jobin inequality \eqref{eq_kohlerjobin} this implies \eqref{ex1.03} for any such $k\leq l$ with $\alpha=1$, {\BBB and} $C_{n,k}=C_nk^{6+\frac{10}{n}}g_n(k)^{-1}$.

\BBB
\begin{remark}\label{rk:oneside}
A consequence of Theorem  \ref{main_linvect} is the following one-sided linear control: for any $\Om\in\A$ such that $\lambda_1(\Om)\leq 2\lambda_1(B)$, if $k\geq 2$ is such that $\lb_k(B)$ is multiple, we have for some $C_{n,k}>0$:
\begin{align*}
\text{ if }\lambda_k(B)<\lambda_{k+1}(B),\text{ then }&\lambda_k(\Om)-\lambda_k(B)\geq -C_{n,k}(\lambda_1(\Om)-\lambda_1(B)),\\
\text{ if }\lambda_{k-1}(B) <\lambda_k(B),\text{ then }&\lambda_k(\Om)-\lambda_k(B)\leq C_{n,k}(\lambda_1(\Om)-\lambda_1(B)).
\end{align*}
The second one generalizes inequality \eqref{eq:Ash-Ben} which was observed for $k=2$.
\end{remark}
\EEE

As a consequence of Theorems \ref{main_sqrt} and \ref{main_linvect},  we can state a general result on the stability of the  Saint-Venant (and Faber-Krahn) inequality through perturbation by a spectral functional which has enough symmetries:

\begin{theorem}\label{main_perturb}
Let $k\in\N^*$ be such that $\lambda_{k}(B)<\lambda_{k+1}(B)$. Let $F\in\C^2(\R_+^{*k},\R)$ \BBB satisfy \EEE 
\BBB
\begin{enumerate}
\item [$\bullet$] $|F(\lambda)|\leq C(1+|\lambda|),  \text{ for some }C>0$,
\item  [$\bullet$] $\forall i,j\in\{1,\hdots,k\}, \text{ with } \lambda_i(B)=\lambda_j(B),\ \frac{\partial F}{\partial\lambda_i}=\frac{\partial F}{\partial\lambda_j}\text{ at }(\lambda_1(B),\hdots,\lambda_k(B)).$
\end{enumerate}
\EEE
Then there exists $\delta_F>0$ such that the functional
\begin{equation}\label{ex1.06}
\Om\in\A\mapsto T(\Om)^{-1}+\delta F(\lambda_1(\Om),\hdots,\lambda_k(\Om))
\end{equation}
is minimal on the ball for any $\delta\in\R$ such that $|\delta|<\delta_F$.
\end{theorem}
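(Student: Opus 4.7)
The plan is to reduce the theorem to establishing the one-sided control
\[|F(\Lambda_\Om) - F(\Lambda_B)| \leq C_{F}\bigl(T(\Om)^{-1} - T(B)^{-1}\bigr) \quad \forall \Om \in \A,\]
where $\Lambda_\Om := (\lb_1(\Om), \ldots, \lb_k(\Om))$ and $C_F$ depends only on $n, k, F$. Given such a bound, the Saint-Venant inequality $T(\Om)^{-1} \geq T(B)^{-1}$ immediately yields
\[T(\Om)^{-1} + \delta F(\Lambda_\Om) - T(B)^{-1} - \delta F(\Lambda_B) \geq (1 - C_F|\delta|)\bigl(T(\Om)^{-1} - T(B)^{-1}\bigr) \geq 0\]
for all $|\delta| < \delta_F := 1/C_F$, regardless of the sign of $\delta$, which is exactly the claim.

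To establish the reduction, the strategy is to split according to the size of $\lb_1(\Om)$. In the regime $\lb_1(\Om) \geq M\lb_1(B)$ for $M = M(n)$ chosen large enough, I use the elementary lower bound $T(\Om)^{-1} \geq \lb_1(\Om)/\om_n$ (obtained by testing the Rayleigh quotient against $w_\Om$ and applying Cauchy--Schwarz to $T(\Om) = \int_\Om w_\Om$), together with the universal growth bound $\lb_i(\Om) \leq C_{n,k}\lb_1(\Om)$ and the linear growth hypothesis $|F(\lambda)| \leq C(1+|\lambda|)$. Combined, these directly yield $|F(\Lambda_\Om) - F(\Lambda_B)| \leq C' \lb_1(\Om) \leq C''(T(\Om)^{-1} - T(B)^{-1})$ in this regime, where the last step uses that $\lb_1(\Om)/\om_n$ dominates the fixed constant $T(B)^{-1}$ once $M$ is large.

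In the complementary regime $\lb_1(\Om) \leq M\lb_1(B)$, the vector $\Lambda_\Om$ lies in a compact set $K \subset \R_+^{*k}$. I partition $\{1, \ldots, k\}$ into the maximal clusters $I_1, \ldots, I_p$ on which $\lb_i(B)$ is constant: the assumption $\lb_k(B) < \lb_{k+1}(B)$ guarantees that each $I_j$ is a \emph{complete} cluster of the spectrum of $B$, so that Theorem \ref{main_linvect} applies to each. Since $\partial_i F(\Lambda_B) = \partial_j F(\Lambda_B) =: c_j$ for all $i,j \in I_j$ by hypothesis, Taylor's formula with integral remainder yields
\[F(\Lambda_\Om) - F(\Lambda_B) = \sum_{j=1}^p c_j \sum_{i\in I_j}\bigl[\lb_i(\Om) - \lb_i(B)\bigr] + R,\qquad |R| \leq \tfrac12 \|\mathrm{Hess}\,F\|_{L^\infty(K)}\,|\Lambda_\Om - \Lambda_B|^2.\]
Theorem \ref{main_linvect} bounds each cluster sum linearly by $C(T(\Om)^{-1} - T(B)^{-1})$; Theorem \ref{main_sqrt} combined with $\lb_1(\Om) \leq M\lb_1(B)$ bounds each squared deviation $(\lb_i(\Om) - \lb_i(B))^2$ by $C(T(\Om)^{-1} - T(B)^{-1})$, so $|R|$ is also linearly bounded, completing the estimate. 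The conceptually essential step, and the only really non-routine point, is the use of the symmetry hypothesis on $\partial_i F$: without it, the linear part of the Taylor expansion would involve each single difference $\lb_i(\Om) - \lb_i(B)$ for $i$ in a multiple cluster, which can only be controlled with square-root exponent via Theorem \ref{main_sqrt} and would be insufficient to absorb the torsion deviation. The remaining details—explicit choice of $M$, compactness of $K$, and bookkeeping of constants—are routine.
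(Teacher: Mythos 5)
Your proof is correct and follows essentially the same route as the paper: the heart of both arguments is the partition of $\{1,\hdots,k\}$ into complete clusters of equal eigenvalues of $B$, the Taylor expansion whose linear part collapses onto the cluster sums thanks to the symmetry hypothesis on $\partial F/\partial\lambda_i$, Theorem \ref{main_linvect} for those sums, and Theorem \ref{main_sqrt} for the quadratic remainder. The only difference is organizational: you prove the uniform linear control $|F(\Lambda_\Om)-F(\Lambda_B)|\le C_F\,(T(\Om)^{-1}-T(B)^{-1})$ for \emph{all} $\Om\in\A$ by splitting into regimes according to the size of $\lambda_1(\Om)$ and invoking the linear-growth hypothesis together with $\lambda_1(\Om)T(\Om)\le\om_n$ in the large regime, whereas the paper first derives the bound with an extra factor $T(\Om)^{-1}$ on the right-hand side and only afterwards uses the linear-growth hypothesis to restrict to competitors whose energy does not exceed that of the ball, for which $T(\Om)^{-1}\le 2T(B)^{-1}$ when $|\delta|$ is small. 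Your version has the small advantage of making explicit that the Hessian of $F$ only needs to be bounded on a compact box containing $\Lambda_B$ and all admissible $\Lambda_\Om$ in the bounded regime, a point the paper glosses over when stating its Taylor estimate for all $\lambda\in(\R_+^*)^k$; both organizations yield the same conclusion with $\delta_F$ of the same nature.
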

In particular, the full reverse Kohler-Jobin inequality holds:

\begin{corollary}\label{cor_reversekohlerjobin}
There exists $p_n>1$ such that $\A\ni \Om \mapsto T(\Om)\lambda_1(\Om)^{\frac{1}{p_n}}$ is maximal on the ball.
\end{corollary}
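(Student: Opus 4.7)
The plan is to deduce this consequence from Theorem \ref{main_perturb} applied with $k = 1$ and $F(\lambda) = \lambda$. Since $\lb_1(B) < \lb_2(B)$ (the first eigenvalue is always simple), and $F(\lambda) = \lambda$ trivially satisfies the growth condition with the symmetry assumption being vacuous for $k=1$, there is some $\eps_n > 0$ such that for every $\Om \in \A$,
\[
T(\Om)^{-1} - T(B)^{-1} \ge \eps_n\bigl(\lb_1(\Om) - \lb_1(B)\bigr),
\]
which may be viewed as a quantitative Saint-Venant inequality controlled from below by the Faber-Krahn deficit.

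Taking logarithms, the desired inequality $T(\Om)\lb_1(\Om)^{1/p_n} \le T(B)\lb_1(B)^{1/p_n}$ is equivalent to
\[
p_n\bigl[\log T(B) - \log T(\Om)\bigr] \ge \log\lb_1(\Om) - \log\lb_1(B),
\]
so it suffices to show that the ratio $r(\Om) := [\log\lb_1(\Om) - \log\lb_1(B)]/[\log T(B) - \log T(\Om)]$ is uniformly bounded on $\A\setminus\{B\}$ (both numerator and denominator are positive for $\Om \ne B$, by the strict Faber-Krahn and strict Saint-Venant inequalities). I would split the analysis according to whether $\lb_1(\Om)$ is below or above a threshold $M$ depending only on $n$.

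For $\lb_1(\Om) \le M$, the Kohler-Jobin inequality~\eqref{eq_kohlerjobin} yields a uniform lower bound $T(\Om) \ge T_0(n,M) > 0$. Rewriting the stability inequality as $T(B) - T(\Om) \ge \eps_n T(B) T(\Om)(\lb_1(\Om) - \lb_1(B))$, and applying the elementary estimates $\log(1+x) \le x$ to the numerator and $-\log(1-y) \ge y$ to the denominator of $r$, we bound $r(\Om)$ by a constant depending only on $n$ and $M$.

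For $\lb_1(\Om) \ge M$, we use the classical inequality $T(\Om)\lb_1(\Om) \le |\Om| = \om_n$ (which follows from Cauchy-Schwarz applied to the torsion function, combined with the Rayleigh quotient for $\lb_1$). This gives $\log T(B) - \log T(\Om) \ge \log\lb_1(\Om) + \log(T(B)/\om_n)$, so that $r(\Om)$ tends to $1$ as $\lb_1(\Om) \to \infty$ and is bounded for $M$ chosen large enough in terms of $n$. Taking $p_n$ larger than the two bounds obtained in the two regimes then yields the corollary; the main substance of the argument lies in the small regime, where the delicate Theorem \ref{main_perturb} intervenes.
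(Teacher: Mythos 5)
Your proof is correct, and the essential input is the same as the paper's: the linear stability estimate $T(\Om)^{-1}-T(B)^{-1}\ge \eps_n(\lambda_1(\Om)-\lambda_1(B))$, which the paper extracts directly from Proposition \ref{main2_lin} with $k=1$ and which you obtain through the wrapper Theorem \ref{main_perturb} with $F(\lambda)=\lambda$ (legitimate, and not circular, since that theorem does not rely on the corollary). Where you diverge is in how the power inequality is deduced from this linear bound. The paper argues by contradiction in one line using Bernoulli's inequality: if $T(\Om)\lambda_1(\Om)^{1/p}>T(B)\lambda_1(B)^{1/p}$, then
\[
\frac{\lambda_1(\Om)}{\lambda_1(B)}>\Big(\frac{T(B)}{T(\Om)}\Big)^p\ge 1+p\Big(\frac{T(B)}{T(\Om)}-1\Big)\ge 1+\delta_n pT(B)\big(\lambda_1(\Om)-\lambda_1(B)\big),
\]
and the factor $\lambda_1(\Om)-\lambda_1(B)>0$ cancels, forcing $p<(\delta_nT(B)\lambda_1(B))^{-1}$ with no case split and no a priori bound on $\lambda_1(\Om)$. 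Your logarithmic reformulation requires the two-regime analysis precisely because the elementary bounds $\log(1+x)\le x$ and $-\log(1-y)\ge y$ degenerate when $T(\Om)\to 0$; your fixes — the Kohler--Jobin lower bound $T(\Om)\ge T_0(n,M)$ in the regime $\lambda_1(\Om)\le M$, and the inequality $T(\Om)\lambda_1(\Om)\le\om_n$ (Proposition \ref{prop:growth_egv}) in the regime $\lambda_1(\Om)\ge M$ with $M>\om_n/T(B)$ — are both valid and close the argument. The net effect is a correct but longer proof; the Bernoulli route buys you uniformity across all of $\A$ for free and even yields an explicit value $p_n=(\delta_nT(B)\lambda_1(B))^{-1}$ in terms of the stability constant.
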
 

{\BBB Notice finally that under suitable assumptions, we are also able to deal with functionals involving the whole spectrum $(\lambda_k(\Om))_{k\geq 1}$, see Proposition \ref{ex1.10}.}

\subsection{Outline of the paper}

In Section \ref{sect:prelim}, we {\BBB recall} some classical estimates of Dirichlet eigenvalues and eigenfunctions. In Section \ref{sec_sqrt}, we prove Theorem \ref{main_sqrt} as well as several useful lemmas on eigenfunctions and the torsion function. This is established by combining estimates from \cite{B03} on eigenvalues of nested domains, some estimates with explicit test functions and the quantitative Faber-Krahn inequality of \cite{BDV15}.

The next two sections are devoted to the proof of Theorems \ref{main_lin} and \ref{main_linvect}; while the second result is strictly stronger than the first, for expository reasons we shall first give a full proof of Theorem \ref{main_lin} in Section \ref{sec_lin} and then adapt this proof to the vectorial case in Section \ref{sec_linvect}, pointing out the differences.

Precisely, in  Section \ref{sec_lin} we start by  restating Theorem \ref{main_lin} as a shape optimization problem in the spirit of \eqref{ex1.04}. In a first step, we prove the existence of a relaxed minimizer among capacitary measures and, in a second step {\BBB we show} that this measure corresponds to  an open set which   is a smooth perturbation of the ball in an increasingly stronger sense. The key passage from an open set to a $\C^{1,\gamma}$ set is obtained by relating our problem  to a vectorial  Alt-Caffarelli problem as in \cite{KL18} or as in the more recent result \cite{MTV21}, depending on the sign of the perturbation. We finally conclude through second order shape derivative arguments for small perturbations of the ball.

Section \ref{sec_linvect} is a summary of the steps of the previous section for the vectorial problem, with in addition a careful examination of the dependency of the constants in terms of the multiplicity of the eigenspace,  obtained by following the proof of \cite{KL18} on the one hand and through a full proof of a vectorial version of \cite{MTV21} on the other hand. 

The last section is devoted to the discussion of the consequences, namely the proof of Theorem \ref{main_perturb} and of the  reverse Kohler-Jobin inequality, Corollary \ref{cor_reversekohlerjobin}.

\section{Some preliminary estimates}\label{sect:prelim}

Here we summarize some results on eigenvalues and eigenfunctions which we will use throughout the paper.  Although these results are not original, for the readability of the paper we give short proofs when possible or, at least, we comment \BBB on \EEE the proofs.

\smallskip 
\noindent {\bf Eigenvalues of the ball.}
For any $d\in\N$ we define $\mathbb{H}_{n,d}$ the space of harmonic homogeneous polynomials of degree $d$ in $n$ variables $x_1,\hdots,x_n$. For any $\alpha>0$ we denote by $J_\alpha$ the $\alpha$-th Bessel function
\[J_\alpha(x)=\sum_{p\geq 0}\frac{(-1)^{p}}{p!\Gamma(p+\alpha+1)}\left(\frac{x}{2}\right)^{2p+\alpha},\]
where $\Gamma$ is the standard Gamma function and we call $j_{\alpha,p}$ the $p$-th positive zero of $J_\alpha$, which is well-defined for every $p\in\N^*$. Then for every eigenvalue $\lambda_k(B)$, there exists a unique $(d,p)\in\N\times\N^*$, such that 
\[\lambda_k(B)=j_{d+\frac{n-2}{2},p}^2\]
and, conversely, for every $(d,p)\in\N\times\N^*$, $j_{d+\frac{n-2}{2},p}^2$ is an eigenvalue of $B$ associated to the eigenspace
\[\left\{x\mapsto\frac{ J_{d+\frac{n-2}{2}}\left(j_{d+\frac{n-2}{2},p}|x|\right)}{|x|^{d+\frac{n-2}{2}}}P\left(x\right),\ P\in\mathbb{H}_{n,d}\right\}\]
which has dimension
\[\text{dim}(\mathbb{H}_{n,d})=\begin{cases}1&\text{ if }d=0,\\
2&\text{ if }d>0,n=2,\\
(2d+n-2)\frac{(d+n-3)!}{d!(n-2)!}&\text{ if }d\geq 0,n\geq 3.\end{cases}\]
In particular an eigenvalue $\lambda_k(B)$ is either simple with a radial eigenfunction, or degenerate with only non-radial eigenfunctions. This particular fact (and more generally the fact that any eigenvalue corresponds to a unique couple $(d,p)$) is a result due to Siegel \cite{S29}. In the literature, it is also called ``Bourget's hypothesis'' since it has been mentioned in \cite{B1866}, with an incomplete proof.\newline

\smallskip
\noindent{\bf Eigenvalues and eigenfunctions estimates on general domains.} We start {\BBB by} recalling the following inequalities.

\begin{proposition}\label{prop:growth_egv}
Let $\Om\in\A$, $k\in\N^*$, then
\begin{equation}\label{est_eigen}
\left(\frac{n}{n+2}\right)\frac{{4\pi^2}}{\om_n^{4/n}}k^\frac{2}{n}\leq\lambda_k(\Om)\leq \left(1+\frac{4}{n}\right)\lambda_1(\Om)k^\frac{2}{n},
\end{equation}
\[\lambda_1(\Om)T(\Om)\leq \om_n.\]
\end{proposition}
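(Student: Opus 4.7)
The proposition bundles three classical estimates; I would treat them separately.

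\textbf{Lower bound on $\lambda_k(\Om)$.} The plan is to invoke the Berezin--Li--Yau inequality, which asserts that for any open set of finite measure
\[\sum_{i=1}^{k}\lambda_i(\Om)\geq \frac{n}{n+2}\cdot\frac{4\pi^2}{\om_n^{2/n}}\cdot\frac{k^{1+2/n}}{|\Om|^{2/n}}.\]
Since the sequence $(\lambda_i(\Om))_i$ is non-decreasing, $\lambda_k(\Om)\geq \frac{1}{k}\sum_{i=1}^k \lambda_i(\Om)$, and using $|\Om|=\om_n$ one gets exactly the claimed lower bound. (Alternatively one could cite Weyl's law for the asymptotic behavior and prove the finite-$k$ version via Li--Yau.)

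\textbf{Upper bound on $\lambda_k(\Om)$.} This is essentially a direct consequence of Cheng--Yang's inequality \cite{CY07} (already mentioned a few lines above the proposition statement), which says
\[\lambda_k(\Om)\leq \Bigl(1+\tfrac{4}{n}\Bigr)k^{2/n}\lambda_1(\Om).\]
The plan is just to invoke this result; no scaling adjustment is needed since the inequality is scale-invariant.

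\textbf{The bound $\lambda_1(\Om)T(\Om)\leq\om_n$.} The plan is a short three-line computation starting from the torsion function $w_\Om$. First, multiplying $-\Delta w_\Om=1$ by $w_\Om$ and integrating by parts yields
\[T(\Om)=\int_\Om w_\Om=\int_\Om |\nabla w_\Om|^2.\]
Next, the variational characterization of $\lambda_1(\Om)$ gives $\int_\Om w_\Om^2\leq \lambda_1(\Om)^{-1}\int_\Om |\nabla w_\Om|^2=\lambda_1(\Om)^{-1}T(\Om)$. Finally, Cauchy--Schwarz applied to $\int_\Om w_\Om = \int_\Om 1\cdot w_\Om$ gives
\[T(\Om)^2=\Bigl(\int_\Om w_\Om\Bigr)^2\leq |\Om|\int_\Om w_\Om^2\leq \frac{|\Om|}{\lambda_1(\Om)}T(\Om),\]
so that $\lambda_1(\Om)T(\Om)\leq |\Om|=\om_n$.

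There is no real obstacle here: the proposition is a compilation of standard facts, and the only care needed is to track the explicit constants (in particular $\frac{n}{n+2}$, which comes from the sharp Berezin constant) in the first estimate.
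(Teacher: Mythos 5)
Your proposal is correct and follows essentially the same route as the paper: the lower bound is the Li--Yau inequality (averaged over the first $k$ eigenvalues), the upper bound is a direct citation of Cheng--Yang, and the bound $\lambda_1(\Om)T(\Om)\leq\om_n$ comes from using $w_\Om$ as a test function in the Rayleigh quotient combined with Cauchy--Schwarz. The constants all check out, so nothing further is needed.
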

The lower bound in the first inequality is due to Li and Yau in \cite[Corollary 1]{LY83}, while the upper bound was obtained by Chen and Yang in \cite[Theorem 3.1]{CY07}. On the other hand inequality $\lambda_1(\Om)T(\Om)\leq \om_n$ follows directly from using the torsion function as a competitor in the Rayleigh quotient defining $\lambda_1$.

\begin{lemma}\label{lem_prelim}
Let $\Om\in\A$, $k\geq 1$, and let $w$ be the torsion function of $\Om$ and $u_k$ some $L^2$-normalized eigenfunction. Then

\[w\leq \frac{1}{2n},\qquad |u_k|\leq e^\frac{1}{8\pi}\lambda_k(\Om)^\frac{n}{4},\qquad  |u_k|\leq e^{\frac{1}{8\pi}}\lambda_k(\Om)^{1+\frac n 4}w\qquad \text{ in } \Om,\]
\[\sup_\Om|\nabla u_k|^2\leq \left(\frac{1}{n}+\sup_{\Om}|\nabla w|^2\right)e^{\frac{1}{4\pi}}\lambda_k(\Om)^{2+\frac{n}{2}}.\]
\end{lemma}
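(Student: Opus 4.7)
The plan is to prove the four inequalities in sequence, using each one (and standard tools) to feed into the next.

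First I would dispose of the torsion bound $w\le\frac{1}{2n}$ by Talenti's pointwise symmetrization theorem: the Schwarz rearrangement $w^*$ is dominated by the torsion function of the ball of same measure, which is exactly $\frac{1-|x|^2}{2n}$ on $B$, so $\|w\|_\infty=\|w^*\|_\infty\le\frac{1}{2n}$.

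Next I would establish the $L^\infty$-bound on $u_k$ by ultracontractivity. Since $u_k$ is an $L^2$-normalized Dirichlet eigenfunction, the spectral identity $u_k=e^{t\lambda_k(\Om)}P^\Om_t u_k$ together with Cauchy--Schwarz and the domination of the Dirichlet heat kernel by the Gaussian heat kernel on $\R^n$ yields
\[|u_k(x)|^2\le e^{2t\lambda_k(\Om)}\int_\Om p^\Om_t(x,y)^2\,\dd y=e^{2t\lambda_k(\Om)}p^\Om_{2t}(x,x)\le e^{2t\lambda_k(\Om)}(8\pi t)^{-n/2}.\]
Specialising to $t=\frac{1}{8\pi\lambda_k(\Om)}$ collapses the prefactor and gives precisely $|u_k|\le e^{1/(8\pi)}\lambda_k(\Om)^{n/4}$. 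The third bound then follows from a one-line weak maximum principle: the functions $\lambda_k(\Om)\|u_k\|_\infty w\pm u_k$ vanish on $\partial\Om$ and satisfy $-\Delta(\lambda_k\|u_k\|_\infty w\pm u_k)=\lambda_k(\|u_k\|_\infty\pm u_k)\ge 0$, hence are nonnegative in $\Om$, so $|u_k|\le\lambda_k(\Om)\|u_k\|_\infty w$ everywhere, and inserting the $L^\infty$-bound proved at step~2 yields the claimed estimate.

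The gradient bound is the delicate step and where I expect the main technical friction. The natural Bernstein candidate is $v:=|\nabla u_k|^2+\lambda_k(\Om)u_k^2$, for which a direct computation using $-\Delta u_k=\lambda_k(\Om)u_k$ gives $\Delta v=2|D^2 u_k|^2-2\lambda_k(\Om)^2 u_k^2$, and Cauchy--Schwarz $(\Delta u_k)^2\le n|D^2 u_k|^2$ then yields $-\Delta v\le \frac{2(n-1)}{n}\lambda_k(\Om)^2\|u_k\|_\infty^2$. Writing $C$ for the right-hand side, the function $v-Cw$ is subharmonic, so the maximum principle gives $\sup_\Om v\le \sup_{\partial\Om}|\nabla u_k|^2+C\|w\|_\infty$, and the bound on $w$ from step~1 converts the second term into $\frac{n-1}{n^2}\lambda_k(\Om)^2\|u_k\|_\infty^2\le\frac{1}{n}\lambda_k(\Om)^2\|u_k\|_\infty^2$. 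For the boundary term, the pointwise inequality $|u_k|\le\lambda_k(\Om)\|u_k\|_\infty w$ (both vanishing on $\partial\Om$) yields, by passing to the inward normal derivative, $|\nabla u_k|\le\lambda_k(\Om)\|u_k\|_\infty|\nabla w|$ on $\partial\Om$; combining everything and plugging in the $L^\infty$-bound on $u_k$ produces the claimed inequality.

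The main obstacle is step~4: the Bernstein identity formally requires $u_k\in C^2$, and the boundary gradient comparison requires enough regularity of $\partial\Om$ to make sense of normal derivatives, whereas the lemma is stated for an arbitrary open set $\Om\in\A$. I would resolve this by first proving the estimate for sets with sufficiently smooth boundary (where elliptic regularity gives $u_k,w\in C^2(\Om)\cap C^1(\overline\Om)$), and then extending to general open sets either by monotone approximation by smooth domains (using stability of the torsion function and of the first $k$ eigenpairs under Mosco convergence), or by interpreting the maximum-principle arguments in the viscosity/weak sense. The remaining constants and growth exponents track through unambiguously.
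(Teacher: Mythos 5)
Your proposal is correct and follows essentially the same route as the paper: Talenti for $w\le\frac{1}{2n}$, heat-kernel ultracontractivity with $t=\frac{1}{8\pi\lambda_k}$ for the $L^\infty$ bound, the maximum principle applied to $e^{1/(8\pi)}\lambda_k^{1+n/4}w\pm u_k$ for the pointwise domination, and the Bernstein quantity $|\nabla u_k|^2+\lambda_k u_k^2$ corrected by a multiple of $w$ for the gradient bound, with smooth domains treated first and the general case by approximation. The only detail to pin down is the choice of exhaustion: Mosco/$\gamma$-convergence controls $\lambda_k$ and the eigenfunctions but not $\sup_{\Om_j}|\nabla w_{\Om_j}|$ for an arbitrary smooth exhaustion, which is why the paper takes $\Om^\eps=\{w>\eps\}$ for regular values $\eps$ (Sard), so that $w_{\Om^\eps}=(w-\eps)_+$ and the gradient constant on the right-hand side is literally inherited from $\Om$.
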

\begin{proof}
{\BBB Talenti's inequality (see \cite[Theorem 1 (iv)]{Tal76}) implies that} the supremum of the torsion function is maximal on the unit ball, on which the torsion function has the explicit expression $w(x)=\frac{1-|x|^2}{2n}${\BBB, hence the first estimate}. Then classical heat kernel estimates (see for instance \cite[Ex. 2.1.8]{D89}) give $|u_k|\leq e^{\frac{1}{8\pi}}\lambda_k(\Om)^\frac{n}{4}$ so
\[\Delta \left(\pm u_k- e^{\frac{1}{8\pi}}\lambda_k(\Om)^{1+\frac{n}{4}}w\right)=-(\pm)\lambda_k(\Om)u_k +e^{\frac{1}{8\pi}}\lambda_k(\Om)^{1+\frac{n}{4}}\geq 0,\]
therefore $|u_k|\leq e^{\frac{1}{8\pi}}\lambda_k(\Om)^{1+\frac{n}{4}}w$ by \BBB the \EEE maximum principle. 

For the gradient bound, we suppose that $\nabla w$ is bounded{\BBB, the estimate being trivial if $\sup_\Om|\nabla w|=+\infty$}. By direct computation we have that $\Delta(|\nabla a|^2)\geq 2\nabla a \cdot\nabla(\Delta a)$ for a smooth function $a:\R^n\rightarrow\R$. \BBB Using the bounds on $w$ and $u_k$ we have \EEE
\[\Delta(|\nabla u_k|^2+\BBB\lambda_k(\Om)\EEE u_k^2)\geq -2\lambda_k(\Om)^2u_k^2\geq -2e^{\frac{1}{4\pi}}\lambda_k(\Om)^{2+\frac{n}{2}}\text{ in }\Om,\]
thus giving
\[\Delta\left(|\nabla u_k|^2+\lambda_k(\Om)u_k^2-2e^{\frac{1}{4\pi}}\lambda_k(\Om)^{2+\frac{n}{2}}w\right)\geq 0\text{ in }\Om.\]
Suppose first that $\Om$ is a $\C^\infty$ domain, then $\nabla u_k$ and $\nabla w$ extend continuously to the boundary and the inequality $|u_k|\leq e^{\frac{1}{8\pi}}\lambda_k(\Om)^{1+\frac{n}{4}}w$ (\BBB together with $u_k=w=0$ on $\partial\Om$\EEE) ensures
\[\forall x\in\partial\Om,\ |\nabla u_k(x)|\leq e^\frac{1}{8\pi}\lambda_k(\Om)^{1+\frac{n}{4}}|\nabla w(x)|,\]
and so by maximum principle:
\begin{align*}
\sup_\Om|\nabla u_k|^2&\leq 2e^{\frac{1}{4\pi}}\lambda_k(\Om)^{2+\frac{n}{2}}w+\sup_{\partial\Om}|\nabla u_k|^2\\
&\leq \frac{e^{\frac{1}{4\pi}}}{n}\lambda_k(\Om)^{2+\frac{n}{2}}+e^{\frac{1}{4\pi}}\lambda_k(\Om)^{2+\frac{n}{2}}\sup_{\Om}|\nabla w|^2.
\end{align*}
In the general case, by Sard's Theorem and since $w$ is smooth inside $\Om$ we may find arbitrarily small regular values $\eps>0$ such that $\{w=\eps\}=\partial \{w>\eps\}$ is a smooth hypersurface. Denote $\Om^\eps=\{w>\eps\}$ and $w^\eps,(u_k^\eps)_{k\in\N^*}$ the associated torsion function and eigenfunctions. Note that $w^\eps=(w-\eps)_+$, {\BBB hence $\Vert w_\Om-w_{\Om_\eps}\Vert_{L^2(\Rn)}\leq \om_n \eps \to 0$}, so that $\Om^\eps$ $\gamma$-converges to $\Om$ (see  for example \cite{BB05} for the definition and properties of $\gamma$-convergence). In particular, for all $k\geq 1$, $\lambda_k(\Om^\eps)\to \lambda_k(\Om)$ thanks to \cite[Corollaries 3 and 4, pp. 1089-1090]{DS88}. Now, \BBB since \EEE $u_k^\eps$ is bounded in $H^1_0(\Om)$ we can assume (up to extraction) that $u_k^\eps$ converges strongly in $L^2(\Om)$ and weakly in $H^1_0(\Om)$ to some limit $\BBB u_k^0\EEE$. Passing to the limit in the sense of distributions in $-\Delta u_k^\eps=\lambda_k(\Om^\eps)u_k^\eps$ we obtain that $(u_k^0)_{k\in\N}$ is an orthonormal \BBB eigenbasis \EEE for $\Om$.

Now, since $w^\eps=(w-\eps)_+$ we have 
\[\sup_{\Om^\eps}|\nabla u_k^\eps|^2\leq \frac{e^{\frac{1}{4\pi}}}{n}\lambda_k(\Om^\eps)^{2+\frac{n}{2}}+e^{\frac{1}{4\pi}}\lambda_k(\Om^\eps)^{2+\frac{n}{2}}\sup_{\Om}|\nabla w|^2\]
Using the $L^\infty$ bound \BBB on $u_k^\eps\left(\om_n^{-1/n}|\Om^\eps|^{1/n}\cdot\right)$ (so that its support has measure $\om_n$), we get $|u_k^\eps|\leq (1+o_{\eps\to 0}(1)) e^{\frac{1}{8\pi}}\lambda_k(\Om^\eps)^\frac{n}{4}\leq 2e^{\frac{1}{8\pi}}\lambda_k(\Om)^\frac{n}{4}$\EEE for small $\eps$. Thus $u_k^\eps$ is bounded in $W^{1,\infty}$ as $\eps\to 0$, so that it converges (up to subsequence) locally uniformly to \BBB $u_k^0$.\EEE The uniform gradient bound on $\nabla u_k^\eps$ transfers to $\nabla u_k$, thus concluding the proof. 
\end{proof}

As the next result shows, one can control the difference of eigenvalues by the difference of torsions for two nested domains $\om\subset\Om$.
\begin{lemma}\label{lem_bucur}
Let $\om\subset\Om$ be two open sets of finite measure, then
\[\frac{1}{\lambda_k(\Om)}-\frac{1}{\lambda_k(\om)}\leq e^{\frac{1}{4\pi}}k\lambda_k(\Om)^\frac{n}{2}\left[T(\Om)-T(\om)\right]\]
\end{lemma}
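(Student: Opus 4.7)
The plan is to recast the inequality at the operator level. Define $R_\Om:L^2(\R^n)\to L^2(\R^n)$ by sending $f$ to the extension by zero of the solution $u\in H^1_0(\Om)$ of $-\Delta u=f$ in $\Om$, and similarly for $R_\om$. Both are compact, self-adjoint, positive; the non-zero eigenvalues of $R_\Om$ are $1/\lambda_j(\Om)$, with eigenfunctions $u_j^\Om$ (extended by zero), and analogously for $\om$. The variational identity $\tfrac{1}{2}\langle R_\Om f,f\rangle=\sup_{v\in H^1_0(\Om)}\{\int fv-\tfrac{1}{2}\int|\nabla v|^2\}$, combined with the inclusion $H^1_0(\om)\subset H^1_0(\Om)$ via zero extension, yields $R_\om\leq R_\Om$ as quadratic forms. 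Equivalently the Green's kernels satisfy $0\leq G_\om(x,y)\leq G_\Om(x,y)$ on $\R^n\times\R^n$, and integrating both variables gives
\[\int_{\R^n}\!\int_{\R^n}\bigl(G_\Om(x,y)-G_\om(x,y)\bigr)\,dx\,dy\;=\;T(\Om)-T(\om).\]

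Next I would invoke Ky Fan's maximum principle: for a compact positive self-adjoint operator $A$ and a rank-$k$ orthogonal projection $P$, $\mathrm{Tr}(PA)\leq \sum_{j=1}^{k}\mu_j(A)$, with equality when $P$ projects onto the first $k$ eigenspaces of $A$. Taking $P$ to be the projection onto $\mathrm{span}(u_1^\Om,\ldots,u_k^\Om)$, equality holds for $A=R_\Om$ while the inequality applies to $A=R_\om$, which combined give
\[\sum_{j=1}^{k}\frac{1}{\lambda_j(\Om)}\;-\;\sum_{j=1}^{k}\frac{1}{\lambda_j(\om)}\;\leq\; \mathrm{Tr}\bigl(P(R_\Om-R_\om)\bigr)\;=\;\sum_{j=1}^{k}\bigl\langle (R_\Om-R_\om)u_j^\Om,\,u_j^\Om\bigr\rangle.\]

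Each summand is then controlled pointwise via the Green-kernel representation, using $G_\Om-G_\om\geq 0$:
\[\bigl\langle (R_\Om-R_\om)u_j^\Om,u_j^\Om\bigr\rangle=\int\!\!\int (G_\Om-G_\om)(x,y)u_j^\Om(x)u_j^\Om(y)\,dx\,dy\leq \|u_j^\Om\|_\infty^2\bigl(T(\Om)-T(\om)\bigr).\]
Applying the heat-kernel estimate $\|u_j^\Om\|_\infty^2\leq e^{1/(4\pi)}\lambda_j(\Om)^{n/2}\leq e^{1/(4\pi)}\lambda_k(\Om)^{n/2}$ from Lemma \ref{lem_prelim} and summing over $j\leq k$ bounds the right-hand side by $e^{1/(4\pi)}k\lambda_k(\Om)^{n/2}(T(\Om)-T(\om))$. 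Domain monotonicity $\lambda_j(\om)\geq\lambda_j(\Om)$ makes every summand on the left non-negative, so the $k$-th term alone is dominated by the full sum, which is the announced inequality. The main obstacle — which this strategy is designed to circumvent — is that for $n\geq 2$ the resolvents are not trace class, so one cannot directly relate individual eigenvalue differences to $\mathrm{Tr}(R_\Om-R_\om)$; Ky Fan's principle applied to the specific rank-$k$ projection onto the first $k$ eigenfunctions of $\Om$ is the substitute that turns the comparison into a controlled sum of exactly $k$ quadratic forms, each handled via the pointwise Green's function bound and the $L^\infty$ estimate on eigenfunctions.
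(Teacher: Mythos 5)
Your argument is correct, and every ingredient you use is available in the paper: the kernel monotonicity $0\leq G_\om\leq G_\Om$ (a consequence of the maximum principle — note that this is genuinely stronger than, not ``equivalent'' to, the quadratic-form inequality $R_\om\leq R_\Om$ you derive first, but it is the standard fact and is what your pointwise bound actually needs), the identity $\iint(G_\Om-G_\om)=T(\Om)-T(\om)$, the $L^\infty$ bound $\|u_j^\Om\|_\infty^2\leq e^{1/(4\pi)}\lambda_j(\Om)^{n/2}$ from Lemma \ref{lem_prelim}, and domain monotonicity of the $\lambda_j$ to isolate the $k$-th term from the sum. The paper itself gives no self-contained proof here: it cites \cite[Theorem 3.4]{B03} and asks the reader to track constants. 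The argument in that reference is the classical min--max one: take $V=\mathrm{span}(u_1^\Om,\dots,u_k^\Om)$ as a test subspace in $\frac{1}{\lambda_k(\om)}=\max_{\dim V=k}\min_{f\in V}\langle R_\om f,f\rangle/\|f\|^2$ and bound $\langle(R_\Om-R_\om)f,f\rangle\leq\|f\|_\infty^2\,(T(\Om)-T(\om))$ for a single normalized $f=\sum a_ju_j^\Om$, with $\|f\|_\infty^2\leq k\max_j\|u_j^\Om\|_\infty^2$ producing the factor $k$. Your route replaces the min--max over one worst test function by Ky Fan's trace inequality applied to the same rank-$k$ projection, summing $k$ quadratic forms and then discarding the first $k-1$ nonnegative terms; the factor $k$ arises from the length of the sum rather than from Cauchy--Schwarz on the coefficients. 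The two proofs are of comparable difficulty and yield the same constant $e^{1/(4\pi)}k\lambda_k(\Om)^{n/2}$; yours has the mild advantage of giving for free the stronger statement on the partial sums $\sum_{j\leq k}(\lambda_j(\Om)^{-1}-\lambda_j(\om)^{-1})$, and of being fully self-contained where the paper defers to the literature.
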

\begin{proof}
This result is proved in \cite[Theorem 3.4]{B03}, where one has to follow the proof to keep track of the constants (using for instance the $L^\infty$ bound $|u_k|\leq e^\frac{1}{8\pi}\lambda_k(\Om)^{\frac{n}{4}}$).
\end{proof}

\smallskip
\noindent{\bf The quantitative Faber-Krahn inequality.} 
The Fraenkel asymmetry $\F$, defined as
\[\F(\Om)=\inf_{x\in\R^n}|(B+x)\Delta\Om|,\]
plays a crucial role in the following quantitative Faber-Krahn inequality obtained in \cite{BDV15}.  \BBB Note that since the set $\Omega$ is of finite measure, the infimum is always attained, hence equality occurs for some $x \in \R^n$.\EEE
\begin{theorem}
There exists $c_n>0$ such that for any $\Om\in\A$,
\begin{equation}\label{eq:QSV}
T(\Om)^{-1}\geq T(B)^{-1}+c_n\F(\Om)^2,
\end{equation}
\begin{equation}\label{eq:QFK}\lambda_1(\Om)\geq \lambda_1(B)+c_n\F(\Om)^2.\end{equation}
\end{theorem}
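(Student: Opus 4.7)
The plan is to adopt the selection principle strategy introduced by Cicalese--Leonardi for the quantitative isoperimetric inequality and adapted to the spectral setting by Brasco, De Philippis and Velichkov. I focus on the Faber--Krahn bound \eqref{eq:QFK}; the Saint-Venant inequality \eqref{eq:QSV} follows by an identical scheme applied to the torsion energy, using the analogous free boundary problem associated with the torsion function $w_\Om$.

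First I argue by contradiction: suppose there is a sequence $(\Om_j)\subset\A$ with $\F(\Om_j)=\eps_j\to 0$ while $\lb_1(\Om_j)-\lb_1(B)=o(\eps_j^2)$. Rather than work with these possibly wild $\Om_j$ directly, I replace them through a \emph{selection principle}: for each $j$ I consider a penalized shape minimization of the form
\[\min\left\{\lb_1(\Om)+\eta_j\big(\F(\Om)-\eps_j\big)^2\ :\ \Om\in\A\right\},\]
with $\eta_j$ tuned so that $\Om_j$ is not optimal and every minimizer $\widetilde\Om_j$ satisfies $\F(\widetilde\Om_j)=\eps_j$ while still realizing $\lb_1(\widetilde\Om_j)-\lb_1(B)=o(\eps_j^2)$. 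Being a minimizer of a free-boundary functional essentially of Alt--Caffarelli type, $\widetilde\Om_j$ is then amenable to regularity theory.

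The second step is to invoke free boundary regularity (Alt--Caffarelli, sharpened through the flatness-implies-smoothness scheme of De Silva) to show that $\partial\widetilde\Om_j$ is of class $\C^{1,\gamma}$ with uniform estimates, and then to bootstrap via elliptic regularity of $u_1(\widetilde\Om_j)$ together with the optimality condition $|\nabla u_1|^2=\textrm{const}$ on the free boundary, obtaining $\C^{2,\gamma}$ convergence of $\widetilde\Om_j$ to $B$ (up to a translation). The third step is then a Fuglede-type expansion on nearly spherical domains: writing $\partial\widetilde\Om_j=\{(1+\varphi_j(\sigma))\sigma:\sigma\in\partial B\}$ with $\|\varphi_j\|_{\C^{2,\gamma}}\to 0$ and decomposing $\varphi_j$ in spherical harmonics, a second order shape expansion of $\lb_1$ at $B$, together with the positive-definiteness of the Hessian modulo the translation kernel, yields
\[\lb_1(\widetilde\Om_j)-\lb_1(B)\geq c_n\|\varphi_j\|_{L^2(\partial B)}^2+o\big(\|\varphi_j\|_{L^2(\partial B)}^2\big),\]
while $\F(\widetilde\Om_j)\lesssim \|\varphi_j\|_{L^2(\partial B)}$ once the translation freedom has been fixed. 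Combining these two bounds contradicts $\F(\widetilde\Om_j)^2=\eps_j^2$ together with $\lb_1(\widetilde\Om_j)-\lb_1(B)=o(\eps_j^2)$.

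The main obstacle is the regularity step: the asymmetry penalty $\F(\Om)^2$ is non-local and merely Lipschitz in $\Om$, so one must show that it behaves as a negligible lower order perturbation in the free boundary analysis, not spoiling the Alt--Caffarelli compactness or flatness arguments. Making this precise enough to obtain $\C^{1,\gamma}$ (and then $\C^{2,\gamma}$) bounds \emph{uniformly} along the extracted sequence is where nearly all the technical work is concentrated; once that is in hand, the Fuglede-type contradiction is a comparatively short second-order computation analogous to the classical one for nearly spherical domains.
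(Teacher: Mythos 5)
This theorem is not proved in the paper at all: it is quoted directly from Brasco--De Philippis--Velichkov \cite{BDV15}, so there is no internal argument to compare yours against. What you have written is essentially a reconstruction of the strategy of that reference (selection principle in the spirit of Cicalese--Leonardi, free boundary regularity, Fuglede-type second-order expansion), and as a roadmap it is the correct one. Two caveats are worth recording. First, your order of proof is inverted relative to \cite{BDV15} and to the way the present paper uses the result: the reference proves the quantitative Saint-Venant inequality \eqref{eq:QSV} by running the selection principle on the torsion energy, where the state function solves $-\Delta w=1$ and the Alt--Caffarelli regularity theory is cleanest, and then \emph{deduces} \eqref{eq:QFK} from \eqref{eq:QSV} via the Kohler--Jobin inequality (precisely the reduction recorded in \eqref{eq_kohlerjobin}); running the free boundary analysis directly on $\lambda_1$ is possible but is not the ``identical scheme'' you suggest, and the torsion route is the reason the present paper states its main theorems in terms of $T(\Om)^{-1}-T(B)^{-1}$. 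Second, the selection step as you phrase it overstates what can be arranged: minimizers of $\lb_1+\eta_j(\F-\eps_j)^2$ need not satisfy $\F(\widetilde\Om_j)=\eps_j$ exactly, and even their existence is nontrivial (the penalized functional is not monotone under inclusion, so one needs a concentration-compactness or confinement argument, as in Section \ref{sect:exmin} of this paper for a related problem); what the argument actually yields is that $\F(\widetilde\Om_j)$ is comparable to $\eps_j$ while the deficit is $o(\F(\widetilde\Om_j)^2)$, which suffices for the contradiction. Since you yourself identify the uniform regularity estimates as the locus of all the work and defer them entirely, the proposal should be read as an accurate outline of the known proof rather than a self-contained argument.
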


\section{Proof of Theorem \ref{main_sqrt}: the square root bound}\label{sec_sqrt}
The proof of \ref{main_sqrt} is obtained as a consequence of the quantitative \BBB Saint-Venant \EEE inequality \eqref{eq:QFK}, growth estimates over $\lambda_k(\Om)$ from \eqref{est_eigen} and the next proposition, 
which, we believe, is of independent interest.
\begin{proposition}\label{prop_auxiliaire}
Let $\Om\in\A$, then
\[\left|\frac{1}{\lambda_k(\Om)}-\frac{1}{\lambda_k(B)}\right|\leq \left(1+\frac{4}{n}\right)^\frac{n}{2}e^{\frac{1}{4\pi}}k^2\lambda_1(\Om)^\frac{n}{2}\left[T(B)-T(\Om)+\left(\frac{1}{n}+\frac{1}{n^2}\right)|\Om\Delta B|\right].\]
\end{proposition}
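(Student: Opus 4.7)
The strategy is to use the intersection $\Om \cap B$ as an intermediate set, exploit Lemma \ref{lem_bucur} on both nested pairs, and reduce to a sharp torsion-measure inequality. Since $\Om\cap B$ sits inside both $\Om$ and $B$, monotonicity gives $1/\lambda_k(\Om\cap B)\le \min(1/\lambda_k(\Om),1/\lambda_k(B))$; the triangle inequality together with Lemma \ref{lem_bucur} applied to the pairs $\Om\cap B\subset\Om$ and $\Om\cap B\subset B$, combined with the Chen--Yang bound $\lambda_k(\Om)^{n/2},\lambda_k(B)^{n/2}\le (1+4/n)^{n/2}k\,\lambda_1(\Om)^{n/2}$ (using Faber--Krahn $\lambda_1(B)\le\lambda_1(\Om)$ for the second), yields
\[
\Big|\tfrac{1}{\lambda_k(\Om)}-\tfrac{1}{\lambda_k(B)}\Big|\le e^{\frac{1}{4\pi}}(1+\tfrac{4}{n})^{n/2}k^2\lambda_1(\Om)^{n/2}\big[T(\Om)+T(B)-2T(\Om\cap B)\big].
\]

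Writing $T(\Om)+T(B)-2T(\Om\cap B) = (T(B)-T(\Om)) + 2(T(\Om)-T(\Om\cap B))$, the whole proof then reduces to proving the sharp torsion-measure inequality
\[
T(\Om) - T(\Om\cap B) \;\le\; \tfrac{n+1}{2n^2}|\Om\Delta B|.
\]
To establish it, the natural decomposition
\[
T(\Om) - T(\Om\cap B) = \int_{\Om\setminus B} w_\Om\,dx + \int_{\Om\cap B}(w_\Om - w_{\Om\cap B})\,dx
\]
controls the first integral by $|\Om\Delta B|/(4n)$ via Talenti's bound $w_\Om \le 1/(2n)$ (Lemma \ref{lem_prelim}). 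For the second integral, the function $h := w_\Om - w_{\Om\cap B}$ is nonnegative and harmonic on $\Om\cap B$, with $h = w_\Om$ on $\partial B\cap\overline{\Om}$ and $h = 0$ on $\partial\Om\cap\overline{B}$; Green's identity then yields $\int_{\Om\cap B} h\,dx = -\int_{\partial B \cap \Om} w_\Om\,\partial_\nu w_{\Om\cap B}\,d\sigma$, while Hopf comparison via $0 \le w_{\Om\cap B} \le w_B$ (both vanishing on $\partial B \cap \Om$) gives $|\partial_\nu w_{\Om\cap B}| \le |\partial_\nu w_B| = 1/n$ there, so that $\int_{\Om\cap B} h\,dx \le \tfrac{1}{n}\int_{\partial B\cap\Om}w_\Om\,d\sigma$.

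The main obstacle is therefore the surface-to-volume estimate $\int_{\partial B\cap\Om} w_\Om\,d\sigma \le \tfrac{n+2}{2n}|\Om\setminus B|$. The plan to extract this boundary integral is through Green's second identity on $\Om$ pairing $w_\Om$ with the extended torsion $w_B$ (zero outside $B$): its distributional Laplacian on $\Rn$ equals $-\mathbf{1}_B + \tfrac{1}{n}\delta_{\partial B}$, the Dirac part coming from the jump of $\partial_\nu w_B$ across $\partial B$, and this singular piece produces exactly $\int_{\partial B\cap\Om} w_\Om\,d\sigma$. After rearrangement, the remaining contributions are an interior integral $n\int_{\Om\cap B}(w_\Om - w_B)\,dx$ estimated using $w_\Om,w_B \le 1/(2n)$ (which produces the $\tfrac{1}{2}|\Om\setminus B|$ share), and a boundary term on $\partial\Om\cap B$ where $w_B(x)=(1-|x|^2)/(2n)$ is small (proportional to the depth of $\partial\Om$ inside $B$) and $\int_{\partial\Om}(-\partial_\nu w_\Om) = |\Om|$, yielding the missing $\tfrac{1}{n}|\Om\setminus B|$ share. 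Tracking constants carefully through this identity closes the inequality and hence the proposition.
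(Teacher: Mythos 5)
The first half of your plan — Lemma \ref{lem_bucur} applied to the two nested pairs $\Om\cap B\subset\Om$ and $\Om\cap B\subset B$, the Chen--Yang and Faber--Krahn bounds, and the rewriting $T(\Om)+T(B)-2T(\Om\cap B)=(T(B)-T(\Om))+2(T(\Om)-T(\Om\cap B))$ — is exactly the paper's argument, and your target inequality $T(\Om)-T(\Om\cap B)\le\frac{n+1}{2n^2}|\Om\Delta B|$ is precisely the paper's Lemma \ref{lem_torsioninter}. The gap is in your proof of that lemma, specifically in the ``surface-to-volume'' step that you yourself flag as the main obstacle. Your Green identity reads
\[
\tfrac{1}{n}\int_{\partial B\cap\Om}w_\Om\,d\sigma=\int_{\Om\cap B}(w_\Om-w_B)\,dx+\int_{\partial\Om\cap B}w_B\,(-\partial_\nu w_\Om)\,d\sigma,
\]
and the last term is nonnegative but is \emph{not} $O(|\Om\setminus B|)$: take $\Om=(B\setminus\overline{B_{x_0,\rho}})\sqcup B_{y,\rho}$ with $B_{x_0,\rho}$ deep inside $B$ and $B_{y,\rho}$ disjoint from $\overline B$. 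Then $|\Om\setminus B|\sim\rho^n$, while the flux of $w_\Om$ through $\partial B_{x_0,\rho}=\partial\Om\cap B$ lives at the capacity scale, $\sim\rho^{n-2}$ for $n\ge3$ (and $\sim1/\log(1/\rho)$ for $n=2$), and $w_B$ is of order $1$ there; so that boundary term is $\gg|\Om\setminus B|$ as $\rho\to0$. (In this example the identity is saved only because the interior term $\int_{\Om\cap B}(w_\Om-w_B)$ is negative of the same order, so a termwise estimate cannot work.) Your estimate of the interior term is also under-justified as stated — bounding it by $\frac{1}{2}|\Om\setminus B|$ needs $T(\Om)\le T(B)$ (Saint-Venant) and $|B\setminus\Om|=|\Om\setminus B|$, not just $w_\Om,w_B\le\frac{1}{2n}$ — but that part is true; the boundary term is the genuine obstruction. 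A further structural worry is that both your Green identities involve normal derivatives on $\partial\Om$, which is only an arbitrary open set here.

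The paper avoids all of this by never trying to bound $\int_{\partial B}w_\Om$ by the measure alone. It tests $T(\Om\cap B)$ with $w_\Om\wedge w_B$, which produces the inequality $T(\Om)-T(\Om\cap B)+\int_{\Om\setminus B}|\nabla w_\Om|^2\le\int_{\Om\setminus B}2w_\Om+\frac{2}{n}\int_{\partial B}w_\Om$, i.e.\ the torsion deficit comes with a \emph{favorable} Dirichlet energy term on the left. It then uses the radial trace inequality $\int_{\partial B}w_\Om\le\int_{\Rn\setminus B}|\nabla w_\Om|$ and Young's inequality to absorb $\int_{\Om\setminus B}|\nabla w_\Om|^2$, leaving only $\frac{1}{n^2}|\Om\setminus B|$. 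If you want to salvage your route, you would need to keep such an absorbable energy term rather than aim at a pure surface-measure bound; as written, the plan does not close.
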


{\BBB To show this result,} we first prove the following lemma.
\begin{lemma}\label{lem_torsioninter}
Let $\Om\in\A$, then
\[T(\Om)-T(\Om\cap B)\leq \left(\frac{1}{n}+\frac{1}{n^2}\right)|\Om\setminus B|\]
\end{lemma}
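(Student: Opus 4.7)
The plan is to reduce the inequality to a clean Saint-Venant comparison on the union $\Om \cup B$, by means of a classical min/max decomposition of torsion functions. In a first step I would establish the rearrangement inequality
\[T(\Om) - T(\Om \cap B) \le T(\Om \cup B) - T(B).\]
Let $\tilde w_\Om$ and $\tilde w_B$ denote the torsion functions of $\Om$ and $B$ extended by zero to $\R^n$; they are nonnegative elements of $H^1(\R^n)$. Then $v := \min(\tilde w_\Om, \tilde w_B)$ and $V := \max(\tilde w_\Om, \tilde w_B)$ satisfy $v \in H^1_0(\Om \cap B)$ and $V \in H^1_0(\Om \cup B)$, since each extended function vanishes where the other does. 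Using the a.e.\ identities $v + V = \tilde w_\Om + \tilde w_B$ and $|\nabla v|^2 + |\nabla V|^2 = |\nabla \tilde w_\Om|^2 + |\nabla \tilde w_B|^2$, I test the variational characterization $T(D) = \max\{2\int \phi - \int|\nabla \phi|^2 : \phi \in H^1_0(D)\}$ with $\phi = v$ on $D = \Om \cap B$ and with $\phi = V$ on $D = \Om \cup B$; summing, and using $\int w_D = \int|\nabla w_D|^2 = T(D)$, the right-hand side telescopes to $T(\Om) + T(B)$, yielding the displayed inequality.

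Next I would control $T(\Om \cup B) - T(B)$ via Saint-Venant: since $|\Om \cup B| = \om_n + |\Om \setminus B|$ and $T$ is $(n+2)$-homogeneous,
\[T(\Om \cup B) \le \left(1 + \frac{|\Om\setminus B|}{\om_n}\right)^{(n+2)/n} T(B),\]
where the ratio $x := |\Om \setminus B|/\om_n$ lies in $[0,1]$ because $|\Om \setminus B| \le |\Om| = \om_n$.

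The last step is an elementary real-variable bound: for $n \ge 2$ and $x \in [0,1]$ one has $(1+x)^{(n+2)/n} - 1 \le \tfrac{(n+1)(n+2)}{n} x$, because the function $g(x) := \tfrac{(n+1)(n+2)}{n}x - (1+x)^{(n+2)/n} + 1$ vanishes at $0$ and has derivative $g'(x) = \tfrac{n+2}{n}[(n+1) - (1+x)^{2/n}] > 0$ on $[0,1]$, since $(1+x)^{2/n} \le 2^{2/n} \le 2 < n+1$. Multiplying the resulting estimate by $T(B) = \om_n/[n(n+2)]$ gives
\[T(\Om \cup B) - T(B) \le \frac{n+1}{n^2}\,|\Om \setminus B| = \left(\frac{1}{n} + \frac{1}{n^2}\right)|\Om \setminus B|,\]
which combined with the first step is the claim. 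I anticipate no real technical obstacle: the argument is three short ingredients glued together, and the delicate-looking constant $\tfrac1n + \tfrac1{n^2}$ emerges cleanly from the Saint-Venant comparison applied to $\Om \cup B$.
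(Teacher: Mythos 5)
Your proof is correct, and it reaches the exact constant \(\tfrac1n+\tfrac1{n^2}\) by a genuinely different route from the paper. The paper's argument is a direct one: it tests the variational characterization of \(T(\Om\cap B)\) with \(w_\Om\wedge w_B\), integrates by parts to isolate the boundary term \(-2w_B'(1)\int_{\partial B}w_\Om\) with \(-w_B'(1)=\tfrac1n\), and then combines a trace inequality with Young's inequality to absorb the leftover gradient term; the constants \(\tfrac1n\) and \(\tfrac1{n^2}\) arise respectively from the bound \(w_\Om\le\tfrac1{2n}\) and from this boundary computation. You instead prove the superadditivity \(T(\Om\cup B)+T(\Om\cap B)\ge T(\Om)+T(B)\) via the standard \(\min/\max\) decomposition and then control \(T(\Om\cup B)-T(B)\) by the Saint-Venant inequality, scaling, and an elementary convexity bound, with the constant emerging from \(T(B)=\om_n/[n(n+2)]\). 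Your version is shorter and more structural, and needs no trace inequality or boundary analysis, at the price of invoking Saint-Venant (harmless here, since the paper uses it, indeed its quantitative form, elsewhere). The only step stated a bit tersely is the membership \(\min(\tilde w_\Om,\tilde w_B)\in H^1_0(\Om\cap B)\): vanishing outside \(\Om\cap B\) is not by itself the definition of \(H^1_0\) for a general open set, but the claim is a standard fact, proved by approximating \(\tilde w_\Om,\tilde w_B\) by smooth compactly supported functions and using the \(H^1\)-continuity of \((a,b)\mapsto\min(a^+,b^+)\); with that one-line justification the argument is complete.
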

\begin{proof}
We write $w:=w_\Om$ and $v=w_B$. 
Then let $\tilde{w}=w\wedge v$: we have $\tilde{w}\in H^1_0(\Om\cap B)$ so
\[T(\Om\cap B)\geq \int_{ \Om\cap B}\left(2\tilde{w}-|\nabla\tilde{w}|^2\right)=\int_{ \Om\cap B}\left(2(w\wedge v)-|\nabla (w\wedge v)|^2\right),\]
{\BBB and}
\begin{align*}
T(\Om)-T(\Om\cap B)+\int_{\Om\setminus B}|\nabla w|^2&\leq \int_{\Om\setminus B}2w+\int_{ B\cap\Om}\left(2(w- \tilde{w})+ |\nabla (w\wedge v)|^2-|\nabla w|^2\right)\\
&\leq \int_{\Om\setminus B}2w+\int_{ B\cap\Om}\left(2(w- v)_+ +2\nabla (w\wedge v)\cdot\nabla (w\wedge  v - w)\right)\\
&= \int_{\Om\setminus B}2w+\int_{ B\cap\Om}\left(2(w- v)_+ -2\nabla (w\wedge v)\cdot\nabla (w- v)_+\right).
\end{align*}
Notice that $\nabla (w\wedge v)\cdot\nabla (w- v)_+=\nabla  v\cdot\nabla (w- v)_+=\nabla\cdot((w- v)_+\nabla v)+(w- v)_+$  in $\Om\cap B$ so by Stokes' formula,
\begin{align*}
T(\Om)-T(\Om\cap B)+\int_{\Om\setminus B}|\nabla w|^2&\leq \int_{\Om\setminus B}2w-2 v'(1)\int_{\partial B}w
\end{align*}
Notice that $-v'(1)=\frac{1}{n}$ and using the trace estimate $\int_{\partial B}w\leq \int_{\Rn\setminus B}|\nabla w|$ we get
\[ 2(-v'(1))\int_{\partial B}w\leq \frac{2}{n}\int_{\Om\setminus B}|\nabla w|\leq \frac{1}{n^2}|\Om\setminus B|+\int_{\Om\setminus B}|\nabla w|^2,\]
so
\[T(\Om)-T(\Om\cap B)\leq \int_{\Om\setminus B}2w+\frac{1}{n^2}|\Om\setminus B|\leq \left(\frac{1}{n}+\frac{1}{n^2}\right)|\Om\setminus B|.\]
\end{proof}
We may now prove Proposition \ref{prop_auxiliaire}.
\begin{proof}[Proof of Proposition \ref{prop_auxiliaire}]
\BBB Applying \EEE  the bound from Lemma \ref{lem_bucur} to $(\Om\cap B,B)$ and $(\Om\cap B,\Om)$, we have the two inequalities
\begin{align*}
\frac{1}{\lambda_k(\Om)}-\frac{1}{\lambda_k(\Om\cap B)}&\leq e^{\frac{1}{4\pi}}k\lambda_k(\Om)^\frac{n}{2}\left[T(\Om)-T(\Om\cap B)\right]\\
&\leq \left(1+\frac{4}{n}\right)^\frac{n}{2}e^{\frac{1}{4\pi}}k^{2}\lambda_1(\Om)^\frac{n}{2}\left[T(\Om)-T(\Om\cap B)\right],
\end{align*}
\begin{align*}
\frac{1}{\lambda_k(B)}-\frac{1}{\lambda_k(\Om\cap B)}&\leq e^{\frac{1}{4\pi}}k\lambda_k(B)^\frac{n}{2}\left[T(B)-T(\Om\cap B)\right]\\
&\leq \left(1+\frac{4}{n}\right)^\frac{n}{2}e^{\frac{1}{4\pi}}k^{2}\lambda_1(\Om)^\frac{n}{2}\left[T(B)-T(\Om\cap B)\right].
\end{align*}
So combining them, we get
\begin{align*}
\left|\frac{1}{\lambda_k(\Om)}-\frac{1}{\lambda_k(B)}\right|&\leq \left(1+\frac{4}{n}\right)^\frac{n}{2}e^{\frac{1}{4\pi}}k^{2}\lambda_1(\Om)^\frac{n}{2}\left[T(\Om)+T(B)-2T(\Om\cap B)\right]\\
&= \left(1+\frac{4}{n}\right)^\frac{n}{2}e^{\frac{1}{4\pi}}k^{2}\lambda_1(\Om)^\frac{n}{2}\left[(T(B)-T(\Om))+2(T(\Om)-T(\Om\cap B))\right].
\end{align*}
Using Lemma \ref{lem_torsioninter},  
\begin{align*}
\left|\frac{1}{\lambda_k(\Om)}-\frac{1}{\lambda_k(B)}\right|&\leq \left(1+\frac{4}{n}\right)^\frac{n}{2}e^{\frac{1}{4\pi}}k^{2}\lambda_1(\Om)^\frac{n}{2}\left[(T(B)-T(\Om))+2\left(\frac{1}{n}+\frac{1}{n^2}\right)|\Om\setminus B|\right]
\end{align*}
which is the result  as $|\Om\setminus B|=\frac12|\Om\Delta B|$.
\end{proof}

We can now prove Theorem \ref{main_sqrt}.
\begin{proof}[Proof of Theorem \ref{main_sqrt}]
We may take the result of Proposition \ref{prop_auxiliaire} and apply, up to a translation of $\Om$, {\BBB \eqref{eq:QSV} as well as \eqref{est_eigen}}, 
\begin{align*}
&\left|\lambda_k(\Om)-\lambda_k(B)\right|\\
&\hskip 1cm \leq\left(1+\frac{4}{n}\right)^\frac{n}{2}e^{\frac{1}{4\pi}}k^{2}\lambda_1(\Om)^\frac{n}{2}\lambda_k(B)\lambda_k(\Om)\left[(T(B)-T(\Om))+{2\left(\frac{1}{n}+\frac{1}{n^2}\right)}|\Om\setminus B|\right]\\
&\hskip 1cm  \leq C_nk^{2+\frac{4}{n}}\lambda_1(\Om)^{1+\frac{n}{2}}\left[(T(B)-T(\Om))+C_n\sqrt{T(\Om)^{-1}-T(B)^{-1}}\right].
\end{align*}
This gives the result when $T(\Om)\geq \frac{1}{2}T(B)$ (notice that in this case, thanks to $\lambda_1(\Om)\leq\omega_nT(\Om)^{-1}$ from Proposition \ref{prop:growth_egv}, we bound $\lambda_1(\Om)\leq C_n'$ for some $C_n'>0$). When $T(\Om)\leq \frac{1}{2}T(B)$ we may write more direcly
\begin{align*}
|\lambda_k(\Om)-\lambda_k(B)|&\leq \left(1+\frac{4}{n}\right)k^{\frac{2}{n}}\left(\lambda_1(\Om)+\lambda_1(B)\right)\leq  2\left(1+\frac{4}{n}\right)k^{\frac{2}{n}}\lambda_1(\Om)\\
&\leq 2\sqrt{2\om_n}\left(1+\frac{4}{n}\right)k^{\frac{2}{n}}\lambda_1(\Om)^\frac{1}{2}\left(T(\Om)^{-1}-T(B)^{-1}\right)^\frac{1}{2}
\end{align*}
where we used the estimates from Proposition \ref{prop:growth_egv}. In both cases  the result holds for some constant $C_n>0$.
\end{proof}

\section{Proof of Theorem \ref{main_lin}: the linear bound}\label{sec_lin}
Let us fix $k\ge 1$ as in Theorem \ref{main_lin}, {\BBB which is} such that $\lambda_k(B)$ is simple (we  also include $k=1$, as it will give non-trivial results). In order to prove Theorem \ref{main_lin}, {\BBB we will show an equivalent statement, namely} that when $\delta\in\R$ is  close to $0$ (depending on $n$ and $k$) the ball is a minimizer of the functional
\begin{equation}\label{eq_func}
\Om\in\A\mapsto T(\Om)^{-1}+\delta \lambda_k(\Om).
\end{equation}
{\BBB Precisely, the goal of this section is to prove the following result, which directly implies Theorem \ref{main_lin}:}
\begin{proposition}\label{main2_lin}
There exists $c_n>0$ such that for any $\delta\in\R$ with $|\delta|\leq c_n  k^{-\left(4+\frac{8}{n}\right)}g_n(k)$ the ball is the unique minimizer of \eqref{eq_func}.
\end{proposition}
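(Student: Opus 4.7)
Theorem \ref{main_lin} follows at once from Proposition \ref{main2_lin}: applying the inequality $T(\Om^*)^{-1}+\delta\lambda_k(\Om^*)\geq T(B)^{-1}+\delta\lambda_k(B)$ for $\delta=+\delta_0$ and then for $\delta=-\delta_0$ with $\delta_0=c_n k^{-(4+8/n)}g_n(k)$ yields the desired two-sided linear control of $\lambda_k(\Om)-\lambda_k(B)$ by $T(\Om)^{-1}-T(B)^{-1}$. The plan to prove Proposition \ref{main2_lin} follows the classical four-step scheme: existence of a relaxed minimizer, passage to an open set with Lipschitz states, $\mathcal{C}^{2,\gamma}$ free boundary regularity, and a second-order Fuglede-type conclusion on nearly spherical sets.

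\textbf{Existence and Lipschitz regularity.} For $\delta>0$ the functional is non-increasing under inclusion, so existence of a quasi-open minimizer follows from Buttazzo--Dal Maso \cite{BDM93}. For $\delta<0$ the monotonicity is lost, and one must first confine minimizing sequences: I would use the bound $\lambda_k(\Om)\leq C_n k^{2/n}\lambda_1(\Om)\leq C'_n k^{2/n}T(\Om)^{-1}$ from Proposition \ref{prop:growth_egv} to show that for $|\delta|$ small (compared with $k^{-2/n}$), minimizing sequences have torsion uniformly close to $T(B)$, and then apply \eqref{eq:QSV} to obtain tightness and $\gamma$-convergence. Openness of the minimizer $\Om^*$ and Lipschitz continuity of $w_{\Om^*}$ and $u_k(\Om^*)$ then follow by recasting the problem as a penalized vectorial free boundary problem and running the now-classical arguments of \cite{BDV15}, together with Lemma \ref{lem_prelim} which ensures that $w_{\Om^*}$ dominates $u_k(\Om^*)$ in the sense of Definition \ref{def_Sol}.

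\textbf{$\mathcal{C}^{2,\gamma}$ regularity of the free boundary.} Modulo the Lagrange multiplier associated with the volume constraint, the Euler--Lagrange condition on $\partial\Om^*$ reads formally
\[
\left(\partial_\nu w_{\Om^*}\right)^2 - \delta\, T(\Om^*)^2 \left(\partial_\nu u_k(\Om^*)\right)^2 = \mathrm{constant},
\]
a vectorial Alt--Caffarelli-type condition with quadratic form $q(x,y)=x^2-\delta T^2 y^2$. This form is positive definite for $\delta<0$, for which I would invoke the regularity theory of \cite{KL18}; but for $\delta>0$ it is \emph{indefinite}, and this is the main obstacle. In the indefinite case I would appeal to the framework of \cite{MTV21}, which is applicable here thanks precisely to the dominance of $w_{\Om^*}$ over $u_k(\Om^*)$ (this is exactly the situation, with $k=l$, singled out in the introduction as covered by the existing \cite{MTV21}). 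Small flatness near $B$, required to launch the improvement-of-flatness scheme, comes from \eqref{eq:QSV} combined with the smallness of $T(\Om^*)^{-1}-T(B)^{-1}$ forced by comparing $\Om^*$ with $B$ itself. Once $\partial\Om^*$ is $\mathcal{C}^{1,\gamma}$, a standard hodograph transform / Kinderlehrer--Nirenberg bootstrap upgrades it to $\mathcal{C}^{2,\gamma}$.

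\textbf{Conclusion via second-order shape derivatives.} At this stage $\Om^*$ is a $\mathcal{C}^{2,\gamma}$ nearly spherical domain whose deviation from $B$ tends to $0$ as $\delta\to 0$, so I may expand both functionals to second order around $B$. Since $\lambda_k(B)$ is simple with a radial eigenfunction, $B$ is a critical point of $\lambda_k$ under volume constraint, and the second shape derivative of $\lambda_k$ at $B$ is bounded in terms of $k$, $g_n(k)$ and the $L^\infty$ and gradient estimates of Lemma \ref{lem_prelim}, yielding the $k^{4+8/n}g_n(k)^{-1}$ dependence. Meanwhile, a Fuglede-type computation shows that the second shape derivative of $T^{-1}$ at $B$ is strictly coercive modulo translations (the kernel being killed by also fixing the barycenter). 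Combining these two estimates, the second shape derivative of $T^{-1}+\delta\lambda_k$ at $B$ remains strictly positive on admissible perturbations as soon as $|\delta|\leq c_n k^{-(4+8/n)}g_n(k)$, forcing $\Om^*=B$ and concluding the proof. The hardest step in this whole pipeline is undoubtedly the $\mathcal{C}^{1,\gamma}$ regularity in the indefinite case $\delta>0$, which is why the adaptation of \cite{MTV21} plays a central role.
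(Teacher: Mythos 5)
Your overall architecture matches the paper's proof almost exactly (relaxed existence via concentration--compactness for $\delta<0$, Lipschitz regularity and non-degeneracy, $\eps$-regularity of the free boundary, hodograph bootstrap to $\mathcal{C}^{2,\gamma}$, and a Fuglede-type second-order argument with coercivity of $(T^{-1})''$ modulo translations). However, there is a concrete sign error in your Euler--Lagrange condition that reverses the entire case analysis in the regularity step. Computing the first variation of $T^{-1}+\delta\lambda_k$ under a volume-preserving normal perturbation gives
\[
T(\Om^*)^{-2}\left(\partial_\nu w_{\Om^*}\right)^2 \;+\; \delta\left(\partial_\nu u_k\right)^2 \;=\; \mathrm{constant},
\]
i.e.\ the quadratic form is $q(x,y)=x^2+\delta T^2 y^2$, not $x^2-\delta T^2 y^2$ (both $T^{-1}$ and $\lambda_k$ decrease when the domain grows, so their normal derivatives enter with the \emph{same} sign). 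Consequently the form is positive definite when $\delta>0$ (this is the case handled by the sum-of-squares theory of \cite{KL18}) and \emph{indefinite} when $\delta<0$ (this is the case requiring \cite{MTV21}, via the factorization $\partial_\nu\big(w+T\sqrt{-\delta}\,u_k\big)\cdot\partial_\nu\big(w-T\sqrt{-\delta}\,u_k\big)=Q$). You have assigned the two regularity theories to the wrong signs of $\delta$, and the assignment matters: \cite{KL18} requires nonnegative coefficients in the quadratic form and simply does not apply when $\delta<0$, while \cite{MTV21} is only usable because $w$ dominates $\sqrt{|\delta|}\,u_k$ so that both factors stay positive. Note also that your own existence discussion already identifies $\delta<0$ as the case where monotonicity under inclusion fails; it is that same sign that is hard throughout, which is a useful sanity check you could have run.

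With the sign corrected, the rest of your plan is sound and coincides with the paper's: smallness of $\|w_{\Om^*}-w_B\|_{\infty}$ from the quantitative Saint-Venant inequality launches the flatness scheme, and the final step bounds the second shape derivative of $\lambda_k$ at a nearly spherical set by $C_n k^{1+8/n}g_n(k)^{-1}\|h\|_{H^{1/2}}^2$ against the $H^{1/2}$-coercivity of $T^{-1}$, which is where the threshold $|\delta|\ll k^{-(4+8/n)}g_n(k)$ ultimately comes from (the extra powers of $k$ arise from converting the a priori bound $|\Om\Delta B|\lesssim k^{2+4/n}|\delta|$ into the smallness of $\|h\|_{L^1(\partial B)}$ needed to keep $\lambda_k(B_{th})$ simple along the deformation).
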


\begin{remark}\rm  We remind the reader that $g_{n}(k)$ has been defined in \eqref{eq:gnk}. As far as we know, 
there is no explicit lower bound of $g_n(k)$; it was proved by Siegel (see \cite{S29} or \cite[15.28]{W44}, referred to as ``Bourget's hypothesis'') that zeroes of different Bessel functions are distinct, but with no quantified separation between successive zeroes. We conjecture that there exists an exponent $\kappa>1$ such that for any $m,p,q\in\N^*$, $\mu\in\N/2$, it holds that
\[|j_{\mu,p}-j_{\mu+m,q}|\geq j_{\mu,p}^{-\kappa}.\]
The validity of this conjecture would improve the quality of our bounds.
\end{remark}

In Proposition \ref{main2_lin}  we  have to consider the case when $\delta$ is positive {\it and } negative in order to get a proof of Theorem \ref{main_lin} 		and to obtain a bound of $(\lambda_k(\Om)-\lambda_k(B))$ on both sides. Then, by  \eqref{eq_kohlerjobin}, {\BBB which is a consequence of Kohler-Jobin's inequality}, Theorem \ref{main_lin} directly implies  inequality \eqref{ex1.03} as a consequence{\BBB, as explained in the introduction}. 

The plan of proof of Proposition \ref{main2_lin}  is the following.
\begin{itemize}
\item For  $\delta$ close enough to $0$ we prove the existence of a minimizer \BBB $\Omega$ for \eqref{eq_func} \EEE. The case $\delta <0$ raises extra difficulties: we obtain existence through careful concentration-compactness methods, first as a capacity measure and second as a quasi-open set{\BBB , see Proposition \ref{prop_existence} (we recall that a quasi-open set is by definition the level set $\{w>0\}$ of an $H^1$-function $w$; see for example \cite{HP18} for more details)}; {\BBB we then show that the torsion function $w$ of $\Om$}  verifies some uniform bounds, {\BBB namely the Lipschitz bound} $|\nabla w|\leq C_{n}$, and a non degeneracy condition: for all $x\in\Om$, $r\in (0,1)$
\[\fint_{\partial B_{x,r}}w\geq c_{n}r.\]
{\BBB In particular, the global continuity of $w$ provides the existence of an open set for \eqref{eq_func}. The estimates (see Lemma \ref{lem_reg}) are obtained by perturbing $\Om$ and controlling the variation of $\lambda_k$ by the variations of the torsion $T$. All of these results are detailed} in  Section \ref{sect:exmin}.
\item In Section \ref{sect:blowup}, we prove that if $\Om$ solves \eqref{eq_func}, then its  torsion function $w$ and \BBB its \EEE $L^2$-normalized $k$-th eigenfunction $u_k$ verify, in some sense that will be made precise, the equations
\begin{equation}\label{eq:optimality}\begin{cases}
-\Delta w=1,\quad -\Delta u_k=\lambda_k(\Om)u_k & \mbox{in } \Om\\[2mm]
|\nabla w|^2+T(\Om)^2\delta |\nabla u_k|^2=Q & \mbox{on } \partial\Om
\end{cases}
\end{equation}
where $Q{\BBB=Q(n,k,\delta)}>0$ is a constant which is arbitrarily close to $\frac{1}{n^2}$ when $\delta\to 0$. This part of the proof uses blow-up methods similar to \cite{KL18} and \cite{CS05}. 
\item We {\BBB then use \eqref{eq:optimality} to improve the regularity properties of $\Om$}, and show in Section \ref{sect:min_are_NS} that, as $\delta\to 0$, $\partial\Om$ is \BBB an arbitrarily small $\C^{3,\gamma}$ graph on $\partial B$ \EEE (up to translation). The case $\delta>0$ relies on the results from \cite{KL18} while the case $\delta<0$ is obtained by applying the results from \cite{MTV21}. 
\item Finally, in Section \ref{subsec_NS} we prove  a Fuglede-type result, namely that the ball is optimal for \eqref{eq_func} among small $\C^{3,\gamma}$ perturbations of the ball, through a second shape derivative estimate which follows the method of \cite{DL19}. {\BBB Combined with the previous step, this allows to conclude that the ball is the unique solution to \eqref{eq_func} for small $\delta$, which was our goal.}
\end{itemize}

Throughout the proof we extensively use the two following notations:
\begin{itemize}
\item $a\lesssim b$ when $a\leq C_n b$ for some (possibly) large $C_n>0$ which only depends on the dimension $n$.
\item $a\ll b$ when $a\leq c_n b$ for some $c_n>0$  that can be made as small as we want, and only depends on the dimension $n$.
\end{itemize}
In both cases the notation does not involve a dependence on the order of the eigenvalue $k$.
\subsection{Existence of a minimizer}\label{sect:exmin}
To prove existence we first prove some a priori estimates for sets  whose energy $T^{-1}+\delta\lambda_{k}$ is bounded from above by the one of the ball, which we may suppose to be verified without loss of generality for any element of a minimizing sequence. This is the object of the next Lemma.

\begin{lemma}\label{lem_apriori} Let $\Om\in\A$ be   such that 
\[T(\Om)^{-1}+\delta\lambda_k(\Om)\leq T(B)^{-1}+\delta\lambda_k(B),\]
and suppose $\Om$ is translated such that $\F(\Om)=|\Om\Delta B|$. Then if $|\delta|\ll k^{-\frac{2}{n}}$   the following inequalities hold
\BBB
\begin{itemize}
\item [$\bullet$] 
$|\Om\Delta B|\lesssim k^\frac{1}{n}|\delta|^\frac{1}{2}$, \; \; $T(\Om)^{-1}\lesssim 1$, \; \; $\lambda_k(\Om)\lesssim k^{\frac{2}{n}}$, 
\smallskip
\item [$\bullet$] $
T(\Om)^{-1}-T(B)^{-1}\lesssim k^\frac{2}{n}|\delta|\ \text{ and for all } i\in \mathbb{N}^*, |\lambda_i(\Om)-\lambda_i(B)|\lesssim i^{2+\frac{4}{n}}k^\frac{1}{n}|\delta|^\frac{1}{2}, $
\smallskip
\item [$\bullet$] $\Vert w_\Om-w_B\Vert_{L^1(\Rn)}\lesssim k^\frac{1}{n}|\delta|^\frac{1}{2}.$
\end{itemize}\EEE
\end{lemma}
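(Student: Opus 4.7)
The lemma collects several a priori estimates that all descend from the single near-optimality inequality
\[T(\Om)^{-1} - T(B)^{-1} \leq \delta\bigl(\lambda_k(B) - \lambda_k(\Om)\bigr),\]
obtained by rearranging the assumption. The plan is first to control the torsional deviation, then to use the quantitative Saint-Venant inequality \eqref{eq:QSV} to get the asymmetry bound, and finally to derive the remaining estimates as direct consequences.

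The central step is to control $T(\Om)^{-1} - T(B)^{-1}$ and $T(\Om)^{-1}$ by splitting according to the sign of $\delta$. If $\delta > 0$, then Saint-Venant gives $\lambda_k(\Om) \geq 0$ and Proposition \ref{prop:growth_egv} applied to $B$ gives $\lambda_k(B) \lesssim k^{2/n}$, so the right-hand side is at most $\delta \lambda_k(B) \lesssim k^{2/n}\delta$ at once. If $\delta < 0$, the argument is more delicate: I combine the upper bound $\lambda_k(\Om) \leq (1+4/n)\lambda_1(\Om) k^{2/n}$ in \eqref{est_eigen} with $\lambda_1 T \leq \omega_n$ to obtain $\lambda_k(\Om) \leq C_n k^{2/n} T(\Om)^{-1}$, and hence
\[T(\Om)^{-1} \leq T(B)^{-1} + C_n |\delta|\, k^{2/n}\, T(\Om)^{-1}.\]
Under the smallness assumption $|\delta|\ll k^{-2/n}$, chosen so that $C_n |\delta| k^{2/n} \leq 1/2$, I absorb the right-hand factor into the left to deduce $T(\Om)^{-1} \leq 2 T(B)^{-1} \lesssim 1$, and feeding this back into the previous inequality yields $T(\Om)^{-1} - T(B)^{-1} \lesssim k^{2/n}|\delta|$.

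The remaining estimates follow routinely. The asymmetry bound $|\Om \Delta B| = \F(\Om) \lesssim k^{1/n}|\delta|^{1/2}$ is immediate from the quantitative Saint-Venant inequality \eqref{eq:QSV}, squaring the asymmetry. The bound $\lambda_k(\Om) \lesssim k^{2/n}$ follows from $T(\Om)^{-1} \lesssim 1$ together with the chain $\lambda_k \lesssim k^{2/n} \lambda_1 \leq \omega_n k^{2/n} T(\Om)^{-1}$ from Proposition \ref{prop:growth_egv}. For the general eigenvalue perturbation, I plug $\lambda_1(\Om) \lesssim 1$, $|\Om| = \omega_n$ and $T(\Om)^{-1} - T(B)^{-1} \lesssim k^{2/n}|\delta|$ into Theorem \ref{main_sqrt} to obtain $|\lambda_i(\Om) - \lambda_i(B)| \lesssim i^{2+4/n}\, k^{1/n}|\delta|^{1/2}$ for every $i \in \N^*$.

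For the last estimate $\|w_\Om - w_B\|_{L^1(\Rn)} \lesssim k^{1/n}|\delta|^{1/2}$, I rely on the classical energy identity for nested domains: if $\omega\subset\Om$ then $\int |\nabla w_\Om - \nabla w_\omega|^2 = T(\Om) - T(\omega)$. Applying this to $(\Om, \Om\cap B)$ and to $(B, \Om\cap B)$, together with Lemma \ref{lem_torsioninter} (and its analogue with the roles of $\Om$ and $B$ swapped) to control the right-hand sides by $|\Om \Delta B|$, then the Poincaré inequality (using $\lambda_1(\Om) \geq \lambda_1(B) \gtrsim 1$ by Faber-Krahn) and Cauchy-Schwarz on the bounded support $\Om \cup B$, and finally the triangle inequality $\|w_\Om - w_B\|_{L^1} \leq \|w_\Om - w_{\Om\cap B}\|_{L^1} + \|w_B - w_{\Om\cap B}\|_{L^1}$, the estimate reduces to a power of the asymmetry bound from step two. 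The main subtlety in the whole proof is the absorption argument in the case $\delta<0$, where the smallness scaling $|\delta|\ll k^{-2/n}$ is exactly what is needed to overcome the $k^{2/n}$ growth in the upper bound on $\lambda_k(\Om)$; all the other bounds then fall out as formal consequences of results already in the paper.
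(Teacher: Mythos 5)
Your treatment of the first five estimates is correct and essentially identical to the paper's: the single inequality $T(\Om)^{-1}-T(B)^{-1}\leq\delta(\lambda_k(B)-\lambda_k(\Om))$, the absorption under $|\delta|\ll k^{-2/n}$ (the paper does not even split on the sign of $\delta$, bounding $|\lambda_k(B)-\lambda_k(\Om)|\lesssim k^{2/n}\lambda_1(\Om)\lesssim k^{2/n}T(\Om)^{-1}$ directly), the quantitative Saint-Venant inequality for the asymmetry, and Theorem \ref{main_sqrt} for the eigenvalue deviations.

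The last estimate, however, has a genuine gap: your route loses a square root. The energy identity $\int|\nabla(w_\Om-w_{\Om\cap B})|^2=T(\Om)-T(\Om\cap B)$, followed by Poincar\'e and Cauchy--Schwarz, yields only
\[\Vert w_\Om-w_{\Om\cap B}\Vert_{L^1}\lesssim\bigl(T(\Om)-T(\Om\cap B)\bigr)^{1/2}\lesssim|\Om\Delta B|^{1/2}\lesssim\bigl(k^{1/n}|\delta|^{1/2}\bigr)^{1/2},\]
which is strictly weaker than the claimed $k^{1/n}|\delta|^{1/2}$ (the quantities are $\ll1$, so the half power is the larger one), and this loss would propagate to the $L^\infty$ bound \eqref{eq:wOm_wB_Linfty} and the flatness estimates downstream. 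The correct observation is that for nested domains the torsion functions are \emph{pointwise ordered} by the maximum principle, $w_{\Om\cap B}\leq w_\Om$ and $w_{\Om\cap B}\leq w_B$, so the $L^1$ norms of the differences are exactly the differences of the integrals, i.e.\ of the torsions: $\Vert w_\Om-w_{\Om\cap B}\Vert_{L^1}=T(\Om)-T(\Om\cap B)$ with no square root. A secondary issue is your appeal to ``the analogue of Lemma \ref{lem_torsioninter} with the roles of $\Om$ and $B$ swapped'': that analogue is not proven in the paper and its proof is not symmetric (it uses the explicit radial form of $w_B$ and $-w_B'(1)=1/n$; the swapped version would need control of $\partial_\nu w_\Om$ on $\partial\Om$, unavailable at this stage). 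It is also unnecessary: write $T(B)-T(\Om\cap B)=(T(B)-T(\Om))+(T(\Om)-T(\Om\cap B))$ and use the already-established bound $T(B)-T(\Om)\lesssim k^{2/n}|\delta|\lesssim k^{1/n}|\delta|^{1/2}$ for the first term and Lemma \ref{lem_torsioninter} only for the second.
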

\begin{proof}
Thanks to the upper bound from \eqref{est_eigen}, we have
\[T(\Om)^{-1}-T(B)^{-1}\leq\delta \left(\lambda_k(B)-\lambda_k(\Om)\right)\lesssim k^\frac{2}{n}\lambda_1(\Om)|\delta|\lesssim  k^\frac{2}{n}T(\Om)^{-1}|\delta|\]
so when $|\delta|\ll k^{-\frac{2}{n}}$ we get that $T(\Om)^{-1}\lesssim 1$, and using the same series of inequalities together with $T(\Om)\leq T(B)$ we deduce
\[T(B)-T(\Om)\lesssim k^\frac{2}{n}|\delta|.\]
Applying the quantitative Saint-Venant inequality \eqref{eq:QSV},  we get
\[|\Om\Delta B|\lesssim k^\frac{1}{n}|\delta|^\frac{1}{2}\text{ when }|\delta|\ll k^{-\frac{2}{n}},\]
from which we also deduce, using Theorem \ref{main_sqrt}, that for any $i\in\mathbb{N}^*$ it holds
\[|\lambda_i(\Om)-\lambda_i(B)|\lesssim i^{2+\frac{4}{n}}k^\frac{1}{n}|\delta|^\frac{1}{2}.\]
For the third item, we write
\begin{align*}
\Vert w_{\Om}-w_B\Vert_{L^1(\Rn)}&\leq \Vert w_{\Om}-w_{\Om\cap B}\Vert_{L^1(\Rn)}+\Vert w_{B}-w_{\Om\cap B}\Vert_{L^1(\Rn)}\\
&=T(B)-T(\Om)+2(T(\Om)-T(\Om\cap B))\\
&\leq T(B)-T(\Om)+\left(\frac{1}{n}+\frac{1}{n^2}\right)|\Om\Delta B|,
\end{align*}
where we used Lemma \ref{lem_torsioninter} and $|\Om\Delta B|=2|\Om\setminus B|$ in the last line. We obtain the last result by recalling that $|\Om\Delta B|\lesssim k^\frac{1}{n}|\delta|^\frac{1}{2}$.
\end{proof}
{\BBB Using the bound from Theorem \ref{main_sqrt} we can also } improve the decay of the quantities from the
previous lemma, in terms of $\delta$, \BBB to the price of having larger polynomial growth in $k$\EEE.
{\BBB We deduce from these estimates that if $\delta$ is small enough (precisely if $|\delta|\ll k^{-(4+\frac8n)}g_{n}(k)$ as in Proposition \ref{main2_lin})}, then any $\Om$ satisfying the hypothesis of Lemma \ref{lem_apriori} will be such that $\lambda_k(\Om)$ is simple.
\begin{corollary}\label{rem_estimate}
Let $\Om\in\A$ and $\delta$ be chosen as in the previous lemma \ref{lem_apriori}. Then
\begin{align*}
T(\Om)^{-1}-T(B)^{-1}&\lesssim k^{4+\frac{8}{n}}|\delta|^2,\\
\F(\Om)&\lesssim k^{2+\frac{4}{n}}|\delta|,\\
\forall i\in\N^*,\ |\lambda_i(\Om)-\lambda_i(B)|&\lesssim i^{2+\frac{4}{n}}k^{2+\frac{4}{n}}|\delta|.
\end{align*}
As a consequence, if $|\delta|\ll k^{-(4+\frac8n)}g_{n}(k)$ then $\lambda_k(\Om)$ is simple.
\end{corollary}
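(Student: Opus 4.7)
The plan is a bootstrap argument: feed the square-root bound from Theorem \ref{main_sqrt} into the minimality relation, and solve for $T(\Om)^{-1}-T(B)^{-1}$ to upgrade the rough $|\delta|^{1/2}$-scale estimates of Lemma \ref{lem_apriori} to the sharp $|\delta|$-scale (and $|\delta|^2$ for the torsion deficit). All subsequent statements then fall out of the improved torsion bound via quantitative Saint-Venant and a second application of Theorem \ref{main_sqrt}.

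Concretely, I would first rewrite the energy comparison as
\[
T(\Om)^{-1}-T(B)^{-1}\le |\delta|\,\big|\lambda_k(\Om)-\lambda_k(B)\big|.
\]
From Lemma \ref{lem_apriori} we already know $T(\Om)^{-1}\lesssim 1$, hence $\lambda_1(\Om)\lesssim 1$ by Proposition \ref{prop:growth_egv}, and $|\Om|=\om_n$. Applying Theorem \ref{main_sqrt} to the right-hand side gives the self-improving inequality
\[
T(\Om)^{-1}-T(B)^{-1}\lesssim k^{2+\frac{4}{n}}|\delta|\,\big(T(\Om)^{-1}-T(B)^{-1}\big)^{1/2},
\]
which after dividing by $(T(\Om)^{-1}-T(B)^{-1})^{1/2}$ yields the first inequality $T(\Om)^{-1}-T(B)^{-1}\lesssim k^{4+\frac{8}{n}}|\delta|^2$. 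Plugging this back into the quantitative Saint-Venant inequality \eqref{eq:QSV} (applied to the translated $\Om$ with $\F(\Om)=|\Om\Delta B|$) gives the Fraenkel bound $\F(\Om)\lesssim k^{2+\frac{4}{n}}|\delta|$. Finally, a second application of Theorem \ref{main_sqrt}, combined with the improved torsion deficit, produces the uniform bound on each eigenvalue deviation:
\[
|\lambda_i(\Om)-\lambda_i(B)|\lesssim i^{2+\frac{4}{n}}\lambda_1(\Om)^{1/2}\big(T(\Om)^{-1}-T(B)^{-1}\big)^{1/2}\lesssim i^{2+\frac{4}{n}}k^{2+\frac{4}{n}}|\delta|.
\]

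For the final assertion about simplicity, recall that $\lambda_k(B)$ is simple throughout Section \ref{sec_lin}, so for every $i$ such that $\lambda_i(B)\neq \lambda_k(B)$ we have $|\lambda_i(B)-\lambda_k(B)|\ge g_n(k)$. Applying the eigenvalue estimate to $i\in\{k-1,k,k+1\}$ (and more generally to any $i$ in the cluster indices adjacent to $k$, for which $i\lesssim k$), the choice $|\delta|\ll k^{-(4+\frac{8}{n})}g_n(k)$ ensures $|\lambda_i(\Om)-\lambda_i(B)|\ll g_n(k)$, so that $\lambda_{k-1}(\Om)<\lambda_k(\Om)<\lambda_{k+1}(\Om)$ and hence $\lambda_k(\Om)$ is simple. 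No genuine difficulty arises: the content of the corollary is a clean algebraic bootstrap, the only subtlety being to keep track of the exponent of $k$ in each application of Theorem \ref{main_sqrt} so that the final constant in front of $g_n(k)$ in the smallness condition on $|\delta|$ matches exactly $k^{-(4+\frac{8}{n})}$.
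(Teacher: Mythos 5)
Your proposal is correct and follows essentially the same route as the paper: the self-improving inequality obtained by inserting Theorem \ref{main_sqrt} into the minimality relation, then quantitative Saint-Venant for the Fraenkel bound, a second application of Theorem \ref{main_sqrt} for the eigenvalue deviations, and the comparison at $i=k-1,k,k+1$ against the gap $g_n(k)$ for simplicity. No discrepancies to report.
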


\begin{proof}
{\BBB By Theorem \ref{main_sqrt} we have}
\[T(\Om)^{-1}-T(B)^{-1}\leq\delta \left(\lambda_k(B)-\lambda_k(\Om)\right)\lesssim k^{2+\frac{4}{n}}|\delta|\left(T(\Om)^{-1}-T(B)^{-1}\right)^{\frac{1}{2}},\]
{\BBB which} gives the first estimate. The second estimate follows from the quantitative Saint-Venant inequality and the third estimate from applying Theorem \ref{main_sqrt} again.  We deduce that $\lambda_k(\Om)$ is simple by applying separately $|\lambda_i(\Om)-\lambda_i(B)|\lesssim i^{2+\frac{4}{n}}k^{2+\frac{4}{n}}|\delta|$ for $i=k-1,k,k+1$.
\end{proof}

\begin{proposition}\label{prop_existence}
If $|\delta|\ll k^{-\left(2+\frac{4}{n}\right)}$, then the functional \eqref{eq_func} has a minimizer in the class of quasi-open sets of measure $\om_n$.
\end{proposition}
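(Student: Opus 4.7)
The strategy is to extract a minimizer from a minimizing sequence $(\Om_m) \subset \A$, with the two signs of $\delta$ requiring genuinely different arguments. Since the ball is admissible, we may assume each $\Om_m$ satisfies
\[
T(\Om_m)^{-1} + \delta \lambda_k(\Om_m) \leq T(B)^{-1} + \delta \lambda_k(B),
\]
and, translating so that $\F(\Om_m) = |\Om_m \Delta B|$, Lemma \ref{lem_apriori} furnishes the uniform bounds $T(\Om_m)^{-1} \lesssim 1$, $\lambda_k(\Om_m) \lesssim k^{2/n}$, and $|\Om_m \Delta B| \ll 1$. The torsion functions $w_{\Om_m}$, extended by zero, are therefore bounded in $H^1(\R^n)$ and supported in a fixed neighborhood of $B$ (up to the small symmetric difference), so up to extraction they converge weakly in $H^1$, strongly in $L^2$, and pointwise a.e.\ to some limit $w_\infty$.

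When $\delta \geq 0$, the functional $\Om \mapsto T(\Om)^{-1} + \delta \lambda_k(\Om)$ is decreasing for set inclusion (since $T$ is increasing and $\lambda_k$ decreasing under inclusion) and is $\gamma$-lower semicontinuous on quasi-open sets. The Buttazzo--Dal Maso theorem \cite{BDM93} applied to the problem of minimizing this functional among quasi-open sets of volume at most $\om_n$ then immediately provides a minimizer, whose volume must equal $\om_n$ by the decreasing property.

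When $\delta < 0$ monotonicity is lost, and we proceed by relaxation to the class of nonnegative capacitary measures together with a concentration-compactness analysis. Setting $\mu_m := \infty_{\R^n \setminus \Om_m}$, we extract a weak-$\gamma$ limit $\mu_\infty$; the quantitative closeness $|\Om_m \Delta B| \ll 1$ rules out both dichotomy and mass escaping to infinity. The identity $\int|\nabla w_\Om|^2 = T(\Om)$ combined with the continuity of $T$ under weak-$\gamma$-convergence forces convergence of the $H^1$ norms; together with weak $H^1$-convergence this upgrades the convergence to strong $H^1$, i.e.\ to $\gamma$-convergence. Under $\gamma$-convergence both $T^{-1}$ and $\lambda_k$ pass to the limit, so $\mu_\infty$ minimizes the relaxed functional. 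A standard outer-variation argument --- penalizing the volume constraint with a large Lagrange multiplier $\Lambda$ and perturbing $\mu_\infty$ by test potentials --- then identifies $\mu_\infty$ with the Dirichlet measure $\infty_{\R^n \setminus \Om_\infty}$ of the quasi-open set $\Om_\infty := \{w_{\mu_\infty} > 0\}$, of volume $\om_n$.

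The main obstacle is the case $\delta < 0$, where the lack of monotonicity forces the detour through capacitary measures and the upgrade of weak-$\gamma$-convergence to $\gamma$-convergence, together with the identification of the limiting capacitary measure as a genuine quasi-open set. The crucial ingredient making this go through is the quantitative closeness of Lemma \ref{lem_apriori}, which confines the minimizing sequence to a bounded neighborhood of $B$ and precludes loss of mass.
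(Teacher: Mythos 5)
Your overall architecture (minimizing sequence, a priori bounds from Lemma \ref{lem_apriori}, Buttazzo--Dal Maso for $\delta>0$, capacitary measures and concentration-compactness for $\delta<0$) matches the paper's, and the $\delta>0$ shortcut via monotonicity is legitimate (the paper itself notes it). But for $\delta<0$ the two hardest steps are asserted rather than proved, and the assertions as stated are not correct.

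First, the closeness $|\Om_m\Delta B|\lesssim k^{1/n}|\delta|^{1/2}$ does \emph{not} rule out dichotomy. Dichotomy in the sense of \cite{B00} only requires $\liminf|\Om_m^i|>0$ for both pieces, with no uniform lower bound; a piece $\Om_m^2$ of small but positive measure drifting to infinity is perfectly compatible with $|\Om_m\Delta B|$ being small (and note that elements of a minimizing sequence need not be contained in any fixed neighborhood of $B$ -- your claim that the $w_{\Om_m}$ are supported near $B$ is unjustified at this stage; boundedness is only proved later, for minimizers). The actual exclusion of dichotomy is an energy argument: by Faber--Krahn, $\lambda_1(\Om_m^2)\gtrsim|\Om_m^2|^{-2/n}$ exceeds $\lambda_k(\Om_m^1)\lesssim k^{2/n}$ once $|\delta|$ is small, so the far piece does not carry $\lambda_k$; discarding it and dilating $\Om_m^1$ back to volume $\om_n$ then strictly decreases the energy by an amount proportional to $\liminf_m|\Om_m^2|>0$, a contradiction. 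This computation is where the smallness condition on $\delta$ actually enters.

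Second, the identification $\mu_\infty=\infty_{\R^n\setminus A_{\mu_\infty}}$ is not a "standard outer-variation" for $\delta<0$. Replacing $\mu$ by the quasi-open set $A_\mu$ improves the torsion term ($T(A_\mu)\geq T(\mu)$ by maximum principle) but \emph{worsens} the eigenvalue term, since $\lambda_k(A_\mu)\leq\lambda_k(\mu)$ and $\delta<0$. To conclude one must quantify the loss: Lemma \ref{lem_bucur} (applied to $A_\mu$ and $\Om_p\cap A_\mu$, which still $\gamma$-converges to $\mu$ by \cite[Lemma 3.1]{BBG22}) gives $0\leq\lambda_k(\mu)-\lambda_k(A_\mu)\lesssim k^{2+4/n}\bigl(T(\mu)^{-1}-T(A_\mu)^{-1}\bigr)$, so that for $|\delta|\ll k^{-(2+4/n)}$ minimality forces $T(A_\mu)=T(\mu)$, hence $\int w_\mu^2\,d\mu=0$ and $\mu=\infty_{\R^n\setminus A_\mu}$. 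Without a bound of this type the comparison between $\mu$ and $A_\mu$ is inconclusive, and this is precisely where the exponent $2+\tfrac{4}{n}$ in the hypothesis comes from.
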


To prove this proposition,  we will use the setting of capacitary measures (see for instance \cite{BDM93}). A capacitary measure is a nonnegative Borel measure  $\mu$, possibly infinite valued,  such that $\mu(E)=0$ as soon as $E$ has zero capacity. We typically assign to any quasi-open set $A$ the capacitary measure 
\[\infty_{\Rn\setminus A}(E):=\begin{cases}+\infty&\text{ if }\text{Cap}(E\setminus A)>0\\ 0&\text{ else}.\end{cases}\] 
Given a capacitary measure $\mu$, we define the regular set of $\mu$, denoted $A_\mu$ as the union of all finely open sets of finite $\mu$-measure. If $A_\mu$ has finite Lebesgue measure, 
we define the torsion $T(\mu)$ and the eigenvalues $\lambda_k(\mu)$ of a capacitary measure as follows ($\mathscr{L}^n$ denotes the $n$-dimensional Lebesgue measure):
\begin{align*}
T(\mu)&:=\sup_{u\in H^1(\Rn)\cap L^2(\mu)}\int_{\Rn}\left(2u-|\nabla u|^2\right)d\mathscr{L}^n-\int_{\Rn}u^2d\mu\\
&=\sup_{H^1(\Rn)\cap L^2(\mu)}\frac{\left(\int_{\Rn}ud\mathscr{L}^n\right)^2}{\int_{\Rn}|\nabla u|^2d\mathscr{L}^n+\int_{\Rn}u^2d\mu}= \int_{\Rn}w_\mu,\\
\lambda_k(\mu)&:=\inf\left\{\sup_{v\in V}\frac{\int_{\Rn}|\nabla v|^2d\mathscr{L}^n+\int_{\Rn}v^2d\mu}{\int_{\Rn}v^2d\mathscr{L}^n},\ V\subset H^1(\Rn)\cap L^2(\mu)\text{ of dimension }k\right\}\\
&=\int_{\Rn}|\nabla u_{k,\mu}|^2d\mathscr{L}^n+\int_{\Rn}u_{k,\mu}^2d\mu.
\end{align*}
Above, $w_\mu$ is the torsion function associated to $\mu$ and is a variational solution of
\begin{equation}\label{ex1.07}
\begin{cases}-\Delta w_\mu+\mu w_\mu=1 \text{ in } [H^1(\Rn)\cap L^2(\Rn,\mu)]',\\[2mm]
w_\mu\in H^1(\Rn)\cap L^2(\Rn,\mu),\end{cases}
\end{equation}
and $(u_{k,\mu})_{k\in\N^*}$ is a choice of an $L^2$-orthonormal basis of eigenfunctions associated to $\mu$, that verify
\[\begin{cases}-\Delta u_{k,\mu}+\mu u_{k,\mu}=\lambda_k(\mu)u_{k,\mu} \text{ in } [H^1(\Rn)\cap L^2(\Rn,\mu)]',\\[2mm]
u_{k,\mu}\in H^1(\Rn)\cap L^2(\Rn,\mu).\end{cases}\]
Note that $\mu\mapsto T(\mu)$ and $\mu\mapsto\lambda_k(\mu)$ are continuous for the $L^1(\Rn)$ distance between the associated torsion functions $w_\mu$ (which is called $\gamma$-distance, see \cite{BDM93}). Moreover $A_\mu=\{w_\mu >0\}$, up to a set of zero capacity.

This setting is in fact only necessary when $\delta<0$, since the case $\delta>0$ could be solved using only the lower semicontinuity of the functionals $T^{-1},\lambda_k$, however we choose a unified approach to both problems.

\begin{proof}[\BBB Proof of Proposition \ref{prop_existence}]
A first remark is that when $|\delta|$ is small enough the measure constraint $|\Om|=\om_n$ may be relaxed into $|\Om|\leq \om_n$, since any set $\Om$ which does not saturate the constraint $|\Om|\leq \om_n$ may be dilated into a set with lower energy. Indeed {\BBB if $|\Om|=:(1-t)\om_n$} for some $t{\BBB \in(0,1)}$, then $(1-t)^{-\frac{1}{n}}\Om$ is still admissible and using $\lambda_k(\Om)\lesssim k^{\frac{2}{n}}T(\Om)^{-1}$  (from Lemma \ref{prop:growth_egv}) we get
\begin{equation}\label{eq_dilation}
\begin{split}
T((1-t)^{-\frac{1}{n}}\Om)^{-1}+\delta\lambda_k((1-t)^{-\frac{1}{n}}\Om)&=
(1-t)^{\frac{n+2}{n}}T(\Om)^{-1}+(1-t)^{\frac{2}{n}}\delta\lambda_k(\Om)\\
&\leq  (1-t)T(\Om)^{-1}+\delta\lambda_k(\Om)+|\delta|t\lambda_k(\Om)\\
&\leq T(\Om)^{-1}+\delta\lambda_k(\Om)-t\left( T(\Om)^{-1}-C_nk^{\frac{2}{n}}|\delta|T(\Om)^{-1}\right)\\
&< T(\Om)^{-1}+\delta\lambda_k(\Om)\text{ when }|\delta|\ll k^{-\frac{2}{n}}.
\end{split}
\end{equation}

Let $(\Om_p)_{p\in\N}$ be a minimizing sequence of $T^{-1}+\delta\lambda_k$ in $\mathcal{A}$. By replacing $\Om_p$ by $B$ if needed we can assume without loss of generality that $T^{-1}(\Om_p)+\delta\lambda_k(\Om_p)\leq T^{-1}(B)+\delta\lambda_k(B)$ for {\BBB every} $p$, so that $\Om_p$ satisfies the hypothesis of Lemma \ref{lem_apriori}. Then by Lemma \ref{lem_apriori} we have a bound on the Fraenkel asymmetry $\F(\Om_p)\lesssim k^\frac{1}{n}|\delta|^\frac{1}{2}$ so, up to translation, we {\BBB may} suppose 
\[|\Om_p\Delta B|\lesssim k^\frac{1}{n}|\delta|^\frac{1}{2}.\]
Let us first prove that this sequence $\gamma$-converges to a capacitary measure $\mu$ (meaning that the associated torsion functions $w_{\Om_p}$ converges weakly in $H^1(\Rn)$ to the function $w_\mu$ given by \eqref{ex1.07})
and {\BBB that} $A_\mu =\{w_\mu>0\}$ verifies $|A_\mu|{\BBB \leq} \omega_n$.\newline

By concentration-compactness for sequences of open sets of bounded measure (see \cite[Th 2.2.]{B00}), in order to prove that convergence occurs we must exclude the dichotomy behaviour.

We thus assume by contradiction that we are in the latter situation, meaning that one can find $\tilde{\Om}_p=\BBB \Om^1_p\cup \Om^2_p\EEE \subset\Om_p$ with $\text{dist}(\Om_p^1,\Om_p^2)\to\infty$, $\liminf_{p\to\infty} |\Om_p^i|>0$ {\BBB for $i\in\{1,2\}$}, and verifying $\|w_{\Om_p}-w_{\tilde{\Om}_p}\|_{L^2(\R^n)}\rightarrow0$. As a consequence $|T(\tilde{\Om}_p)-T(\Om_p)|\rightarrow0$ and $|\lambda_k(\tilde{\Om}_p)-\lambda_k(\Om_p)|\rightarrow0$, and $\tilde{\Om}_p$ is therefore still a minimizing sequence. By Lemma \ref{lem_apriori} we have $T(\Om_p)^{-1}-T(B)^{-1}\lesssim k^\frac{2}{n}|\delta|$, hence we also have $T(\tilde{\Om}_p)^{-1}-T(B)^{-1}\lesssim k^\frac{2}{n}|\delta|$ and this ensures $|\Om_p\setminus \tilde{\Om}_p|\lesssim  k^\frac{2}{n}|\delta|$ using the Saint-Venant inequality. We therefore have $|\tilde{\Om}_p\Delta B|\leq |\Om_p\Delta B|+|\Om_p\setminus\tilde{\Om}_p|\lesssim  k^\frac{1}{n}|\delta|^\frac{1}{2}$ for $|\delta|\ll k^{-\frac{2}{n}}$. Furthermore, since $d(\Om^1_p,\Om^2_p)\rightarrow+\infty$ we have (say) that $|\Om^1_p\Delta B|\lesssim k^\frac{1}{n}|\delta|^\frac{1}{2}$ and $|\Om^2_p|\lesssim k^\frac{1}{n}|\delta|^\frac{1}{2}$.

We claim that $\lambda_k(\tilde{\Om}_p)=\lambda_k(\Om_p^1)$ when $|\delta|\ll k^{-2-\frac{2}{n}}$. Indeed on the one hand by Faber-Krahn inequality we have
\[\lambda_1(\Om_p^2)\gtrsim |\Om_p^2|^{-\frac{2}{n}}\gtrsim k^{-\frac{2}{n^2}}|\delta|^{-\frac{1}{n}},\]
and on the other hand thanks to Proposition \ref{prop:growth_egv}
\[ \lambda_k(\Om_p^1)\lesssim k^\frac{2}{n}T(\Om_p^1)^{-1}\lesssim k^\frac{2}{n}.\]
To justify the last inequality we note that by Saint-Venant it holds $T(\Om^2_p)\lesssim k^{\frac{n+2}{n^2}}|\delta|^{\frac{n+2}{2n}}$ and   thanks to the a priori estimates from Lemma \ref{lem_apriori} we have \[T(\Om_p^1)=T(\tilde{\Om}_p)-T(\Om_p^2)\geq T(B)-C_n\left(k^\frac{2}{n}|\delta|+ k^\frac{(n+2)}{n^2}|\delta|^\frac{n+2}{2n}\right)\geq \frac{1}{2}T(B)\text{ for  }|\delta|\ll k^{-\frac{2}{n}}.\] Hence $\lambda_1(\Om_p^2)\geq\lambda_k(\Om_p^1)$ {\BBB if} $|\delta|\ll k^{-2-\frac{2}{n}}$ and therefore $\lambda_k(\tilde{\Om}_p)=\lambda_k(\Om_p^1)$ {\BBB under the same condition on $\delta$}; as a consequence
\[T(\tilde{\Om}_p)^{-1}+\delta\lambda_k(\tilde{\Om}_p)=\left(T(\Om_p^1)+T(\Om_p^2)\right)^{-1}+\delta\lambda_k(\Om_p^1).\]

Set $t_p\in(0,1)$ such that $|\Om_p^2|=t_p\om_n$, so $t_p\lesssim k^\frac{1}{n}|\delta|^\frac{1}{2}$ and $\liminf_{p\to\infty}t_p>0$. We now argue that $(1-t_p)^{-\frac{1}{n}}\Om_p^1$ is a strictly better minimizing sequence. Indeed,  since $T(\Om_p^2)\lesssim t_p^{\frac{n+2}{n}}\lesssim k^{\frac{n+2}{n^2}}|\delta|^{\frac{n+2}{2n}}$ and $T(\Om_p^1)\gtrsim 1$, one has $\frac{1}{T(\Om_p^1)}\leq \frac{1}{T\left(\Om_p^1\right)+T(\Om_p^2)}+C_nt_p^{\frac{n+2}{n}}$ for some $C_n>0$ so that
\begin{align*}
\frac{1}{T\left((1-t_p)^{-\frac{1}{n}}\Om_p^1\right)}&+\delta \lambda_k\left((1-t_p)^{-\frac{1}{n}}\Om_p^1\right)=(1-t_p)^\frac{n+2}{n}\frac{1}{T\left(\Om_p^1\right)}+(1-t_p)^{\frac{2}{n}}\delta \lambda_k\left(\Om_p^1\right)\\
&\leq \frac{1}{T\left(\Om_p^1\right)}+\delta \lambda_k\left(\Om_p^1\right)-t_p\left(\frac{1}{T(\Om_p^1)}-|\delta|\lambda_k(\Om_p^1)\right)\\
&\leq \frac{1}{T\left(\Om_p^1\right)+T(\Om_p^2)}+\delta \lambda_k\left(\Om_p^1\right)-t_p\left(\frac{1}{T(\Om_p^1)}- C_nt_p^{\frac{2}{n}}-|\delta|\lambda_k(\Om_p^1)\right)\\
&\leq \frac{1}{T\left(\Om_p^1\right)+T(\Om_p^2)}+\delta \lambda_k\left(\Om_p^1\right)-t_p\left(\frac{{1}}{T(B)}- C_n k^{\frac{2}{n^2}}|\delta|^{\frac{1}{n}}-C_nk^{\frac{2}{n}}|\delta|\right).
\end{align*}
Since $\liminf_{p\to\infty}t_p>0$, this provides a strictly better minimizing sequence when $|\delta|\ll k^{-\frac{2}{n}}$, providing a contradiction and thus proving that dichotomy does not occur.

Thanks to \cite[Th 2.2.]{B00} we deduce that there exists some capacitary measure $\mu$ such that after extraction (and translation of the $\Om_p$) the sequence $(\Om_p)_p$ $\gamma$-converges to $\mu$.  In particular one has convergence of the torsional rigidity and eigenvalues. We let $w_\mu$ be the associated torsion function and $A_\mu=\{w_\mu>0\}$ the (quasi-open) associated domain. Notice that we can assume without loss of generality that the limiting measure verifies $\mu(E)=\infty$ whenever $\text{Cap}(E\setminus A_\mu)>0$, since this leaves $w_\mu$ unchanged.

{\BBB We first notice that }$|A_\mu|\leq \omega_n$ by a.e. pointwise convergence of the torsion functions. {\BBB To conclude, we now have to show} that $\mu$ corresponds to some quasi-open domain, precisely that $\mu=\infty_{\R^n\setminus A_\mu}$. The idea is to use \BBB the \EEE optimality of $\mu$ to prove that the torsions of $\mu$ and $A_\mu$ are equal. We denote by $w_{A_\mu}$ the torsion function of $A_\mu$. We have (see \cite{BDM93})
\[\begin{cases}-\Delta w_\mu+\mu w_\mu\le1 \text{ in } {\mathcal D}'(\Rn)\\
w_\mu\in H^1(\Rn)\cap L^2(\mu)\end{cases},
\text{ while } 
\begin{cases}-\Delta w_{A_\mu}=1 \text{ in } A_\mu,\\
w_{A_\mu}\in H^1_0(A_\mu).\end{cases}\]

By the maximum principle, it holds that $w_{A_\mu}\geq w_\mu$, implying in particular $T(A_\mu)\geq T(\mu)$.  Thanks to \cite[Lemma 3.1]{BBG22}, the sequence $\Om_p\cap A_\mu$ still $\gamma$-converges to $\mu$. As a consequence, by applying Lemma \ref{lem_bucur} to $A_\mu$ and $\Om_p\cap A_\mu$ and passing to the limit one gets
\[0\leq\frac{1}{\lambda_k(A_\mu)}-\frac{1}{\lambda_k(\mu)}\leq e^{\frac{1}{4\pi}}k\lambda_k(A_\mu)^\frac{n}{2}\left[T(A_\mu)-T(\mu)\right],\]
which we rewrite, using that $\lambda_k(A_\mu)\leq \lambda_k(\mu)\lesssim k^{\frac{2}{n}}$ (using monotonicity and Lemma \ref{lem_apriori}), as
\[0\leq\lambda_k(\mu)-\lambda_k(A_\mu)\lesssim k^{2+\frac{4}{n}}\left[T(\mu)^{-1}-T(A_\mu)^{-1}\right].\]
Then by minimality of $\mu$,
\begin{align*}
T(\mu)^{-1}+\delta\lambda_k(\mu)&\leq T(A_\mu)^{-1}+\delta\lambda_k(A_\mu)\\
&\leq T(\mu)^{-1}+\delta\lambda_k(\mu)+\left(1-C_n k^{2+\frac{4}{n}}|\delta|\right)\left(T(A_\mu)^{-1}-T(\mu)^{-1}\right).
\end{align*}
When $|\delta|\ll k^{-\left(2+\frac{4}{n}\right)}$ this gives $T(A_\mu)\leq T(\mu)$, hence $T(A_\mu)=T(\mu)$. Now, since $w_\mu\in H^1_0(A_\mu)$ we deduce
\[\int_{\Rn}\left(2w_\mu-|\nabla w_\mu|^2\right)d\mathscr{L}^n\leq T(A_\mu)= T(\mu)\leq \int_{\Rn}\left(2w_\mu-|\nabla w_\mu|^2\right)d\mathscr{L}^n-\int_{\Rn}w_\mu^2d\mu\]
thus implying $\int_{\Rn}w_\mu^2d\mu=0$. As a consequence $\mu=0$ in $A_\mu$, meaning $\mu=\infty_{\Rn\setminus A_\mu}$. Hence $A_\mu$ is a minimizer of the functional \eqref{eq_func} in the class of quasi-open sets of measure $\omega_n$, thus concluding the proof. 
\end{proof}
\BBB
\begin{remark}f\label{rem:scale_free}
In order to prove regularity estimates, a scale-invariant functional is easier to handle than a measure constraint: we recall \EEE that any minimizer of \eqref{eq_func} in the class of open (or quasi-open) sets of measure $\om_n$, is also a minimizer of the scale-invariant functional

\begin{equation}\label{eq:scale_free}
A\in\{\text{(quasi-)open sets}\}\mapsto |A|^\frac{2}{n}\left(\frac{|A|}{\om_nT(A)}+\delta\lambda_k(A)\right).
\end{equation}
\end{remark}
\EEE

{\BBB Now that we have shown the existence of a solution for \eqref{eq_func} in the class of quasi-open sets, we show some weak regularity properties of those solutions, implying in particular the existence of a solution in the class of open sets.}

\begin{lemma}\label{lem_reg}
Let $\Om$ be a   minimizer of \eqref{eq_func} in the class of quasi-open sets of measure $\om_n$, and suppose that $|\delta|\ll k^{-\left(2+\frac{4}{n}\right)}$. Then $\Om$ is bounded and there exists $c_{n},C_{n}>0$ such that 
{\BBB\[\|\nabla w_\Om\|_{L^\infty(\Rn)}\leq C_{n},\qquad \|\nabla u_k\|_{L^\infty(\Rn)}\leq C_nk^{\frac{2}{n}+\frac{1}{2}},\qquad \text{diam}(\Om)\leq C_n\]} and for all $x\in\Rn$, $r\in {\BBB(0,1]}$,
\begin{equation}\label{eq:nondeg}
\fint_{\partial B_{x,r}}w_\Om< c_{n}r\text{ implies } w_{\Om}|_{B_{x,r/2}}= 0.
\end{equation}
In particular, the open set $\{w_\Om>0\}${\BBB (equal to $\Om$ up to a set of 0-capacity)} is an open minimizer of \eqref{eq_func}.
\end{lemma}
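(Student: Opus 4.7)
The plan follows the classical free boundary strategy adapted to this constrained vectorial setting. First, I aim to derive a torsion-only quasi-minimality for $\Om$ by absorbing the spectral term via Lemma \ref{lem_bucur}: given any quasi-open competitor $\tilde\Om$ with $|\tilde\Om|$ close to $\om_n$, the scale-invariant form \eqref{eq:scale_free} (equivalently, dilating as in \eqref{eq_dilation} to restore the measure constraint) compared at $\Om$ and $\tilde\Om$ yields, thanks to $|\lambda_k(\tilde\Om)-\lambda_k(\Om)|\lesssim k^{2+4/n}|T(\tilde\Om)^{-1}-T(\Om)^{-1}|$ (which follows from Lemma \ref{lem_bucur} together with the a priori bounds $\lambda_k(\Om)\lesssim k^{2/n}$ from Proposition \ref{prop:growth_egv} and $T(\Om)\asymp T(B)$ from Lemma \ref{lem_apriori}) and the assumption $|\delta|\ll k^{-(2+4/n)}$, the quasi-minimality
\[
T(\tilde\Om)^{-1}-T(\Om)^{-1}\gtrsim |\Om\setminus\tilde\Om|\quad\text{for every inner perturbation }\tilde\Om\subset\Om,
\]
and symmetrically a dual bound for outer perturbations $\tilde\Om\supset\Om$.

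These are exactly the sub/super-solution conditions of Alt-Caffarelli type from \cite{BDV15}, so the classical free boundary machinery then gives the desired estimates. The Lipschitz bound $\|\nabla w_\Om\|_{L^\infty(\R^n)}\leq C_n$ follows from the outer perturbation with $\tilde\Om=\Om\cup B_{x,r}$ and a harmonic replacement test function, giving the growth $\fint_{\partial B_{x,r}}w_\Om\lesssim r$ for every $x,r$ and hence, via interior elliptic regularity plus the a priori bound $w_\Om\leq\frac{1}{2n}$ from Lemma \ref{lem_prelim}, global Lipschitz continuity. Non-degeneracy \eqref{eq:nondeg} follows from the inner perturbation with $\tilde\Om=\Om\setminus B_{x,r/2}$ and the admissible test function $w_\Om\chi$, with $\chi$ a Lipschitz cutoff equal to $0$ on $B_{x,r/2}$, to $1$ outside $B_{x,r}$, and $|\nabla\chi|\leq 2/r$: a direct integration by parts using $-\Delta w_\Om=1$ in $\Om$ yields $T(\Om)-T(\tilde\Om)\lesssim r^{n-1}\int_{\partial B_{x,r}}w_\Om$, and combining with the inner quasi-minimality forces $|B_{x,r/2}\cap\Om|\lesssim r^{n-1}\int_{\partial B_{x,r}}w_\Om$; a classical density iteration \emph{\`a la} De Giorgi then forces $w_\Om\equiv 0$ on $B_{x,r/2}$ whenever $\fint_{\partial B_{x,r}}w_\Om<c_nr$. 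The diameter bound is immediate from non-degeneracy applied at $r=1$: any $x\in\Om$ must satisfy $|B_{x,1}\cap\Om|\gtrsim 1$, and the measure constraint $|\Om|=\om_n$ bounds the number of such disjoint balls.

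The gradient bound on the $k$-th eigenfunction follows directly from Lemma \ref{lem_prelim}:
\[
\|\nabla u_k\|_{L^\infty}^2\leq \Big(\tfrac{1}{n}+\|\nabla w_\Om\|_{L^\infty}^2\Big)e^{1/(4\pi)}\lambda_k(\Om)^{2+n/2}\lesssim k^{4/n+1},
\]
using $\lambda_k(\Om)\lesssim k^{2/n}$. Finally, the Lipschitz continuity of $w_\Om$ ensures that $\{w_\Om>0\}$ is an open set, which coincides with $\Om$ quasi-everywhere, hence yields an open minimizer of \eqref{eq_func}. The main obstacle I expect is the sign analysis in Step 1 when $\delta<0$: there the spectral and torsion variations have competing signs, and the threshold $|\delta|\ll k^{-(2+4/n)}$ emerges precisely from the requirement that Lemma \ref{lem_bucur} effectively absorbs the former into the latter. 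A secondary technical point is the global propagation of the outer perturbation argument: since the competitor $\Om\cup B_{x,r}$ is only meaningful at small scales (to remain within the regime of Lemma \ref{lem_apriori}), the Lipschitz estimate must be globalized via a covering argument combined with interior elliptic regularity.
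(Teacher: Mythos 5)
Your derivation of the two quasi-minimality inequalities (outer: $T(\Om)^{-1}\leq T(\tilde\Om)^{-1}+\Lambda_n|\tilde\Om\setminus\Om|$ for $\tilde\Om\supset\Om$; inner: $T(\Om)^{-1}+\Lambda_n|\Om\setminus\tilde\Om|\leq T(\tilde\Om)^{-1}$ for $\tilde\Om\subset\Om$) by dilating to restore the measure constraint and absorbing the spectral variation via Lemma \ref{lem_bucur} is exactly the paper's argument, including the observation that the threshold $|\delta|\ll k^{-(2+\frac4n)}$ comes from this absorption; one small precision is that the hard sign is $\delta<0$ for outer perturbations but $\delta>0$ for inner ones (monotonicity of $\lambda_k$ handles the other case for free in each instance). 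From there the Lipschitz bound and the non-degeneracy follow from standard free-boundary machinery, which the paper delegates to \cite{BMPV15} and \cite{Bu2012} respectively and which you re-sketch correctly. The bound on $\|\nabla u_k\|_{L^\infty}$ via Lemma \ref{lem_prelim} is also as in the paper.

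There is, however, a genuine gap in your diameter bound. Knowing that every $x\in\Om$ satisfies $|B_{x,1}\cap\Om|\gtrsim 1$ together with $|\Om|=\om_n$ bounds the \emph{number} of pairwise disjoint unit balls centered in $\Om$, but this does not bound $\mathrm{diam}(\Om)$: a priori the minimizer could consist of two (or finitely many) components of definite measure placed arbitrarily far apart, and your counting argument cannot exclude this since nothing forces $\Om$ to be connected at this stage. The paper's argument uses an essentially different, global input: from Lemma \ref{lem_apriori} one has $\|w_\Om-w_B\|_{L^1(\Rn)}\lesssim k^{1/n}|\delta|^{1/2}$ (a consequence of the quantitative Saint-Venant inequality, i.e. of $\Om$ being close to a translate of $B$ in measure), which interpolated against the Lipschitz bound via Gagliardo--Nirenberg gives $\|w_\Om-w_B\|_{\C^0(\Rn)}\lesssim (k^{1/n}|\delta|^{1/2})^{1/(n+1)}$. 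Hence for $x\notin B_2$ one has $\fint_{\partial B_{x,1}}w_\Om=\fint_{\partial B_{x,1}}(w_\Om-w_B)<c_n$ once $|\delta|$ is small, and the non-degeneracy you already established then forces $w_\Om\equiv 0$ on $B_{x,1/2}$, so $\Om\subset B_{3/2}$. You need this closeness-to-the-ball input (or some other mechanism ruling out far-apart components); the measure constraint alone is not enough.
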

Property \eqref{eq:nondeg} will be referred to as non-degeneracy, as it accounts in a weak sense for the fact that $|\nabla w_{\Om}|$ stays away from 0 near $\partial\Om$.
\begin{proof}

\textbf{Lipschitz regularity.} Let us first prove the Lipschitz regularity of $w_\Om$, which will imply the Lipschitz regularity of the eigenfunctions $u_k$ by the estimates of Lemma \ref{lem_prelim}.  To prove the Lipschitz regularity it is enough to prove the following property on the torsional rigidity: for any open set $\tilde{\Om}$ that contains $\Om$ {\BBB and} such that $|\tilde{\Om}\setminus\Om|$ is small enough, we have
\begin{equation}\label{eq_extminimality}
T(\Om)^{-1}\leq T(\tilde{\Om})^{-1}+\Lambda_n|\tilde{\Om}\setminus \Om|.
\end{equation}

Indeed, \BBB if this property is true, consider some ball $B_{x,r}$  with $r$ small enough  \EEE  such that this inequality applies to $\tilde{\Om}=\Om\cup B_{x,r}$; for any $\tilde{w}\in H^1_0(\Om\cup B_{x,r})$ coinciding with $w$ outside of $B_{x,r}$, writing $T(\Om\cup B_{x,r})\geq \int_{\R^n}2\tilde{w}-|\nabla \tilde{w}|^2$ we get by rearranging \eqref{eq_extminimality}:
\[\int_{B_{x,r}}\left(|\nabla w|^2-\frac{1}{2}w\right)\leq \int_{B_{x,r}}\left(|\nabla\tilde{w}|^2-\frac{1}{2}\tilde{w}\right)+\Lambda_n' r^n\]
for some $\Lambda_n'>0$. This corresponds to the notion of quasi-minimizer of \cite[Definition 3.1]{BMPV15} for $f=1$, so we may apply \cite[Theorem 3.3]{BMPV15} to get a uniform Lipschitz bound.

Let us therefore prove claim \eqref{eq_extminimality}.
Let $\tilde{\Om}$ be an open set that contains $\Om$ with $|\tilde{\Om}|\leq 2|\Om|$.
We separate the case $\delta>0$ and $\delta<0$ for clarity:
\begin{itemize}[label=\textbullet]
\item Case $\delta>0$. By monotonicity of $\lambda_k$ we have $\lambda_k(\Om)\geq \lambda_k(\tilde{\Om})$, so using minimality of $\Om$ against the competitor $\left(\frac{|\Om|}{|\tilde{\Om}|}\right)^{\frac{1}{n}}\tilde{\Om}$ we have
\[
T(\Om)^{-1}\leq \left(\frac{|\tilde{\Om}|}{|\Om|}\right)^{\frac{n+2}{n}}T(\tilde{\Om})^{-1}+\delta\left[\left(\frac{|\tilde{\Om}|}{|\Om|}\right)^{\frac{2}{n}}-1\right]\lambda_k(\Om)\]
which implies
\[T(\Om)^{-1}\leq T(\tilde{\Om})^{-1}+C_n\left(1+|\delta|k^\frac{2}{n}\right)|\tilde{\Om}\setminus\Om|\]
thanks to $T(\tilde{\Om})^{-1}\leq T(\Om)^{-1}\lesssim 1$ and $\lambda_k(\Om)\lesssim k^{\frac{2}{n}}$ from Lemma \ref{lem_apriori}. We thus get \eqref{eq_extminimality} as soon as $|\delta|k^\frac{2}{n}\lesssim 1$.
\item Case $\delta<0$. {\BBB In this case, we use the same competitor as in the positive case, but we have to use Lemma \ref{lem_bucur} instead of the monotonicity of $\lambda_k$}. Comparing the energy of $\Om$ to the energy of $\left(\frac{|\Om|}{|\tilde{\Om}|}\right)^{\frac{1}{n}}\tilde{\Om}$ gives
\begin{align*}
	T(\Om)^{-1}-T(\tilde{\Om})^{-1}&\leq \delta\left(\lambda_k(\tilde{\Om})-\lambda_k(\Om)\right)+\left(\left(\frac{\tilde{|\Om|}}{|\Om|}\right)^\frac{n+2}{n}-1\right)T(\tilde{\Om})^{-1}\\
	&\leq |\delta|e^\frac{1}{4\pi}k\lambda_k(\Om)\lambda_k(\tilde{\Om})^{1+\frac{n}{2}}\left[T(\tilde{\Om})-T(\Om)\right]+C_n|\tilde{\Om}\setminus\Om|\\
	&\leq C_n'|\delta|k^{2+\frac{4}{n}}\left[T(\Om)^{-1}-T(\tilde{\Om})^{-1}\right]+C_n|\tilde{\Om}\setminus\Om|
\end{align*}
for some $C_n,C_n'>0$. When $|\delta|\ll k^{-\left(2+\frac{4}{n}\right)}$ we get \eqref{eq_extminimality}.
\end{itemize}
\textbf{Non-degeneracy property.} The non-degeneracy is obtained by similar arguments, {\BBB now} choosing sets $\tilde{\Om}$ such that $\tilde{\Om}\subset\Om$. Let us prove that for any $\tilde{\Om}\subset\Om$ with $|\tilde{\Om}]\geq\frac{1}{2}|\Om|$ it holds
\begin{equation}\label{eq_intminimality}
T(\Om)^{-1}+\Lambda_n|\Om\setminus \tilde{\Om}|\leq T(\tilde{\Om})^{-1}.
\end{equation}
This is enough to obtain the nondegeneracy property,  thanks to \cite[Lemma 1]{Bu2012}. 

This time, it is the case $\delta>0$ which requires a more careful analysis{\BBB , so we start with the negative case}.
\begin{itemize}
\item Case $\delta<0$. Consider any open set $\tilde{\Om}$ contained in $\Om$ with $|\tilde{\Om}|\geq \frac{1}{2}|\Om|$. By monotonicity $\lambda_k(\Om)\leq \lambda_k(\tilde{\Om})$, hence testing minimality of $\Om$ against $\left(\frac{|\Om|}{|\tilde{\Om}|}\right)^{\frac{1}{n}}\tilde{\Om}$ we have 
\[
T(\Om)^{-1}-\delta\left[\left(\frac{|\tilde{\Om}|}{|\Om|}\right)^{\frac{2}{n}}-1\right]\lambda_k(\Om)\leq \left(\frac{|\tilde{\Om}|}{|\Om|}\right)^{\frac{n+2}{n}}T(\tilde{\Om})^{-1} \leq T(\tilde{\Om})^{-1}\]
which provides \eqref{eq_intminimality} for $|\delta|\ll k^{-\frac{2}{n}}$, using $\lambda_k(\Om)\lesssim k^{\frac{2}{n}}$ from Lemma \ref{lem_apriori}.
\item Case $\delta>0$. We proceed as before, using in addition {\BBB  Lemma} \ref{lem_bucur}.
\BBB Testing \EEE the minimality of $\Om$ against the competitor $\left(\frac{|\Om|}{|\tilde{\Om}|}\right)^{\frac{1}{n}}\tilde{\Om}$ we get
\begin{align*}
	T(\Om)^{-1}-T(\tilde{\Om})^{-1}&\leq \delta\left(\lambda_k(\tilde{\Om})-\lambda_k(\Om)\right)+\left(\left(\frac{\tilde{|\Om|}}{|\Om|}\right)^\frac{n+2}{n}-1\right)T(\tilde{\Om})^{-1}\\
	&\leq \delta e^\frac{1}{4\pi}k\lambda_k(\Om)^{1+\frac{n}{2}}\lambda_k(\tilde{\Om})\left[T( \Om)-T( \tilde{\Om})\right]-C_n|\tilde{\Om}\setminus\Om|\\
	&\leq C_n'\delta k^{2+\frac{4}{n}}\left[T( \tilde{\Om})^{-1}-T( \Om)^{-1}\right]-C_n|\tilde{\Om}\setminus\Om|
\end{align*}
for some $C_n,C_n'>0$,  using $T(\tilde{\Om})^{-1}\gtrsim T(B)^{-1}$ and also $\lambda_k(\Om)\lesssim k^{\frac{2}{n}}$ from Lemma \ref{lem_apriori}. We get \eqref{eq_intminimality} for $|\delta|\ll k^{-\left(2+\frac{4}{n}\right)}$.
\end{itemize} 
\textbf{Bound on $\text{diam}(\Om)$.}  We have shown above that $\|\nabla w_{\Om}\|_{L^\infty(\R^n)}\lesssim1$. Hence by the Gagliardo-Nirenberg inequality,
\begin{equation}\label{eq:wOm_wB_Linfty}\Vert w_\Om-w_B\Vert_{\C^{0}(\Rn)}\lesssim \Vert \nabla (w_\Om-w_B)\Vert_{L^\infty(\Rn)}^\frac{n}{n+1} \Vert w_\Om-w_B\Vert_{L^1(\Rn)}^\frac{1}{n+1}\lesssim \Vert w_\Om-w_B\Vert_{L^1(\Rn)}^\frac{1}{n+1}   \lesssim  \left(k^\frac{1}{n}|\delta|^\frac{1}{2}\right)^{\frac{1}{n+1}}\end{equation}
where we also used $\Vert w_\Om-w_B\Vert_{L^1(\Rn)}\lesssim k^\frac{1}{n}|\delta|^\frac{1}{2}$ from Lemma \ref{lem_apriori}. Let now $c_n$ denote the non-degeneracy constants found above. Then for any $x\in \Rn\setminus B_{2}$ we have
\[\fint_{\partial B_{x,1}}w_\Om =\fint_{\partial B_{x,1}}(w_\Om-w_B)\lesssim \left(k^\frac{1}{n}|\delta|^\frac{1}{2}\right)^{\frac{1}{n+1}}\]
and this is strictly less than $c_n$ for $|\delta|\ll k^{-\frac{2}{n}}$. Hence $w_\Om(x)=0$ for \BBB any $x\in\R^n\setminus B_2$\EEE, so that we find $\Om\subset B_{2}$. This gives the desired upper bound on $\text{diam}(\Om)$, thus concluding the proof.

\end{proof}

\subsection{Blow-ups and viscosity solutions}\label{sect:blowup}
We recall \BBB (see Remark \eqref{rem:scale_free}) \EEE that if $\Om$ is a minimizer of \eqref{eq_func}, then it is also a minimizer (among open sets of any measure) of

\[ J:A\mapsto |A|^\frac{2}{n}\left(\frac{|A|}{\om_nT(A)}+\delta\lambda_k(A)\right).\]
Consider a smooth vector field $\xi\in\C^\infty_c(\mathbb{R}^n,\mathbb{R}^n)$.  Assuming enough regularity on $\Om$, the shape derivative 
\[\left.\frac{d}{dt}\right|_{t=0}J((\text{Id}+t\xi)(\Om))\]
of this functional at $\Om$ in the direction $\xi$ is given by
\[\int_{\partial\Om}\left[\om_n^{\frac{2}{n}-1}\left(\frac{n+2}{nT(\Om)}+\frac{2}{n}\delta\lambda_k(\Om)\right)-\frac{\om_n^{\frac{2}{n}}}{T(\Om)^2}|\nabla w|^2-\om_n^{\frac{2}{n}}\delta |\nabla u_k|^2\right](\xi\cdot \nu_\Om)d\mathscr{H}^{n-1},\]
where $\nu_\Om$ is the outward unit normal vector of $\Om$  (see for instance \cite{HP18} for the expressions of the shape derivatives of $|\cdot|$, $T$ and $\lambda_k$). So letting
\begin{equation}\label{eq:defQ}Q:=\frac{T(\Om)^2}{\om_n}\left(\frac{n+2}{nT(\Om)}+\frac{2}{n}\delta\lambda_k(\Om)\right)\end{equation}
we expect an overdetermined boundary condition 
\[|\nabla w|^2+T(\Om)^2\delta |\nabla u_k|^2=Q\text{ on }\Om.\]
Note that when $\delta\to 0$, using that $T(\Om)^{-1}-T(B)^{-1}\lesssim |\delta|k^{\frac{2}{n}}$ and $\lambda_k(\Om)\lesssim k^{\frac{2}{n}}$ (see Lemma \ref{lem_apriori}) we find
\[Q\to \frac{1}{n^2},\]
which is expected because it corresponds to the value of $|\nabla w_B|^2_{|\partial B}$. We may {\BBB even} estimate its rate of convergence 
\begin{equation}\label{eq_estQ}
\left|Q-\frac{1}{n^2}\right|\lesssim k^{\frac{2}{n}}|\delta|.
\end{equation}

Let us now prove that these informal considerations hold true in the viscosity sense; for any ``contact point'' of the boundary as defined below, we may define the gradient of the function in a weak sense through a characterization of blow-ups for such points: this is the object of Lemma \ref{lem_blowup} below.

\begin{definition}\label{def_contact}
Let $\Om\subset\Rn$ be an open set and $x\in\partial\Om$. We say that $x$ is a contact point of $\Om$ if there exists $R>0$ and $\nu\in\mathbb{S}^{n-1}$ verifying
\[B_{x+R\nu, R}\subset\Om \text{ or }B_{x-R\nu,R}\subset \Rn\setminus\Om.\]
The vector $\nu$ is called the  inward ``normal'' vector of $\Om$ at $x$.
\end{definition}

For a function $w:\R^n\rightarrow\R$ and $z\in\Rn$, $r>0$ we will denote $(w)_{z,r}$ the rescaled function $(w)_{z,r}:\Rn\rightarrow\R$ defined by
\[(w)_{z,r}(x):=\frac{w(z+rx)}{r}.\]

Let us \BBB recall \EEE a classical lemma of the one-phase free boundary problem. \BBB A function $w$ which satisfies the following property  
\begin{equation}\label{eq_property_blowup}
|\nabla w|\leq C,\quad\text{ and }\quad \forall r\in  (0,1),x\in\Rn,\quad \fint_{\partial B_{x,r}}w<cr\text{ implies }w|_{B_{x,r/2}}=0\ 
\end{equation}
for some constants $c,C>0$, enjoys the following blow-up behaviour.\EEE
\begin{lemma}\label{lem_prelim_blowup}
Let $w\in \C^0(\Rn,\R_+)$ \BBB that satisfy \EEE $w(0)=0$,  property \eqref{eq_property_blowup} for some constants $c$ and $C$, and $|\Delta w|1_{\{w>0\}}\in L^\infty_\loc(\Rn)$. Then there exists a   \BBB sequence  \EEE $r_i\to 0$ and a function $\ov{w}\in\C^0(\Rn,\R_+)$ verifying $\ov{w}(0)=0$, property \eqref{eq_property_blowup} for the same constants $c,C$, $\Delta\ov{w}=0$ on $\{\ov{w}>0\}$ such that
\begin{align*}
w_{0,r_i}\underset{\C^0_\loc(\Rn)}{\cvg}\ov{w},\qquad &w_{0,r_i}\underset{\C^1_\loc(\{\ov{w}>0\})}{\cvg}\ov{w},\\
\{w_{0,r_i}>0\}\underset{loc.Hausd.}{\cvg}\{\ov{w}>0\},\qquad &\{w_{0,r_i}=0\}\underset{loc.Hausd.}{\cvg}\{\ov{w}=0\}.
\end{align*}

\end{lemma}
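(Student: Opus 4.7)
The proof proceeds in three steps: extraction of a uniform limit by Arzel\`a-Ascoli, promotion of this convergence to local Hausdorff convergence of the positivity/zero sets using non-degeneracy, and interior elliptic regularity on the limiting positivity set to obtain harmonicity and $C^1_\loc$ convergence. The one genuinely free-boundary input is the second step; everything else is classical.

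For the first step, the uniform gradient bound $|\nabla w_{0,r}(x)|=|\nabla w(rx)|\leq C$ together with $w_{0,r}(0)=0$ shows that $\{w_{0,r}\}_{r\in(0,1]}$ is locally uniformly bounded and equicontinuous on $\Rn$. By Arzel\`a-Ascoli one extracts $r_i\to 0$ along which $w_{0,r_i}\to\ov w$ locally uniformly, with $\ov w$ nonnegative, Lipschitz of constant $C$, and vanishing at the origin. Moreover, the non-degeneracy property for $w_{0,r_i}$ holds on $r\in(0,1/r_i)$, which eventually contains any fixed $r>0$; uniform convergence on spheres then gives $\fint_{\partial B_{x,r}}w_{0,r_i}\to\fint_{\partial B_{x,r}}\ov w$, and the implication $\fint_{\partial B_{x,r}}\ov w<cr\Rightarrow \ov w\equiv 0$ on $B_{x,r/2}$ passes to the limit with the same constants $c$ and $C$, so that $\ov w$ itself satisfies property \eqref{eq_property_blowup}.

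For the second step, I would establish the local Hausdorff convergence of $\{w_{0,r_i}>0\}$ to $\{\ov w>0\}$. The inclusion $\{\ov w>0\}\subset\liminf\{w_{0,r_i}>0\}$ is immediate from pointwise convergence. For the reverse inclusion, suppose $x_i\in\{w_{0,r_i}>0\}$ with $x_i\to x\notin\overline{\{\ov w>0\}}$; then $\ov w\equiv 0$ on some $B(x,\delta)$, which gives $\fint_{\partial B_{x,\delta/2}}\ov w=0<c\delta/2$, and by uniform convergence the same strict inequality holds for $w_{0,r_i}$ when $i$ is large. Non-degeneracy of $w_{0,r_i}$ forces $w_{0,r_i}\equiv 0$ on $B(x,\delta/4)$, contradicting $x_i\to x$. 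The corresponding convergence $\{w_{0,r_i}=0\}\to\{\ov w=0\}$ follows by the same mechanism, using the non-degeneracy of the limit $\ov w$ now that it is established.

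For the third step, fix a compact set $K\subset\{\ov w>0\}$. By the Hausdorff convergence, $K\subset\{w_{0,r_i}>0\}$ for all large $i$, and on this set $-\Delta w_{0,r_i}(x)=-r_i(\Delta w)(r_ix)$, so that
\[\|\Delta w_{0,r_i}\|_{L^\infty(K)}\leq r_i\,\||\Delta w|\,1_{\{w>0\}}\|_{L^\infty(r_iK)}\to 0,\]
using $|\Delta w|1_{\{w>0\}}\in L^\infty_\loc$ and the fact that $r_iK$ is contained in any fixed ball around the origin for large $i$. Combined with the uniform Lipschitz bound on $w_{0,r_i}$, standard interior elliptic estimates upgrade the local uniform convergence on $K$ to $C^{1,\alpha}_\loc$ convergence on $\{\ov w>0\}$, and passing to the limit in the equation yields $\Delta\ov w=0$ on this set. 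The main technicality is ensuring these interior estimates are uniform on compact subsets of the a priori unknown positivity set, which is exactly what the Hausdorff convergence from the previous step provides.
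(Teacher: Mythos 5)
Your overall strategy — Arzelà--Ascoli extraction using the uniform Lipschitz bound, Hausdorff convergence of the level sets via non-degeneracy, then interior elliptic estimates on compact subsets of $\{\ov w>0\}$ to get harmonicity and $\mathcal{C}^1_\loc$ convergence — is exactly the paper's (the paper compresses the Hausdorff step into a citation to \cite[Section 6]{V19}). Steps 1 and 3 are correct, and your argument for $\{w_{0,r_i}>0\}\to\{\ov w>0\}$ is the right one.

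There is, however, a genuine gap in the zero-set convergence, namely in the direction asserting that every point of $\{\ov w=0\}\cap K$ lies within $\eps$ of $\{w_{0,r_i}=0\}$ for large $i$. You claim this "follows by the same mechanism, using the non-degeneracy of the limit $\ov w$", but that ingredient cannot work: near a free boundary point $y$ of $\ov w$ the set $\{\ov w=0\}$ may have empty interior (think of $\ov w(x)=|x_n|$, which satisfies \eqref{eq_property_blowup} vacuously when $c$ is small), so no sphere centered near $y$ has a small average of $\ov w$ and the non-degeneracy of $\ov w$ yields nothing. What is actually needed — and what the paper flags with "non-degeneracy and (near-)harmonicity of $w$ and $(w_{0,r_i})$" — is the almost-mean-value property of the rescalings. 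Concretely, if $y_i\in\{\ov w=0\}\cap K$ but $w_{0,r_i}>0$ on $B_{y_i,\eps}$, then $|\Delta w_{0,r_i}|\leq C'r_i$ there, so $w_{0,r_i}(y_i)\geq\fint_{\partial B_{y_i,\rho}}w_{0,r_i}-\frac{C'r_i\rho^2}{2n}$ for $\rho\leq\eps$, while the contrapositive of the non-degeneracy of $w_{0,r_i}$ (which does not vanish on $B_{y_i,\rho/2}$) forces $\fint_{\partial B_{y_i,\rho}}w_{0,r_i}\geq c\rho$; hence $w_{0,r_i}(y_i)\geq c\rho/2$ for large $i$, contradicting $w_{0,r_i}(y_i)\to\ov w(y_i)=0$ by locally uniform convergence. (Equivalently: positivity of $w_{0,r_i}$ on a fixed ball around $y$ would make $\ov w$ harmonic, nonnegative, and vanishing at an interior point there, hence identically zero on that ball, after which the non-degeneracy of $w_{0,r_i}$ — not of $\ov w$ — forces $w_{0,r_i}$ to vanish near $y$.) So the missing ingredient is the near-harmonicity of $w_{0,r_i}$ on its positivity set, which you only invoke later in Step 3.
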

\begin{proof}
\BBB The \EEE functions $(w_{0,r})$ all verify the same Lipschitz bound so there exists a  \BBB sequence  \EEE $(w_{0,r_i})$ (for $r_i\to 0$) that converges locally uniformly to some $\ov{w}\in\C^0(\Rn,\R_+)$. Property \eqref{eq_property_blowup} directly transfers to $\ov{w}$ for the same constants. Letting now $U$ be some open set compactly included in $\{\ov{w}>0\}$, we have that $U\subset\{w_{0,r_i}>0\}$ for any large enough $i$ with $|\Delta w_{0,r_i}|\leq \|\Delta w\|_{L^\infty(U)}r_i$ on $U$, giving both that $\Delta \ov{w}=0$ in $\{\ov{w}>0\}$ and the local $\C^1$ convergence in the support. Finally, the local Hausdorff convergence of the supports and their complements is obtained by non-degeneracy and (near-)harmonicity of $w$ and $(w_{0,r_i})$, see for instance \cite[Section 6]{V19}.
\end{proof}

\begin{lemma}\label{lem_blowup}
Let $\Om\subset\Rn$ be a minimizer of \eqref{eq_func}. Let $z\in\partial\Om$ be a contact point of $\Om$ with inward {\BBB normal} vector $\nu$. Then provided $|\delta|\ll k^{-\left(4+\frac{8}{n}\right)}g_n(k)$, there exists $\alpha>0$, $\beta\in\R$ and a positive sequence $s_i\to 0$ such that
\begin{equation}\label{eq_blowup}
\begin{split}
	(w)_{z,s_i}\underset{{ \C^0_\loc(\Rn)}}{\longrightarrow} &\alpha (x\cdot\nu)_+,\\
	(u_k)_{z,s_i}\underset{{ \C^0_\loc(\Rn)}}{\longrightarrow} &\beta (x\cdot\nu)_+,\\
\end{split}
\end{equation}
as $i\to\infty$, and 
\begin{equation}\label{eq_alphabeta}
\alpha^2+T(\Om)^2\delta\beta^2=Q
\end{equation}
where $Q$ is defined in \eqref{eq:defQ}.
\end{lemma}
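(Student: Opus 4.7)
The plan is to extract blow-ups of $w$ and $u_k$ at $z$ via Lipschitz compactness, identify them as one-plane profiles using the one-sided ball condition at the contact point, and derive the relation \eqref{eq_alphabeta} by testing the minimality of $\Om$ for the scale-invariant functional \eqref{eq:scale_free} against localised normal perturbations at scale $s_i$.

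First, by Lemma \ref{lem_reg} both $w$ and $u_k$ are uniformly Lipschitz and $w$ satisfies the non-degeneracy condition \eqref{eq:nondeg}. Applying Lemma \ref{lem_prelim_blowup} to $w$ at $z$ yields a subsequence $s_i\to 0$ along which $(w)_{z,s_i}\to\overline w$ locally uniformly, and in $\mathcal{C}^1_{\loc}$ of $\{\overline w>0\}$, with $\overline w$ Lipschitz, non-degenerate, and harmonic on its support. By the pointwise dominance $|u_k|\leq e^{1/(8\pi)}\lambda_k(\Om)^{1+n/4}w$ from Lemma \ref{lem_prelim}, the rescalings $(u_k)_{z,s_i}$ are uniformly bounded by $C(w)_{z,s_i}$; Arzelà–Ascoli along a further subsequence produces a limit $\overline{u_k}$ satisfying $|\overline{u_k}|\leq C\overline w$. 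Since $-\Delta (u_k)_{z,s_i}=s_i^2\lambda_k(\Om)(u_k)_{z,s_i}$, passing to the limit gives $\Delta\overline{u_k}=0$ on $\{\overline w>0\}$, and $\overline{u_k}\equiv 0$ on $\{\overline w=0\}$ by the dominance.

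The contact point provides a one-sided ball inclusion: either $B_{z+R\nu,R}\subset\Om$ or $B_{z-R\nu,R}\subset\R^n\setminus\Om$. Rescaling yields respectively $\{x\cdot\nu>0\}\subset\{\overline w>0\}$ or $\{x\cdot\nu<0\}\subset\{\overline w=0\}$, and then the standard one-phase analysis (harmonicity, non-degeneracy, linear growth, cf.\ \cite{AC81}) forces $\{\overline w>0\}=\{x\cdot\nu>0\}$ with $\overline w(x)=\alpha(x\cdot\nu)_+$ for some $\alpha>0$. Since $\overline{u_k}$ is harmonic in that half-space, vanishes on $\{x\cdot\nu=0\}$, and has at most linear growth, a Liouville-type argument gives $\overline{u_k}(x)=\beta(x\cdot\nu)_+$ for some $\beta\in\R$.

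It remains to derive \eqref{eq_alphabeta}. Given $\varphi\in\mathcal{C}^\infty_c(\R^n;\R_+)$, consider the competitors $\Om_{\epsilon,i}=(\mathrm{Id}+\epsilon\,s_i\,\varphi((\cdot-z)/s_i)\,\nu)(\Om)$, which for $i$ large perturb $\partial\Om$ by an amount $O(s_i\epsilon)$ in a ball of radius $O(s_i)$ around $z$. A Taylor expansion of $J$, using the $\mathcal{C}^1_{\loc}$ convergence of the blow-ups in $\{\overline w>0\}$ to identify $|\nabla w|^2$ and $|\nabla u_k|^2$ along the perturbed boundary as $\alpha^2$ and $\beta^2$ respectively, yields
\[
J(\Om_{\epsilon,i})-J(\Om)=s_i^n\,\epsilon\,c_\varphi\bigl(\alpha^2+T(\Om)^2\delta\beta^2-Q\bigr)+o_i(s_i^n\epsilon)+O(s_i^n\epsilon^2),
\]
with $c_\varphi>0$. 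Minimality of $\Om$, first letting $i\to\infty$ and then $\epsilon\to 0$ with both signs, forces the bracket to vanish. The main obstacle is this last step when $\delta<0$: $\lambda_k$ is not monotone in $\Om$, so the variation of $\lambda_k$ must be controlled via Lemma \ref{lem_bucur} rather than by simple comparison, and the Hadamard formula at leading order is only meaningful because the smallness $|\delta|\ll k^{-(4+8/n)}g_n(k)$ ensures through Corollary \ref{rem_estimate} that $\lambda_k(\Om)$ is simple, so that $u_k$ (hence its blow-up $\beta$) is unambiguously defined. The positivity $\alpha>0$, provided by the non-degeneracy of $w$, then guarantees that the linear term in $\epsilon$ genuinely dominates and closes the argument.
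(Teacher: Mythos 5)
Your overall architecture (compactness of the rescalings, identification of the blow-up as a half-plane profile, then a first-variation argument to get the relation between $\alpha$ and $\beta$) is the same as the paper's, but there is a genuine gap in the middle step. When the contact sphere is \emph{interior} ($B_{z+R\nu,R}\subset\Om$), the rescaling only gives $\{x\cdot\nu>0\}\subset\{\ov{w}>0\}$, and it is not true that harmonicity on the support, non-degeneracy and linear growth then force $\{\ov{w}>0\}$ to equal the half-space: the two-plane profile $\alpha(x\cdot\nu)_+ + \gamma(x\cdot\nu)_-$ with $\gamma>0$ satisfies all of these. Excluding it requires using the minimality of $\Om$, and in this problem that is delicate because of the sign of $\delta$. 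The paper does this by showing that $\gamma>0$ would make the density of $\{\ov{w}=0\}$ at the origin zero, then building the competitor $\left(\om_n/|\Om\cup B_{s_i}|\right)^{1/n}(\Om\cup B_{s_i})$, quantifying the torsion gain of filling the ball via a harmonic-extension energy lemma, and controlling the induced variation of $\lambda_k$ through Lemma \ref{lem_bucur}; this yields $|B_{s_i}\setminus\Om|\gtrsim s_i^n$, contradicting the density statement. This whole mechanism is missing from your argument and cannot be replaced by a citation to the standard one-phase theory.

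A secondary issue is your derivation of \eqref{eq_alphabeta}: you Taylor-expand $J$ under a normal perturbation and read off $|\nabla w|^2$ and $|\nabla u_k|^2$ "along the perturbed boundary", but at this stage $\partial\Om$ has no regularity and the gradients have no boundary traces, so the Hadamard boundary formula is not available. The correct route, which the paper follows, is to write the first variation under inner variations $\zeta^t=\Id+t\zeta$ in its volume (distributional) form — valid for Lipschitz state functions on an arbitrary open minimizer, using simplicity of $\lambda_k(\Om)$ for differentiability — then substitute $\zeta_i(x)=\zeta(x/s_i)$, rescale, and pass to the limit using the $\mathcal{C}^0_\loc(\Rn)\cap\mathcal{C}^1_\loc(\Hn)$ convergence of the blow-ups together with the Hausdorff convergence of the supports; Stokes' theorem on the limiting half-space then gives $\alpha^2+T(\Om)^2\delta\beta^2=Q$. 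Your Liouville argument for $\ov{u_k}$ once $\ov{w}$ is known to be a half-plane profile is fine and is a mild simplification of the paper's treatment, which instead runs the expansion argument on $w+cu_k$.
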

\BBB Before the proof of Lemma \ref{lem_blowup}, we state the following technical result{\BBB, in which we use the notation $\Hn=\{x\in\Rn:x_n>0\}$.}
\begin{lemma}\label{lem:db01}
There exists $\theta_n>0$ and $\eps_n>0$ such that the following property holds: for any $u\in H^1(B_1,\R_+)$, such that $\Vert u- x_n^+\Vert_{L^\infty(B_1\cap\Hn)}<\eps_n$, we have
\[\int_{B_1}|\nabla u|^2\geq \theta_n+\int_{B_1}|\nabla \mathcal{H}u|^2.\]
{\BBB where $\mathcal{H}u$ is the harmonic extension of $u_{|\partial B_1}$ in $B_1$.}
\end{lemma}
\begin{proof}
Suppose that $\|u-x_n^+\|_{L^\infty(B_1\cap\Hn)}\leq\eps$ for some $\eps>0$ and let us adjust $\eps$ so that the conclusion holds. We have 
\begin{align*}
	\int_{B}(|\nabla u|^2-|\nabla \mathcal{H}u|^2)&=\int_{B}|\nabla (u-\mathcal{H}u)|^2\geq \lambda_1(B)\int_{B}|u-\mathcal{H}u|^2\\ &\geq \frac{2\lambda_1(B)}{\om_n}\left(\int_{B\cap\Hn}(\mathcal{H}u-u)_+\right)^2,
\end{align*}
where we used Faber-Krahn and Cauchy-Schwarz inequalities. Since
\[\int_{B\cap\Hn}(\mathcal{H}u-u)_+\geq \int_{B\cap\Hn}(\mathcal{H}x_n^+ -x_n^+)-\om_n \eps
\]
then by taking $\eps=\eps_n :=\frac{1}{4}\fint_{B\cap\Hn}(\mathcal{H}x_n^+ -x_n^+)$ and $\theta_n:=\frac{1}{2}\int_{B\cap\Hn}(\mathcal{H}x_n^+ -x_n^+)$ the conclusion follows.
\end{proof}
\EEE

\begin{proof}[\BBB Proof of Lemma \ref{lem_blowup}]\BBB The proof is divided in three steps. \EEE Up to a displacement we {\BBB may} assume $z=0$, $\nu=e_n$. We also write 
\[w_r(x):=\frac{w(rx)}{r},\ u_{r}(x):=\frac{u_k(rx)}{r}\]
We start by proving that $(w,u_k)$ admits $(\alpha x_n^+,\beta x_n^+)$ as a blow-up (for some subsequence) at $0$ for some $\alpha>0$, $\beta\in\R$.

\smallskip
\noindent {\bf Step 1.} {\bf Blow-up for an exterior contact sphere}. 
Supposing that there is an exterior contact sphere $ \B:=B_{-Re_n,R}$, we prove that $w(x)=\alpha x_n^+ +o(|x|)$ and $u_k(x)=\beta x_n^++o(|x|)$ for some $\alpha>0$, $\beta\in\R$, thus getting \eqref{eq_blowup}. We follow the method of \cite[Lemma 11.17]{CS05}, using the non-degeneracy and Lipschitz bounds on $w$ and $u_k$ from Lemma \ref{lem_reg}.\newline
Let us first prove the expansion for $w$: set \[G(x)=\begin{cases} \left(R\log\left(\frac{|x+Re_n|}{R}\right)\right)_+&\text{ if }n=2\\ \frac{\left(R^{2-n}-|x+Re_n|^{2-n}\right)_+}{(n-2)R^{1-n}}&\text{ if }n\geq 3\end{cases},\quad \forall m\in\N, \ \alpha_m:=\inf\left\{\alpha\geq 0:w\leq \alpha G\text{ in }B_{2^{-m}}\right\}.\]
Above, $\alpha_m$ is well-defined and finite since $w(x)\lesssim d(x,\B)$ whereas $G(x)\gtrsim d(x,\B)$ on $B_1$. It is also bounded from below by a positive constant due to the non-degeneracy property. The sequence $(\alpha_m)$ decreases and therefore we can set $\alpha=\lim_{m\to\infty}\alpha_m$. We claim that $w(x)=\alpha G(x)+o(|x|)$, which is sufficient for proving the expansion, considering that $G(x)=x_n^++o(|x|)$.\newline

Suppose it is not the case, meaning there is some sequence of points $(x^p)_{p\in\N}$ in $\Rn\setminus\B$ converging to $0$ and some $\eps\in (0,1]$ such that
\[\forall p\in\N,\ w(x^p)<\alpha G(x^p)-\eps |x^p|.\]
We let $L$ be \BBB the \EEE Lipschitz constant of $w-\alpha G$, and we will suppose without loss of generality that $\eps\ll L$. Letting $y^p=x^p+\frac{\eps}{2L}|x^p|e_n$, then $\frac{1}{2}|x^p|\leq |y^p|\leq 2|x^p|$ and by the Lipschitz bounds we have
\[\forall p,\ w(y^p)-\alpha G(y^p)<-\frac{\eps}{4} |y^p|,\]
as well as
\[y^p_n= \frac{\eps}{2L}|x^p|+x_n\geq \frac{\eps}{4L}|y^p|-\frac{1}{2R}|x^p|^2\geq \frac{\eps}{8L}|y^p|,\]
where the last inequality holds for any large enough $p$ and we have used that $x_n^p\geq \frac{-1}{2R}|x^p|^2$ (since $x^p\notin \B$). We now let $r^p=|y^p|$ and $B^p$ the ball of center $y^p/r^p$ and of radius $\frac{\eps}{16L}$ on which we have, still by the Lipschitz bound,
\[(w-\alpha G)_{r^p}\leq -\frac{\eps}{8}\]
and for which $\text{dist}(B^p,\partial (B_2\setminus \B))\geq \frac{\eps}{16L}$. Let now $\varphi^p$ be the continuous function defined by
\[\begin{cases}
\varphi^p=\eta&\text{ in }B^p\\
\varphi^p=0 &\text{ on }\partial (B_2\setminus \BBB(\B/r^p)\EEE)\\
\Delta \varphi^p = 2r^p&\text{ in }(B_2\setminus \BBB(\B/r^p)\EEE)\setminus B^p
\end{cases}\]
where $\eta>0$ is fixed small ; if $\eta\leq \frac{\eps}{8}$ is small enough we have $\varphi^p\leq \alpha G_{r^p}$ in $B_2$ {\BBB  for all $p$,} by {\BBB the} maximum principle. Then for a large enough $p$ we have $\varphi^p\geq 0$ and $\varphi^p\geq cG_{r_p}$ in $B_{1/2}$ for some $c>0$ by Hopf's lemma. We claim that for a large enough $p$ in this case
\[\left(w-\left(\alpha+\frac{c}{2}\right)G\right)_{r^p}\leq -\varphi^p\]
by maximum principle in the domain $\om^p:=\{w_{r^p}>0\}\cap(B_2\setminus (\BBB(\B/r^p)\EEE\cup B^p))$. Indeed suppose $p$ is large enough such that $w_{r^p}\leq \left(\alpha+\frac{c}{2}\right)G_{r^p}$ in $B_2$, then  
\[\Delta\left(w-\left(\alpha+\frac{c}{2}\right)G\right)_{r^p}=-r^p>-2r^p=-\Delta\varphi^p\text{ on }\om^p\]
and the inequality is verified on $\partial\om^p$:
\begin{align*}
\left(w-\left(\alpha+\frac{c}{2}\right)G\right)_{r^p}+\varphi^p&\leq \varphi^p-\alpha G_{r^p}\leq 0\text{ on }\{w_{r^p}=0\},\\
\left(w-\left(\alpha+\frac{c}{2}\right)G\right)_{r^p}+\varphi^p&= w_{r^p}-\left(\alpha+\frac{c}{2}\right)G_{r^p}\leq 0\text{ on }\partial B_2,\\
\left(w-\left(\alpha+\frac{c}{2}\right)G\right)_{r^p}+\varphi^p&\leq -\frac{\eps}{8}+\varphi^p\leq 0\text{ on }B^p.\\
\end{align*}
This implies $w(x)\leq \left(\alpha-\frac{c}{2}\right)G(x)$ in some neighbourhood of the origin, which contradicts the definition of $\alpha=\inf_m\alpha_m$. This gives the announced expansion for $w$ and hence \eqref{eq_blowup} for $w$.\newline
The \BBB exact \EEE same reasoning can then be done for $w+c u_k$ for any $c$ chosen such that $w+c u_k$ is positive in its support and \BBB $\Delta(w+c u_k)\geq -2$ (which holds for $c$ small enough )\EEE, thus getting the existence of $\beta\in\R$ such that \eqref{eq_blowup} holds true for $u_k$. This finishes the proof of \eqref{eq_blowup} in the case of an exterior contact sphere. 

\smallskip
\noindent
\textbf{Step 2. Blow-up for an interior contact sphere}. Assume now that there is an interior contact sphere $B_{Re_n,R}\subset \Om$; in particular for any blow-up $(\ov{w},\ov{u}_k)$ of $(w,u_k)$ at $0$ we have $\Hn\subset\{\ov{w}>0\}$. We apply \cite[Lemma 11.17 and Remark 11.18]{CS05} to $\ov{w}$ (and $\ov{w}+c\ov{u_k}$ for a small enough $c\ll k^{-\left(\frac{1}{2}+\frac{2}{n}\right)}$); this gives $\ov{w}(x)=\alpha x_n^++o(|x|)$ and $ \ov{u_k}(x)=\beta x_n^++o(|x|)$ in $\Hn$ for some $\alpha>0$, $\beta\in\R$.\newline

We \BBB recall \EEE that a blow-up (at 0) of a blow-up (at 0) of $w$ is still a blow-up of $w$: indeed if $w_{0,r_i}\to \ov{w}$ and $\ov{w}_{0,s_i}\to\tilde{w}$ then there is some extraction $\varphi(i)$ such that $w_{0,r_{\varphi(i)}s_i}\to\tilde{w}$.

As a consequence, there is a blow-up of $w,u_k$ at $0$ (that we still denote $\ov{w},\ov{u_k}$) such that $\ov{w}(x)=\alpha x_n^+$ and $\ov{u_k}(x)=\beta x_n^+$ in $\Hn$.

We now prove that $\ov{w}(x)=o(|x|)$ on $\Rn\setminus \Hn$, which is enough to conclude since $(\alpha x_n^+,\beta x_n^+)$ is then a blow-up of $(w,u_k)$ at $0$. Arguing by contradiction we assume that $\ov{w}(x)=o(|x|)$ is not verified on $\Rn\setminus\Hn$, so that in particular $\{\ov{w}>0\}\cap(\Rn\setminus \ov{\Hn})$ is a non-empty open set which accumulates at $0$ and since $\ov{w}_{|\partial\Hn}\equiv 0$, then $\ov{w}1_{\Rn\setminus \Hn}$ is continuous and admits $B_{e_n,1}$ as an exterior contact sphere at $0$. We can therefore proceed as in the exterior sphere condition case to deduce that there exists $\gamma\geq 0$ such that $\ov{w}(x)=\gamma x_n^-+o(|x|)$ on $\{\ov{w}>0\}\cap(\Rn\setminus \Hn)$.

Thanks to the contradiction hypothesis we must have $\gamma>0$. In particular the density of $\{\ov{w}=0\}$ at the origin is zero. We \BBB recall \EEE that $\ov{w}$ is a blow-up of $w$ at $0$ for some sequence $r_i\to 0$, so
\begin{equation}\label{eq_density}
\lim_{\tau\to 0}\lim_{i\to\infty}\frac{|B_{\tau r_i}\setminus \Om|}{(\tau r_i)^n}=\lim_{\tau\to 0}\frac{|B_\tau\setminus \{\ov{w}>0\}|}{|B_\tau|}= 0.
\end{equation}

Let then $s_i=\tau r_i$ for some $\tau>0$ to be fixed later. 
We arrive \BBB at \EEE a contradiction by proving that the energy of $\tilde{\Om}_i:=\left(\frac{\om_n}{|\Om\cup B_{s_i}|}\right)^\frac{1}{n}\Om\cup B_{s_i}$ is strictly lower. \BBB We rely on Lemma \ref{lem:db01} to build a good competitor for $T(\Om\cup B_{s_i})$. Using \EEE the harmonic extension of $w$ in $B_{s_i}$ as a test function for $T(\Om\cup B_{s_i})$, and using the fact that $\frac{1}{\alpha}w_{s_i}\underset{\C^0(B\cap\Hn)}{\longrightarrow}x_n^+$, we find that {\BBB for} any large enough $i$,
\begin{align*}
T(\Om\cup B_{s_i})-T(\Om)&\geq s_i^n\int_{B_1}\left(2(\mathcal{H}u_{s_i} -u_{s_i})-|\nabla \mathcal{H}u_{s_i}|^2+|\nabla u_{s_i}|^2\right)\geq \theta_n \alpha s_i^n
\end{align*}
so $T(\Om\cup B_{s_i})^{-1}-T(\Om)^{-1}\gtrsim s_i^n$.
At the same time we have by Lemma \ref{lem_bucur}
\[\lambda_k(\Om)-\lambda_k(\Om\cup B_{s_i})\lesssim k^{2+\frac{4}{n}}(T(\Om\cup B_{s_i})-T(\Om))\lesssim  k^{2+\frac{4}{n}}\left(T(\Om)^{-1}-T(\Om\cup B_{s_i})^{-1}\right)\]
where we also used Lemma \ref{lem_apriori} to write $\lambda_k(\Om\cup B_{s_i})\leq\lambda_k(\Om)\lesssim  k^{\frac{2}{n}}$. We now compare the energy of $\Om$ and $\tilde{\Om}_i:=\left(\frac{\om_n}{|\Om\cup B_{s_i}|}\right)^\frac{1}{n}\Om\cup B_{s_i}$:
\begin{align*}
0&\leq \left(T^{-1}+\delta\lambda_k\right)\left(\tilde{\Om}_i\right)-
\left(T^{-1}+\delta\lambda_k\right)\left(\Om\right)\\
&= \left(\frac{|\Om\cup B_{s_i}|}{\omega_n}\right)^{\frac{n+2}{n}}T(\Om\cup B_{s_i})^{-1}-T(\Om)^{-1}+\delta\left(\left(\frac{|\Om\cup B_{s_i}|}{\omega_n}\right)^{\frac{2}{n}}\lambda_k(\Om\cup B_{s_i})-\lambda_k(\Om)\right)\\
&\leq \left(1-C_n|\delta|k^{2+\frac{4}{n}}\right)\left(T(\Om\cup B_{s_i})^{-1}-T(\Om)^{-1}\right)+(C_n'+k^\frac{2}{n}C_n'')|B_{s_i}\setminus \Om|
\end{align*}
for some constants $C_n,C_n',C_n''>0$: as a consequence, when $|\delta|\ll k^{-2-\frac{4}{n}}$ we get
\[|B_{s_i}\setminus \Om|\gtrsim T(\Om)^{-1}-T(\Om\cup B_{s_i})^{-1}\gtrsim s_i^n.\]
Finally, we get $|B_{s_i}\setminus\Om|\geq c_n s_i^n$ for all $i\in\N$, for some constant $c_n>0$, which is in contradiction with \eqref{eq_density} for large $i\in\N$ when $\tau$ is chosen small enough. As a consequence we have $\ov{w}(x)=o(|x|)$ on $\Hn$, thus finishing the proof of the interior sphere case. 

\smallskip
\noindent
\textbf{Step 3. Relation between $\alpha$ and $\beta$}. Let $\zeta\in\C^\infty_c(\Rn,\Rn)$ and $\zeta^t=\Id+t\zeta$, which is a diffeomorphism for any \BBB $t\in\R$ small enough\EEE. Since for $|\delta|\ll k^{-\left(4+\frac{8}{n}\right)}g_n(k)$ we have that $\lambda_k(\Om)$ is simple (by Corollary \ref{rem_estimate}){\BBB, so} we may compute the shape derivatives of $T$, $\lambda_k$ and $|\cdot|$ at the bounded open set $\Om$ (see respectively \cite[Proposition 6]{Lau20} and \cite[Theorem 2.6 (iii)]{LamLan06} for the derivatives of $T$ and $\lambda_k$). We have
\begin{align*}
\left.\frac{d}{dt}\right|_{t=0}\left|\zeta^t(\Om)\right|&=\int_{\Om}\nabla\cdot\zeta,\\
\left.\frac{d}{dt}\right|_{t=0}T\left(\zeta^t(\Om)\right)&=\int_{\Om}\left[\left(2w-|\nabla w|^2\right)\nabla\cdot\zeta +2\nabla w\cdot D\zeta\cdot\nabla w\right],\\
\left.\frac{d}{dt}\right|_{t=0}\lambda_k\left(\zeta^t(\Om)\right)&=\int_{\Om}\left[\left(|\nabla u_k|^2-\lambda_k(\Om)u_k^2\right)\nabla\cdot\zeta -2\nabla u_k\cdot D\zeta\cdot\nabla u_k\right].
\end{align*}
Thanks to $\Om$ being a minimizer of the scale-free functional \eqref{eq:scale_free}, the optimality condition writes
\[\left.\frac{d}{dt}\right|_{t=0}\left[\left|\zeta^t(\Om)\right|^\frac{2}{n} \left(\frac{\left|\zeta^t(\Om)\right|}{\om_n T\left(\zeta^t(\Om)\right)}+\delta \lambda_k\left(\zeta^t(\Om)\right)\right)\right]=0.\]
It gives, after simplification,
\begin{align*}
&\int_{\Om}\left[\left(|\nabla w|^2+T(\Om)^2\delta |\nabla u_k|^2+Q\right)\nabla\cdot\zeta -2\left(\nabla w\cdot D\zeta\cdot\nabla w+T(\Om)^2\delta \nabla u_k\cdot D\zeta\cdot\nabla u_k\right)\right]\\
&\hskip 4cm =\int_{\Om}\left( 2w+T(\Om)^2\delta\lambda_k(\Om)u_k^2\right)\nabla\cdot\zeta.
\end{align*}
We now replace $\zeta$ with $\zeta_i(x):=\zeta(x/s_i)$, where $s_i$ is a positive sequence for which \eqref{eq_blowup} holds, and we rescale the previous equality to obtain
$$
\int_{s_i^{-1}\Om} \Big(\left(|\nabla w_{s_i}|^2+T(\Om)^2\delta |\nabla u_{k,s_i}|^2+Q\right)\nabla\cdot\zeta \hskip 4cm $$
$$\hskip 3cm -2\left(\nabla w_{s_i}\cdot D\zeta\cdot\nabla w_{s_i}+T(\Om)^2\delta \nabla u_{k,s_i}\cdot D\zeta\cdot\nabla u_{k,s_i}\right) \Big)$$
$$
\hskip 2cm =s_i\int_{s_i^{-1}\Om}\left( 2w_{s_i}+s_iT(\Om)^2\delta\lambda_k(\Om)(u_{k,s_i})^2\right)\nabla\cdot\zeta.
$$
Now, we have shown in the first part that $w_{s_i}\to \alpha x_n^+$ and $u_{k,s_i}\to \beta x_n^+$ in the $\C^0_\loc(\Rn)\cap \C^1_\loc(\Hn)$ sense. Using also the $L^\infty$ and Lipschitz bounds on $w_{s_i}$ and $u_{k,s_i}$, and recalling that $s_i^{-1}\Om=\{w_{s_i}>0\}$ converges locally in $\Rn$ in the Hausdorff sense to $\Hn$ thanks to Lemma \ref{lem_prelim_blowup}, every term above passes to the limit and we get

\[\int_{\Hn}\left[\left(\alpha^2+T(\Om)^2\delta\beta^2 +Q\right)\nabla\cdot\zeta-2\left(\alpha^2+T(\Om)^2\delta\beta^2)\partial_n\zeta_n\right) \right]=0.\]
Applying Stoke's theorem, this gives $\int_{\partial\Hn}\left(\alpha^2+T(\Om)^2\delta\beta^2-Q\right)(\zeta\cdot e_n)=0$. Since $\zeta\in\C^\infty_c(\Rn,\Rn)$ was chosen arbitrarily, we conclude that $\alpha^2+T(\Om)^2\delta\beta^2=Q$.\end{proof}

\subsection{Minimizers are nearly spherical}\label{sect:min_are_NS}

In this section we {\BBB improve the regularity properties of $\Om$ solution to \eqref{eq_func} and} prove that under sufficient smallness of $\delta$, minimizers are nearly spherical sets. In this context, ``nearly spherical'' means that
\[\Om=the author:=\left\{s(1+h(x))x,\ s\in [0,1),\ x\in\partial B\right\},\]
where $h\in \C^{3,\gamma}\left(\partial B,[-\frac{1}{2},\frac{1}{2}]\right)$ with a bound  $\Vert h \Vert_{ \C^{3,\gamma}}\lesssim 1$. This is achieved {\BBB in two main steps: first in Lemma \ref{lem_C1gamma} we show a {\BBB $\C^{1,\gamma}$} regularity estimate by relying on recent results from free boundary theory, using \cite{KL18} for the $\delta>0$ case and \cite{MTV21} for the $\delta<0$ one. Then 
in Lemma \ref{lem_C2gamma}, we go further to obtain {\BBB $\C^{3,\gamma}$}-regularity}. 

It will be useful for us to consider \textit{centered} sets, where we say $\Om\in\A$ is centered when $\text{bar}(\Om):=\fint_{\Om}xdx$ is well-defined and equals zero.  \BBB We know from Lemma \ref{lem_reg} that minimizers are bounded \BBB, so that their barycenters are well defined. \EEE Note that since the functional under study $T^{-1}+\delta\lambda_k$ is translation invariant, there is no loss of generality in assuming that a given minimizer is centered.

\begin{lemma}\label{lem_convw}
Let $\Om$ be a centered minimizer of \eqref{eq_func} for $|\delta|\ll k^{-\left(2+\frac{4}{n}\right)}$. Then we have
\begin{align*}
\Vert w_\Om-w_B\Vert_{\C^{0}(\Rn)}&\lesssim \left(k^\frac{1}{n}|\delta|^\frac{1}{2}\right)^{\frac{1}{n+1}},\\
|\Om\Delta B|&\lesssim \mathcal{F}(\Om).
\end{align*}

\end{lemma}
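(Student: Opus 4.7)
The plan is to establish the two bounds in reverse order of their statement, since the second estimate, interpreted as a translation-control statement, is what will allow me to transfer the $L^1$-bound of Lemma \ref{lem_apriori} from a Fraenkel-optimal translate of $\Om$ to the centered minimizer $\Om$ itself, before invoking a Gagliardo--Nirenberg interpolation in the spirit of \eqref{eq:wOm_wB_Linfty}.

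For the translation control $|\Om\Delta B|\lesssim\F(\Om)$, let $x_\ast\in\R^n$ realize $\F(\Om)=|\Om\Delta(B+x_\ast)|$. Since $|\Om|=|B+x_\ast|=\om_n$, $\int_{B+x_\ast}y\,dy=\om_n x_\ast$, and $\int_\Om y\,dy=0$ by the centering hypothesis, subtracting these identities yields
\[\om_n x_\ast=\int_{(B+x_\ast)\setminus\Om}y\,dy-\int_{\Om\setminus(B+x_\ast)}y\,dy.\]
From Lemma \ref{lem_apriori} combined with \eqref{eq:QSV}, one has $\F(\Om)\ll\om_n$ for small $|\delta|$, which through the above identity forces $|x_\ast|$ to be bounded by a dimensional constant; combined with the diameter bound $\mathrm{diam}(\Om)\lesssim 1$ from Lemma \ref{lem_reg}, I get $|x_\ast|\lesssim\F(\Om)$. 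The triangle inequality together with the elementary estimate $|(B+x_\ast)\Delta B|\lesssim|x_\ast|$ (valid since $|x_\ast|$ is dimensionally bounded) then yields $|\Om\Delta B|\lesssim\F(\Om)$.

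For the $L^\infty$ estimate, I apply Lemma \ref{lem_apriori} to the translate $\Om-x_\ast$, which satisfies the hypothesis $\F(\Om-x_\ast)=|(\Om-x_\ast)\Delta B|$ by the choice of $x_\ast$, to get $\|w_{\Om-x_\ast}-w_B\|_{L^1(\R^n)}\lesssim k^{1/n}|\delta|^{1/2}$. Since $w_\Om(\cdot)=w_{\Om-x_\ast}(\cdot-x_\ast)$ and $\|w_B(\cdot-x_\ast)-w_B\|_{L^1(\R^n)}\lesssim|x_\ast|\lesssim\F(\Om)\lesssim k^{1/n}|\delta|^{1/2}$ by the previous paragraph, the triangle inequality yields $\|w_\Om-w_B\|_{L^1(\R^n)}\lesssim k^{1/n}|\delta|^{1/2}$. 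Coupled with the uniform Lipschitz bound $\|\nabla(w_\Om-w_B)\|_{L^\infty}\lesssim 1$ from Lemma \ref{lem_reg}, the Gagliardo--Nirenberg interpolation used in \eqref{eq:wOm_wB_Linfty} produces the desired $\|w_\Om-w_B\|_{\mathcal{C}^0(\R^n)}\lesssim(k^{1/n}|\delta|^{1/2})^{1/(n+1)}$.

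No individual step is particularly delicate; the only subtlety is to ensure the implicit constants remain $\delta$-independent, which is automatic from the $\delta$-uniform bounds of Lemma \ref{lem_reg} and from the smallness of $\F(\Om)$ guaranteed by Lemma \ref{lem_apriori}. If there is a genuine obstacle to anticipate, it is the qualitative step of showing a priori that $|x_\ast|$ stays bounded as $\delta\to 0$, but this is precisely where the smallness of $\F(\Om)$ inherited from Lemma \ref{lem_apriori} is crucial.
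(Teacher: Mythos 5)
Your proof is correct and follows essentially the same route as the paper's: both arguments reduce to the Fraenkel-optimal translate, where Lemma \ref{lem_apriori} applies, and control the re-centering translation by the barycenter comparison $|x_\ast|\lesssim|\Om\Delta(B+x_\ast)|$ (using the diameter bound of Lemma \ref{lem_reg}), before concluding via the Gagliardo--Nirenberg interpolation of \eqref{eq:wOm_wB_Linfty}. The only difference is cosmetic — you start from the centered set and solve for the optimal translation vector, whereas the paper starts from the Fraenkel-optimal translate and subtracts its barycenter — so no further comment is needed.
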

\begin{proof}
Suppose that $\Om$ is translated into $\tilde{\Om}$ so that $\F(\tilde{\Om})=|\tilde{\Om}\Delta B|$ and $\Om=\tilde{\Om}-\text{bar}(\tilde{\Om})$. If $|\delta|\ll k^{-\left(2+\frac{4}{n}\right)}$ then we have shown in \eqref{eq:wOm_wB_Linfty} that $\Vert w_{\tilde{\Om}}-w_B\Vert_{\C^{0}(\Rn)}\lesssim \left(k^\frac{1}{n}|\delta|^\frac{1}{2}\right)^{\frac{1}{n+1}}$.
Now, since {\BBB the diameter of} $\tilde{\Om}$ is bounded by a dimensional constant thanks to Lemma \ref{lem_reg}, we have  $|\text{bar}(\tilde{\Om})|=|\text{bar}(\tilde{\Om})-\text{bar}(B)|\lesssim |\tilde{\Om}\Delta B|\lesssim k^{\frac{1}{n}}|\delta|^\frac{1}{2}$ using also Lemma \ref{lem_apriori}. As a consequence, we deduce
\[\Vert w_{\Om}-w_B\Vert_{\C^0(\Rn)}\leq \Vert w_{\tilde{\Om}}-w_B\Vert_{\C^0(\Rn)} +\Vert w_{B-\text{bar}(\tilde{\Om})}-w_B\Vert_{\C^0(\Rn)}\lesssim \left(k^\frac{1}{n}|\delta|^\frac{1}{2}\right)^{\frac{1}{n+1}}\]
as well as
\[|\Om\Delta B|\leq |\tilde{\Om}\Delta B|+|(B+\text{bar}(\tilde{\Om}))\Delta B|\lesssim\mathcal{F}(\tilde{\Om}).\]
\end{proof}

\begin{lemma}\label{lem_C1gamma}
Let $\Om$ be a centered minimizer of \eqref{eq_func}. If $|\delta|\ll k^{-\left(4+\frac{8}{n}\right)}g_n(k)$ then we have
\BBB \[\Om=\left\{s(1+h(x))x,\ s\in[0,1), \ x\in\partial B\right\},\] \EEE
where $h\in\C^{1,\gamma}\left(\partial B,[-\frac{1}{2},\frac{1}{2}]\right)$ for some $\gamma=\gamma_n\in (0,1)$  depending only on $n$, and $\Vert  h\Vert_{\C^{1,\gamma}(\partial B)}\lesssim 1$.
\end{lemma}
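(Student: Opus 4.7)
The plan is to transfer $\mathcal{C}^{1,\gamma}$ regularity from the vectorial one-phase free boundary theory, using $(w_\Om, u_k)$ as the pair of states and the overdetermined condition derived in Lemma \ref{lem_blowup}. The key reductions are to show that $\partial\Om$ is uniformly flat at every boundary point, on a scale independent of $\Om$, and then to feed this into an $\varepsilon$-regularity theorem -- from \cite{KL18} when $\delta\geq 0$ and from \cite{MTV21} when $\delta<0$.

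First I would upgrade the $\mathcal{C}^0$-closeness of $w_\Om$ to $w_B$ given by Lemma \ref{lem_convw} into Hausdorff closeness of the boundaries. Combining $\|w_\Om-w_B\|_{\mathcal{C}^0}\ll 1$ with the Lipschitz bound $\|\nabla w_\Om\|_{L^\infty}\lesssim 1$ and the non-degeneracy estimate \eqref{eq:nondeg} from Lemma \ref{lem_reg}, one shows that there exists $\eta=\eta_n(\delta)\to 0$ as $\delta\to 0$ such that
\[
\partial\Om\subset \{1-\eta\leq |x|\leq 1+\eta\}.
\]
Indeed, a point of $\partial\Om$ outside this annulus would force $w_\Om$ to be either too large where $w_B$ vanishes (contradicting the $L^\infty$ bound via non-degeneracy of $w_\Om$) or too small inside $B$ (contradicting non-degeneracy of $w_B$ and uniform convergence).

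Next I would invoke the free boundary framework. By Lemma \ref{lem_blowup}, at every contact point $z\in\partial\Om$ the pair $(w_\Om, u_k)$ admits a blow-up of the form $(\alpha(x\cdot\nu)_+,\beta(x\cdot\nu)_+)$ with $\alpha^2+T(\Om)^2\delta\beta^2=Q$, and the constant $Q$ is close to $1/n^2$ by \eqref{eq_estQ}. Setting $v_1=w_\Om$ and $v_2=T(\Om)\sqrt{|\delta|}\,u_k$, the overdetermined condition becomes
\[
\left(\tfrac{\partial v_1}{\partial\nu_\Om}\right)^{\!2}+\mathrm{sgn}(\delta)\left(\tfrac{\partial v_2}{\partial\nu_\Om}\right)^{\!2}=Q
\qquad\text{on } \partial\Om,
\]
in the viscosity sense, and the domination $|u_k|\lesssim k^{1+n/4}w_\Om$ from Lemma \ref{lem_prelim} gives the hypothesis $|v_2|/v_1\lesssim \sqrt{|\delta|}\,k^{1+n/4}\ll 1$ required to apply the vectorial regularity theory. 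When $\delta>0$ this is exactly the setting of \cite{KL18}, and when $\delta<0$ this is the negative-sign vectorial problem addressed by \cite{MTV21}, applied here in the scalar-dominated subcase $m=2$.

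The main obstacle is the verification that, for $|\delta|\ll k^{-(4+8/n)}g_n(k)$, the pair $(v_1,v_2)$ is uniformly $\varepsilon$-flat at every boundary point on a scale $r_0$ independent of $\Om$, with $\varepsilon$ below the threshold of the flatness-improvement theorems in \cite{KL18,MTV21}. For $v_1$ this follows from Hausdorff closeness of $\partial\Om$ to $\partial B$, the gradient bound, and non-degeneracy; for $v_2$ it follows because $\sqrt{|\delta|}\,u_k$ has $L^\infty$ and gradient bounds of order $\sqrt{|\delta|}\,k^{1+n/4}\to 0$. The cited flatness-implies-$\mathcal{C}^{1,\gamma}$ theorems then yield, at each $z\in\partial\Om$, a ball $B_{z,r_0/2}$ in which $\partial\Om$ is the graph of a $\mathcal{C}^{1,\gamma}$ function of uniform norm over the hyperplane $\{(x-z)\cdot\nu_z=0\}$, with $\nu_z$ close to $z/|z|$. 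A finite cover of $\partial B$ and an implicit function theorem argument in polar coordinates $x\mapsto (x/|x|,|x|)$ then assemble these local graphs into the global radial parametrization $\partial\Om=\{(1+h(x))x:x\in\partial B\}$ with $h\in\mathcal{C}^{1,\gamma}(\partial B,[-1/2,1/2])$ and $\|h\|_{\mathcal{C}^{1,\gamma}}\lesssim 1$, the bound $|h|\leq 1/2$ following from Hausdorff closeness to $\partial B$.
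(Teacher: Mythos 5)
Your overall architecture (quantitative flatness of the free boundary at a uniform scale, then an $\varepsilon$-regularity theorem from \cite{KL18} or \cite{MTV21}, then patching local graphs into a radial parametrization) is the same as the paper's. But there is a genuine gap at the central step: you keep the pair $(v_1,v_2)=(w_\Om,\,T(\Om)\sqrt{|\delta|}\,u_k)$ with the signed condition $(\partial_\nu v_1)^2+\mathrm{sgn}(\delta)(\partial_\nu v_2)^2=Q$ and assert that for $\delta<0$ this ``is the negative-sign vectorial problem addressed by \cite{MTV21}''. It is not: \cite{MTV21} treats the boundary condition $\partial_\nu u_1\cdot\partial_\nu u_2=1$ for \emph{two positive states}, i.e.\ the quadratic form $q(x_1,x_2)=x_1x_2$, not $x_1^2-x_2^2$, and your $v_2$ is sign-changing (the eigenfunction $u_k$ has nodal sets for $k\ge2$), so neither the positivity hypothesis nor the flatness hypothesis of the cited theorem can be verified for $(v_1,v_2)$ near boundary points where $u_k\le0$. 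The missing idea is the difference-of-squares factorization: the paper sets $\ov{w}=Q^{-1/2}(w+T(\Om)\sqrt{-\delta}\,u_k)$ and $\un{w}=Q^{-1/2}(w-T(\Om)\sqrt{-\delta}\,u_k)$, which are positive on $\Om$ precisely because of the domination $|u_k|\lesssim k^{1+n/4}w$ you quote, and for which the overdetermined condition becomes exactly $\partial_\nu\ov{w}\cdot\partial_\nu\un{w}=1$. Only after this change of variables does \cite[Theorem 3.1]{MTV21} apply; the flatness of $(\ov w,\un w)$ at scale $r$ is then checked from $\Vert \ov w-(\tfrac{1-|x|^2}{2})_+\Vert_{\mathcal{C}^0}\lesssim(k^{1/n}|\delta|^{1/2})^{1/(n+1)}$. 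The same transformation (with $\sqrt{\delta}$) is used for $\delta>0$ to produce two positive states satisfying $\tfrac12\left((\partial_\nu\ov w)^2+(\partial_\nu\un w)^2\right)=1$, fitting the framework of \cite[Theorem 7.2]{KL18}.

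Since this factorization is the whole mechanism by which the ``wrong sign'' case is reduced to known free boundary theory (the paper stresses in its introduction that the negative sign falls outside all previously studied settings), omitting it leaves the $\delta<0$ case unproved. The remaining parts of your argument are sound but secondary: your Hausdorff-closeness route to flatness is workable, though the paper proceeds more directly from the quantitative $L^\infty$ bound $\Vert w_\Om-w_B\Vert_{\mathcal{C}^0}\lesssim(k^{1/n}|\delta|^{1/2})^{1/(n+1)}$ of Lemma \ref{lem_convw}, rescaled in balls $B_{x,r}$ with $x\in\partial B$ and $r$ dimensional.
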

\BBB In the rest of the section, the value of  $\gamma$ will be fixed as given from this Lemma.\EEE

\begin{proof}
We {\BBB again} separate the cases $\delta<0$ and $\delta>0$. 

\smallskip
\noindent
\textbf{Case $\delta<0$.} Set 
\[\ov{w}=Q^{-\frac{1}{2}}\left(w+T(\Om)\sqrt{-\delta}u_k\right) \text{ and } \un{w}=Q^{-\frac{1}{2}}\left(w-T(\Om)\sqrt{-\delta}u_k\right).\] 
For $|\delta|\ll k^{-1-\frac{4}{n}}$ the functions $\ov{w}$ and $\un{w}$ are positive on their support, due to the estimates from Lemma \ref{lem_prelim}. Since $|\delta|\ll k^{-4-\frac{8}{n}}g_n(k)$, Lemma \ref{lem_blowup} applies to ensure that the couple $(\ov{w},\un{w})$ is a viscosity solution in the sense of \cite[Definition 2.4]{MTV21} of the system
\[\begin{cases}
-\Delta \ov{w}=Q^{-\frac{1}{2}}\left(1+T(\Om)\sqrt{-\delta}\lambda_k(\Om)u_k\right) & {\BBB \textrm{ in }}\Om,\\
-\Delta \un{w}=Q^{-\frac{1}{2}}\left(1-T(\Om)\sqrt{-\delta}\lambda_k(\Om)u_k\right) & {\BBB \textrm{ in }}\Om,\\
\ov{w},\un{w}>0 & {\BBB \textrm{ in }}\Om,\\
\ov{w}=\un{w}=0 & {\BBB \textrm{ on }}\partial\Om,\\
\partial_\nu \ov{w}\cdot\partial_\nu \un{w}=1 & {\BBB \textrm{ on }}\partial\Om.
\end{cases}\]
Note that $\ov{w}$ and $\un{w}$ both converge uniformly to $\left(\frac{1-|x|^2}{2}\right)_+$ as $k^{1+\frac{4}{n}}|\delta|\to 0$. In fact, by respectively Lemma \ref{lem_convw}, inequality \eqref{eq_estQ} and Lemma \ref{lem_prelim} we have
\begin{align*}
\Vert w_\Om-w_B\Vert_{\C^0(\Rn)}\lesssim \left(k^\frac{1}{n}|\delta|^\frac{1}{2}\right)^{\frac{1}{n+1}},\qquad \left|Q-\frac{1}{n^2}\right|\lesssim k^\frac{1}{n}|\delta|^\frac{1}{2},\qquad \sqrt{-\delta}|u_k|\lesssim k^{\frac{1}{2}}|\delta|^\frac{1}{2},
\end{align*}
so that
\[\left\Vert \ov{w}-\left(\frac{1-|x|^2}{2}\right)_{+}\right\Vert_{\C^0(\Rn)}+\left\Vert \un{w}- \left(\frac{1-|x|^2}{2}\right)_{+}\right\Vert_{\C^0(\Rn)}\lesssim  \left(k^{\frac{1}{n}}|\delta|^\frac{1}{2}\right)^\frac{1}{n+1}.\]
Our goal is now to apply the $\C^{1,\gamma}$ regularity theorem \cite[Theorem 3.1]{MTV21} for balls $B_{x,r}$ with $x\in \partial B$ and sufficiently small $r>0$. To simplify {\BBB notations} we assume that $x=-e_n$ and \BBB we \EEE let $r>0$ be a radius which will be fixed later. Since  $\|(w_B)_{-e_n,r}-x_n^+\|_{\C^0(B_1)}\lesssim r$ we deduce from the convergence of $\ov{w}$ and $\un{w}$
\begin{align*}
&\Vert \ov{w}_{-e_n,r}-x_n^+\Vert_{\C^0(B_1)}+\Vert \ov{w}_{-e_n,r}-x_n^+\Vert_{\C^0(B_1)}\lesssim r+r^{-1}\left(k^\frac{1}{n}|\delta|^\frac{1}{2}\right)^{\frac{1}{n+1}},\\[2mm]
&|\Delta \ov{w}_{-e_n,r}|+|\Delta \un{w}_{-e_n,r}|\lesssim r\text{ in }\{w_{-e_n,r}>0\},
\end{align*}
where we also used  $|\Delta \ov{w}_{-e_n,r}|,|\Delta \un{w}_{-e_n,r}|\lesssim r(1+|\delta|^{\frac{1}{2}}k^{\frac{1}{2}+\frac{4}{n}})\lesssim r$ (in $\Om_{-e_n,r}$) thanks to $\lambda_k(\Om)\lesssim k^{\frac{2}{n}}$ (Lemma \ref{lem_apriori}) and the choice of $\delta$.

We let $\eps:=\sqrt{r}$ and choose $r$ small enough so that the $\eps$-regularity Theorem \cite[Theorem 3.1]{MTV21} applies to $\eps$ (note that our inequalities are up to a dimensional constant, so the choice of $\eps$ may also differ up to a constant). Then when $\left(k^{\frac{1}{n}}|\delta|^\frac{1}{2}\right)^{\frac{1}{n+1}}\ll r^2$ the couple $(\ov{w}_{-e_n,r}, \un{w}_{-e_n,r})$ is $\eps$-flat so by \cite[Theorem 3.1]{MTV21}, $\partial\{\ov{w}_{-e_n,r}>0\}\cap B_{\frac{1}{2}}$ is a $\C^{1,\gamma}$ graph with controlled $\C^{1,\gamma}$ norm (for some dimensional constant $\gamma=\gamma_n\in (0,1)$), meaning that there exists $g:\left[-\frac{1}{2},\frac{1}{2}\right]\to [-1,1]$ such that
\[\{\ov{w}_{-e_n,r}>0\}\cap B_{\frac{1}{2}}=\left\{(x',x_n)\in\R^{n-1}\times\R,\ |x'|\BBB < \EEE\frac{1}{2},\ x_n{\BBB >} g(x)\right\},\ \Vert g\Vert_{\C^{1,\gamma}\left(\left[-\frac{1}{2},\frac{1}{2}\right]\right)}\lesssim 1.\]
This translates into the fact that $\partial\Om\cap B_{-e_n,r/2}$ is a $\C^{1,\gamma}$ graph with graph function $\tilde{g}:=r(-1+g)$. Covering $\partial \Om$ with a finite number of such balls of radius $r/2$ we find $h:\partial B\to \R$ such that $\partial\Om=\{(1+h(x))x,x\in\partial B\}$ with $\Vert h\Vert_{\C^{1,\gamma}(\partial B)}\lesssim 1$.

\smallskip
\noindent\textbf{Case $\delta>0$.} We analogously set 
\[\ov{w}=Q^{-\frac{1}{2}}\left(w+T(\Om)\sqrt{\delta}u_k\right) \text{ and } \un{w}=Q^{-\frac{1}{2}}\left(w-T(\Om)\sqrt{\delta}u_k\right),\] 
This time the couple $(\ov{w},\un{w})$ is a viscosity solution with boundary condition $\frac{(\partial_\nu \ov{w})^2+(\partial_\nu \un{w})^2}{2}=1$ in the sense given by \cite[Definition 4.1]{KL18}. By the exact same argument as in the previous case we may apply \cite[Theorem { 7.2}]{KL18} to the couple $(\ov{w}_{x,r}, \un{w}_{x,r})$ for $x\in\partial B$ and some dimensional $r$, thus providing again the existence of $h:\partial B\to \R$ such that $\partial\Om=\{(1+h(x))x,x\in\partial B\}$ with $\Vert h\Vert_{\C^{1,\gamma}(\partial B)}\lesssim 1$ (with a possibly different dimensional constant $\gamma$).

\end{proof}

\begin{lemma}\label{lem_C2gamma}
Let $\Om$ be a centered minimizer of \eqref{eq_func}. If $|\delta|\ll k^{-\left(4+\frac{8}{n}\right)}g_n(k)$ then
\BBB \[\Om=\left\{s(1+h(x))x,\ s\in[0,1), \ x\in\partial B\right\},\] \EEE
where  \BBB $h\in \C^{3,\gamma}\left(\partial B,[-\frac{1}{2},\frac{1}{2}]\right)$ and  $\Vert  h\Vert_{ \C^{3,\gamma}(\partial B)}\leq D_n$ for some dimensional $\gamma=\gamma_n\in (0,1)$ \BBB introduced in lemma \ref{lem_C1gamma} \EEE and $D_n>0$.
\end{lemma}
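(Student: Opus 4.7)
The plan is a classical bootstrap via the partial hodograph transform combined with Schauder theory for oblique boundary value problems. We start from the $\C^{1,\gamma}$ parametrization $h$ of $\partial\Om$ furnished by Lemma \ref{lem_C1gamma}. Boundary Schauder estimates applied to $-\Delta w = 1$ and $-\Delta u_k = \lambda_k(\Om) u_k$ on the $\C^{1,\gamma}$ domain $\Om$ yield $w, u_k \in \C^{1,\gamma}(\overline{\Om})$ with quantitatively controlled norms. Combined with the non-degeneracy of Lemma \ref{lem_reg}, the normal derivative $\partial_\nu w$ is a $\C^{0,\gamma}$ function on $\partial\Om$ bounded below uniformly.

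Fix $x_0 \in \partial\Om$, rotate so that the inward normal at $x_0$ is $e_n$, and locally write $\partial\Om = \{x_n = \psi(x')\}$ with $\psi \in \C^{1,\gamma}$. Define the partial hodograph $\Psi(x) := (x', w(x))$: since $\partial_n w(x_0) > 0$ is bounded below, $\Psi$ is a local $\C^{1,\gamma}$-diffeomorphism sending $\partial\Om$ to $\{y_n = 0\}$. Let $X_n(y) := \Psi^{-1}(y)\cdot e_n$, so $X_n(y',0) = \psi(y')$, and let $U(y) := u_k(\Psi^{-1}(y))$. Standard chain-rule computations convert $-\Delta w = 1$ into a quasilinear uniformly elliptic equation for $X_n$ with coefficients depending smoothly on $\nabla X_n$, and $-\Delta u_k = \lambda_k(\Om) u_k$ into a quasilinear elliptic equation for $U$ coupled to $X_n$. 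The identities
\[
|\nabla w|^2 = \frac{1+|\nabla_{y'}X_n|^2}{(\partial_{y_n} X_n)^2}, \qquad |\nabla u_k|^2 = \frac{(\partial_{y_n} U)^2\bigl(1+|\nabla_{y'}X_n|^2\bigr)}{(\partial_{y_n} X_n)^2} \text{ on } \{y_n = 0\}
\]
(the second using $U \equiv 0$ on $\{y_n = 0\}$) rewrite the overdetermined condition from Lemma \ref{lem_blowup} as the nonlinear condition
\[
G(\nabla X_n, \partial_{y_n} U) := \bigl(1+|\nabla_{y'}X_n|^2\bigr)\bigl(1 + T(\Om)^2\delta(\partial_{y_n} U)^2\bigr) - Q(\partial_{y_n} X_n)^2 = 0 \quad\text{on }\{y_n = 0\}.
\]
Since $\partial_{y_n} X_n = 1/\partial_n w$ is bounded above and below by the non-degeneracy and Lipschitz estimates, $G$ satisfies a uniform obliqueness condition.

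Apply Schauder theory for nonlinear oblique boundary value problems for quasilinear elliptic equations, of the type employed in the $\C^{2,\gamma}$ regularity arguments of \cite{KL18,MTV21}, to the coupled system $(X_n, U)$: the $\C^{1,\gamma}$ regularity of $X_n$ and $U$ promotes the coefficients of both the interior equations and the boundary operator to $\C^{0,\gamma}$, which upgrades $X_n$ to $\C^{2,\gamma}$ near $\{y_n = 0\}$ with quantitative bounds. Translating back, $\psi \in \C^{2,\gamma}$ locally, and covering $\partial\Om$ by finitely many such charts produces $h \in \C^{2,\gamma}(\partial B)$ with $\|h\|_{\C^{2,\gamma}} \le D_n$. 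The main obstacle is the uniformity of the Schauder bootstrap as $\delta \to 0$: fortunately, the boundary condition converges continuously to $(\partial_{y_n} X_n)^2 = Q^{-1}(1+|\nabla_{y'}X_n|^2)$, which is the flattened overdetermined condition for the ball, and the ellipticity and obliqueness constants remain uniformly bounded away from zero thanks to the Lipschitz and non-degeneracy estimates of Lemma \ref{lem_reg}, so the resulting Schauder constants depend only on the dimension.
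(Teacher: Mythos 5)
Your proposal is correct and follows essentially the same route as the paper: the partial hodograph transform $x\mapsto(x',w(x))$ (the paper additionally rescales by a small dimensional radius $r$ and by $\alpha=\partial_n w$ at the base point), the reformulation of the overdetermined condition as a nonlinear oblique boundary condition for the transformed torsion function coupled to the transformed eigenfunction, $\C^{2,\gamma}$ control of the latter first via its Dirichlet condition, and then Schauder estimates for the oblique problem (implemented in the paper via tangential difference quotients and \cite[Th.~6.26]{GT01}), with the same mechanism ensuring uniformity as $\delta\to0$.
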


\begin{proof}   \footnote{\BBB A former version of this Lemma, albeit enough for the purposes of this paper, was initially proved by the authors \textit{via} a partial hodograph transform, by considering the torsion function $w_\Om$ as a coordinate: this type of approach is detailed for instance in \cite{KL18,CSY18}. Based on a suggestion of one of the reviewers (to whom we extend our thanks), we give a shorter and less computational proof based on the boundary Harnack inequality \cite[Theorem 2.4]{DS14}. \EEE}
Since $|\delta|\ll k^{-4-\frac{8}{n}}g_n(k)$ we may apply Lemma \ref{lem_C1gamma} to $\Om$, thus giving that $\Om$ is the  graph on the sphere of a function $h\in\C^{1,\gamma}(\partial B,\left[-\frac{1}{2},\frac{1}{2}\right])$ with $\|h\|_{\C^{1,\gamma}(\partial B)}\lesssim 1$.  By classical $\C^{1,\gamma}$ elliptic regularity  (see for instance \cite[Th. 3.13]{HL11})  and the $W^{1,\infty}$ bounds from Lemmas \ref{lem_prelim} and \ref{lem_reg} we have $ \Vert w_\Om\Vert_{\C^{1,\gamma}(\ov{\Om})}\lesssim 1$ and $\|u_k\|_{\C^{1,\gamma}(\ov{\Om})}\lesssim k^{\frac{1}{2}+\frac{4}{n}}$. 

We remind the optimality condition that is verified (now in the classical sense) on $\partial\Om$:
\[|\nabla w_\Om|^2+T(\Om)^2\delta|\nabla u_k|^2=Q\]
where $Q$ is the constant defined in equation \eqref{eq:defQ}, and satisfies $\left|Q-\frac{1}{n^2}\right|\lesssim k^\frac{2}{n}|\delta|$. In other words, \BBB noting that $|\nabla w_\Om|$ does not vanish on $\partial\Om$ thanks to Lemma \ref{lem_prelim} and $\lambda_k(\Om)\lesssim k^{\frac{2}{n}}$ (by Lemma \ref{lem_apriori}) \BBB, we have

\begin{equation}\label{eq_proofnearlycircular_optcondition}
|\nabla w_\Om|^2=\frac{Q}{1+T(\Om)^2\delta \frac{|\nabla u_k|^2}{|\nabla w_\Om|^2}}\text{ on }\partial\Om
\end{equation}

We claim that $\left\Vert \frac{|\nabla u_k|}{|\nabla w_\Om|}\right\Vert_{\C^{1,\gamma}(\partial\Om)}\lesssim k^{\frac{1}{2}+\frac{4}{n}}$, as a consequence of \cite[Th. 2.4]{DS14} applied to the ratio $\frac{u_k}{w_\Om}$. In \cite{DS14} this is stated for a harmonic denominator, so we introduce the auxiliary function 
$$v:\Om\setminus B_{1/2}\to \R$$
to be the harmonic extension of the boundary data $1$ on $\partial B_{1/2}$ (which does not meet $\partial\Om$) and $0$ on $\partial\Om$. By classical elliptic regularity we have
$$\Vert v\Vert_{\C^{1,\gamma}(\Om\setminus B_{1/2})}\lesssim 1.$$
By \cite[Th. 2.4]{DS14} applied to $(w_\Om,v)$ and $(u_k,v)$ \BBB in every ball $B_{p,1/2}$ for $p\in\partial B$\BBB, we have 
\begin{align*}
\left\Vert\frac{w_\Om}{v}\right\Vert_{\C^{1,\gamma}(\Om\setminus B_{3/4})}&\lesssim \Vert w_\Om\Vert_{L^\infty(\Om\setminus B_{1/2})}+\Vert \Delta w_\Om\Vert_{\C^{0,\gamma}(\Om\setminus B_{1/2})}\lesssim 1,\\
\left\Vert\frac{u_k}{v}\right\Vert_{\C^{1,\gamma}(\Om\setminus B_{3/4})}&\lesssim \Vert u_k\Vert_{L^\infty(\Om\setminus B_{1/2})}+\Vert \Delta u_k\Vert_{\C^{0,\gamma}(\Om\setminus B_{1/2})},\\
&\lesssim \Vert u_k\Vert_{L^\infty(\Om)}+\Vert \lambda_k u_k\Vert_{W^{1,\infty}(\Om)}\lesssim k^{\frac{1}{2}+\frac{4}{n}}.
\end{align*}
By maximum principle, since $v\lesssim w_\Om$ on $\partial B_{1/2}$ and both functions vanish on $\partial \Om$, then for some dimensional constant $c_n>0$ we have $\frac{w_\Om}{v}\geq c_n$ on $\Om\setminus B_{1/2}$, so by writing $\frac{u_k}{w_\Om}=\frac{u_k/v}{w_\Om/v}$ we have 
\[\left\Vert\frac{u_k}{w_\Om}\right\Vert_{\C^{1,\gamma}(\Om\setminus B_{3/4})}\lesssim k^{\frac{1}{2}+\frac{4}{n}}.\]
Since $u_k,w_\Om$ both vanish on $\partial\Om$, then the trace of $\frac{u_k}{w_\Om}$ on $\partial\Om$ is equal to the trace of $\frac{|\nabla u_k|}{|\nabla w_\Om|}$, which proves our earlier claim.

So when $|\delta|\ll k^{-1-\frac{8}{n}}$ (which is implied by our hypothesis), we obtain a bound
\[\Vert \nabla w_\Om\Vert_{\C^{1,\gamma}(\partial\Om)}\lesssim 1.\]

As a consequence, $w_\Om$ verifies in a distributional sense $\Delta w_\Om=-\chi_{\{w_\Om>0\}}+q\mathcal{H}^{n-1}\lfloor \partial\Om$ for some $q\in\C^{1,\gamma}(\R^n)$: by \cite[Th. 2]{KN77} (see also \cite[Th. 8.4]{AC81}, \cite[Lem. A.3]{KL18}), $\partial\Om$ is bounded in $\C^{2,\gamma}$, in the sense that \BBB $\Om$ is the graph on the sphere of some $h\in C^{2,\gamma}(\partial B)$ with in addition \BBB
\[\Vert h\Vert_{\C^{2,\gamma}(\partial B)}\lesssim 1.\]
Note that in both references, this is stated for harmonic functions instead of torsion functions, but the proof by hodograph transform adapts seamlessly. Then $\Delta w_\Om=-\chi_{\{w_\Om>0\}}+q\mathcal{H}^{n-1}\lfloor \partial\Om$ for some $q\in\C^{2,\gamma}(\R^n)$, and iterating \cite[Th. 2]{KN77} again gives that for some dimensional constant $D_n>0$ we have
\[\Vert h\Vert_{\C^{3,\gamma}(\partial B)}\leq D_n.\]

\end{proof}
\EEE

\subsection{Minimality of the ball among nearly spherical sets.}\label{subsec_NS}

This section is dedicated to the proof that {\BBB the ball is the unique local minimizer for \eqref{eq_func} in the class of nearly spherical sets}.  In the following definition we let $\gamma\in (0,1)$ and $D_n>0$ be given by Lemma \ref{lem_C2gamma} \BBB (or by Lemma \ref{lem_C2gammavect} later)\EEE.
\begin{definition}\label{def:NS}
\BBB An open set $\Om\subset\Rn$ is said to be nearly spherical if $|\Om|=|B|$ and there is a function $h\in \C^{3,\gamma}(\partial B,\left[-\frac{1}{2},\frac{1}{2}\right])$ with  $\Vert h\Vert_{ \C^{3,\gamma}(\partial B)}\leq D_n$  and such that $\Om=B_h$ \EEE, where
\[B_h:=\{s(1+h(x))x,s\in [0,1),x\in\partial B\}.\]
By convention, $h\nu_B$ will be extended as a vector field from $\Rn$ to $\Rn$ by the expression
\begin{equation}\label{eq:ext_hzeta}\zeta(x)=\varphi(|x|)h\left(\frac{x}{|x|}\right)\frac{x}{|x|},\end{equation}
where $\varphi\in\C_c^\infty(\R_+^*,[0,1])$ is such that $\varphi\equiv 1$ on $[1/2,3/2]$, $\varphi\equiv0$ on $[0,1/4]$ and $\varphi$ is nondecreasing on $[0,1/2]$. This way $\zeta^t(x)=x+t\zeta(x)$ is a \BBB $ \C^{3,\gamma}$ diffeomorphism from $B$ to $B_{th}$ \EEE for all $|t|\leq 1$.\\
Finally, we \BBB recall \EEE that $\Om$ is said to be \textit{centered} when its barycenter is at the origin.
\end{definition} 

To be consistent with the notation $B_r$ of the centered ball of radius $r$ it would probably be more natural to denote instead $B_{1+h}$ the nearly spherical set, but we will however carry on with the notation $B_h$ through the whole section for sake of simplicity. Note also that the values of $\gamma,D_n$, which {\BBB are} taken as in lemma \ref{lem_C2gamma}, do not matter as they could be replaced {\BBB in this section} by any $\gamma'\in (0,1)$, $D_n'>0$ (so long as they are fixed).\newline

The local minimality result is the following.
\begin{proposition}\label{prop_minloc}
Let $B_h$ be a nearly spherical centered set such that $\Vert h \Vert_{L^1(\partial B)}\ll k^{-1-\frac{4}{n}}g_n(k)$. Then when  $|\delta|\ll k^{-\left(1+\frac{8}{n}\right)}  g_n(k)$   we have
\[T(B_h)^{-1}+\delta\lambda_k(B_h)\geq T(B)^{-1}+\delta\lambda_k(B)\]
with equality if and only if $h\equiv 0$.
\end{proposition}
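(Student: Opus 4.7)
I would apply a Fuglede-type second-order argument along the smooth path $(B_{th})_{t\in[0,1]}$, using the vector field $\zeta$ extending $h\nu_B$ defined in \eqref{eq:ext_hzeta}. Setting $J(t) := T(B_{th})^{-1} + \delta \lambda_k(B_{th})$, one writes the Taylor formula with integral remainder
\[
J(1) - J(0) = J'(0) + \int_0^1 (1-t)\, J''(t)\, dt.
\]
Under the stated hypothesis, one may in particular assume $|\delta| \ll k^{-(4+8/n)} g_n(k)$, so that Corollary \ref{rem_estimate} ensures $\lambda_k(B)$ is simple. The associated $L^2$-normalized eigenfunction $u_k$ is then radial, so $|\nabla u_k|^2 \equiv c_k$ is constant on $\partial B$, and similarly $|\nabla w_B|^2 \equiv 1/n^2$ there. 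The Hadamard formulas for $T$ and $\lambda_k$ recalled in the proof of Lemma \ref{lem_blowup} then give $J'(0)$ as a multiple of $\int_{\partial B} \zeta\cdot\nu_B\, d\Ha^{n-1}$, which by the volume constraint $|B_h|=|B|$ is of order $\|h\|_{L^2(\partial B)}^2$, hence absorbed in the quadratic term.

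\textbf{Second-order bounds.} The heart of the proof is a coercivity estimate for $J''$. For the torsion, the method of \cite{DL19} yields the Fuglede-type inequality: there exists $c_n > 0$ such that at the ball
\[
\frac{d^2}{dt^2}\bigg|_{t=0} T(B_{th})^{-1} \geq c_n \|h\|_{H^{1/2}(\partial B)}^2,
\]
modulo error terms controlled by the smallness of the $\ell=0$ (volume) and $\ell=1$ (centering) spherical harmonic components of $h$. On the spectral side, the second variation formula for a simple Dirichlet eigenvalue at $B$ expands as
\[
\frac{d^2}{dt^2}\bigg|_{t=0} \lambda_k(B_{th}) = 2\sum_{j \neq k} \frac{\left|\int_{\partial B} \partial_\nu u_k\, \partial_\nu u_j\, h\, d\Ha^{n-1}\right|^2}{\lambda_j(B) - \lambda_k(B)} + \text{lower order}.
\]
Combining the spectral gap $g_n(k)$ in the denominators with the gradient estimates of Lemma \ref{lem_prelim} and the growth $\lambda_j(B) \lesssim j^{2/n}$ from Proposition \ref{prop:growth_egv}, one obtains
\[
\left|\tfrac{d^2}{dt^2}\big|_{t=0} \lambda_k(B_{th}) \right| \lesssim \frac{k^{1+8/n}}{g_n(k)} \|h\|_{H^{1/2}(\partial B)}^2.
\]

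\textbf{Continuity and conclusion.} Combining the two bounds, whenever $|\delta| \ll k^{-(1+8/n)} g_n(k)$ the quadratic form $J''(0)$ remains coercive on the subspace of volume-preserving, barycenter-vanishing perturbations. To extend this from $t=0$ to all $t \in [0,1]$ I would invoke a continuity argument: the shape Hessians of $T^{-1}$ and $\lambda_k$ depend continuously on the domain in the $\mathcal{C}^{2,\gamma}$ topology, which is controlled by Definition \ref{def:NS}, and simplicity of $\lambda_k(B_{th})$ is preserved along the whole path (since $|\lambda_k(B_{th}) - \lambda_k(B)| \ll g_n(k)$ follows from Theorem \ref{main_sqrt} combined with the $L^1$-smallness of $h$). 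Hence $J''(t) \geq \tfrac{1}{2} J''(0)$ uniformly in $t$, and integrating the Taylor remainder gives $J(1) > J(0)$ unless $h \equiv 0$.

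\textbf{Main obstacle.} The most delicate step is the precise tracking of the $k$-dependence in the Hadamard expansion of $\lambda_k''$: the series involves all other eigenfunctions $u_j$, and one must combine the explicit $j$-dependence of $\|\nabla u_j\|_\infty$ coming from Lemma \ref{lem_prelim} with decay in $j$ of the boundary integrals $\int_{\partial B} \partial_\nu u_k \partial_\nu u_j\, h$ (obtained through integration by parts and Sobolev regularity of $h$) in order to recover the polynomial growth in $k$ and the unavoidable $g_n(k)^{-1}$ factor. A secondary technical point is to reconcile the quadratic volume and centering deficits (natural in $\|h\|_{L^2}$) with the hypothesis stated in $\|h\|_{L^1}$, through interpolation and the $\mathcal{C}^{2,\gamma}$ bound.
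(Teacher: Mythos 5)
Your overall strategy (second-order Taylor expansion along $t\mapsto B_{th}$, coercivity of the torsion Hessian, $H^{1/2}$-bound on the eigenvalue Hessian with a $g_n(k)^{-1}$ factor) is the right one and matches the paper's. However, your ``Continuity and conclusion'' step contains a genuine gap. You claim that since the shape Hessians depend continuously on the domain in the $\mathcal{C}^{2,\gamma}$ topology, one gets $J''(t)\geq\frac{1}{2}J''(0)$ uniformly in $t$. This does not follow: the coercivity of $J''(0)$ is measured in the weak norm $\|h\|_{H^{1/2}(\partial B)}^2$, whereas continuity of the Hessian in the $\mathcal{C}^{2,\gamma}$ topology only controls $|J''(t)[h,h]-J''(0)[h,h]|$ by $\epsilon\|h\|_{\mathcal{C}^{2,\gamma}}^2$, which cannot be absorbed into $c_n\|h\|_{H^{1/2}}^2$ (there is no reverse inequality between the two norms). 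This norm discrepancy is precisely the delicate point in all Fuglede-type arguments; to close it you must show that the \emph{difference} of Hessians is controlled by $\epsilon\|h\|_{H^{1/2}}^2$ with $\epsilon$ small, which requires redoing the quantitative estimates at every $t$, not invoking abstract continuity.

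The paper avoids this issue by a slightly different organization, which you could adopt. First, no coercivity is ever needed for the eigenvalue term: writing the Lagrange form of the Taylor remainder, $\lambda_k(B_h)-\lambda_k(B)+c_{n,k}(|B_h|-|B|)=\frac{1}{2}\frac{d^2}{ds^2}\big|_{s=t}(\lambda_k(B_{sh})+c_{n,k}|B_{sh}|)$ for some intermediate $t$, one only needs the \emph{bound} $|\mu_k''(t)|\lesssim \frac{k^{1+8/n}}{g_n(k)}\|h\|_{H^{1/2}(\partial B)}^2$ valid at \emph{every} $t\in[0,1]$ (Proposition \ref{prop_controllambda_k}); this is obtained by running the decomposition $u_k'(t)=z+w$ and the spectral expansion directly on the intermediate domain $B_{th}$, which is still nearly spherical, together with Lemma \ref{lem:Kato} to follow the analytic branches and keep them separated via $|\mu_i'(t)|\lesssim\|h\|_{L^1}\mu_i(t)^{2+n/2}$. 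Second, for the torsion one does not differentiate at all but invokes the already-integrated quantitative Saint-Venant inequality for nearly spherical sets from \cite{BDV15}, $T(B_h)\leq T(B)-\frac{1}{32n^2}\|h\|_{H^{1/2}(\partial B)}^2$, after checking via interpolation that $\|h\|_{L^1}\ll 1$ and $\|h\|_{\mathcal{C}^{2,\gamma}}\lesssim1$ force $\|h\|_{\mathcal{C}^{2,\gamma/2}}\ll1$. Combining these two ingredients gives the conclusion for $|\delta|\ll k^{-(1+8/n)}g_n(k)$ without any continuity-of-Hessian argument. A final minor point: your ``main obstacle'' of tracking the $j$-decay of $\int_{\partial B}\partial_\nu u_k\,\partial_\nu u_j\,h$ is circumvented in the paper by summing the spectral series via Parseval applied to the harmonic extension $z$ of the boundary datum, i.e.\ $\sum_j(\int zu_j)^2=\|z\|_{L^2}^2$, rather than term-by-term estimates.
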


This will be obtained by performing a second-order Taylor expansion of the functional $T^{-1}+\delta\lambda_k$. The rough idea is the following: on the one hand, the first shape derivative (taken among measure-preserving variation) of $T^{-1}$ and $\lambda_k$ vanish, while on the other hand the second shape derivative of $T^{-1}$ is coercive in $H^{1/2}(\partial B)$ (in some sense that takes into account the invariance by translation){\BBB , see \eqref{eq:tors_coercive},} and the second shape derivative of $\lambda_k$ is bounded in $H^{1/2}(\partial B)$. This will be enough to get the local minimality of $T^{-1}+\delta \lambda_k$ at the ball.\newline

We begin \BBB with \EEE a Lemma which states that the eigenvalues and eigenfunctions may be \BBB approximated \EEE smoothly. It includes the case of degenerate eigenvalues, which will also be useful in the next section.
\begin{lemma}\label{lem:Kato} Let $h\in \C^{2,\gamma}\left(\partial B,\left[-\frac{1}{2},\frac{1}{2}\right]\right)$ and $\zeta$ the corresponding vector field (in accordance with \eqref{eq:ext_hzeta}). Then there exists real analytic functions 
\[t\in [-1,1]\mapsto \mu_i(t)\in \R, \  t\in [-1,1]\mapsto \widehat{u_i}(t)\in H_0^1(B)\] 
for every $i\in\N^*$, such that $\mu_{i}(0)=\lambda_{i}(B)$, and denoting $u_i(t)=\widehat{u}_i(t)\circ (\zeta^t)^{-1}$, the functions $(u_i(t))_{i\in\N^*}$ form an orthonormal basis of (non-ordered) eigenfunctions of $B_{th}$ associated to $(\mu_i(t))_{i\in\N^*}$ and \[\ t\in [-1,1]\mapsto u_i(t)\in L^2(\R^n)\]
is differentiable with $u_i'(t)\in H^1(B_{th})$. Moreover, we have the expressions
\begin{align*}
\mu_i'(t)&=-\int_{\partial B_{th}}|\nabla u_i(t)|^2 (\zeta\cdot \nu_t),\\
\mu_i''(t)&=\int_{\partial B_{th}}|\nabla u_i(t)|^2\left(H_t(\zeta\cdot \nu_t)^2-b_t( \zeta_{\tau_t},\zeta_{\tau_t})+2\zeta_{\tau_t}\cdot \nabla_{|\partial B_{th}}(\zeta\cdot\nu_t)\right)\\
&\hspace{1cm}+2\int_{B_{th}}\left(|\nabla u_i'(t)|^2-\mu_i(t)|u_i'(t)|^2\right),
\end{align*}
where $b_t$ is the second fundamental form of $\partial B_{th}$, $H_t$ its (outward) mean curvature, $\nu_t$ its (outward) normal vector and $\zeta_{\tau_t}:=\zeta-(\zeta\cdot\nu_t)\nu_t$. Finally, $u_i'(t)$ verifies
\begin{equation}\label{eq:u_i'}\begin{cases}
	-\Delta u_i'(t)-\mu_i(t) u_i'(t)=\mu_i'(t)u_i(t) & {\BBB \textrm{ in }}B_{th},\\[2mm] u_i'(t)=-(\zeta\cdot\nu_t)\partial_{\nu_t} u_i(t) & {\BBB \textrm{ on }}\partial B_{th}, \\[2mm] \forall j\in \N^*,\ \int_{B_{th}}(u_i'(t)u_j(t)+u_i(t)u_j'(t))=0.
\end{cases}\end{equation}
\end{lemma}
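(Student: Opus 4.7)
The plan is to apply Kato--Rellich analytic perturbation theory after pulling the eigenvalue problem back to the fixed domain $B$. Setting $A(t):=(D\zeta^t)^{-T}(D\zeta^t)^{-1}$ and $J(t):=\det(D\zeta^t)$, the change of variable $x=\zeta^t(y)$ turns the eigenfunctions of $-\Delta$ on $B_{th}$ into functions $\widehat{u_i}(t):=u_i(t)\circ\zeta^t\in H_0^1(B)$ satisfying the weak identity
\[
a_t(\widehat{u_i}(t),\varphi)=\mu_i(t)\,m_t(\widehat{u_i}(t),\varphi),\qquad \forall\,\varphi\in H_0^1(B),
\]
where $a_t(u,v):=\int_B (A(t)\nabla u)\cdot\nabla v\,J(t)$ and $m_t(u,v):=\int_B uv\,J(t)$. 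Since $\zeta\in\C^{2,\gamma}$ and $\zeta^t$ is a $\C^{2,\gamma}$-diffeomorphism for $|t|\le1$, the maps $t\mapsto A(t)$ and $t\mapsto J(t)$ are polynomial, hence real analytic, with values in $\C^{1,\gamma}(\overline{B})$, and the bilinear forms $a_t,m_t$ are uniformly coercive and bounded on $H_0^1(B)\times H_0^1(B)$ and $L^2(B)\times L^2(B)$ respectively.

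By the Kato--Rellich selection theorem for real analytic self-adjoint families with compact resolvent, there exist real analytic functions $t\mapsto\mu_i(t)\in\R$ and $t\mapsto\widehat{u_i}(t)\in H_0^1(B)$, $i\in\N^*$, with $\mu_i(0)=\lambda_i(B)$, such that $(\widehat{u_i}(t))_i$ is an $m_t$-orthonormal family of eigenfunctions (the ordering need not be preserved, matching the non-ordered statement of the lemma). The pushforwards $u_i(t):=\widehat{u_i}(t)\circ(\zeta^t)^{-1}$ then form an $L^2(B_{th})$-orthonormal basis of eigenfunctions for $(\mu_i(t))$ by the change of variable formula; differentiability of $t\mapsto u_i(t)\in L^2(\R^n)$ with $u_i'(t)\in H^1(B_{th})$ follows from the analyticity of $\widehat{u_i}(t)$ in $H_0^1(B)$ composed with the smooth family $\zeta^t$. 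The boundary value problem \eqref{eq:u_i'} for $u_i'(t)$ is obtained by differentiating the relation $\widehat{u_i}(t)=u_i(t)\circ\zeta^t$ (which yields the boundary condition $u_i'(t)=-(\zeta\cdot\nu_t)\partial_{\nu_t}u_i(t)$ on $\partial B_{th}$), differentiating the PDE in the bulk, and using the $m_t$-orthonormality to read off the orthogonality relations.

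The shape derivative formulas are then derived by differentiating the weak eigenvalue identity in $t$ and testing against $\widehat{u_i}(t)$, which cancels the contribution of $\widehat{u_i}'(t)$ by symmetry. To first order this yields $\mu_i'(t)=\dot a_t(\widehat{u_i},\widehat{u_i})-\mu_i(t)\,\dot m_t(\widehat{u_i},\widehat{u_i})$; a direct computation of $\dot a_t,\dot m_t$, pushforward to $B_{th}$, and integration by parts produce Hadamard's formula $\mu_i'(t)=-\int_{\partial B_{th}}|\nabla u_i(t)|^2(\zeta\cdot\nu_t)$. Differentiating once more and testing again with $\widehat{u_i}(t)$ produces the second derivative as the sum of the intrinsic second-order terms $\ddot a_t(\widehat{u_i},\widehat{u_i})-\mu_i\ddot m_t(\widehat{u_i},\widehat{u_i})$, a quadratic cross contribution $2(a_t-\mu_i m_t)(\widehat{u_i}'(t),\widehat{u_i}'(t))$ that pushes forward to the bulk term $2\int_{B_{th}}(|\nabla u_i'(t)|^2-\mu_i(t)|u_i'(t)|^2)$, and correction terms coming from the $t$-dependence of the $m_t$-normalization.

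The main obstacle is to recognize the remaining boundary term as the intrinsic geometric expression stated in the lemma. This requires splitting $\zeta=(\zeta\cdot\nu_t)\nu_t+\zeta_{\tau_t}$ on $\partial B_{th}$ and applying tangential Stokes' formula on the moving boundary to recombine the second-order derivatives of $A(t)$, $J(t)$ and of the surface measure into the three intrinsic contributions $H_t(\zeta\cdot\nu_t)^2$, $-b_t(\zeta_{\tau_t},\zeta_{\tau_t})$ and $2\zeta_{\tau_t}\cdot\nabla_{|\partial B_{th}}(\zeta\cdot\nu_t)$. These calculations are classical in shape optimization (cf.~\cite{HP18,DL19}), but keeping track of the correct intrinsic decomposition at every $t\in[-1,1]$, rather than only at $t=0$, is the delicate bookkeeping step.
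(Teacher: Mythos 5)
Your proposal follows essentially the same route as the paper: pull the eigenvalue problem back to the fixed ball via $\zeta^t$, invoke Kato's analytic perturbation theory to get analytic (non-ordered) branches $\mu_i(t),\widehat{u_i}(t)$, push forward, and recover Hadamard's first and second variation formulas by differentiating and integrating by parts. The one place where your argument is looser than the paper's is the invocation of Kato: you apply the selection theorem to the generalized eigenproblem $a_t(u,\varphi)=\mu_i(t)\,m_t(u,\varphi)$, whose operator is self-adjoint only with respect to the $t$-dependent inner product $m_t$, whereas Kato's theorem (VII.3.5, Thm.~3.9 in \cite{Kat95}) is stated for an analytic family of self-adjoint operators on a \emph{fixed} Hilbert space with fixed domain. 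The paper closes this by the substitution $\widehat{u_i}=\widehat{v_i}/\sqrt{J_t}$, conjugating to a genuinely self-adjoint family $T_t$ on $L^2(B)$ with domain $H^2(B)\cap H^1_0(B)$; you would need this (or an equivalent reduction) before citing the theorem. Apart from that one-line symmetrization, the argument is the same, including the use of the resolvent identity to upgrade analyticity from $L^2$ to $H^1_0$ and the classical (if tedious) geometric recombination of the boundary terms.
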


\begin{proof}
For each $i\in\N^*$ and $|t|\leq1$, $u_i$ is an eigenfunction on $B_{th}$ associated to $\lambda_i(B_{th})$ if and only if $\widehat{u_i}:=u_i\circ\zeta^t$ verifies 
\[\nabla\cdot\left[J_t(D\zeta^t)^{-1} ((D\zeta^t)^{-1})^*\nabla \widehat{u_i}\right]= \lambda_i( B_{th})J_t\widehat{u_i}, \]
where $J_t:=\text{det}(D\zeta^t)$.
Letting $\widehat{u_i}:=\frac{\widehat{v_i}}{\sqrt{J_t}}$,  the family $\left((\widehat{v_i},\lambda_i(B_{th}))\right)_{i\in\N^*}$  \BBB consists of the \EEE  eigenelements of the self-adjoint operator
\[T_tv :=-\frac{1}{\sqrt{J_t}}\nabla\cdot\left[J_t(D\zeta^t)^{-1} ((D\zeta^t)^{-1})^*\nabla \frac{v}{\sqrt{J_t}}\right].\]

We apply the result \cite[VII.3.5. Theorem 3.9]{Kat95} to the family of self-adjoint operators $T_t$ as defined above, over $L^2(B)$ with fixed domain $D(T_t)=D_0:=H^2(B)\cap H_0^1(B)$. This provides the existence of real analytic rearrangements $t\in [-1,1]\mapsto \mu_i(t)$ and $t\in[-1,1]\mapsto \widehat{u_i}(t)\in L^2(B)$ of respectively eigenvalues and orthonormal eigenfunctions for the operator $T_t$ such that $\mu_i(0)=\lambda_i(B)$ for all $i$. Writing \[\widehat{u_i}(t)=R_t\left(\mu_i(t)\widehat{u_i}(t)\right),\] 
where $R_t$ is the resolvent of $T_t$, and using that $t\in[-1,1]\mapsto R_t\in \mathcal{L}(H^{-1}(B),H_0^1(B))$ is real analytic (by \BBB the \EEE implicit function theorem), we improve the analyticity of the eigenfunctions into $t\in[-1,1]\mapsto \widehat{u_i}(t)\in H_0^1(B)$.\newline

Now, by construction we have that 
\[(u_i(t),\mu_i(t)):=\left(\frac{\widehat{u_i}(t)\circ (\zeta^t)^{-1}}{\sqrt{{ J_t}\circ(\zeta^t)^{-1}}},\mu_i(t)\right)\]
are eigenvalues and (orthonormal) eigenfunctions of the Dirichlet Laplacian over $B_{th}$. Since $t\mapsto \widehat{u_i}(t)\in H_0^1(B)$ is differentiable, one proves as in \cite[Theorem 5.3.1]{HP18} that the map $t\in[-1,1]\mapsto u_i(t)\in L^2(\R^n)$ is differentiable. The expressions of the first and second derivative are then classical formulas which we derive as in \cite[Section 5.9.3]{HP18}. 
Let us \BBB recall \EEE how these expressions are found.

\smallskip
\noindent
\textbf{First derivative.} The map $t\in[-1,1]\mapsto u_i(t)\in L^2(\R^n)$ is differentiable with derivative $u_i'(t)$ verifying $u_i'(t)+\nabla u_i(t)\cdot \zeta\in H_0^1(B_{th})$. One can therefore differentiate
\[\begin{cases}-\Delta u_i(t)=\mu_i(t)u_i(t),& {\BBB \textrm{ in }}B_{th}\\
\int_{B_{th}}u_i(t)u_j(t)=\delta_{ij}\end{cases} \]
to deduce that $u_i'(t)$ verifies the equation  and the boundary conditions of \eqref{eq:u_i'}.
Integrating by parts (see \cite[(5.87), (5.88)]{HP18}) we get the expression
\[\mu_i'(t)=-\int_{\partial B_{th}}(\partial_{\nu_t} u_i(t))^2 (\zeta\cdot \nu_t) \]

\smallskip
\noindent
\textbf{Second derivative.} We write the first derivative as an integral on the interior
\[\mu_i'(t)=-\int_{B_{th}}\nabla \cdot\left(|\nabla u_i(t)|^2\zeta\right)\] 
and apply the differentiation formula \cite[Corollary 5.2.8]{HP18}. The same computations as in \cite[Section 5.9.6]{HP18} lead to an analogous expression to the case of a simple eigenvalue:
\begin{align*}\mu_i''(t)&=2\int_{\partial B_{th}}u_i'(t)\partial_{\nu_t}u_i'(t)\\&\hspace{1cm}+\int_{\partial B_{th}}(\partial_{\nu_t}u_i(t))^2\left[H_t(\zeta\cdot \nu_t)^2-b_t((\zeta)_{\tau_t},(\zeta)_{\tau_t})+2\nabla_{\tau_t}(\zeta\cdot \nu_t)\cdot(\zeta)_{\tau_t}\right]\\
&= 2\int_{B_{th}}\left(|\nabla u_i'(t)|^2-\mu_i(t)|u_i'(t)|^2\right)\\
&\hspace{1cm}+\int_{\partial B_{th}}(\partial_{\nu_t}u_i(t))^2\left[H_t(\zeta\cdot \nu_t)^2-b_t((\zeta)_{\tau_t},(\zeta)_{\tau_t})+2\nabla_{\tau_t}(\zeta\cdot \nu_t)\cdot(\zeta)_{\tau_t}\right]\end{align*}
where  $\zeta_{\tau_t}:=\zeta-(\zeta\cdot\nu_t)\nu_t$  and $\nabla_{\tau_t}$ is the gradient over $\partial B_{th}$.

\end{proof}

\begin{proposition}\label{prop_controllambda_k}
Let $B_h$ be a nearly spherical set such that  $\Vert h\Vert_{L^1(\partial B)}\ll k^{-1-\frac{4}{n}}g_n(k)$, then 
\[|\lambda_k(B_h)-\lambda_k(B)|\leq C_n \frac{k^{1+\frac{8}{n}}}{g_n(k)}\Vert h\Vert_{H^{1/2}(\partial B)}^2.\]
\end{proposition}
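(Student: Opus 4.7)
My plan is to estimate $\lambda_k(B_h) - \lambda_k(B)$ via a second-order Taylor expansion of the analytic branch $\mu_k(t)$ produced by Lemma \ref{lem:Kato} along the path $t \mapsto B_{th}$, $t \in [0,1]$. The smallness hypothesis $\|h\|_{L^1(\partial B)} \ll k^{-1-4/n} g_n(k)$, combined with Theorem \ref{main_sqrt} (applied along the path) and interpolation against the $\C^{2,\gamma}$ bound, will ensure that $\mu_k(t)$ stays isolated from every other analytic branch $\mu_j(t)$ by a gap of order $g_n(k)$ throughout $[0,1]$; this forces $\mu_k(0) = \lambda_k(B)$ and $\mu_k(1) = \lambda_k(B_h)$. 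Taylor's formula with integral remainder then gives
\begin{equation*}
\lambda_k(B_h) - \lambda_k(B) \;=\; \mu_k'(0) + \int_0^1 (1-t)\,\mu_k''(t)\,dt.
\end{equation*}

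The first-order term is essentially zero thanks to the volume constraint. Indeed, since $\lambda_k(B)$ is simple the eigenfunction $u_k$ is radial, and Pohozaev's identity on $B$ yields $|\nabla u_k|^2 \equiv \frac{2\lambda_k(B)}{n\om_n}$ on $\partial B$. Combined with the formula $\mu_k'(0) = -\int_{\partial B}|\nabla u_k|^2 h\,d\sigma$ from Lemma \ref{lem:Kato}, this gives $\mu_k'(0) = -\frac{2\lambda_k(B)}{n\om_n}\int_{\partial B} h$. Expanding $|B_h| = \om_n$ to second order in $h$ produces $\bigl|\int_{\partial B} h\bigr| \lesssim \|h\|_{L^2(\partial B)}^2$, so $|\mu_k'(0)| \lesssim \lambda_k(B)\|h\|_{L^2}^2 \lesssim k^{2/n}\|h\|_{H^{1/2}}^2$.

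For the quadratic remainder I control $|\mu_k''(t)|$ uniformly in $t$ using the explicit formula of Lemma \ref{lem:Kato}. The boundary piece is absolutely bounded via the uniform $\C^{2,\gamma}$ control of $\partial B_{th}$ (so that $H_t,b_t,\nu_t$ and the tangential gradient of $\zeta\cdot\nu_t$ remain controlled), together with $L^\infty$-bounds on $\nabla u_k(t)$ up to the boundary coming from Schauder estimates on the nearly spherical domain $B_{th}$, and using that $\zeta_{\tau_t}$ vanishes at $t=0$ and is otherwise controlled by $\|h\|_{\C^1}$ (which is small by interpolation between $L^1$ and $\C^{2,\gamma}$). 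This produces a boundary contribution of the form $k^{?}\,\|h\|_{L^2(\partial B)}^2$. The bulk term reduces to controlling $\|u_k'(t)\|_{H^1(B_{th})}^2$, where $u_k'(t)$ solves
\begin{equation*}
\begin{cases}
-\Delta u_k'(t) - \mu_k(t)\,u_k'(t) = \mu_k'(t)\,u_k(t) & \text{in } B_{th}, \\
u_k'(t) = -(\zeta\cdot\nu_t)\,\partial_{\nu_t} u_k(t) & \text{on } \partial B_{th}, \\
\int_{B_{th}} u_k(t)\,u_k'(t) = 0.
\end{cases}
\end{equation*}
Lifting the Dirichlet data by its harmonic extension and projecting onto the eigenbasis of $B_{th}$, the Fredholm alternative together with the uniform spectral gap of order $g_n(k)$ yields $\|u_k'(t)\|_{H^1}^2 \lesssim g_n(k)^{-1}\,\|\partial_{\nu_t} u_k(t)\|_{L^\infty(\partial B_{th})}^2\,\|h\|_{H^{1/2}(\partial B)}^2$.

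The main technical obstacle is precisely this last estimate: one must execute the Fredholm alternative for the resonant self-adjoint operator $-\Delta - \mu_k(t)$ with inhomogeneous $H^{1/2}$ Dirichlet data, \emph{uniformly} in $t\in[0,1]$, while accurately extracting (i) the exponent of $k$ produced by the $L^\infty$-norm of $\nabla u_k$ (where the full power $k^{1+8/n}$ is accounted for by Lemma \ref{lem_prelim} together with elliptic regularity on $B_{th}$) and (ii) the single inverse power of the spectral gap $g_n(k)$. Once this is in place, combining the two estimates with the inequalities $\|h\|_{L^2}^2 \leq \|h\|_{H^{1/2}}^2$ and $k^{2/n} \leq k^{1+8/n}/g_n(k)$ produces the claimed bound $|\lambda_k(B_h)-\lambda_k(B)| \lesssim \frac{k^{1+8/n}}{g_n(k)}\|h\|_{H^{1/2}(\partial B)}^2$.
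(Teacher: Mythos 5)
Your overall architecture matches the paper's: Kato-analytic branches $\mu_i(t)$ along $t\mapsto B_{th}$, gap persistence to identify $\mu_k(1)$ with $\lambda_k(B_h)$, killing the first-order term via radiality of $u_k$ together with the volume constraint, and a second-order estimate via spectral decomposition. The difference in Taylor form (integral remainder plus a separate bound $|\mu_k'(0)|\lesssim \lambda_k(B)\Vert h\Vert_{L^2(\partial B)}^2$, versus the paper's Lagrange remainder applied to $s\mapsto\lambda_k(B_{sh})+c_{n,k}|B_{sh}|$) is cosmetic. However, the step you yourself flag as the main obstacle is resolved incorrectly, and this is a genuine gap. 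The bulk contribution to $\mu_k''(t)$ is $2\int_{\Om}\left(|\nabla u_k'|^2-\mu_k|u_k'|^2\right)$, not $\Vert u_k'\Vert_{H^1}^2$, and the distinction governs the power of $g_n(k)$. Writing $u_k'=z+w$ with $z$ the harmonic lift of the boundary datum and expanding $w=\sum_i c_iu_i(t)$, one finds $c_i=\frac{\mu_k}{\mu_i-\mu_k}\int_\Om zu_i$ for $i\neq k$, so $\Vert\nabla w\Vert_{L^2}^2=\sum_i\mu_ic_i^2$ contains terms of size $\frac{\mu_k^3}{(\mu_i-\mu_k)^2}\left(\int_\Om zu_i\right)^2$: the $H^1$ norm of $u_k'(t)$ genuinely scales like $g_n(k)^{-2}$ in the worst case, so your claimed bound $\Vert u_k'(t)\Vert_{H^1}^2\lesssim g_n(k)^{-1}(\cdots)$ is false, and no Fredholm-alternative argument will produce it. The single inverse power of the gap appears only because of a cancellation special to the combination $\int(|\nabla u_k'|^2-\mu_k|u_k'|^2)$: there the coefficient of $\left(\int_\Om zu_i\right)^2$ is $\frac{\mu_k^2}{\mu_k-\mu_i}-\mu_k$, which is only simply singular in $\mu_i-\mu_k$. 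As written your argument either fails outright or, corrected to $g_n(k)^{-2}$, yields a strictly weaker conclusion than the proposition.

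A secondary issue: your mechanism for gap persistence along the path (Theorem \ref{main_sqrt} applied to $B_{th}$, plus interpolation) does not close under the stated hypothesis. At best Theorem \ref{main_sqrt} combined with $|T(B_{th})-T(B)|\lesssim\Vert h\Vert_{L^1(\partial B)}$ gives $|\lambda_i(B_{th})-\lambda_i(B)|\lesssim i^{2+\frac{4}{n}}\Vert h\Vert_{L^1(\partial B)}^{1/2}$, and requiring this to be $\ll g_n(k)$ forces $\Vert h\Vert_{L^1(\partial B)}\ll k^{-4-\frac{8}{n}}g_n(k)^2$, much stronger than the assumed $\Vert h\Vert_{L^1(\partial B)}\ll k^{-1-\frac{4}{n}}g_n(k)$. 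The paper instead integrates the first-variation bound $|\mu_i'(t)|\lesssim\Vert h\Vert_{L^1(\partial B)}\mu_i(t)^{2+\frac{n}{2}}$, i.e. $\left|\frac{d}{dt}\mu_i(t)^{-1-\frac{n}{2}}\right|\lesssim\Vert h\Vert_{L^1(\partial B)}$, which keeps the branches separated precisely under the stated smallness; you should adopt that route.
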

Here the $H^{1/2}(\partial B)$ norm is defined as
\[\Vert h\Vert_{H^{1/2}(\partial B)}^2=\Vert h\Vert_{L^2(\partial B)}^2+\int_{B}|\nabla\mathcal{H}h|^2\]
where $\mathcal{H}h$ is the harmonic extension of $h$ in $B$. This is equivalent to the usual Gagliardo-Nirenberg norm $\Vert h\Vert_{L^2(\partial B)}^2+\iint_{(\partial B)^2}\frac{|h(x)-h(y)|^2}{|x-y|^n}$.
\begin{proof}[Proof of Proposition \ref{prop_controllambda_k}]
Recall that by definition of a nearly spherical set it holds $\|h\|_{\C^{3,\gamma}(\partial B)}\lesssim 1$.

Let $(u_i(t),\mu_i(t))_{i\in\N^*}$ be the eigenelements of $B_{th}$ as defined in Lemma \ref{lem:Kato}. We claim that for any $|t|\leq 1$ and $i\neq k$ we have $|\mu_i(t)-\mu_k(t)|\geq \frac{1}{2}g_n(k)$ when $\Vert h\Vert_{L^1(\partial B)}\ll k^{-1-\frac{4}{n}}g_n(k)$. Indeed we have $|u_i(t)|\lesssim \mu_i(t)^{\frac{n}{4}}$ by \cite[Example 2.1.8]{D89} and Proposition \ref{prop:growth_egv}, hence using classical elliptic regularity we deduce

\[\Vert u_i(t)\Vert_{\C^{1,\gamma}(B_{th})}\lesssim \Vert \mu_i(t) u_i(t)\Vert_{L^\infty(B_{th})}\lesssim \mu_i(t)^{1+\frac{n}{4}}.\]
Using the expression of $\mu_i'(t)$ from Lemma \ref{lem:Kato} we get
\[|\mu_i'(t)|\lesssim \Vert h\Vert_{L^1(\partial B)}\mu_i(t)^{2+\frac{n}{2}}.\]
Integrating this expression we get that for any $|t|\leq 1$ and $i\in\N$,
\[\left|\mu_i(t)^{-1-\frac{n}{2}}-\mu_i(0)^{-1-\frac{n}{2}}\right|\lesssim \Vert h \Vert_{L^1(\partial B)}\]
and for any $i\neq k$ we have 
\[\left|\mu_i(0)^{-1-\frac{n}{2}}-\mu_k(0)^{-1-\frac{n}{2}}\right|\gtrsim k^{-1-\frac{4}{n}}g_n(k),\]
so when $\Vert h \Vert_{L^1(\partial B)}\ll k^{-1-\frac{4}{n}}g_n(k)$ we get the claim. In particular this means that $\mu_k(t)=\lambda_k(B_{th})$ for such $h$. Again by elliptic regularity we have
\begin{equation}\label{eq:ellipticreg}
\Vert u_k(t)\Vert_{\C^{1,\gamma}(B_{th})}\lesssim k^{\frac{1}{2}+\frac{2}{n}},\ 
\Vert u_k(t)\Vert_{\C^{2,\gamma}(B_{th})}\lesssim k^{\frac{1}{2}+\frac{4}{n}}.
\end{equation}

The eigenvalue $\lambda_k(B)$ being simple, the associated eigenfunction $u_k(0)$ is radial, so that by setting $|\nabla u_k(0)|^2_{|\partial B}=:c_{n,k}(\lesssim k^{1+\frac{4}{n}})$ we have $\left.\frac{d}{ds}\right|_{s=0}\left(\lambda_k(B_{sh})+c_{n,k}|B_{sh}|\right)=0$. As a consequence, by Taylor expansion and recalling that $|B_{ h }|=|B|$, there exists some $t\in [0,1]$ such that
\[\lambda_k(B_{ h })-\lambda_k(B)=\frac{1}{2}\left(\left.\frac{d^2}{ds^2}\right|_{s=t}\lambda_k(B_{sh})+c_{n,k}\left.\frac{d^2}{ds^2}\right|_{s=t}|B_{sh}|\right).\]

To reduce notations we fix $t$ and do not write the dependency in $t$ in the rest of the proof, and set instead $\Om:=B_{th}$, $u_i:=u_i(t)$, $v:=u_k'(t)$ and write $\mu_i$, $\mu_i'$, $\mu_i''$ in place of $\mu_i(t)$, $\mu_i'(t)$, $\mu_i''(t)$. The expression of $\mu_k''$ thus reads
\[\mu_k''=\int_{\Om}2\left(|\nabla v|^2-\mu_k v^2\right)+\int_{\partial \Om}|\nabla u_k|^2\left[H(\zeta\cdot\nu)^2-b( \zeta_\tau,\zeta_\tau)+2\zeta_\tau\cdot\nabla_{|\partial\Om}(\zeta\cdot\nu)\right],\]
where $v$ verifies
\begin{equation}\label{eq:u_k'}\begin{cases}
	-\Delta v-\mu_k v=\mu_k' u_k & {\BBB \textrm{ in }}\Om,\\ v=-(\zeta\cdot\nu)\partial_\nu u_k & {\BBB \textrm{ on }}\partial\Om, \\ \int_{\Om}v u_k=0.
\end{cases}\end{equation}
On the other hand we have the following expression for the second derivative of the volume (see \cite[Theorem 2.1 and Lemma 2.8]{DL19}):
\[\left.\frac{d^2}{ds^2}\right|_{s=t}|B_{sh}|=\int_{\partial \Om}H(\zeta\cdot \nu)^2+\int_{\partial \Om}b( \zeta_\tau,\zeta_\tau)-2 \zeta_\tau \cdot\nabla_{|\partial\Om}(\zeta\cdot\nu).\]
We recall that $\zeta_{|\partial B}=h\nu_B$ is extended thanks to \eqref{eq:ext_hzeta}. 
Let us now bound each term independently.

\smallskip
\noindent 
{\bf 
Estimate of $v$.} We write $v=z+w$, where $z$ is harmonic and $w\in H^1_0(\Om)$. In particular $z$ is the harmonic extension of $-(\zeta\cdot\nu)\partial_\nu u_k$ so
\begin{align*}
\Vert z\Vert_{H^1(\Om)}&\lesssim \Vert (\zeta\cdot\nu)\partial_\nu u_k\Vert_{H^{1/2}(\partial\Om)}\lesssim  k^{\frac{1}{2}+\frac{4}{n}}\Vert h\Vert_{H^{1/2}(\partial B)},\\
\Vert z\Vert_{L^2(\Om)}&\lesssim \Vert z \Vert_{L^2(\partial\Om)}\lesssim  k^{\frac{1}{2}+\frac{2}{n}}\Vert h\Vert_{H^{1/2}(\partial B)},
\end{align*}
where we used  $\|h\|_{\C^{2,\gamma}}\lesssim 1$.
Above, we used the general property 
\[\Vert fg\Vert_{H^{1/2}(\partial B)}\lesssim \Vert f\Vert_{L^{\infty}(\partial B)}\Vert g\Vert_{H^{1/2}(\partial B)}+\Vert \nabla f\Vert_{L^\infty(\partial B)}\Vert g\Vert_{L^{2}(\partial B)},\]
which is a consequence of the inequality
\begin{align*}
\int_{B}|\nabla\mathcal{H}(fg)|^2&\leq\int_{B}|\nabla(\mathcal{H}(f)\mathcal{H}(g))|^2\leq 2\int_{B}|\nabla \mathcal{H}f|^2|\mathcal{H}g|^2+|\mathcal{H}f|^2|\nabla\mathcal{H}g|^2\\
&\leq 2\Vert \nabla f\Vert_{L^\infty(\partial B)}^2\Vert g\Vert_{L^2(\partial B)}^2+\Vert f\Vert_{L^\infty(\partial B)}^2\int_{B}|\nabla \mathcal{H}g|^2.
\end{align*}

Now $w$ verifies $-\Delta w-\mu_k w=\mu_k z+\mu_k' u_k$, with  $\int_{\Om}wu_k=\frac{\mu_k'}{\mu_k}$. {\BBB For every} $i\neq k$, this equation implies
\[(\mu_i-\mu_k)\int_{\Om}wu_i=\mu_k\int_{\Om}zu_i,\]
whereas for $i=k$ we simply have
\[\int_\Om wu_k=-\int_{\Om}zu_k.\]
Using the spectral decomposition, we have
\begin{align*}
\int_{\Om}\left(|\nabla v|^2-\mu_k v^2\right)&=\int_{\Om}\left(|\nabla z|^2+|\nabla w|^2-\mu_kv^2\right)\\
&=\int_{\Om}|\nabla z|^2+\sum_{i\in\N^*}\left(\mu_i\left(\int_{\Om}wu_i\right)^2-\mu_k\left(\int_{\Om}vu_i\right)^2\right)\\
&=\int_{\Om}|\nabla z|^2+\mu_k\left(\int_{\Om}zu_k\right)^2+\sum_{i\neq k}\left(\frac{\mu_k^2}{\mu_k-\mu_i}-\mu_k\right)\left(\int_{\Om}z u_i\right)^2,
\end{align*}
so
\begin{align*}
\left|\int_{\Om}\left(|\nabla v|^2-\mu_k v^2\right)\right|&\lesssim \Vert \nabla z\Vert_{L^2(\Om)}^2+k^\frac{2}{n}\Vert z\Vert_{L^2(\Om)}^2+g_n(k)^{-1}k^\frac{4}{n}\Vert z\Vert_{L^2(\Om)}^2\\
&\lesssim g_n(k)^{-1}k^{1+\frac{8}{n}}\Vert h\Vert_{H^{1/2}(\partial B)}^2 .
\end{align*}

\smallskip
\noindent 
{\bf  Curvature terms.} We directly have,  using again  $\|h\|_{\C^{2,\gamma}}\lesssim 1$:
\[\left|\int_{\partial\Om}|\nabla u_k|^2\left[H(\zeta\cdot\nu)^2-b( \zeta_\tau,\zeta_\tau)\right]\right|\lesssim \Vert\nabla u_k\Vert_{L^\infty(\Om)}^2\Vert\zeta\Vert_{L^2(\partial\Om)}^2\lesssim k^{1+\frac{4}{n}}\Vert h\Vert_{H^{1/2}(\partial B)}^2,\]
\[\left|\int_{\partial\Om}\left[H(\zeta\cdot\nu)^2+b( \zeta_\tau,\zeta_\tau)\right]\right|\lesssim \Vert h\Vert_{H^{1/2}(\partial B)}^2.\]

\smallskip
\noindent 
{\bf  Last term.} We have $\Vert\nabla_{|\partial\Om}\zeta\Vert_{H^{-1/2}(\partial\Om)}\lesssim \Vert\zeta\Vert_{H^{1/2}(\partial\Om)}$ and 
\begin{align*}
\Vert|\nabla u_k|^2\zeta\Vert_{H^{1/2}(\partial\Om)}&\lesssim \Vert\nabla_{|\partial\Om}|\nabla u_k|^2\Vert_{L^\infty(\partial\Om)}\Vert\zeta\Vert_{H^{1/2}(\partial\Om)}\\
&\lesssim\Vert\nabla u_k\Vert_{L^\infty(\Om)}\Vert\nabla^2 u_k\Vert_{L^\infty(\Om)}\Vert\zeta\Vert_{H^{1/2}(\partial\Om)}\lesssim k^{1+\frac{6}{n}}\Vert h\Vert_{H^{1/2}(\partial B)},
\end{align*}
so
\begin{align*}
\left|\int_{\partial\Om}|\nabla u_k|^2 \zeta_\tau\cdot \nabla _{|\partial\Om}(\zeta\cdot\nu)\right|&\lesssim k^{1+\frac{6}{n}}\Vert h\Vert_{H^{1/2}(\partial B)}^2.\end{align*}
We also have 
\[\left|\int_{\partial\Om} \zeta_{\tau}\cdot \nabla _{|\partial\Om}(\zeta\cdot\nu)\right|\lesssim \Vert  h \Vert_{H^{1/2}(\partial B)}^2.\]
\smallskip

Adding all the estimates we get \[|\mu_k''|\lesssim \frac{k^{1+\frac{8}{n}}}{g_n(k)}\Vert  h \Vert_{H^{1/2}(\partial B)}^2\lesssim \frac{k^{1+\frac{8}{n}}}{g_n(k)}\Vert  h \Vert_{H^{1/2}(\partial B)}^2\]
and
\[ \left|c_{n,k}\left.\frac{d^2}{ds^2}\right|_{s=t}|B_{sh}|\right|\lesssim k^{1+\frac{4}{n}}\|h\|_{H^{1/2}(\partial B)}^2,\]
{\BBB therefore}
\[|\mu_k(B_h)-\mu_k(B)|\leq |\mu_k''|\lesssim \frac{k^{1+\frac{8}{n}}}{g_n(k)}\Vert  h \Vert_{H^{1/2}(\partial B)}^2.\]

\end{proof}
We can now prove minimality of the ball for nearly spherical sets. 
\begin{proof}[Proof of Proposition  \ref{prop_minloc}]
It was proven in \cite[Theorem 3.3]{BDV15} that for any $B_h$ with $\text{bar}(B_h)=0$ and $\Vert h\Vert_{\C^{2,\gamma}(\partial B)}\ll 1$ it holds
\begin{equation}\label{eq:tors_coercive}
T(B_h)\leq T(B)-\frac{1}{32n^2}\Vert h\Vert_{H^{1/2}(\partial B)}^2.
\end{equation}
By interpolation, using for instance the interpolation inequalities between Hölder space 
\BBB
\[\Vert h\Vert_{\C^{2,\gamma}(\partial B)}\lesssim \Vert h\Vert_{L^1(\partial B)}^{\kappa}\Vert h\Vert_{ \C^{3,\gamma}(\partial B)}^{1-\kappa}\]
for some $\kappa_n\in (0,1)$\EEE.
Then  $\Vert h\Vert_{\C^{2,\gamma}(\partial B)}\ll \left(k^{-1-\frac{4}{n}}g_n(k)\right)^{\kappa}\ll 1$. We can therefore apply \eqref{eq:tors_coercive}, which together with Proposition \ref{prop_controllambda_k} yields
\begin{align*}T(B_h)^{-1}+\delta\lambda_k(B_h)&\geq \left(T(B)-\frac{1}{32n^2}\|h\|_{H^{1/2}(\partial B)}^2\right)^{-1}+\delta\lambda_k(B)-C_n|\delta|\frac{k^{1+\frac{8}{n}}}{g_n(k)}\|h\|_{H^{1/2}(\partial B)}^2\\
&\geq T(B)^{-1}+\delta\lambda_k(B)\end{align*}
where the last line holds provided $\delta$ is sufficiently small ($|\delta|\ll k^{-\left(1+\frac{8}{n}\right)} g_n(k)$),    with equality if and only if $h=0$. This finishes the proof.
\end{proof}

\subsection{Conclusion.}

\begin{proof}[Proof of Proposition \ref{main2_lin}]
When $|\delta|\ll k^{-4-\frac{8}{n}}g_n(k)$, Proposition \ref{prop_existence} applies and there exists a minimizer $\Om$ to the functional \eqref{eq_func}. By Lemma \ref{lem_C2gamma}, up to translation $\Om$ is a centered minimizer of the form $B_h$ with  $\Vert h\Vert_{ \C^{3,\gamma}(\partial B)}\leq D_n$, and hence is nearly spherical in the sense of Definition \ref{def:NS}. By Lemma \ref{lem_convw} and Corollary \ref{rem_estimate} we have 
\[\Vert h\Vert_{L^1(\partial B)}\lesssim |\Om\Delta B|\lesssim \mathcal{F}(\Om)\lesssim k^{2+\frac{4}{n}}|\delta|.\]

Now, for $\delta$ such that $k^{2+\frac{4}{n}}|\delta|\ll k^{-1-\frac{4}{n}}g_n(k)$ (meaning $|\delta|\ll k^{-3-\frac{8}{n}}g_n(k)$)  we can apply Proposition \ref{prop_minloc} to $\Om$, so that we obtain that $\Om$ is a ball. This finishes the proof.\end{proof}

\section{Proof of Theorem \ref{main_linvect}: linear bound on clusters of multiple eigenvalues}\label{sec_linvect}
{\BBB In this section, we} consider $1\leq k\leq l$ such that
\[\lambda_{k-1}(B)<\lambda_k(B)=\dots = \lambda_l(B)<\lambda_{l+1}(B).\]
We  denote the multiplicity of the eigenspace by $m=l-k+1\geq1$ and note that $l$ and $k$ are comparable since (see Lemma \ref{prop:growth_egv}) 
\[l^\frac{2}{n}\lesssim \lambda_l(B)=\lambda_k(B)\lesssim k^\frac{2}{n}.\]
In dimension $n=2$ it is known that the multiplicity of an eigenspace is at most $2$, {\BBB but}  in dimensions $n\geq 3$ the multiplicity of the eigenspace may get arbitrarily large, since the dimension of degree $d$ homogeneous harmonic polynomials in three variables is $2d+1$. \newline

Similarly to Section \ref{sec_lin}, in order to prove Theorem \ref{main_linvect}, we will prove {\BBB the equivalent formulation that the ball is a}  minimizer of the functional
\begin{equation}\label{eq_funcvect}
\Om\in\A\mapsto T(\Om)^{-1}+\delta\sum_{i=k}^{l}\lambda_i(\Om),
\end{equation}
for any $\delta\in\R$ sufficiently close to $0$. More precisely, we prove the following.
\begin{proposition}\label{main2_linvect}
There exists $c_{n}>0$ such that for any $\delta\in\R$ with $|\delta|\leq c_{n}  k^{-\left(6+\frac{10}{n}\right)}g_n(k)$, the ball is the unique minimizer of \eqref{eq_funcvect}.
\end{proposition}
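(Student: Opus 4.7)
The plan is to follow the four-step strategy developed for the simple case in Proposition \ref{main2_lin}, carrying along the full cluster of eigenfunctions $(u_k,\ldots,u_l)$ at each stage and carefully tracking the dependence of constants in $k$ and in the multiplicity $m=l-k+1$. Since both $l$ and $m$ are controlled by some power of $k$ (as $l\lesssim k$ by Weyl's law and $m\leq \dim(\mathbb{H}_{n,d})$ grows at most polynomially in $k$), all factors of $m$ can ultimately be absorbed into powers of $k$, accounting for the gap between the exponent $6+\frac{10}{n}$ in the statement and the $4+\frac{8}{n}$ of the simple case. The crucial structural observation is that, while each individual $\lambda_i(\Om)$ for $k\leq i\leq l$ is non-smooth at $B$, the symmetric combination $\sum_{i=k}^l\lambda_i(\Om)$ is, so that the second order argument survives for the sum.

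First, a priori estimates analogous to Lemma \ref{lem_apriori} and Corollary \ref{rem_estimate} follow by combining the minimality inequality $T(\Om)^{-1}-T(B)^{-1}\leq \delta\sum_{i=k}^l(\lambda_i(B)-\lambda_i(\Om))$ with Theorem \ref{main_sqrt} applied eigenvalue by eigenvalue; this also guarantees isolation of the cluster, i.e.\ $\lambda_{k-1}(\Om)<\lambda_k(\Om)$ and $\lambda_l(\Om)<\lambda_{l+1}(\Om)$, when $|\delta|$ is small enough depending on $g_n(k)$ and a power of $k$. Existence of a minimizer in the class of quasi-open sets then follows from the same concentration-compactness argument on capacitary measures as in Proposition \ref{prop_existence}: dichotomy is excluded by a dilation comparison, and the limit measure is shown to be of the form $\infty_{\R^n\setminus A_\mu}$ using Lemma \ref{lem_bucur} for each $\lambda_i$ separately. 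Lipschitz regularity and non-degeneracy of $w_\Om$ (analog of Lemma \ref{lem_reg}) are obtained by the same interior/exterior perturbation arguments, now summed over the cluster.

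The blow-up analysis generalizes Lemma \ref{lem_blowup}: at each contact point $z\in\partial\Om$ with inward normal $\nu$, subsequential blow-ups of $(w,u_k,\ldots,u_l)$ take the form $(\alpha(x\cdot\nu)_+,\beta_k(x\cdot\nu)_+,\ldots,\beta_l(x\cdot\nu)_+)$ with $\alpha>0$, and the shape-derivative optimality condition yields the overdetermined Bernoulli-type relation
\[\alpha^2+T(\Om)^2\delta\sum_{i=k}^l\beta_i^2=Q,\]
with $Q$ close to $\frac{1}{n^2}$. This places the regularity analysis in the vectorial free boundary framework described in the introduction, with quadratic form $q(x_0,x_1,\ldots,x_m)=x_0^2+T(\Om)^2\delta(x_1^2+\cdots+x_m^2)$ and dominating component $v_0=w$ (since $|u_i|\lesssim k^{1+\frac{n}{4}}w$ by Lemma \ref{lem_prelim}). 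For $\delta>0$ the form is positive definite and the $\mathcal{C}^{1,\gamma}$ regularity of $\partial\Om$ follows from the analysis of \cite{KL18} adapted to the $m$-dimensional setting. The main obstacle is the case $\delta<0$, where $q$ has signature $(1,m)$ and one must extend the analysis of \cite{MTV21}, which covers only $m=1$, to arbitrary $m$; I expect this to require re-proving a Harnack-type inequality and a flatness-improvement lemma in the more general setting, precisely the content announced in Sections \ref{ssect:harnack} and \ref{ssect:flatness} of the outline.

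Once $\mathcal{C}^{1,\gamma}$ regularity is in hand, the bootstrap to $\mathcal{C}^{2,\gamma}$ via partial hodograph transform proceeds as in Lemma \ref{lem_C2gamma}, applied to the vectorial elliptic system for the transformed functions $(v,v_k,\ldots,v_l)$. The concluding Fuglede-type step then uses Lemma \ref{lem:Kato} to follow the eigenvalues real-analytically along a variation $B\to B_{th}$; the cluster remains isolated from the rest of the spectrum provided $\|h\|_{L^1(\partial B)}\ll k^{-1-\frac{4}{n}}g_n(k)$, so the internal labeling is immaterial and we may work with the symmetric function $\sum_{i=k}^l\mu_i(t)$. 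Since $\sum_{i=k}^l(\partial_\nu u_i)^2$ is constant on $\partial B$ (by rotation-invariance of the eigenspace, $\sum_{i=k}^l u_i^2$ is radial and in particular so is its normal derivative squared), the first shape derivative of $\sum_{i=k}^l\mu_i$ at $B$ vanishes under the volume constraint. Adapting Proposition \ref{prop_controllambda_k} yields a second-derivative bound $|\sum_{i=k}^l\mu_i''|\lesssim k^{c}g_n(k)^{-1}\|h\|_{H^{1/2}(\partial B)}^2$ with a larger exponent $c$ than in the simple case, absorbing the extra factor $m$ from summing the cluster; this is dominated by the coercive second derivative of $T^{-1}$ from \cite[Theorem 3.3]{BDV15} provided $|\delta|\ll k^{-(6+\frac{10}{n})}g_n(k)$, which closes the proof.
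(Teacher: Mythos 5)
Your outline reproduces the paper's strategy for Proposition \ref{main2_linvect} step by step: a priori estimates and existence as in Lemma \ref{lem_apriorivect} and Proposition \ref{prop_existenceregvect}, the blow-up relation \eqref{eq_alphabetavect}, the vectorial $\eps$-regularity (with new Harnack and flatness-improvement arguments for $\delta<0$, which is indeed the content of Sections \ref{ssect:harnack}--\ref{ssect:flatness}), the hodograph bootstrap of Lemma \ref{lem_C2gammavect}, and the Fuglede step via Lemma \ref{lem:Kato} exploiting that $\sum_{i=k}^l|\nabla u_i|^2$ is constant on $\partial B$. So the architecture is the right one, and you correctly identify the symmetric function of the cluster as the object that is differentiable and critical at the ball.

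However, the final step as you state it --- ``adapting Proposition \ref{prop_controllambda_k}'' to bound $\left|\sum_{i=k}^l\mu_i''\right|$ --- would fail if carried out naively, and the fix requires an idea your plan does not contain. In the simple case, $\int_\Om(|\nabla v|^2-\mu_k v^2)$ is controlled through the spectral decomposition, which produces factors $(\mu_j-\mu_k)^{-1}$ bounded by $g_n(k)^{-1}$. In the cluster case the analogous decomposition for $v_i=u_i'(t)$ produces factors $(\mu_j(t)-\mu_i(t))^{-1}$ with $i,j$ both inside the cluster; along the deformation $t\mapsto B_{th}$ the cluster eigenvalues split and may cross, so these differences are not controlled by any spectral gap and the projections $\int_\Om v_iu_j$ cannot be bounded individually. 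The paper's resolution (Proposition \ref{prop_controllambda_k_vec}) is that the dangerous contributions assemble, after summing over the cluster, into $\sum_{i,j\in I}(\mu_i-\mu_j)\bigl(\int_\Om v_iu_j\bigr)^2$, which vanishes identically by the antisymmetry $\int_\Om(v_iu_j+v_ju_i)=0$ obtained by differentiating the orthonormality constraints. This cancellation is the essential new ingredient of the vectorial Fuglede step and is absent from your argument. A secondary, smaller omission: in the contradiction/compactness proof of the flatness improvement that you defer, the multiplicity $m^\eps$ may tend to infinity along the sequence, and the paper needs a separate diagonal-extraction device (Lemma \ref{lemma_compact}) together with the observation that $\alpha^\eps\to1$ in that regime; this does not follow by merely re-running the fixed-$m$ argument.
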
 
\begin{remark}\label{rem_mainlinvect}\rm 
This result admits the following natural generalization, following the same proof: let $k\leq l$ such that $\lambda_{k-1}(B)<\lambda_k(B)$ and $\lambda_{l}(B)<\lambda_{l+1}(B)$ (note that we do not ask $\lambda_l(B)=\lambda_k(B)$). Then for any $\Om\in\A$:
\[
\left|\sum_{i=k}^{l}\big(\lambda_i(\Om)-\lambda_i(B)\big)\right|\leq \frac{C_n l^{6+\frac{10}{n}}}{\min\left\{\lambda_{l+1}(B)-\lambda_{l}(B),\lambda_k(B)-\lambda_{k-1}(B)\right\}}\left(T(\Om)^{-1}-T(B)^{-1}\right),\]
where $C_n>0$ is some dimensional constant.
\end{remark}

The proof {\BBB of Proposition \ref{main2_linvect}} follows the same plan as in the non-degenerate case {\BBB (Section \ref{sec_lin}). Several steps of the proof will be very similar, particularly} the existence of a solution for \eqref{eq_funcvect}, the first regularity estimates and the existence of blow-ups are proved in the same way \BBB as \EEE  in Section \ref{sec_lin} (see Proposition \ref{prop_existence}, Proposition \ref{lem_reg} and Proposition \ref{lem_blowup} respectively). {\BBB We gather these results just below} and emphasize the slight differences in the proofs. Going from $\C^{1,\gamma}$ to $\C^{3,\gamma}$ is also similar{\BBB, see Lemma \ref{lem_C2gammavect}}.	

The main difference with the case when $\lambda_k(B)$ is simple concerns the {\BBB $\C^{1,\gamma}$} estimates of a minimizer $\Om${\BBB, which was the purpose of Lemma \ref{lem_C1gamma} in the previous section}. When $\delta>0$ we {\BBB are able to }again apply the results from \cite{KL18}, {\BBB but we need to obtain estimates that are} uniform in the multiplicity $m$. {\BBB However}, when $\delta<0$ we cannot directly apply \cite{MTV21} {\BBB as we did in Section \ref{sec_lin}}; instead we {\BBB have to }see \eqref{eq_funcvect} as a vectorial version of the problem studied in \cite{MTV21}, and follow the strategy of \cite{MTV21} (as in \cite{DS11} for the one-phase free boundary problem) by proving first some partial Harnack inequality (see Proposition \ref{prop_harnack}) and then get by contradiction an improvement of flatness (see Proposition \ref{prop_flatness_improv}) in order to get {\BBB $\C^{1,\gamma}$} regularity of a minimizer $\Om$. In this second case we {\BBB must} also follow carefully the dependency of the estimates in the multiplicity. {\BBB These new technical results are gathered in Sections \ref{ssect:harnack} and \ref{ssect:flatness} below, but as announced, we start by summarizing the first steps of the strategy for which the proofs are very similar to the ones in Section \ref{sec_lin}:}

\begin{lemma}\label{lem_apriorivect}Let $\Om$ be a domain such that 
\[T(\Om)^{-1}+\delta\sum_{i=k}^{l}\lambda_i(\Om)\leq T(B)^{-1}+\delta\sum_{i=k}^{l}\lambda_i(B).\]
Then if $|\delta|\ll k^{-1-\frac{2}{n}}$, we have the following properties (up to a translation of $\Om$):
\begin{itemize}[label=\textbullet]
\item $|\Om\Delta B|\lesssim k^{\frac{1}{2}+\frac{1}{n}}|\delta|^\frac{1}{2}$.
\item It holds
\begin{align*} T(\Om)^{-1}\lesssim 1, &\text{ and for }k\leq i\leq l, \ \lambda_i(\Om)\lesssim k^{\frac{2}{n}},\\ 
	T(\Om)^{-1}-T(B)^{-1}\lesssim k^{1+\frac{2}{n}}|\delta|,\ &\text{ and for all } i\in\N^*, |\lambda_i(\Om)-\lambda_i(B)|\lesssim i^{2+\frac{4}{n}}k^{\frac{1}{2}+\frac{1}{n}}|\delta|^\frac{1}{2}.\end{align*}
\item $\Vert w_\Om-w_B\Vert_{L^1(\Rn)}\lesssim k^{\frac{1}{2}+\frac{1}{n}}|\delta|^\frac{1}{2}$.
\end{itemize}
\end{lemma}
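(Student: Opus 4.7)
The strategy is to adapt the scalar proof of Lemma \ref{lem_apriori} to the vectorial setting, where the main new ingredient is that the sum in the functional contains $m=l-k+1$ terms, and we must track how $m$ enters the estimates. Since $\lambda_l(B)=\lambda_k(B)$ and $l^{2/n}\lesssim \lambda_l(B)\lesssim k^{2/n}$, we have the comparability $l\lesssim k$, and in particular $m\leq l\lesssim k$.

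The first step is to estimate $\sum_{i=k}^l\lambda_i(\Om)$ from above: using the Chen--Yang inequality \eqref{est_eigen} from Proposition \ref{prop:growth_egv} we have $\lambda_i(\Om)\lesssim i^{2/n}\lambda_1(\Om)$, hence
\[\sum_{i=k}^l\lambda_i(\Om)\lesssim m\,l^{2/n}\lambda_1(\Om)\lesssim k^{1+\frac{2}{n}}\lambda_1(\Om)\lesssim k^{1+\frac{2}{n}}T(\Om)^{-1},\]
where we used $\lambda_1(\Om)T(\Om)\leq \om_n$. Combining this bound with the hypothesis $T(\Om)^{-1}+\delta\sum_{i=k}^l\lambda_i(\Om)\leq T(B)^{-1}+\delta\sum_{i=k}^l\lambda_i(B)$ rewritten as
\[T(\Om)^{-1}-T(B)^{-1}\leq \delta\sum_{i=k}^{l}(\lambda_i(B)-\lambda_i(\Om))\lesssim |\delta|\sum_{i=k}^{l}\lambda_i(\Om)\lesssim |\delta|k^{1+\frac{2}{n}}T(\Om)^{-1},\]
and using the smallness assumption $|\delta|\ll k^{-1-\frac{2}{n}}$, one absorbs the $T(\Om)^{-1}$ term on the left to deduce $T(\Om)^{-1}\lesssim T(B)^{-1}\lesssim 1$, and then $T(\Om)^{-1}-T(B)^{-1}\lesssim k^{1+\frac{2}{n}}|\delta|$. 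Injecting this back into the growth estimate gives $\lambda_i(\Om)\lesssim k^{2/n}$ for $k\leq i\leq l$.

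Next, up to translating $\Om$ so that $\F(\Om)=|\Om\Delta B|$, the quantitative Saint-Venant inequality \eqref{eq:QSV} yields
\[|\Om\Delta B|\lesssim\bigl(T(\Om)^{-1}-T(B)^{-1}\bigr)^{1/2}\lesssim k^{\frac{1}{2}+\frac{1}{n}}|\delta|^{1/2}.\]
Theorem \ref{main_sqrt} then provides, for any $i\in\N^*$,
\[|\lambda_i(\Om)-\lambda_i(B)|\lesssim i^{2+\frac{4}{n}}\lambda_1(\Om)^{1/2}\bigl(T(\Om)^{-1}-T(B)^{-1}\bigr)^{1/2}\lesssim i^{2+\frac{4}{n}}k^{\frac{1}{2}+\frac{1}{n}}|\delta|^{1/2},\]
using $\lambda_1(\Om)\lesssim 1$.

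Finally, for the $L^1$ estimate on the torsion functions, we use the decomposition
\[\|w_\Om-w_B\|_{L^1(\R^n)}\leq \|w_\Om-w_{\Om\cap B}\|_{L^1(\R^n)}+\|w_B-w_{\Om\cap B}\|_{L^1(\R^n)}=(T(B)-T(\Om))+2(T(\Om)-T(\Om\cap B)),\]
together with Lemma \ref{lem_torsioninter}, to get $\|w_\Om-w_B\|_{L^1}\lesssim k^{1+\frac{2}{n}}|\delta|+k^{\frac{1}{2}+\frac{1}{n}}|\delta|^{1/2}$. Since $|\delta|\ll k^{-1-\frac{2}{n}}$ implies $k^{1+\frac{2}{n}}|\delta|\leq (k^{\frac{1}{2}+\frac{1}{n}}|\delta|^{1/2})^2\leq k^{\frac{1}{2}+\frac{1}{n}}|\delta|^{1/2}$, the desired bound follows. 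There is no real obstacle here: the only subtlety is bookkeeping the extra factor $m\lesssim k$ coming from the sum, which ultimately shifts the powers of $k$ from $k^{2/n}$ (scalar case) to $k^{1+\frac{2}{n}}$ and from $k^{1/n}$ to $k^{\frac{1}{2}+\frac{1}{n}}$, in perfect agreement with the announced estimates.
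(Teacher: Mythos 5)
Your proof is correct and follows exactly the paper's route: the paper derives the key bound $T(\Om)^{-1}-T(B)^{-1}\lesssim k^{1+\frac{2}{n}}|\delta|\,T(\Om)^{-1}$ using Proposition \ref{prop:growth_egv} together with $m\lesssim k$, and then states that one proceeds exactly as in the scalar Lemma \ref{lem_apriori} (quantitative Saint-Venant, Theorem \ref{main_sqrt}, and the decomposition via Lemma \ref{lem_torsioninter}), which is precisely what you do. You have merely written out the bookkeeping that the paper leaves implicit.
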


\begin{proof}\BBB
This follows the proof of Lemma \ref{lem_apriori}. Thanks to the upper bound from Lemma \eqref{est_eigen}, we have
\[T(\Om)^{-1}-T(B)^{-1}\leq\delta \sum_{i=k}^{l}\left(\lambda_i(B)-\lambda_i(\Om)\right)\lesssim k^{1+\frac{2}{n}}\lambda_1(\Om)|\delta|\lesssim  k^{1+\frac{2}{n}}T(\Om)^{-1}|\delta|\]
so when $|\delta|\ll k^{-1-\frac{2}{n}}$ we get $T(\Om)^{-1}\lesssim 1$, and using the quantitative Saint-Venant inequality \eqref{eq:QSV},  we get	\[|\Om\Delta B|\lesssim \sqrt{T(B)-T(\Om)}\lesssim k^{\frac{1}{2}+\frac{1}{n}}|\delta|^\frac{1}{2}.\]
Then Theorem \ref{main_sqrt} applied to any $i\in\mathbb{N}^*$ gives
\[|\lambda_i(\Om)-\lambda_i(B)|\lesssim i^{2+\frac{4}{n}}k^{\frac{1}{2}+\frac{1}{n}}|\delta|^\frac{1}{2}.\]
For the third item, we write as in the proof of Lemma \ref{lem_apriori}
\begin{align*}
\Vert w_{\Om}-w_B\Vert_{L^1(\Rn)}&\leq \Vert w_{\Om}-w_{\Om\cap B}\Vert_{L^1(\Rn)}+\Vert w_{B}-w_{\Om\cap B}\Vert_{L^1(\Rn)}=T(B)-T(\Om)+2(T(\Om)-T(\Om\cap B))\\
&\leq T(B)-T(\Om)+\left(\frac{1}{n}+\frac{1}{n^2}\right)|\Om\Delta B|\lesssim k^{\frac{1}{2}+\frac{1}{n}}|\delta|^\frac{1}{2}.
\end{align*}
\end{proof}

\BBB
\begin{corollary}\label{rem_estimatevect}
Let $\Om\in\A$ and $\delta$ satisfy the same hypotheses as in Lemma \ref{lem_apriorivect}. Then
\begin{align*}
T(\Om)^{-1}-T(B)^{-1}&\lesssim k^{6+\frac{8}{n}}|\delta|^2,\\
\forall i\in\N^*,\ |\lambda_i(\Om)-\lambda_i(B)|&\lesssim i^{2+\frac{4}{n}}k^{3+\frac{4}{n}}|\delta|.
\end{align*}
\end{corollary}

\begin{proof}
We follow the proof of Corollary \ref{rem_estimate}. Thanks to the hypothesis on $\Om$ and Theorem \ref{main_sqrt} we have
\[T(\Om)^{-1}-T(B)^{-1}\leq\delta \sum_{i=k}^{l}\left(\lambda_i(B)-\lambda_i(\Om)\right)\lesssim k^{3+\frac{4}{n}}|\delta|\left(T(\Om)^{-1}-T(B)^{-1}\right)^{\frac{1}{2}}.\]
This gives the first estimate. The second estimate follows by applying Theorem \ref{main_sqrt} again.
\end{proof}

\EEE

\BBB

{\BBB In the following result we adapt the results of Proposition \ref{prop_existence}, Lemma \ref{lem_reg} and Lemma \ref{lem_convw} to the case of multiple eigenvalues.}
\begin{proposition}\label{prop_existenceregvect}
If $|\delta|\ll k^{-\left(3+\frac{4}{n}\right)}$, then \BBB the \EEE functional \eqref{eq_funcvect} has a minimizer $\Om \in\A${\BBB. Besides,} there exists $c_n,C_n>0$ such that $\|\nabla w_\Om\|_{L^\infty(\Rn)}\leq C_{n}$, for all $k\leq i \leq l$, $\|\nabla u_i\|_{L^\infty(\Rn)}\leq C_n k^{\frac{1}{2}+\frac{2}{n}}$ and for all $x\in\Rn$, $r\in (0,1)$,
\[\fint_{\partial B_{x,r}}w_\Om< c_{n}r\text{ implies } w_{\Om}|_{B_{x,r/2}}= 0.\]
Moreover, \BBB $\Om$ is bounded \EEE with $\text{diam}(\Om)\lesssim 1$, and up to translating $\Om$ we have
\begin{align*}
\Vert w_\Om-w_B\Vert_{\C^{0}(\Rn)}&\lesssim \left(k^{\frac{1}{2}+\frac{1}{n}}|\delta|^\frac{1}{2}\right)^{\frac{1}{n+1}},\\
|\Om\Delta B|&\lesssim k^{3+\frac{4}{n}}|\delta|.
\end{align*}
\end{proposition}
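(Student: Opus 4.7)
The strategy is to adapt, essentially line by line, the proofs of Proposition \ref{prop_existence}, Lemma \ref{lem_reg} and Lemma \ref{lem_convw} to the vectorial setting where $\lambda_k(\Om)$ is replaced by $\sum_{i=k}^l \lambda_i(\Om)$. The only new ingredient is that the multiplicity $m=l-k+1$ satisfies $m\lesssim k$, so that thanks to Proposition \ref{prop:growth_egv} we have $\sum_{i=k}^l\lambda_i(\Om)\lesssim k\cdot k^{2/n}$ whenever $\lambda_k(\Om)$ itself is under control. Every application of Lemma \ref{lem_bucur} then picks up an extra factor of $m\lesssim k$ when summed over $i\in\{k,\dots,l\}$, and keeping track of these factors is exactly what dictates the smallness condition $|\delta|\ll k^{-(3+4/n)}$.

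For existence, I would repeat verbatim the concentration-compactness argument of Proposition \ref{prop_existence} in the setting of capacitary measures. The a priori bounds of Lemma \ref{lem_apriorivect} allow, after translation, to assume the minimizing sequence $(\Om_p)_p$ satisfies $|\Om_p\Delta B|\lesssim k^{1/2+1/n}|\delta|^{1/2}$. Dichotomy is excluded exactly as in the scalar case, noting that if $\tilde{\Om}_p=\Om_p^1\sqcup \Om_p^2$ splits, the Faber-Krahn lower bound on $\lambda_1(\Om_p^2)$ dominates $\lambda_l(\Om_p^1)\lesssim k^{2/n}$, so $\sum_{i=k}^l\lambda_i(\tilde{\Om}_p)=\sum_{i=k}^l\lambda_i(\Om_p^1)$, and the dilation trick applied to $\Om_p^1$ produces a strictly better competitor. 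Passing from the $\gamma$-limit $\mu$ to its regular set $A_\mu$ requires, in place of the single estimate used in Proposition \ref{prop_existence}, the bound
\[0\leq \sum_{i=k}^l\big[\lambda_i(\mu)-\lambda_i(A_\mu)\big]\lesssim k^{3+4/n}\left(T(\mu)^{-1}-T(A_\mu)^{-1}\right),\]
obtained by summing Lemma \ref{lem_bucur} over $i$; the minimality of $\mu$ then forces $T(A_\mu)=T(\mu)$ as soon as $|\delta|\ll k^{-(3+4/n)}$.

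The Lipschitz regularity and non-degeneracy follow the same outer/inner comparison-with-rescaled-competitor method as in Lemma \ref{lem_reg}. For an outer perturbation $\Om\subset\tilde{\Om}$ (used for the Lipschitz bound), $\delta>0$ needs only monotonicity, while $\delta<0$ forces us to control $|\delta|\sum_{i=k}^l[\lambda_i(\Om)-\lambda_i(\tilde{\Om})]$ via Lemma \ref{lem_bucur} applied termwise, giving the coefficient $|\delta|\cdot m\cdot k^{2+4/n}\lesssim|\delta|k^{3+4/n}$, which is absorbable precisely under the hypothesis. Inner perturbations (non-degeneracy) are symmetric with the roles of the signs swapped. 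Lipschitz bounds on the eigenfunctions then follow from the one on $w_\Om$ through Lemma \ref{lem_prelim}, which yields $\|\nabla u_i\|_{L^\infty}^2\lesssim \lambda_i(\Om)^{2+n/2}\lesssim k^{1+4/n}$ for $k\leq i\leq l$, hence $\|\nabla u_i\|_{L^\infty}\lesssim k^{1/2+2/n}$.

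Finally, the bound $\Vert w_\Om-w_B\Vert_{\mathcal{C}^0(\Rn)}\lesssim (k^{1/2+1/n}|\delta|^{1/2})^{1/(n+1)}$ results from Gagliardo-Nirenberg interpolation between the uniform Lipschitz bound on $w_\Om-w_B$ and the $L^1$ estimate $\|w_\Om-w_B\|_{L^1(\Rn)}\lesssim k^{1/2+1/n}|\delta|^{1/2}$ of Lemma \ref{lem_apriorivect}; this combined with non-degeneracy implies $\Om\subset B_{3/2}$ exactly as at the end of Lemma \ref{lem_reg}, thereby giving the diameter bound. The estimate $|\Om\Delta B|\lesssim k^{3+4/n}|\delta|$ follows by applying the quantitative Saint-Venant inequality \eqref{eq:QSV} to the sharper torsion deviation $T(\Om)^{-1}-T(B)^{-1}\lesssim k^{6+8/n}|\delta|^2$ of Remark \ref{rem_estimatevect} (itself a consequence of Theorem \ref{main_sqrt} plugged back into the minimality inequality), followed by the barycenter-adjustment argument of Lemma \ref{lem_convw}. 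The only delicate point throughout is the careful bookkeeping of the $k$-dependence every time Lemma \ref{lem_bucur} is summed over the cluster; once this is done, the proof is a faithful transcription of the scalar one.
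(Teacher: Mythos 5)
Your proposal is correct and follows exactly the route the paper takes: the paper's own proof is a one-line reference back to Propositions \ref{prop_existence}, \ref{lem_reg} and \ref{lem_convw}, noting only that the strengthened hypothesis $|\delta|\ll k^{-(3+\frac{4}{n})}$ comes from the multiplicity bound $m\lesssim k$, which is precisely the bookkeeping you carry out. Your transcription of the scalar arguments (dichotomy exclusion, the $T(\mu)=T(A_\mu)$ step via the summed Lemma \ref{lem_bucur}, the inner/outer perturbation estimates, and the final Gagliardo--Nirenberg and Saint-Venant steps) is accurate and in fact more detailed than what the paper records.
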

\begin{proof}
This proof is completely similar to the proofs of \BBB Proposition \EEE  \ref{prop_existence} and \BBB Lemmas \EEE \ref{lem_reg}, \ref{lem_convw}.  The condition $|\delta|\ll k^{-\left(3+\frac{4}{n}\right)}$ in place of $|\delta|\ll k^{-\left(2+\frac{4}{n}\right)}$ in Proposition \ref{prop_existence} again comes from the multiplicity estimate $m\lesssim k$, as well as some of the estimates above.

\end{proof}
\EEE
Analogously to the case of a simple eigenvalue, we set
\begin{equation}\label{eq:defQ_mult}Q:=\frac{T(\Om)^2}{\om_n}\left(\frac{n+2}{nT(\Om)}+\frac{2}{n}\delta\sum_{i=k}^l\lambda_i(\Om)\right)\end{equation}
and we have
\begin{equation}\label{eq:Qspeed_vect}\left|Q-\frac{1}{n^2}\right|\lesssim k^{1+\frac{2}{n}}|\delta|.\end{equation}

\begin{lemma}\label{lem_blowupvect}
Let $\Om \in \A$ be a minimizer of \eqref{eq_funcvect} and suppose $|\delta|\ll k^{-\left(6+\frac{8}{n}\right)}g_n(k)$. Suppose $z\in\partial\Om$ has a contact sphere on either side with inward {\BBB normal} vector $\nu$. Then there exists $ \beta>0$, $\beta_i\in\R$, and a sequence $s_j\to 0$ such that
\begin{equation}\label{eq_blowupvect}
\begin{split}
	(w)_{z,s_j}\underset{\C^0_\loc(\Rn)}{\cvg} & \beta (x\cdot\nu)_+,\\
	(u_i)_{z,s_j}\underset{\C^0_\loc(\Rn)}{\cvg} & \beta_i (x\cdot\nu)_+\text{ for any }k\leq i\leq l
\end{split}
\end{equation}
as $j\to\infty$, and 
\begin{equation}\label{eq_alphabetavect}
\beta^2+T(\Om)^2\delta\sum_{i=k}^{l}\beta_i^2=Q,
\end{equation}
where $Q$ is defined in \eqref{eq:defQ_mult}.
\end{lemma}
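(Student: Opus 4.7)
The strategy is to adapt the proof of Lemma \ref{lem_blowup} to the vectorial setting, splitting the argument into two independent parts: extracting the linear blow-up profiles, and deriving the algebraic relation \eqref{eq_alphabetavect} from the first-order optimality condition.

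For the blow-up profiles, after translating and rotating I may assume $z=0$ and $\nu=e_n$. I would apply the barrier argument of \cite[Lemma 11.17]{CS05} (and Remark 11.18 in the interior contact sphere case) directly to the torsion function $w$, using the Lipschitz regularity and non-degeneracy from Proposition \ref{prop_existenceregvect}, to obtain the expansion $w(x)=\beta x_n^+ + o(|x|)$ with $\beta>0$. For each $u_i$ with $k\leq i\leq l$ I would apply the \emph{same} argument to $w+cu_i$ for a constant $c\ll k^{-(1/2+2/n)}$ chosen small enough so that this combination is non-negative on its support, which is allowed thanks to Lemma \ref{lem_prelim}; this yields $\beta_i\in\R$ and $u_i(x)=\beta_i x_n^+ + o(|x|)$. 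The interior-contact case further requires ruling out a nontrivial blow-up outside $\Hn$: this proceeds verbatim from the scalar proof of Lemma \ref{lem_blowup}, constructing the competitor $\tilde\Om_j:=(\omega_n/|\Om\cup B_{s_j}|)^{1/n}(\Om\cup B_{s_j})$ and using Lemma \ref{lem_bucur} summed over the $m\lesssim k$ eigenvalues in the cluster to ensure that the torsion gain dominates the eigenvalue cost as soon as $|\delta|\ll k^{-(3+4/n)}$, which is already guaranteed by the hypotheses of Proposition \ref{prop_existenceregvect}.

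For the algebraic relation, I first observe that the hypothesis $|\delta|\ll k^{-(6+8/n)}g_n(k)$ combined with Remark \ref{rem_estimatevect} forces $|\lambda_i(\Om)-\lambda_i(B)|\ll g_n(k)$ simultaneously for $i\in\{k-1,k,\ldots,l,l+1\}$, so the cluster $\{\lambda_k(\Om),\ldots,\lambda_l(\Om)\}$ stays spectrally isolated from the remainder of the spectrum. Consequently the \emph{symmetric} functional $\Om\mapsto \sum_{i=k}^l\lambda_i(\Om)$ is shape differentiable at $\Om$ with derivative $-\int_{\partial\Om}\sum_{i=k}^l|\nabla u_i|^2(\zeta\cdot\nu)$ (even though the individual $\lambda_i$ may fail to be, due to potential crossings along the perturbation); this is a standard consequence of Kato's analytic perturbation theory, and is essentially the content of Lemma \ref{lem:Kato} below. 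Writing the first-order optimality condition for the scale-invariant analog of \eqref{eq_funcvect} tested along $\zeta_j(x):=\zeta(x/s_j)$ for arbitrary $\zeta\in\C^\infty_c(\Rn,\Rn)$, rescaling, and passing to the limit in $j$ using the $\C^0_\loc(\Rn)\cap\C^1_\loc(\Hn)$ convergence of $(w)_{0,s_j},(u_i)_{0,s_j}$ (from Lemma \ref{lem_prelim_blowup}) together with the uniform $W^{1,\infty}$ bounds, every term converges and a computation analogous to the end of the proof of Lemma \ref{lem_blowup} produces
\[\int_{\partial\Hn}\Big(\beta^2+T(\Om)^2\delta\sum_{i=k}^l\beta_i^2 - Q\Big)(\zeta\cdot e_n)=0,\]
from which \eqref{eq_alphabetavect} follows by arbitrariness of $\zeta$.

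The main obstacle is the shape differentiability of $\sum_{i=k}^l\lambda_i$ at $\Om$: since $\lambda_k(B)=\cdots=\lambda_l(B)$ is multiple, individual eigenvalues are at best directionally differentiable at $B$ and may remain non-smooth at nearby sets due to crossings, so the argument crucially exploits that a symmetric function of a cluster that stays spectrally isolated is smooth. Propagating the multiplicity factor $m\lesssim k$ through Remark \ref{rem_estimatevect} is precisely what raises the smallness threshold on $\delta$ from $k^{-(4+8/n)}g_n(k)$ (scalar case of Lemma \ref{lem_blowup}) to $k^{-(6+8/n)}g_n(k)$ here, accounting for the weaker exponent in Theorem \ref{main_linvect} compared to Theorem \ref{main_lin}.
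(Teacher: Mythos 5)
Your proposal is correct and follows essentially the same route as the paper: the blow-up profiles are obtained exactly as in the scalar Lemma \ref{lem_blowup} (barrier argument on $w$ and on $w+cu_i$, and the density/competitor argument in the interior-sphere case with Lemma \ref{lem_bucur} summed over the cluster), and the only genuinely new point — which you correctly isolate — is that the spectral isolation of the cluster guaranteed by Remark \ref{rem_estimatevect} makes the symmetric sum $\sum_{i=k}^l\lambda_i$ shape differentiable at $\Om$, after which the rescaled optimality condition passes to the limit as before. The paper's proof is exactly this observation together with the domain-integral formula for the derivative of the cluster sum from \cite[Theorem 2.6]{LamLan06}.
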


\begin{proof}
Since $|\delta|\ll k^{-\left(6+\frac{8}{n}\right)}g_n(k)$ we have $\lambda_{k-1}(\Om)<\lambda_k(\Om)\leq \lambda_l(\Om)<\lambda_{l+1}(\Om)$ thanks to \BBB  Corollary \ref{rem_estimatevect}\EEE. The proof is then completely analogous to the proof of \BBB Lemma \EEE  \ref{lem_blowup}, the only difference lying in the computation of the shape derivative: while each $\lambda_i$ is not necessarily differentiable, the sum $\sum_{i=k}^l\lambda_i$ is, thanks to \cite[Theorem 2.6]{LamLan06}, and we have
\[\left.\frac{d}{dt}\right|_{t=0}\left(\sum_{i=k}^{l}\lambda_i\right)\left(\zeta^t(\Om)\right)=\sum_{i=k}^{l}\int_{\Om}\left[\left(|\nabla u_i|^2-\lambda_k(\Om)u_i^2\right)\nabla\cdot\zeta -2\nabla u_i\cdot D\zeta\cdot\nabla u_i\right],\]
where $(u_i)_{k\leq i\leq l}$ is an orthonormal basis of the eigenspaces associated to $(\lambda_i(\Om))_{k\leq i\leq l}$.
\end{proof}

\subsection{Harnack inequality}\label{ssect:harnack}

Let us start by introducing the space of viscosity solutions relevant to us, which we will be denoted by $\Sol_{m,\delta}(L)$,  $m$ being the multiplicity of the eigenspace associated to $\lambda_k(B)$.

\begin{definition}\label{def_Sol}
Let $L\geq 1$, $\delta\in\R$, $m\in\mathbb{N}^*$. We define $\Sol_{m,\delta}(L)$ to be the set of functions $\left(v,\ov{v}_1,\un{v}_1,\hdots,\ov{v}_m,\un{v}_m\right)\in H^1(B_1, \R_+)^{2m+1}$ such that
\begin{align*}
|\nabla v|&\leq L,\\
|\nabla \ov{v}_i|,|\nabla \un{v}_i|&\leq L|\delta|^\frac{1}{4},\\
0<\frac{\ov{v}_i}{v},\frac{\un{v}_i}{v}&\leq L|\delta|^\frac{1}{4} \ \text{ in }\{v>0\},\\
\frac{1}{L}<\frac{\ov{v}_i}{\un{v}_i}&\leq L \ \text{ in }\{v>0\}
\end{align*}
and for every $z\in\partial \{v>0\}$  with a contact sphere on either side with inward normal vector $\nu$, there exists numbers $\alpha>0, \ov{\alpha}_i>0,\ \un{\alpha}_i>0$
such that $x\mapsto (\alpha(x\cdot\nu)_+,\ov{\alpha}_1(x\cdot\nu)_+,\hdots,\un{\alpha}_m(x\cdot\nu)_+)$ is a blow-up of $(v,\ov{v}_1,\ldots,\un{v}_m)$ at $z$ in the sense of \eqref{eq_blowupvect} and
\begin{align*}
\alpha^2+\sum_{i=1}^{m}\frac{\ov{\alpha}_i^2+\un{\alpha}_i^2}{2}&=1&\text{ if }\delta>0,\\
\alpha^2+\sum_{i=1}^{m}\ov{\alpha}_i\un{\alpha}_i&=1&\text{ if }\delta<0.
\end{align*}
\end{definition}
Note that this last condition may be written as $(\partial_\nu v)^2+\sum_{i=1}^{m}\frac{\BBB (\partial_\nu \ov{v}_i)^2+(\partial_\nu\un{v}_i)^2}{2}=1$ {\BBB when $\delta>0$} (resp. $(\partial_\nu v)^2+\sum_{i=1}^{m}({\BBB \partial_\nu \ov{v}_i})(\partial_\nu\un{v}_i)=1$ when $\delta<0$) on $\partial\{v>0\}$ in the viscosity sense, although the traces of the gradients are not {\BBB assumed to be well-defined } here.
\begin{remark}\label{rem_estalpha}\rm 
According to this definition, $\ov{\alpha}_i$ and $\un{\alpha}_i$ are bounded by $L|\delta|^\frac{1}{4}$, so that $\alpha^2\geq 1- L^2 m|\delta|^\frac{1}{2}$. In particular $\alpha\geq \frac{1}{2}$ when $L^2 m|\delta|^\frac{1}{2}\leq \frac{1}{8}$, which is the hypothesis we will make in order to obtain Harnack inequalities.
\end{remark}

In the next Lemma we link this definition of viscosity solutions to our free boundary problem. Let $\Om$ be a minimizer of \eqref{eq_funcvect} and let $w,u_{k},\hdots,u_l$ be its torsion function and eigenfunctions associated to the eigenvalues $(\lambda_i(\Om))_{k\leq i\leq l}$. Set $m=l-k+1$, and $B_{x,r}$ a ball of $\Rn$ \BBB where $r\in(0,1]$ is arbitrary\EEE. We let
\begin{align*}
v(y)&=\frac{1}{r}Q^{-\frac{1}{2}}\left(1-mT(\Om)^2|\delta|^\frac{1}{2}\right)^\frac{1}{2}w(x+ry),\\
\ov{v}_i(y)&=\frac{1}{r}Q^{-\frac{1}{2}}T(\Om)|\delta|^\frac{1}{4}\left(w+|\delta|^\frac{1}{4}u_{k+i-1}\right)(x+ry),\;\;\;i=1,\hdots,m,\\
\un{v}_i(y)&=\frac{1}{r}Q^{-\frac{1}{2}}T(\Om)|\delta|^\frac{1}{4}\left(w-|\delta|^\frac{1}{4}u_{k+i-1}\right)(x+ry),\;\;\;i=1,\hdots,m.
\end{align*}
\begin{lemma}
There exists $C_n,L_n>0$ such that if $|\delta|\leq C_n k^{-2-\frac{8}{n}}$ then for any \BBB $x\in\R^n$ and $v,\ov{v}_1,\un{v}_1,\hdots,\ov{v}_m,\un{v}_m$ defined as above, it holds\EEE
\[\left(v,\ov{v}_1,\un{v}_1,\hdots,\ov{v}_m,\un{v}_m\right)\in \Sol_{m,\delta}\left(L_n\right).\]
\end{lemma}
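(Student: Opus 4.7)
The plan is to verify separately each of the conditions in Definition \ref{def_Sol}, relying on the uniform estimates of Proposition \ref{prop_existenceregvect}, the pointwise bounds of Lemma \ref{lem_prelim}, and the blow-up characterization of Lemma \ref{lem_blowupvect}. Throughout, I will systematically use that $Q\simeq 1/n^2$ thanks to \eqref{eq:Qspeed_vect}, that $T(\Omega)\lesssim 1$, and the pointwise bound $|u_{j}|\le e^{\frac{1}{8\pi}}\lambda_j(\Omega)^{1+\frac n4}w\lesssim k^{\frac12+\frac2n}\,w$ for $k\le j\le l$ (by Lemma \ref{lem_prelim} together with $\lambda_j(\Omega)\lesssim k^{2/n}$ from Proposition \ref{prop_existenceregvect}). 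The smallness condition $|\delta|\le C_nk^{-2-\frac{8}{n}}$ will be forced by the requirement $|\delta|^{1/4}k^{\frac12+\frac2n}\le c_n\ll 1$, ensuring that the perturbations $|\delta|^{1/4}u_j/w$ appearing in $\bar v_i$ and $\un v_i$ remain small.

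First, I would handle the gradient bounds. Since $r\le 1$, the scaling in the definitions of $v,\bar v_i,\un v_i$ only helps, so one can reduce to estimating $|\nabla w|,|\nabla(w\pm|\delta|^{1/4}u_{k+i-1})|$. By Proposition \ref{prop_existenceregvect} the first is $\lesssim 1$, and for the second one gets $\lesssim 1+|\delta|^{1/4}k^{\frac12+\frac2n}\lesssim 1$ under our smallness hypothesis. Combined with the prefactor $T(\Omega)|\delta|^{1/4}\lesssim|\delta|^{1/4}$ this yields $|\nabla\bar v_i|,|\nabla\un v_i|\lesssim|\delta|^{1/4}$, giving the first two lines of the definition for $L_n$ dimensional.

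Next I would verify the ratio conditions in $\{v>0\}=\{w>0\}$. Writing
\[\frac{\bar v_i}{v}=\frac{T(\Omega)|\delta|^{1/4}}{\bigl(1-mT(\Omega)^2|\delta|^{1/2}\bigr)^{1/2}}\Bigl(1+|\delta|^{1/4}\frac{u_{k+i-1}}{w}\Bigr),\]
the bound $|u_{k+i-1}/w|\lesssim k^{\frac12+\frac2n}$ shows that the bracket stays in a fixed interval around $1$ when $|\delta|^{1/4}k^{\frac12+\frac2n}\ll 1$, which simultaneously gives $\bar v_i>0$ and $\bar v_i/v\le L_n|\delta|^{1/4}$; the same computation treats $\un v_i$. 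The ratio $\bar v_i/\un v_i=(1+|\delta|^{1/4}u_{k+i-1}/w)/(1-|\delta|^{1/4}u_{k+i-1}/w)$ is then trapped in $[1/L_n,L_n]$ by the same smallness.

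The key step is the boundary identity. At any $z\in\partial\{v>0\}=\partial\Omega$ admitting a contact sphere on either side with inward normal $\nu$, Lemma \ref{lem_blowupvect} supplies a sequence $s_j\to 0$ and constants $\beta>0$, $\beta_{k+i-1}\in\mathbb R$ with
\[(w)_{z,s_j}\to\beta(x\cdot\nu)_+,\qquad (u_{k+i-1})_{z,s_j}\to\beta_{k+i-1}(x\cdot\nu)_+,\qquad \beta^2+T(\Omega)^2\delta\sum_{i=k}^l\beta_i^2=Q.\]
By linearity, the associated blow-ups of $v,\bar v_i,\un v_i$ at $z$ along the same $s_j$ are linear functions $\alpha(x\cdot\nu)_+,\bar\alpha_i(x\cdot\nu)_+,\un\alpha_i(x\cdot\nu)_+$ with explicit expressions in $\beta,\beta_{k+i-1}$. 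A direct algebraic check then yields
\[\alpha^2+\sum_{i=1}^m\frac{\bar\alpha_i^2+\un\alpha_i^2}{2}=Q^{-1}\Bigl(\beta^2+T(\Omega)^2|\delta|\sum_{i=k}^l\beta_i^2\Bigr)\quad (\delta>0),\]
\[\alpha^2+\sum_{i=1}^m\bar\alpha_i\un\alpha_i=Q^{-1}\Bigl(\beta^2-T(\Omega)^2|\delta|\sum_{i=k}^l\beta_i^2\Bigr)\quad (\delta<0),\]
which in both cases equal $1$ by the relation \eqref{eq_alphabetavect}. The choice of the normalization constants $Q^{-1/2}(1-mT(\Omega)^2|\delta|^{1/2})^{1/2}$ in $v$ and $Q^{-1/2}T(\Omega)|\delta|^{1/4}$ in $\bar v_i,\un v_i$ is precisely engineered to produce this cancellation. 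The positivity of $\alpha,\bar\alpha_i,\un\alpha_i$ needed in the definition of $\Sol_{m,\delta}(L)$ follows from $\beta>0$ and the smallness $|\delta|^{1/4}|\beta_{k+i-1}|/\beta\ll 1$, which is a consequence of the same bound $|u_{k+i-1}/w|\lesssim k^{\frac12+\frac2n}$ passed to the blow-up limit. Collecting all the intermediate smallness requirements gives a dimensional $C_n>0$ such that $|\delta|\le C_nk^{-2-\frac{8}{n}}$ suffices and a dimensional $L_n$ absorbing all the above constants, completing the proof.
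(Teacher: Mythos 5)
Your proposal is correct and follows essentially the same route as the paper: the pointwise domination $|u_{k+i-1}|\lesssim k^{\frac12+\frac2n}w$ and the gradient bounds from Proposition \ref{prop_existenceregvect} give the interior conditions of Definition \ref{def_Sol}, and the boundary identity is obtained by rearranging \eqref{eq_alphabetavect} exactly as in the paper, with the normalizations of $v,\ov v_i,\un v_i$ producing the cancellation. Your explicit check of the positivity of $\ov\alpha_i,\un\alpha_i$ is a small welcome addition, but not a different argument.
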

\begin{proof}
Since $|u_{i}|\lesssim k^{\frac{2}{n}+\frac{1}{2}}w$ for $k\leq i\leq l$ (by Lemma \ref{lem_prelim}) then for $|\delta|\ll k^{-2-\frac{8}{n}}$ we have $0<\ov{v}_i,\un{v}_i\lesssim |\delta|^\frac{1}{4}v$ and also $1\lesssim \ov{v_i}/\un{v_i}\lesssim 1$. By Proposition \ref{prop_existenceregvect} and \eqref{eq:Qspeed_vect} we have $|\nabla w|\lesssim1$ and $|\delta|^\frac{1}{4}|\nabla u_{i}|\lesssim |\delta|^\frac{1}{4}k^{\frac{2}{n}+\frac{1}{2}}\lesssim 1$ since $|\delta|\ll k^{-2-\frac{8}{n}}$, hence there exists $L=L_n$ verifying the properties of Definition \ref{def_Sol}.  Finally, for any $z\in\partial\{v>0\}$ which has a contact sphere with inward {\BBB normal} vector $\nu$, thanks to Lemma \ref{lem_blowupvect} there exists blow-ups $(\beta (x\cdot\nu)_+,\beta_k (x\cdot\nu)_+,\hdots,\beta_l (x\cdot \nu)_+)$ of $(w,u_k,\ldots,u_l)$ at $z$ such that
\[\beta^2+T(\Om)^2\delta\sum_{i=k}^{l}\beta_i^2=Q\]
which may be rearranged as
\[\left(1-m T(\Om)^2|\delta|^\frac{1}{2}\right)\beta^2+T(\Om)^2|\delta|^\frac{1}{2}\sum_{i=k}^{l}\left(\beta^2+\text{sign}(\delta)|\delta|^\frac{1}{2}\beta_i^2\right)=Q.\]
Letting\begin{align*}
\alpha&=Q^{-\frac{1}{2}}\left(1-mT(\Om)^2|\delta|^\frac{1}{2}\right)^\frac{1}{2}\beta,\\
\ov{\alpha}_i&=Q^{-\frac{1}{2}}T(\Om)|\delta|^\frac{1}{4}\left(\beta+|\delta|^\frac{1}{4}\beta_{k+i-1}\right),\;\;\;i=1,\hdots,m,\\
\un{\alpha}_i&=Q^{-\frac{1}{2}}T(\Om)|\delta|^\frac{1}{4}\left(\beta-|\delta|^\frac{1}{4}\beta_{k+i-1}\right),\;\;\;i=1,\hdots,m,
\end{align*}
these correspond to the gradients of the blow-ups of $(v,\ov{v}_1,\hdots,\un{v}_m)$ at $z$, thus concluding the proof.
\end{proof}

As in \cite{MTV21}, \BBB in order to use a viscosity method in the vectorial setting, the key observation \EEE consists in noting that if $(v,\ov{v}_1,\un{v}_1,\hdots,\ov{v}_m,\un{v}_m)\in\Sol_{m,\delta}(L)$, then \[(v,\sqrt{\ov{v}_1\un{v}_1},\hdots,\sqrt{\ov{v}_m\un{v}_m})\]
is a supersolution of a vectorial problem of the type of \cite{KL18} because (see \cite[Lemma 2.9 and Remark 4.1]{MTV21})
\begin{equation}\label{eq_geomean}
\begin{split}
\Delta \sqrt{\ov{v}_i\un{v}_i}&\leq \frac{1}{2}\left(\sqrt{\frac{\un{v}_i}{\ov{v}_i}}\Delta \ov{v}_i+\sqrt{\frac{\ov{v}_i}{\un{v}_i}}\Delta \un{v}_i\right) \leq \sqrt{L}\frac{(\Delta \ov{v}_i)_++(\Delta \un{v}_i)_+}{2},\\
\partial_\nu \sqrt{\ov{v}_i\un{v}_i}&=\sqrt{(\partial_\nu\ov{v}_i)(\partial_\nu\un{v}_i)}\text{ at blow-ups of contact points.}
\end{split}
\end{equation}
Similarly for any positive $(c_i)_{i=1,\hdots, m}$ with $c_i\in [1/(2L),2L]$ we have that
\[\left(v,\frac{1}{2}(c_1\ov{v}_1+c_1^{-1}\un{v}_1),\hdots,\frac{1}{2}(c_m\ov{v}_m+c_m^{-1}\un{v}_m)\right)\] is a subsolution in the sense that\BBB

\begin{equation}\label{eq_armean}
\begin{split}
\Delta \frac{c_i\ov{v}_i+c_i^{-1}\un{v}_i}{2}&\geq -\frac{c_i\Delta\ov{v_i}+c_i^{-1}\Delta\un{v_i}}{2},\\
\partial_\nu \left(\frac{c_i\ov{v}_i+c_i^{-1}\un{v}_i}{2}\right)&\geq \sqrt{(\partial_\nu\ov{v}_i)(\partial_\nu\un{v}_i)}\text{ at blow-ups of contact points.}
\end{split}
\end{equation}
\EEE

\BBB
We will now prove an $\eps$-regularity result (see Corollary \ref{prop_flatness} below), following the general compactness strategy of \cite{DS11} (inspired from \cite{S07}): our goal is to prove that any sufficiently flat (\textit{i.e.} close to affine) solution becomes flatter on a smaller ball, as is stated in Lemma \ref{prop_flatness_improv}. The first step (Lemma \ref{lemma_estharnack}) is a weaker improvement, which is related to Harnack's inequality in the classical \BBB elliptic \BBB setting, regarding the total oscillation of a solution. This implies an equicontinuity property (see Lemma \ref{prop_harnack}) which then allows us to use a compactness argument to prove the flatness improvement result.

Let us start by defining a notion of flatness for solutions.\EEE
\begin{definition}\label{def_flat}
We say $(v,\ov{v}_1,\un{v}_1,\hdots,\ov{v}_m,\un{v}_m)\in\Sol_{m,\delta}(L)$ is $\eps$-flat with parameters 
\[a,b,(\alpha,\ov{\alpha}_1,\un{\alpha}_1,\hdots,\ov{\alpha}_m,\un{\alpha}_m)\]
when $|a|,|b|\leq\eps$ and
\begin{equation}\label{eq_defflatness}
\begin{split}
	0\leq b-a&\leq \eps\\
	\alpha^2+\sum_{i=1}^{m}\frac{\ov{\alpha}_i^2+\un{\alpha}_i^2}{2}= 1&\text{ if }\delta>0,\qquad
	\alpha^2+\sum_{i=1}^{m}\ov{\alpha}_i\un{\alpha}_i= 1\text{ if }\delta<0, \\
	(x_n+a)_+\leq 
	\frac{v(x)}{\alpha},\ &\frac{\ov{v}_i(x)}{\ov{\alpha}_i},\ \frac{\un{v}_i(x)}{\un{\alpha}_i} \leq  (x_n+b)_+\text{ in }B_1,\\
	\frac{|\Delta v|}{\alpha},\ \frac{|\Delta\ov{v}_i|}{\ov{\alpha}_i},\ \frac{|\Delta\un{v}_i|}{\un{\alpha}_i}&< \eps^2\text{ in }B_1 \cap \{v>0\}.\\
\end{split}
\end{equation}
\end{definition}
We remark that the second and third equations of \eqref{eq_defflatness} (evaluated at $x\to e_n$) directly imply, for a small enough $\eps$,
\begin{equation}\label{eq_estapriorialpha_i}
\frac{\ov{\alpha}_i}{\un{\alpha}_i}\in \left[\frac{1}{2L},2L\right]\text{ and } \ov{\alpha}_i,\un{\alpha}_i\leq 2L|\delta|^\frac{1}{4}.
\end{equation}

In all the following we let $\eta\in\C_c^\infty(\R,[0,1])$ such that $\eta\equiv 1$ on $[-3/5,3/5]$ and $\eta=0$ outside of $[-4/5,4/5]$. Then for any small enough $t$ (positive or negative) we set
\[H_t:=\{(x',x_n)\in \Rn:x_n> -t\eta(|x'|)\}.\]

We define three functions depending on $t$:
\[\begin{cases}
\Delta \varphi_t =0 & {\BBB \textrm{in } }B\cap H_t\\
\varphi_t = x_n & {\BBB \textrm{on } }\partial B\cap H_t\\
\varphi_t = 0 & {\BBB \textrm{on } }B\cap \partial  H_t\\
\end{cases},\ \begin{cases}
\Delta \psi_t =0 & {\BBB \textrm{in } }(B\cap H_t)\setminus B\left(\frac{1}{2}e_n,\frac{1}{4}\right)\\
\psi_t=1 & {\BBB \textrm{in } }B\left(\frac{1}{2}e_n,\frac{1}{4}\right)\\
\psi_t = 0 & {\BBB \textrm{on } }\partial(B\cap H_t)
\end{cases},\ \begin{cases}
\Delta \zeta_t =2 & {\BBB \textrm{in } }B\cap H_t\\
\zeta_t =-\chi  & {\BBB \textrm{on } }\partial(B\cap H_t)
\end{cases}\]
where $\chi\in\C^{\infty}(B_1,[0,1])$ is such that $\chi>0$ on $B_1\setminus \ov{B}_{\frac{9}{10}}$ and $\chi\equiv0$ on $B_{\frac{9}{10}}$. Note that $\zeta_t\leq 0$ and $\psi_t\geq0$ for every $t$.

\begin{lemma}\label{lemma_estharnack}
There exists $t_n\in (0,1)$, $c_n,d_n>0$ such that for any $t\in (-t_n,t_n)$,
\[\partial_n\psi_t \geq c_n,\quad | \nabla\varphi_t -e_n|\leq d_n t, \quad |\zeta_t|\leq d_n,\quad |\nabla  \zeta_t|\leq d_n, \text{ on }\partial H_t\cap\left\{|x'|\leq \frac{9}{10}\right\}\]
and
\begin{align*}
\text{if }t>0,\ \varphi_t&\geq \left(x_n+\frac{1}{32}t\right)_+ \text{ on }B_{\frac{1}{8\sqrt{n}}}\cap H_t,\\
\text{if }t<0,\ \varphi_t&\leq \left(x_n+\frac{1}{ 32}t\right)_+ \text{ on }B_{\frac{1}{8\sqrt{n}}}\cap H_t.
\end{align*}

\end{lemma}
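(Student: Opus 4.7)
The approach is a perturbation argument around $t=0$. When $t=0$, the domain $B\cap H_0$ reduces to the upper half-ball $B^+ := B\cap\{x_n>0\}$, on which the three auxiliary functions $\varphi_0$, $\psi_0$, $\zeta_0$ are standard Dirichlet-problem objects with well-understood boundary behaviour. Since $\partial H_t$ depends smoothly on $t$ through the fixed cutoff $\eta$, for $|t|$ small the domain $B\cap H_t$ is a smooth perturbation of $B^+$, and all claimed estimates should follow by a continuity argument combined with one quantitative barrier computation at $t=0$.

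Concretely, I would fix a smooth family of diffeomorphisms $\Phi_t \colon \overline{B^+} \to \overline{B\cap H_t}$ acting as the identity outside a neighbourhood of $\{x_n=0\}$ and sending $\{x_n=0\}\cap B$ onto $B\cap\partial H_t$. The pulled-back functions $\widetilde\varphi_t := \varphi_t\circ \Phi_t$, $\widetilde\psi_t := \psi_t \circ \Phi_t$, $\widetilde\zeta_t := \zeta_t \circ \Phi_t$ solve on the fixed domain $B^+$ elliptic equations with $t$-dependent but smooth coefficients that reduce at $t=0$ to $\Delta$ with the stated boundary values. Standard interior and boundary Schauder estimates on $B^+$---uniform in $t$ and applied away from the corner $\partial H_0 \cap \partial B$, which stays away from the region $\{|x'|\le 9/10\}$ of interest---yield $C^{1,\alpha}$ convergence $\widetilde\varphi_t \to x_n$, $\widetilde\psi_t \to \psi_0$, $\widetilde\zeta_t \to \zeta_0$ as $t\to 0$. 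From this the first three estimates of the Lemma follow directly: Hopf's lemma applied to the nontrivial nonnegative harmonic function $\psi_0$ on $B^+$, which vanishes on the flat part of $\partial B^+$, yields $\partial_n \psi_0 \ge 2c_n > 0$ on $\{x_n=0\}\cap \overline{B_{9/10}}$ by compactness, transferring to $\partial_n \psi_t \ge c_n$ for $|t|\le t_n$; the identity $\varphi_0 = x_n$ combined with the smoothness of $t\mapsto \widetilde\varphi_t$ gives $|\nabla \varphi_t - e_n| \le d_n |t|$ by Taylor expansion in $t$; and the uniform $C^1$-boundedness of $\zeta_t$ follows from $C^{1,\alpha}$ elliptic regularity for $\Delta \zeta = 2$ with smooth boundary data $-\chi$.

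The two remaining inequalities, featuring the explicit constant $1/32$, are the core content. I would set $u_t := \varphi_t - x_n$, a harmonic function on $B\cap H_t$ vanishing on $\partial B \cap H_t$ and equal to $t\eta(|x'|)$ on $B\cap \partial H_t$ (hence exactly $t$ wherever $|x'|\le 3/5$). The minimum principle gives $u_t\ge 0$ when $t>0$ and $u_t\le 0$ when $t<0$. The rescaled function $u_t/t$ converges in $C^1(\overline{B_{1/(8\sqrt n)}\cap B^+})$ to the harmonic function $u_0^{*}$ on $B^+$ with boundary data $\eta(|x'|)$ on the flat part and $0$ on the hemisphere. It suffices to establish a quantitative lower bound $u_0^{*} \ge 1/32 + \varepsilon_n$ on this small ball for some $\varepsilon_n>0$: after absorbing the $C^1$-approximation error for $|t|$ small one obtains $\varphi_t - x_n \ge t/32$ on $B_{1/(8\sqrt n)} \cap B\cap H_t$. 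This quantitative bound can be produced by comparison with an explicit harmonic barrier on the half-space $\{x_n>0\}$: using that $\eta\equiv 1$ on $\{|x'|\le 3/5\}$, I would compare $u_0^{*}$ with the harmonic extension of the indicator of $\{|x'|\le 3/5,\ x_n=0\}$ to the half-space, which admits an explicit Poisson-kernel formula and whose value at $0$ is a dimensional constant; combining this value at the centre with the uniform Lipschitz bound on $u_0^{*}$ and the smallness of $1/(8\sqrt n)$ gives the required margin. The case $t<0$ is identical after replacing $u_t$ by $-u_t$.

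The main technical obstacle is precisely this quantitative barrier computation: one needs the explicit constant $1/32$ with enough margin to absorb both the oscillation of $u_0^{*}$ on $B_{1/(8\sqrt n)}$ and the $O(t)$ error coming from the domain perturbation. All other steps reduce to standard boundary regularity theory for the Dirichlet problem on smoothly varying domains and the Hopf lemma on $B^+$.
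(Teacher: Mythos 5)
Your treatment of the first three estimates (Hopf's lemma at $t=0$, pull-back by a diffeomorphism onto the fixed half-ball, uniform elliptic estimates giving an $O(t)$ change of the normal derivative) is exactly the paper's argument. For the two inequalities carrying the constant $\tfrac{1}{32}$, however, you take a genuinely different route. The paper does not linearize in $t$ at all: it compares $\varphi_t$ directly with the explicit polynomial
\[
P_t(x)=x_n+4t\Bigl(n\bigl(x_n-\tfrac{1}{4\sqrt n}\bigr)^2-|x'|^2\Bigr),
\]
which satisfies $\Delta P_t=8t$ (hence is a subsolution for $t>0$) and is checked by hand to lie below $\varphi_t$ on the boundary of the box $B^{n-1}_{1/2}\times[-t,\tfrac{1}{4\sqrt n}]$; the maximum principle then gives $\varphi_t\ge P_t\ge x_n+\tfrac{t}{32}$ on $B_{1/(8\sqrt n)}\cap H_t$, the constant $\tfrac1{32}$ coming from the elementary inequality $4\bigl(\tfrac1{64}-\tfrac1{64n}\bigr)\ge\tfrac1{32}$ for $n\ge 2$. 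This is a short, dimension-uniform computation. Your route --- passing to the limit $u_t/t\to u_0^*$ and bounding $u_0^*$ from below via the harmonic measure of $\{|x'|\le 3/5\}$ --- can be made to work, but it is heavier and hides a real subtlety: the ``uniform Lipschitz bound on $u_0^*$'' you invoke to transfer the value at the centre to all of $B_{1/(8\sqrt n)}$ is \emph{not} dimension-uniform. The interior gradient bound for a harmonic function bounded by $1$ scales like $n$, while the ball has radius $1/(8\sqrt n)$, so the resulting oscillation is of order $\sqrt n$ and leaves no margin for large $n$. To salvage the step one must compute the relevant normal derivative at the origin via the Poisson kernel (it is of order $\omega_{n-1}/\omega_n\sim\sqrt n$, so the oscillation over $B_{1/(8\sqrt n)}$ is in fact $O(1)$ and small) --- precisely the explicit computation you defer as the ``main technical obstacle''. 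Since the lemma asserts the specific constant $1/32$, this cannot be waved through with a generic Lipschitz bound; either carry out the Poisson-kernel estimate in full, or adopt the explicit barrier, which is what the paper does.
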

\begin{proof}
For the \BBB four \EEE first estimates, we only explain how the estimate of $\partial_n\psi_t$ is obtained, as the three others are derived analogously. We have first that $\partial_n\psi_0\geq c>0$ for some $c>0$ by Hopf Lemma, while on the other hand by elliptic estimates $\|\partial_n(\psi_t\circ T_t-\psi_0)\|_{L^\infty(H_0\cap B_{\frac{9}{10}})}\lesssim t$ where $T_t$ is a diffeomorphism sending  $B\cap H_t$ over $B\cap H_0=B\cap\Hn$, thus giving $\partial_n \psi_t\geq c_n$ over $\partial H_t\cap\left\{|x'|\leq \frac{9}{10}\right\}$ for any $|t|\leq t_n$ for some dimensional $c_n>0$ and $t_n>0$.

For the second point, we consider for $0 \leq t\leq\frac{1}{4\sqrt{n}}$ 
\[ P_t(x):=x_n+4t\left(n\left(x_n-\frac{1}{4\sqrt{n}}\right)^2-|x'|^2\right).\]
We check that $ P_t(x)\leq x_n^+\leq \varphi_t$ on $\partial  \left(B^{n-1}_\frac{1}{2}\times\left[-t,\frac{1}{4\sqrt{n}}\right]\right)$, while $\Delta P_t=8t$, so by maximum principle we have $ P_t\leq \varphi_t$ on $B^{n-1}_\frac{1}{2}\times\left[-t,\frac{1}{4\sqrt{n}}\right]$. Since we have $ P_t(x)\geq x_n+\frac{1}{32}t$ on $B_{\frac{1}{8\sqrt{n}}}\cap H_t$ we deduce the claim in this case.

\BBB The case $t<0$ is treated similarly: this time we have $P_t\geq x_n^+\geq\varphi_t$ by maximum principle and $P_t\leq x_n+\frac{1}{32}t$ on $B_{\frac{1}{8\sqrt{n}}}\cap H_t$.\EEE

\end{proof}

\begin{proposition}\label{cor_harnack}
Let $L\geq 1$, $\delta\in\R$ and $m\in\mathbb{N}^*$ be such that $L^2m|\delta|^\frac{1}{2}\leq \frac{1}{8}$. Then there exists $c_n>0$ such that for any $\eps\ll L^{- \frac{1}{2}}$ and any $(v,\ov{v}_1,\un{v}_1,\hdots,\ov{v}_m,\un{v}_m)\in\Sol_{m,\delta}(L)$ that is $\eps$-flat in the sense of Definition \ref{def_flat} with parameters
\[a,b,(\alpha,\ov{\alpha}_1,\un{\alpha}_1,\hdots,\ov{\alpha}_m,\un{\alpha}_m),\]
then there exists $a',b'$ such that $a\leq a'\leq b'\leq b$, $b'-a'\leq (1-c_n)\eps$ and
\[(x_n+a')_+\leq \frac{v(x)}{\alpha},\ \frac{\ov{v}_i(x)}{\ov{\alpha}_i},\ \frac{\un{v}_i(x)}{\un{\alpha}_i} \leq (x_n+b')_+\text{ on }B_{\frac{1}{8\sqrt{n}}}.\]
\end{proposition}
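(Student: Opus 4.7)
The plan is to adapt the De Silva--style partial Harnack inequality to the vectorial setting of $\Sol_{m,\delta}(L)$. The guiding idea is that $v$ is the dominant component: under $L^2 m|\delta|^{1/2}\leq 1/8$, Remark \ref{rem_estalpha} yields $\alpha\geq 1/2$, so that up to an $O(m|\delta|^{1/2})=O(\eps^2)$ error the vectorial boundary condition reduces to the scalar one $(\partial_\nu v/\alpha)^2=1$. The strategy is therefore to perform the Harnack improvement on $v$ alone and then propagate it to the companion pairs $(\ov{v}_i,\un{v}_i)$ via the sub/supersolution structure \eqref{eq_geomean}--\eqref{eq_armean}.

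First I would dichotomize according to whether $v(e_n/2)/\alpha\geq 1/2+(a+b)/2$ or not, the two cases being symmetric and corresponding respectively to pushing $a$ upward or $b$ downward. Focusing on the former, I would build a one-parameter sliding family of subsolutions of the form
$$P_t(x):=\alpha\bigl(\varphi_t(x+ae_n)+\eta\,\psi_t(x+ae_n)\bigr)+C_n\eps^2\,\zeta_t(x+ae_n),$$
with $\eta:=c_n\eps$ and $C_n>0$ chosen large enough that $\Delta P_t\geq\alpha\eps^2\geq\Delta v$ in $\{v>0\}\cap B\cap H_t$. At $t=0$, the flatness hypothesis together with the dichotomy case ensures $P_0<v$ in $B$. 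Increasing $t$, a first touching point $z$ emerges. Interior contact is ruled out by the strict subsolution property, so $z\in\partial\{v>0\}\cap B\cap H_t$; a standard flatness argument confines $|z'|\leq 9/10$ so that Lemma \ref{lemma_estharnack} is applicable at $z$. The viscosity boundary condition at $z$ provides an upper bound on the normal component of $\nabla P_t(z)/\alpha$, which combined with the estimates $|\nabla\varphi_t-e_n|\leq d_n t$ and $\partial_n\psi_t\geq c_n$ of Lemma \ref{lemma_estharnack} forces $t\geq c'_n\eps$. The pointwise bound $\varphi_t\geq(x_n+t/32)_+$ from the same lemma then gives $v/\alpha\geq(x_n+a+c''_n\eps)_+$ on $B_{1/(8\sqrt{n})}$.

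The propagation to $\ov{v}_i,\un{v}_i$ uses \eqref{eq_geomean}--\eqref{eq_armean}. The sliding barrier argument, applied to the supersolution $\sqrt{\ov{v}_i\un{v}_i}$ from below and to each weighted arithmetic mean $\tfrac{1}{2}(c_i\ov{v}_i+c_i^{-1}\un{v}_i)$ with $c_i\in[1/(2L),2L]$ from above, yields the same $c''_n\eps$ gain on these combinations. The ratio comparability $\ov{v}_i/\un{v}_i\in[1/L,L]$ together with the coefficient bounds $\ov\alpha_i,\un\alpha_i\leq 2L|\delta|^{1/4}$ from \eqref{eq_estapriorialpha_i} then allow one to separate these gains back into individual improvements for $\ov{v}_i/\ov\alpha_i$ and $\un{v}_i/\un\alpha_i$ within the same strip. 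Setting $a':=a+c_n\eps$ and $b':=b$ (with $c_n:=c''_n/2$) concludes the argument; the second dichotomy branch yields the symmetric conclusion $a'=a$, $b'=b-c_n\eps$.

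The hard part will be producing an improvement constant $c_n$ independent of the multiplicity $m$. A pigeonhole over the $2m+1$ components would give $c_n$ of order $1/m$, which is fatal since $m\lesssim k$ would destroy the $k$-uniform constants underlying Theorem \ref{main_linvect}. The $m$-uniformity is secured precisely by the $v$-dominance encoded in Definition \ref{def_Sol}: the $|\delta|^{1/4}$-smallness of $\ov{v}_i,\un{v}_i$ and of $\ov\alpha_i,\un\alpha_i$ turns the vectorial free boundary condition at any blow-up into an $O(\eps^2)$ perturbation of the scalar one for $v$, so a single scalar Harnack step drives the whole scheme with all auxiliary pairs getting dragged along without further loss.
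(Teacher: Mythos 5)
Your overall scheme (dichotomy on $v(e_n/2)/\alpha$, sliding barriers built from $\varphi_t,\psi_t,\zeta_t$, geometric means $\sqrt{\ov{v}_i\un{v}_i}$ as supersolutions for the lower bound and weighted arithmetic means for the upper bound) is the right one and matches the paper's. But the central mechanism you propose — "perform the Harnack improvement on $v$ alone and then propagate" — rests on the claim that the vectorial boundary condition is an $O(m|\delta|^{1/2})=O(\eps^2)$ perturbation of the scalar condition $(\partial_\nu v/\alpha)^2=1$. That claim is false under the stated hypotheses: $L^2m|\delta|^{1/2}\le 1/8$ only gives $\sum_i\ov{\alpha}_i\un{\alpha}_i\le 1/(8L^2)$, a fixed constant with no relation to $\eps$ (which may be taken arbitrarily small). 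So at the touching point $z\in\partial\{v>0\}$ of your barrier $P_t$ with $v$, the viscosity identity $\beta^2+\sum_i\ov{\beta}_i\un{\beta}_i=1$ does not yield the upper bound on $|\nabla P_t(z)|/\alpha$ you need unless you also know $\sum_i\ov{\beta}_i\un{\beta}_i\ge\sum_i\ov{\alpha}_i\un{\alpha}_i\,(1-O(\eps))$; with only the crude positivity $\sum_i\ov{\beta}_i\un{\beta}_i>0$ one gets $\beta<1$, which is compatible with $\alpha(1+c_n\eta-d_nt)\le\beta$ for all small $t$ since $\alpha$ may be as small as $\sqrt{7/8}$, and no contradiction ensues. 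The scalar step therefore does not close.

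The missing idea is to slide all $m+1$ barriers \emph{simultaneously} over the same deformed domain $H_t$: maintain $v/\alpha\ge\varphi_t+\eps\sigma_n\psi_t+\eps^2\zeta_t$ together with $\sqrt{(\ov{v}_i/\ov{\alpha}_i)(\un{v}_i/\un{\alpha}_i)}\ge\varphi_t+L^{1/2}\eps^2\zeta_t$ for every $i$, and take the largest admissible $t$. Since every component vanishes on $\partial\{v>0\}$ and every barrier vanishes on $\partial H_t$, the first contact necessarily occurs at a free boundary point where \emph{all} inequalities are saturated; comparing blow-ups there gives $\ov{\beta}_i\un{\beta}_i\ge\ov{\alpha}_i\un{\alpha}_i(1-O(t)-O(L^{1/2}\eps^2))$ for each $i$, and the normalization $\alpha^2+\sum_i\ov{\alpha}_i\un{\alpha}_i=1$ then makes the viscosity identity read $1\ge 1-2d_nt+\tfrac12 c_n\sigma_n\eps-C_nL^{1/2}\eps^2$, forcing $t\gtrsim\eps$. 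The $m$-uniformity of $c_n$ thus comes from the normalization together with $\alpha^2\ge 1/2$ (the $\psi_t$-gain is attached to $v$'s barrier only and is weighted by $\alpha^2$), not from any smallness of the companions' contribution — and the companions gain their $\tfrac{1}{32}\vartheta_n\eps$ purely from the displacement of $H_t$ via $\varphi_t\ge(x_n+t/32)_+$, not by "propagation" from $v$. Note also that recovering individual improvements for $\ov{v}_i/\ov{\alpha}_i$ and $\un{v}_i/\un{\alpha}_i$ from a lower bound on their geometric mean alone is not a routine algebraic step (an upper bound $x_n+\eps$ on each factor only returns $x_n+2c\eps-\eps$, which is no gain for $c<1/2$), so this point deserves more care than your sketch gives it.
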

\begin{remark}\rm 
The hypothesis $L^2m|\delta|^\frac{1}{2}\leq \frac{1}{8}$ may be replaced by $L^2m|\delta|^\frac{1}{2}\leq 1-\eta$ for any $\eta>0$, but how small $\eps$ needs to be would depend on $\eta$.
\end{remark}
\begin{proof}
First note that by the estimates \eqref{eq_estapriorialpha_i} and the hypothesis $L^2m|\delta|^\frac{1}{2}\leq\frac{1}{8}$ we get $\alpha\geq\frac{1}{2}$.\newline

We suppose without loss of generality that $b-a\geq\frac{1}{2}\eps$ otherwise we are done. As a consequence we have either $\frac{v\left(\frac{1}{2}e_n\right)}{\alpha}\geq \frac{1}{2}+a+\frac{\eps}{4}$ (Case A) or $\frac{v\left(\frac{1}{2}e_n\right)}{\alpha}\leq \frac{1}{2}+b-\frac{\eps}{4}$ (Case B).

\smallskip
\noindent
\textbf{Case A.} Without loss of generality, we can set $a=0$, meaning $\frac{v\left(\frac{1}{2}e_n\right)}{\alpha}\geq\frac{1}{2}+\frac{\eps}{4}$.

Assume first $\delta<0$. Since $|\Delta v|<\alpha\eps^2$  in $B_1 \cap \{v>0\}$, then for a small enough $\eps$ we get by the usual Harnack inequality applied to the positive function $v-\alpha x_n$ the existence of some $\sigma_n\in (0,1/4)$ such that
\begin{equation}\label{eq:Harnack_classi_v}\frac{v(x)}{\alpha}\geq x_n+ 2\sigma_n \eps\text{ on }B\left(\frac{1}{2}e_n,\frac{1}{4}\right).\end{equation}
We now consider the set of $t\geq 0$ such that the following $m+1$ inequalities are all verified on $B\cap H_t$:
\begin{equation}\label{eq_viscosity}
\begin{split}
	\frac{v}{\alpha}&\geq \varphi_t+\eps\sigma_n \psi_t +\eps^2\zeta_t,\\
	\sqrt{\frac{\ov{v}_i}{\ov{\alpha}_i}\cdot \frac{\un{v}_i}{\un{\alpha}_i}}&\geq \varphi_t +L^\frac{1}{2}\eps^2\zeta_t.
\end{split}
\end{equation}
Note first that for each $t\geq0$, 
\begin{align*}\frac{\Delta v}{\alpha}< \eps^2=\Delta(\varphi_t+\eps\sigma_n \psi_t +\eps^2\zeta_t) &\text{ in }B\cap H_t\setminus \ov{B}(\frac{1}{2}e_n,\frac{1}{4}),\\
\Delta \sqrt{\frac{\ov{v}_i}{\ov{\alpha}_i}\cdot \frac{\un{v}_i}{\un{\alpha}_i}}\leq L^\frac{1}{2}\eps^2<\Delta\left(\varphi_t +L^\frac{1}{2}\eps^2\zeta_t\right)&\text{ in }B\cap H_t,
\end{align*}
where we used \eqref{eq_geomean} in the second series of inequalities. 

By \eqref{eq:Harnack_classi_v}, it holds $v(x)/\alpha\geq x_n+\sigma_n\eps\psi_0(x)$. As a consequence the \BBB first inequality in \eqref{eq_viscosity} is verified at $t=0$ by maximum principle inside $B\cap H_0\setminus B(\frac{1}{2}e_n,\frac{1}{4})$, and the second inequality of \eqref{eq_viscosity} is verified at $t=0$ by maximum principle inside $B\cap H_0$ (using also $v(x)/\alpha, \ov{v}_i/\ov{\alpha}_i,\un{v}_i/\un{\alpha}_i\geq x_n$ in $B_1$ and noting that $\varphi_0=x_n$ and $\zeta_t\leq 0$)\EEE.
We can therefore consider the largest $t\geq0$ such that the inequalities \eqref{eq_viscosity} are verified in $B\cap H_t$. We want to prove that $t\geq \vartheta_n \eps$ for some dimensional $\vartheta_n>0$. Note that we lose no generality in supposing that $t$ is at most comparable to $\eps$ (meaning $t\ll\eps$), since the claim holds otherwise.\newline

At the maximal $t$ there is a equality in one of the inequalities \eqref{eq_viscosity} at some point $x\in \ov{B\cap H_t}$. Let us consider the possible cases. 
\begin{itemize}[label=\textbullet]
\item We cannot have $x\in \partial (H_t\cap B)\setminus \ov{B_{\frac{9}{10}}}$, since $\zeta_t<0$, $\psi_t=0$ and $\varphi_t=x_n$ over this set.
\item Suppose that $x\in B\cap H_t$. Then let us show that in this case we must have $v(x)>0$. Otherwise, we would have $v(x)=\ov{v}_i(x)=\un{v}_i(x)=0$ so that necessarily $x_n\leq0$. But $\varphi_t+L^{\frac{1}{2}}\eps^2\zeta_t>0$ (and likewise $\varphi_t+\eps\sigma_n \psi_t +\eps^2\zeta_t>0$) over $B\cap H_t\cap\{x_n\leq0\}$, which comes from $\varphi_t+L^{\frac{1}{2}}\eps^2\zeta_t=\varphi_t-L^{\frac{1}{2}}\eps^2\chi=0$ over $B\cap \partial H_t\cap\{x_n\leq0\}$ and $|\nabla(\varphi_t+L^{\frac{1}{2}}\eps^2\zeta_t)-e_n|\ll1$ thanks to Lemma \ref{lemma_estharnack}. As a consequence $v(x)>0$ and we can apply the maximum principle inside $B\cap H_t\cap\{v>0\}$ to get that the equality cannot happen \BBB for the second inequality of \eqref{eq_viscosity}\EEE. On the other hand, equality cannot happen for $v$ by maximum principle inside $\{v>0\}\cap B\cap H_t\setminus \ov{B}(\frac{1}{2}e_n,\frac{1}{4})$ and since in $ \ov{B}(\frac{1}{2}e_n,\frac{1}{4})$ one has for $t\ll \eps$:

\[\varphi_t+\eps\sigma_n \psi_t +\eps^2\zeta_t\leq \varphi_t+\eps \sigma_n\leq x_n+C_n t+\eps\sigma_n<x_n+2\sigma_n\eps.\]

\end{itemize}

As a consequence, $x\in\partial H_t\cap \ov{B_{\frac{9}{10}}}$. \BBB Since \EEE $(\varphi_t+\eps\sigma_n \psi_t +\eps^2\zeta_t)(x)=(\varphi_t +L^\frac{1}{2}\eps^2\zeta_t)(x)=0$ \BBB then \EEE $v(x)=\ov{v}_i(x)=\un{v}_i(x)=0$ and there is  equality in all the inequalities \eqref{eq_viscosity}.
Since on the other hand one has $B\cap H_t\subset \{v>0\}$, hence at any interior contact sphere for $B\cap H_t$ at $x$ there exists a blow-up of $(v,\ov{v}_1,\hdots,\un{v}_m)$ \BBB of the form $z\mapsto(\beta z\cdot\nu,\ov{\beta}_1 z\cdot\nu,\un{\beta}_1 z\cdot\nu,\hdots,\un{\beta}_m z\cdot\nu)$ \EEE as in Definition \ref{def_Sol}. 
As a consequence we have the viscosity condition:

\begin{align*}
1&=\beta^2+\sum_{i=1}^{m}\ov{\beta}_i\un{\beta}_i\geq \alpha^2|\nabla(\varphi_t+\eps \sigma_n \psi_t +\eps^2\zeta_t)(x)|^2+\sum_{i=1}^{m}\ov{\alpha}_i\un{\alpha}_i|\nabla(\varphi_t+L^\frac{1}{2}\eps^2\zeta_t)(x)|^2\\
&\geq \alpha^2\left[(\partial_n\varphi_t)(x)^2+2\partial_n\varphi_t(x)\partial_n\psi_t(x)\sigma_n\eps-C_n\eps^2\right]+\sum_{i=1}^{m}\ov{\alpha}_i\un{\alpha}_i\left[(\partial_n\varphi_t(x))^2-C_nL^\frac{1}{2}\eps^2\right]\\
&\hspace{1cm}\text{ for some large enough dimensional constant }C_n>0\\
&\geq \partial_n\varphi_t(x)^2+2\alpha^2\partial_n\varphi_t(x)\partial_n\psi_t(x)\sigma_n\eps-C_nL^\frac{1}{2}\eps^2\\
&\geq 1-2d_nt+\frac{1}{2}c_n\sigma_n\eps-C_nL^\frac{1}{2}\eps^2\text{ since }\alpha\geq \frac{1}{2}
\end{align*}
where $c_n,d_n>0$ come from Lemma \ref{lemma_estharnack}.
So when $\eps\ll L^{-\frac{1}{2}}$ we get
\[t\geq \frac{c_n\sigma_n}{8d_n}\eps =:\vartheta_n \eps\]
so that using Lemma \ref{lemma_estharnack} we find for any $y\in B_{\frac{1}{8\sqrt{n}}} \cap H_t$
\[\frac{v(y)}{\alpha},\frac{\ov{v}_i(y)}{\ov{\alpha}_i},\frac{\un{v}_i(y)}{\un{\alpha}_i}\geq \varphi_{t}(y)+\sqrt{L}\eps^2\zeta_{t}(y)\geq y_n+\frac{1}{32}\vartheta_n \eps-\sqrt{L}\Vert\zeta_{t}\Vert_\infty\eps^2\geq y_n+\frac{1}{64}\vartheta_n \eps\]
for $\eps\ll1$. For $y\in B_{\frac{1}{8\sqrt{n}}} \setminus H_t$ the above inequalities always hold, since $y_n+\frac{1}{64}\vartheta_n \eps\leq0$ and the functions are non-negative. This finishes the proof when $\delta<0$.

The case $\delta>0$ follows the same strategy, and was proven in \cite[Theorem 5.1]{KL18}{\BBB, though we notice in  addition that one can keep} track of the constants. \BBB Roughly speaking, in this case \BBB we find instead the viscosity condition

\begin{align*}
1&=\beta^2+\sum_{i=1}^{m}\frac{\ov{\beta}_i^2+\un{\beta}_i^2}{2}\geq \alpha^2|\nabla(\varphi_t+\eps \sigma_n \psi_t +\eps^2\zeta_t)(x)|^2+\sum_{i=1}^{m}\ov{\alpha}_i\un{\alpha}_i|\nabla(\varphi_t+L^\frac{1}{2}\eps^2\zeta_t)(x)|^2\\
&\geq 1-2d_nt+\frac{1}{2}c_n\sigma_n\eps-C_nL^\frac{1}{2}\eps^2\text{ since }\alpha\geq \frac{1}{2}
\end{align*}
and conclude in the same way.
\EEE

\smallskip
\noindent
\textbf{Case B.} We suppose without loss of generality that $b=0$, meaning $\frac{v\left(\frac{1}{2}e_n\right)}{\alpha}\leq \frac{1}{2}-\frac{\eps}{4}$. The proof here follows the same outline, \BBB thus we only \EEE give rough details.

First, the Harnack inequality applied to $\alpha x_n-v$ gives the existence of some $\sigma_n>0$ such that $\frac{v(x)}{\alpha}\leq x_n- 2\sigma_n \eps$ on $B\left(\frac{1}{2}e_n,\frac{1}{4}\right)$. We now consider the largest $t>0$ such that all the following inequalities are verified in $B\cap H_{-t}$:
\begin{align*}
\frac{v}{\alpha}&\leq \varphi_{-t}-\eps\sigma_n \psi_{-t} -\eps^2\zeta_{-t},\\
\frac{1}{2}\left(\sqrt{\frac{\un{\alpha}_i}{\ov{\alpha}_i}}\ov{v}_i+ \sqrt{\frac{\ov{\alpha}_i}{\un{\alpha}_i}}\un{v}_i\right)&\leq \varphi_{-t} -(2L)^\frac{1}{2}\eps^2\zeta_{-t}.
\end{align*}
It is verified at $t=0$ by the previous remark and the maximum principle. We then identify a contact point $x$ associated to the largest $t$ that we suppose small compared to $\eps$: it is not inside $B\cap H_{-t}$ by maximum principle since
\begin{align*}\frac{\Delta v}{\alpha}>\Delta(\varphi_{-t}-\eps\sigma_n \psi_{-t} +\eps^2\zeta_{-t}) &\text{ in }B\cap H_{-t}\setminus \ov{B}(\frac{1}{2}e_n,\frac{1}{4})\\
\Delta \frac{1}{2}\left(\sqrt{\frac{\un{\alpha}_i}{\ov{\alpha}_i}}\ov{v}_i+ \sqrt{\frac{\ov{\alpha}_i}{\un{\alpha}_i}}\un{v}_i\right)>\Delta\left(\varphi_{-t} -(2L)^\frac{1}{2}\eps^2\zeta_{-t}\right)&\text{ in }B\cap H_{-t},
\end{align*}
\BBB where we used the estimates \eqref{eq_estapriorialpha_i} in the last line. \EEE The contact point is not in $\partial(B_1\cap H_{-t})\setminus\ov{B_{\frac{9}{10}}}$ for the same reason as before, thus giving that it lies in $\partial H_{-t}\cap \ov{B_{\frac{9}{10}}}$. We then use the boundary condition (understood in the viscosity sense at the contact point)
\[1\leq (\partial_\nu v)^2+\sum_{i=1}^{m}\left(\partial_\nu \frac{1}{2}\left(\sqrt{\frac{\un{\alpha}_i}{\ov{\alpha}_i}}\ov{v}_i+ \sqrt{\frac{\ov{\alpha}_i}{\un{\alpha}_i}}\un{v}_i\right)\right)^2\leq 1+a_n t-b_n\eps\]
for some $a_n,b_n>0$. This yields $t\geq \vartheta_n\eps$ for some $\vartheta_n>0$ and we conclude by the last property of Lemma \ref{lemma_estharnack}. The case $\delta>0$ is similar.
\end{proof}

\begin{proposition}\label{prop_harnack}
Let $L\geq 1$, $\delta\in\R$ and $m\in\mathbb{N}^*$ be such that $L^2m|\delta|^\frac{1}{2}\leq \frac{1}{8}$. Then there exists $C_n>0$ and $\kappa_n\in (0,1)$, such that the following holds: for any $\eps\ll L^{-\frac{1}{2}}$, and for any $(v,\ov{v}_1,\un{v}_1,\hdots,\ov{v}_m,\un{v}_m)\in\mathcal{S}_{m,\delta}(L)$ that is $\eps$-flat in the sense of definition \ref{def_flat} with parameters
\[a,b,(\alpha,\ov{\alpha}_1,\un{\alpha}_1,\hdots,\ov{\alpha}_m,\un{\alpha}_m),\]
then defining
\[V(x)=\frac{v(x)-{\alpha} x_n}{\alpha \eps},\
\ov{V}_i(x)=\frac{\ov{v}_{i}(x)-\ov{\alpha}_{i} x_n}{\ov{\alpha}_{i} \eps},\
\un{V}_i(x)=\frac{\un{v}_{i}(x)-\un{\alpha}_{i} x_n}{\un{\alpha}_{i} \eps},\]
\[\ W_i^\eps(x)=\frac{\sqrt{\ov{v}_i\un{v}_i}-\sqrt{\ov{\alpha}_i\un{\alpha}_i}x_n}{\sqrt{\ov{\alpha}_i\un{\alpha}_i}\eps}\]
we have that for any $x,y\in B_{\frac{1}{2}}\cap\{v>0\}$ such that $|x-y|>\eps$,
\begin{align}
\label{eq:V_iepsHold}
\left|V(x)-V(y)\right|,\;\;\left|\ov{V}_i(x)-\ov{V}_i(y)\right|,\;\;\left|\ov{V}_i(x)-\ov{V}_i(y)\right|,\;\;\left|W_i(x)-W_i(y)\right|\leq C_n|x-y|^{\kappa_n}.
\end{align}
\end{proposition}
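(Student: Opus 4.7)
I prove Proposition~\ref{prop_harnack} by iterating the one-step Harnack improvement of Proposition~\ref{cor_harnack} at geometrically decaying dyadic scales, following the Campanato-type scheme of De Silva \cite{DS11} as adapted to two-phase problems in \cite{MTV21}. Fix $x_0\in B_{1/2}\cap\{v>0\}$ and set $\rho:=1/(8\sqrt n)$. The plan is to produce by induction a sequence of parameters $(a_k,b_k)$ with $b_k-a_k\leq(1-c_n)^k\eps$ such that
\[(x_n+a_k)_+\leq \frac{v(x_0+\cdot)}{\alpha},\ \frac{\ov{v}_i(x_0+\cdot)}{\ov{\alpha}_i},\ \frac{\un{v}_i(x_0+\cdot)}{\un{\alpha}_i}\leq (x_n+b_k)_+\quad\text{on }B_{\rho^k},\]
and then to convert this geometric oscillation decay into Hölder continuity with exponent $\kappa_n:=\log(1-c_n)/\log\rho\in(0,1)$.

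\textbf{Iterative step.} The base case $k=0$ is exactly the flatness hypothesis (after translating $x_0$ to the origin, noting $B_{1/2}\subset B_1$). For the inductive step, I apply Proposition~\ref{cor_harnack} to the rescaled tuple
\[(v,\ov{v}_i,\un{v}_i)_{x_0,\rho^k}(y):=\rho^{-k}\big(v,\ov{v}_i,\un{v}_i\big)(x_0+\rho^k y).\]
A direct inspection of Definition~\ref{def_Sol} shows that this rescaled tuple remains in $\mathcal{S}_{m,\delta}(L)$: the pointwise ratios $\ov{v}_i/v$, $\un{v}_i/v$, $\ov{v}_i/\un{v}_i$ and the viscosity sphere identity at contact-point blow-ups are invariant under rescaling, while the Lipschitz bounds are preserved and the Laplacian bounds improve. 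Its flatness on $B_1$ equals $(b_k-a_k)/\rho^k$, which stays below the smallness threshold of Proposition~\ref{cor_harnack} throughout the regime $\rho^k\gtrsim\eps$. Applying the proposition and unrescaling produces parameters $(a_{k+1},b_{k+1})$ with $b_{k+1}-a_{k+1}\leq(1-c_n)^{k+1}\eps$ on the smaller ball $B_{\rho^{k+1}}(x_0)$, closing the induction.

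\textbf{Hölder estimate.} Given $x,y\in B_{1/2}\cap\{v>0\}$ with $|x-y|>\eps$, pick $k\in\N$ such that $\rho^{k+1}\leq|x-y|\leq\rho^k$, so that $y\in B_{\rho^k}(x)$. Applying the construction at $x_0:=x$ and dividing by $\eps$ yields
\[|V(x)-V(y)|\leq\frac{b_k-a_k}{\eps}\leq(1-c_n)^k=(\rho^k)^{\kappa_n}\leq\rho^{-\kappa_n}|x-y|^{\kappa_n},\]
and likewise for $\ov{V}_i$ and $\un{V}_i$. For the geometric-mean function $W_i$, the elementary inequality $\min(A,B)\leq\sqrt{AB}\leq\max(A,B)$ for positive $A,B$, applied with $A=\ov{v}_i/\ov{\alpha}_i$ and $B=\un{v}_i/\un{\alpha}_i$, shows that $\sqrt{\ov{v}_i\un{v}_i}/\sqrt{\ov{\alpha}_i\un{\alpha}_i}$ is squeezed between the same two profiles $(x_n+a_k)_+$ and $(x_n+b_k)_+$ at every scale $\rho^k$; hence $W_i$ inherits the same oscillation decay and the same Hölder estimate.

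\textbf{Main obstacle.} The main technical point is to verify uniformly in $k$ and in the multiplicity $m$ that the hypotheses of Proposition~\ref{cor_harnack} propagate through the iteration: the rescaled flatness $((1-c_n)/\rho)^k\eps$ must remain below the smallness threshold $\ll L^{-1/2}$ for every step down to the scale $\rho^k\sim\eps$ — which is precisely the regime of interest — and the coefficients $(\alpha,\ov{\alpha}_i,\un{\alpha}_i)$, being properties of blow-ups of the original tuple at contact points on $\partial\{v>0\}$, must remain fixed intrinsic data of the problem throughout the iteration (so no drift or rotation bookkeeping is needed beyond that already handled in Proposition~\ref{cor_harnack}). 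Uniformity in $m$ is built into Proposition~\ref{cor_harnack} itself under the single condition $L^2m|\delta|^{1/2}\leq 1/8$, so that the dimensional constants $c_n,\rho$, and hence $\kappa_n$ and the final constant $C_n$ in \eqref{eq:V_iepsHold}, depend only on $n$.
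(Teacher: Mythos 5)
Your proposal follows the same route as the paper, whose own proof is a one-line reference to iterating Proposition \ref{cor_harnack} as in \cite{DS11} or \cite{V19}: the scaling invariance of $\Sol_{m,\delta}(L)$, the geometric decay $b_k-a_k\le(1-c_n)^k\eps$ against the controlled growth of the rescaled flatness $((1-c_n)/\rho)^k\eps$, and the squeeze $\min(A,B)\le\sqrt{AB}\le\max(A,B)$ for $W_i$ are exactly the right ingredients, and your exponent $\kappa_n=\log(1-c_n)/\log\rho$ is the correct one.

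There is, however, one step that fails as written: the base case (and each subsequent step) of your induction at an \emph{arbitrary} $x_0\in B_{1/2}\cap\{v>0\}$. Definition \ref{def_flat} requires not only $b-a\le\eps$ but also $|a|,|b|\le\eps$, i.e.\ the comparison profiles must vanish within distance $\eps$ of the center of the ball. After translating $x_0$ to the origin the profiles become $((x_0)_n+x_n+a)_+$, so the recentered parameters are $(x_0)_n+a$ and $(x_0)_n+b$; these violate the constraint as soon as $|(x_0)_n|\gg\eps$, and at scale $\rho^k$ the rescaled parameters are of size $|(x_0)_n|/\rho^k$. Hence Proposition \ref{cor_harnack} only applies at centers lying within $O((1-c_n)^k\eps)$ of the free boundary at each scale, and one cannot substitute oscillation decay on balls centered at the free boundary alone, since for $x,y$ deep inside $\{v>0\}$ one may have $|x-y|$ much smaller than their distance to $\partial\{v>0\}$. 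The standard fix, present in both cited references, is a dichotomy: run the iteration at (or near) free boundary points only, and once $B_d(x_0)\subset\{v>0\}$ with $d=\mathrm{dist}(x_0,\partial\{v>0\})\ge|x-y|$, use that $V,\ov{V}_i,\un{V}_i$ are harmonic up to an $O(\eps)$ right-hand side on $B_d(x_0)$ with oscillation $\lesssim d^{\kappa_n}$ inherited from the last free-boundary scale, so that the interior gradient estimate gives $|V(x)-V(y)|\lesssim |x-y|\,d^{\kappa_n-1}\le|x-y|^{\kappa_n}$. With this dichotomy added, your argument coincides with the intended proof.
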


\begin{proof}
This is obtained by applying successively the previous Lemma, as in \cite[Corollary 3.2]{DS11} or \cite[Lemma 7.14]{V19}.
\end{proof}

\subsection{Flatness improvement}\label{ssect:flatness}
\BBB We start by stating a general result on sequences in compact metric spaces.
\begin{lemma}\label{lemma_compact}
Let $(X,d)$ be a nonempty compact metric space. Let $m_k$ be a sequence of integers such that $m_k\to\infty$ and $(x_{j}^{k})_{k\in\N^*,1\leq j\leq m_k}$ be a sequence in $X$. Then there exists a sequence of permutations $\sigma^k\in\mathfrak{S}(\llbracket 1,m_k\rrbracket)$ and a sequence $( x_{j})_{j\in \N^*}$ such that
\[\liminf_{k\to\infty}\sup_{1\leq j\leq m_k}d\left(x_{\sigma^k(j)}^k,x_j\right)=0.\]
\end{lemma}

We do not claim that this lemma is original, but since we have not found any reference in the literature we provide a short proof.
\begin{proof}
Note first that it is enough to prove the Lemma for the Cantor set $X=\{0,1\}^{\N^*}$ endowed with the dyadic metric $d(x,y)=2^{-\inf\{i\geq 1:x(i)\neq y(i)\}}$, as it surjects continuously onto any compact metric space. We write $X_N=\{0,1\}^{N}$ and $\pi_N:X\to X_N$ the projection onto the first $N$ coordinates. Let $\varphi_1:\N^*\to\N^*$ be an extraction such that the number of $0$'s and $1$'s among \[\left(\pi_1 x^{\varphi_1(k)}_{1},\hdots,\pi_1 x^{\varphi_1(k)}_{m_{\varphi_1(k)}}\right)\]
is nondecreasing in $k$. Starting from $\varphi_1$ we define recursively $\varphi_N$ in the following way: if $\varphi_{N-1}$  is given we build $\varphi_N$ as an extraction of $\varphi_{N-1}$ to guarantee that the number of occurrences of each $b\in X_N$ in
\[\left(\pi_N x^{\varphi_N(k)}_{1},\hdots,\pi_N x^{\varphi_N(k)}_{m_{\varphi_N(k)}}\right)\]
is nondecreasing in $k$. We finally set $\varphi(k):=\varphi_k(k)$. We now define a sequence of permutations $\sigma^k\in\mathfrak{S}_{m_{\varphi(k)}}$ as follows: we let $\sigma^1$ be the identity, and provided $\sigma^{k}$, we define $\sigma^{k+1}$ recursively. Since in the list
\[\left(\pi_{k} x^{\varphi(k+1)}_{1},\hdots,\pi_{k} x^{\varphi(k+1)}_{m_{\varphi(k+1)}}\right)\]
there are at least as many occurrences of each element of $X_k$ as in the list
\[\left(\pi_{k} x^{\varphi(k)}_{1},\hdots,\pi_{k} x^{\varphi(k)}_{m_{\varphi(k)}}\right).\]
Then we may fix $\sigma^{k+1}\in\mathfrak{S}_{m_{\varphi(k+1)}}$ such that for each $j\in \{1,\hdots, m_{\varphi(k)}\}$, we have
\[\pi_kx^{\varphi(k+1)}_{\sigma^{k+1}(j)}=\pi_k x^{\varphi(k)}_{\sigma^k(j)}.\]
We now define $x_j$ as the unique element of $X$ such that for every $k\in \N^*$, as soon as $m_{\varphi(k)}\geq j$ we have
\[\pi_kx_j=\pi_kx_{\sigma^k(j)}^{\varphi(k)}.\]
By construction this gives for every $k\in\N^*$, $j\in \{1,\hdots,m_{\varphi(k)}\}$:
\[
d\left(x_{\sigma^k(j)}^{\varphi(k)},x_j\right)\leq\frac{1}{2^{k+1}}.\]
Hence $(x_{\sigma^k(j)}^{\varphi(k)})\underset{k\to\infty}{\longrightarrow}(x_j)$ thus concluding the proof.
\end{proof}

\EEE

\BBB Coming back to the flatness improvement claim, we \EEE let $\tau=\tau_n\in (0,1)$ be a fixed constant depending only on $n$, such that for any harmonic function $h:B_1\to [-5,5]$ and for any $x\in B_\tau$ it holds
\begin{equation}\label{eq:harmonic_2grad_est}\left|h(x)-h(0)-x\cdot\nabla h(0)\right|\leq \frac{1}{8}\tau.\end{equation}
{\BBB This constant is used in the statement of the next result.}

\begin{proposition}\label{prop_flatness_improv}
Let $L\geq 1$, $\delta\in\R$ and $m\in\mathbb{N}^*$ with $L^2m^2|\delta|^\frac{1}{2}\leq \frac{1}{8}$. Then there exists $\eps_{n}(L)$ such that we have the following flatness reduction property for any $\eps<\eps_n(L)$. Suppose  $(v,\ov{v}_1,\un{v}_1,\hdots,\ov{v}_m,\un{v}_m)\in\mathcal{S}_{m,\delta}(L)$ is $\eps$-flat in the sense of Definition \ref{def_flat} with parameters
\[a,b,(\alpha,\ov{\alpha}_1,\un{\alpha}_1,\hdots,\ov{\alpha}_m,\un{\alpha}_m).\]
Then there exists $a'\leq b'$,  $e'\in\mathbb{S}^{n-1}$, and $\alpha',\ov{\alpha}_i',\un{\alpha}_i'$ verifying
\[{\alpha'}^2+\sum_{i=1}^{m}\frac{{\ov{\alpha}_i'}^2+{\un{\alpha}_i'}^2}{2}= 1\text{ if }\delta>0,\;\;\;
{\alpha'}^2+\sum_{i=1}^{m}\ov{\alpha}_i'\un{\alpha}_i'= 1\text{ if }\delta<0\]
such that
\[\left(e'\cdot x+a'\right)\leq\frac{v(\tau x)}{\tau \alpha'},\ \frac{\ov{v}_i(\tau x)}{\tau \ov{\alpha}_i'},\ \frac{\un{v}_i(\tau x)}{\tau \un{\alpha}_i'} \leq \left(e'\cdot x+b'\right)\text{ on }B_{1}\cap \frac{\{v>0\}}{\tau},\]
and $b'-a'\leq \frac{1}{2}\eps$, with moreover
\[|e'-e_n|,\ \left|1-\frac{\alpha'}{\alpha}\right|,\ \left|1-\frac{\ov{\alpha'}_i}{\ov{\alpha}_i}\right|,\ \left|1-\frac{\un{\alpha'}_i}{\un{\alpha}_i}\right|\lesssim \eps.\]
\end{proposition}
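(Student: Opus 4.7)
\textbf{Proof plan for Proposition \ref{prop_flatness_improv}.} The approach is the standard compactness/contradiction scheme of De Silva \cite{DS11}, adapted to the vectorial setting as in \cite{KL18,MTV21}, with uniform dependencies in the multiplicity $m$. Assume for contradiction that there exist sequences $\eps_p\to 0$, $\delta_p\in\R$, $m_p\in\N^*$ with $L^2m_p^2|\delta_p|^{1/2}\leq\tfrac{1}{8}$, and tuples $(v_p,\ov{v}_{1,p},\un{v}_{1,p},\ldots,\ov{v}_{m_p,p},\un{v}_{m_p,p})\in\Sol_{m_p,\delta_p}(L)$ that are $\eps_p$-flat with parameters $(a_p,b_p,\alpha_p,\ov{\alpha}_{i,p},\un{\alpha}_{i,p})$, satisfy $b_p-a_p\geq\tfrac{1}{2}\eps_p$ (else the conclusion is immediate), but for which no admissible $(a'_p,b'_p,e'_p,\alpha'_p,\ov{\alpha}{}'_{i,p},\un{\alpha}{}'_{i,p})$ exists.

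\textbf{Step 1 (compactness).} Introduce the normalized linearizations $V_p,\ov{V}_{i,p},\un{V}_{i,p},W_{i,p}$ as in Proposition \ref{prop_harnack} (with $W_{i,p}$ built from the geometric mean $\sqrt{\ov{v}_{i,p}\un{v}_{i,p}}$). Proposition \ref{prop_harnack} provides uniform Hölder estimates on $B_{1/2}\cap\{v_p>0\}$ for scales above $\eps_p$, so upon extracting a subsequence (and using Hausdorff convergence of $\{v_p>0\}$ to $B_{1/2}\cap\Hn$, which follows from the flatness) we obtain limits $V,\ov{V}_i,\un{V}_i,W_i:B_{1/2}\cap\ov{\Hn}\to\R$ in the locally uniform sense.

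\textbf{Step 2 (linearized problem).} The bounds $|\Delta v_p|/\alpha_p,|\Delta\ov{v}_{i,p}|/\ov{\alpha}_{i,p},|\Delta\un{v}_{i,p}|/\un{\alpha}_{i,p}<\eps_p^2$ imply that $V,\ov{V}_i,\un{V}_i,W_i$ are harmonic in $B_{1/2}\cap\Hn$. The sub- and super-solution relations \eqref{eq_geomean}--\eqref{eq_armean} for $\sqrt{\ov{v}_i\un{v}_i}$ and $\tfrac12(c_i\ov{v}_i+c_i^{-1}\un{v}_i)$, combined with the uniform bound $\ov{\alpha}_{i,p}/\un{\alpha}_{i,p}\in[1/(2L),2L]$, force $\ov{V}_i=\un{V}_i=W_i$ in $B_{1/2}\cap\Hn$ (sandwiching the three limits between the arithmetic and geometric means of $\ov{V}_i,\un{V}_i$). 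Passing the viscosity boundary condition $\alpha^2+\sum_i\ov{\alpha}_i\un{\alpha}_i=1$ (or the analogous condition when $\delta>0$) to the limit in the sense of \cite{DS11,MTV21} yields the linearized Neumann-type condition
\[
\alpha_\infty^2\,\partial_n V+\sum_{i=1}^m\ov{\alpha}_{i,\infty}\un{\alpha}_{i,\infty}\,\partial_n W_i=0\quad\text{on } B_{1/2}\cap\partial\Hn,
\]
where $(\alpha_\infty,\ov{\alpha}_{i,\infty},\un{\alpha}_{i,\infty})$ are the (bounded) limits of the corresponding quantities. By odd/even reflection, $U:=\alpha_\infty^2V+\sum_i\ov{\alpha}_{i,\infty}\un{\alpha}_{i,\infty}W_i$ extends harmonically across $\partial\Hn$, and each $V-W_i$ extends by odd reflection as a harmonic function too. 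Since $|V|,|W_i|\leq 1$ on $B_{1/2}\cap\Hn$, the inequality \eqref{eq:harmonic_2grad_est} applies to each limit on $B_\tau$.

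\textbf{Step 3 (new parameters and contradiction).} Set $\nu^*:=\nabla' V(0)\in\R^{n-1}\times\{0\}$ and, for large $p$, define $e'_p:=(e_n+\eps_p\nu^*)/|e_n+\eps_p\nu^*|$ together with the updated normal components
\[
\alpha'_p/\alpha_p=1+\eps_p\,\partial_n V(0),\qquad \ov{\alpha}{}'_{i,p}/\ov{\alpha}_{i,p}=1+\eps_p\,\partial_n W_i(0),\qquad \un{\alpha}{}'_{i,p}/\un{\alpha}_{i,p}=1+\eps_p\,\partial_n W_i(0).
\]
The linearized boundary condition on $U$ ensures that, to order $\eps_p$, the constraint $\alpha'^2_p+\sum_i\ov{\alpha}{}'_{i,p}\un{\alpha}{}'_{i,p}=1$ (resp. the $+$ form for $\delta>0$) is preserved, up to an $O(\eps_p^2)$ correction which we absorb into $a'_p,b'_p$. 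The tangential gradients of $V,\ov{V}_i,\un{V}_i$ at $0$ must all equal $\nu^*$ in the limit (using $\ov{V}_i=\un{V}_i=W_i$ together with the reflection of $V-W_i$), so applying \eqref{eq:harmonic_2grad_est} to each of $V-\nu^*\cdot x-\partial_n V(0)x_n$ and $W_i-\nu^*\cdot x-\partial_n W_i(0)x_n$ and unwinding the definitions of $V_p,\ov{V}_{i,p},\un{V}_{i,p}$ yields
\[
(e'_p\cdot x+a'_p)_+\leq\frac{v_p(\tau x)}{\tau\alpha'_p},\;\frac{\ov{v}_{i,p}(\tau x)}{\tau\ov{\alpha}{}'_{i,p}},\;\frac{\un{v}_{i,p}(\tau x)}{\tau\un{\alpha}{}'_{i,p}}\leq (e'_p\cdot x+b'_p)_+
\]
on $B_1\cap \{v_p>0\}/\tau$ with $b'_p-a'_p\leq\tfrac{1}{2}\eps_p$ for $p$ large, contradicting the assumption.

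\textbf{Main obstacle.} The crucial difficulty, absent in \cite{KL18} (positive-sign case with $m=1$) and in the scalar formulations, is forcing the \emph{same} tangential tilt direction $\nu^*$ for all $2m+1$ components \emph{simultaneously} and uniformly in $m$. This is why both the rigidity $\ov{V}_i=\un{V}_i=W_i$ (coming from squeezing the two sub/supersolution averages) and the quadratic constraint must pass cleanly to the linearized level. The appearance of $m^2$ (rather than $m$) in the hypothesis $L^2m^2|\delta|^{1/2}\leq 1/8$ comes from summing $m$ contributions in the linearized Neumann condition while controlling the error in the renormalization of the coefficients, and this is what makes the constants explicit in the subsequent $\mathcal{C}^{1,\gamma}$ regularity estimate.
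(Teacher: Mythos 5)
Your overall architecture (contradiction and compactness, linearization, the partial Harnack estimate to extract limits, passage of the viscosity condition to a linearized Neumann condition, reflection, and the interior estimate \eqref{eq:harmonic_2grad_est} on $B_\tau$) matches the paper's proof. However, Step 2 contains a claim that is false and on which your Step 3 genuinely relies: the limits $\ov{V}_i$ and $\un{V}_i$ are \emph{not} equal in general. The arithmetic--geometric mean comparison only pinches at second order in $\eps$: writing $\ov{v}_i=\ov{\alpha}_i(x_n+\eps\ov{V}_i)$ and $\un{v}_i=\un{\alpha}_i(x_n+\eps\un{V}_i)$, both $\sqrt{\ov{v}_i\un{v}_i}/\sqrt{\ov{\alpha}_i\un{\alpha}_i}$ and the weighted arithmetic means from \eqref{eq_armean} linearize to $x_n+\eps\,\tfrac{\ov{V}_i+\un{V}_i}{2}$, so the sandwich only yields $W_i=\tfrac{\ov{V}_i+\un{V}_i}{2}$ (which is what the paper records), not $\ov{V}_i=\un{V}_i$. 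In the model case $\ov{v}_i\propto w+|\delta|^{1/4}u_i$, $\un{v}_i\propto w-|\delta|^{1/4}u_i$ there is no rigidity forcing the two linearizations to coincide. What is true is that $V-\ov{V}_i$ and $V-\un{V}_i$ vanish on $\{x_n=0\}$, so all $2m+1$ limits share the same trace and tangential gradient there; only the normal derivatives $q,\ov{q}_i,\un{q}_i$ at the origin differ.

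This matters in Step 3: since you set $\ov{\alpha}'_{i}/\ov{\alpha}_{i}=\un{\alpha}'_{i}/\un{\alpha}_{i}=1+\eps\,\partial_nW_i(0)$, the improvement for $\ov{v}_i$ would require $|\ov{V}_i(x)-c-\nu^*\cdot x'-\partial_nW_i(0)\,x_n|\le\tfrac18\tau$ on $B_\tau$, whereas the correct slope is $\partial_n\ov{V}_i(0)=\ov{q}_i$, which may differ from $\tfrac{\ov{q}_i+\un{q}_i}{2}$ by an order-one amount; the resulting error $\eps\,|\ov{q}_i-\un{q}_i|\,|x_n|$ is of the same order $\eps\tau$ as the gain you need, so the flatness improvement fails for $\ov{v}_i$ and $\un{v}_i$ individually. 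The fix is the paper's: give each component its own normal correction, $\ov{\alpha}_i'\propto(1+\eps\ov{q}_i)\ov{\alpha}_i$ and $\un{\alpha}_i'\propto(1+\eps\un{q}_i)\un{\alpha}_i$, and note that the linearized Neumann condition $\alpha^2q+\sum_i\ov{\alpha}_i\un{\alpha}_i\tfrac{\ov{q}_i+\un{q}_i}{2}=0$ is exactly what makes the renormalizing factor $S^\eps=(1+\eps q)^2\alpha^2+\sum_i(1+\eps\ov{q}_i)(1+\eps\un{q}_i)\ov{\alpha}_i\un{\alpha}_i$ equal to $1+o(\eps)$, so the exact constraint can be restored without destroying the $\tfrac12\eps$ gain. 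A secondary gap: when $m_p\to\infty$ a plain diagonal extraction does not give limits $\ov{V}_i,\un{V}_i$ uniformly over $i\le m_p$; the paper isolates this case, uses a separate compactness lemma to reorder the components before passing to the limit, and exploits $L^2m^2|\delta|^{1/2}\le\tfrac18$ to get $\alpha_\infty=1$, so that the limiting boundary condition degenerates to $\partial_nV=0$.
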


\begin{proof}{\BBB We follow the ideas of the proof of \cite[Theorem 6.1]{KL18}, with a different treatment when the multiplicity of the eigenspace goes to infinity (see the case $m_\eps\to\infty$ below):
we will} proceed by contradiction and compactness. Suppose there exists a sequence $\eps_p\to 0$ (we drop the index $p$ and just write $\eps\to 0$ to lighten the notations) and some sequences \[(v^\eps,\ov{v}_{1}^\eps,\un{v}_{1}^\eps,\hdots,\ov{v}_{m^\eps}^\eps,\un{v}_{m^\eps}^\eps)\in\Sol_{m^\eps,\delta^\eps}(L),\ a^\eps, b^\eps,(\alpha^\eps,\ov{\alpha}_1^\eps,\un{\alpha}_1^\eps,\hdots,\ov{\alpha}_{m^\eps}^\eps,\un{\alpha}_{m^\eps}^\eps)\]
which verify the hypotheses but not the conclusion.  This means that at least one of the functions  $v^\eps$, $\ov{v}_1^\eps$, $\un{v}_1^\eps$, $\hdots$, $\ov{v}_{m^\eps}^\eps$, $\un{v}_{m^\eps}^\eps$ does not verify the flatness improvement on $B_{\tau}$.

Consider the sequences
\[V^\eps(x)=\frac{v^\eps(x)-\alpha^\eps x_n}{\alpha^\eps \eps},\
\ov{V}_i^\eps(x)=\frac{\ov{v}_{i}^\eps(x)-\ov{\alpha}_{i}^\eps x_n}{\ov{\alpha}_{i}^\eps \eps},\
\un{V}_i^\eps(x)=\frac{\un{v}_{i}^\eps(x)-\un{\alpha}_{i}^\eps x_n}{\un{\alpha}_{i}^\eps \eps},\]
\[ W_i(x)=\frac{\sqrt{\ov{v}_i^\eps\un{v}_i^\eps}-\sqrt{\ov{\alpha}_i^\eps\un{\alpha}_i^\eps}x_n}{\sqrt{\ov{\alpha}_i^\eps\un{\alpha}_i^\eps}\eps}.\]
We also write $\Om^\eps=B_1\cap\{v^\eps>0\}$ their (common) domain of definition, which converges locally Hausdorff to $B_1\cap\Hn$ since $\{x\in B_1, x_n>\eps\}\subset\Om^\eps\subset \{x\in B_1, x_n>-\eps\}$.
Each function has values in $[-1,1]$, with \BBB Laplacian \EEE bounded by $\eps$ in $\Om^\eps$. Moreover thanks to Proposition \ref{cor_harnack}, they verify the Hölder-type property \eqref{eq:V_iepsHold} for some $\kappa_n\in(0,1)$ up to the boundary $\partial\Hn$.

After extraction in $\eps$ we have a local Hausdorff convergence of the graphs of $V^\eps$, $\ov{V}_i^\eps$, $\un{V}^\eps_i$ on $\Om^\eps$ to the graphs of functions $V,\ov{V}_i,\un{V}_i:B\cap\Hn\to [-1,1]$, which are in $\C^{0,\kappa_n}_\loc(B\cap\ov{\Hn})$ and harmonic in $B\cap\Hn$ (see for instance \cite[Lemma 7.15]{V19}). The functions $(W_i^\eps)_{\eps\to 0}$ verify the same oscillation reduction so after extraction their graphs converge in the local Hausdorff sense to a limit $W_i$, which we identify (by taking a limit for any $x\in B\cap\{x_n>\delta\}$) as
\[W_i=\frac{\ov{V}_i+\un{V}_i}{2}.\]

We now distinguish four cases depending on whether $m^\eps$ is stationary at some finite value $m\in\N^*$ or not, and whether $\delta>0$ or $\delta<0$: we detail the cases $\delta<0$ and outline the cases $\delta>0$, {\BBB for which more details may be found (though without focus on the }control of the constants in $m$) in \cite{KL18}.\bigbreak

\smallskip
\noindent
\textbf{Case $m^\eps\to m$, $\delta<0$.} We lose no generality in assuming $m^\eps=m$ for all $\eps$. Up to extraction there exists $\alpha,\ov{\alpha}_i,\un{\alpha}_i\geq0$ such that
\[\alpha^\eps\to\alpha,\ \ov{\alpha}_i^\eps\to \ov{\alpha}_i,\ \un{\alpha}_i^\eps\to \un{\alpha}_i,\]
$V-\ov{V}_i$ and $V-\un{V}_i$ verify a Dirichlet boundary condition on $B_1\cap\{x_n=0\}$ (since $V^\eps-\un{V}_i^\eps=V^\eps-\ov{V}_i^\eps=0$ on $\partial\Om^\eps$). This makes $2m$ Dirichlet boundary conditions for $2m+1$ harmonic functions, and we claim that we have an additional boundary condition
\begin{equation}\label{eq:Neum_visc}\partial_{n} h=0\text{ in }B_1\cap\{x_n=0\}, \mbox{ where } h= \left(\alpha^2V+\sum_{i=1}^{m}\ov{\alpha}_i\un{\alpha}_i\frac{\ov{V}_i+\un{V}_i}{2}\right),\end{equation}
holding in the viscosity sense.\newline

\smallskip
\noindent
\BBB \textit{Inequality $\partial_nh\leq 0$.} \EEE To prove this claim set $x^0\in B_1\cap\{x_n=0\}$, and we suppose by contradiction that there are constants $\BBB \overline p \EEE>0$, $z\in\R^{n-1}\times\{0\}$, $\sigma>0$ such that
\begin{equation}\label{eq:polynom_h}h(x)\geq h(x^0)+\BBB \overline p \EEE x_n+z\cdot(x-x^0)+\sigma\left(x_n^2-\frac{1}{n+1}\left|x-x^0\right|^2\right)=:\varphi(x),\ \forall x\in B(x^0,\rho)\cap\Hn\end{equation}
Note that we can always change $\BBB \overline p \EEE$ into $\BBB \overline p \EEE/2$, replace $\sigma$ by some arbitrarily large $\sigma'\geq \sigma$ and $\rho$ by some small enough $\rho'<\rho$ such that the equality holds only at $x=x^0$. Since the functions $V-h$, $\ov{V}_i-h$ and $\un{V}_i-h$ are harmonic and vanish on $B_1\cap\{x_n=0\}$, they are smooth over $B_1\cap\ov{\Hn}$  so that there exists $q$, $\ov{q}_i$, $\un{q}_i\in\R$ such that
\begin{align*}
(V-h)(x)&=qx_n+\mathcal{O}\left(|x-x^0|^2\right),\\
(\ov{V}_i-h)(x)&=\ov{q}_ix_n+\mathcal{O}\left(|x-x^0|^2\right),\\
(\un{V}_i-h)(x)&=\un{q}_ix_n+\mathcal{O}\left(|x-x^0|^2\right),\\
(W_i-h)(x)&=\frac{\ov{q}_i+\un{q}_i}{2}x_n+\mathcal{O}\left(|x-x^0|^2\right)=:q_ix_n+\mathcal{O}\left(|x-x^0|^2\right),
\end{align*}
\BBB and \EEE which verify in addition $\alpha^2q+\sum_{i=1}^{m}\ov{\alpha}_i\un{\alpha}_iq_i=0$. Up to reducing $\BBB \overline p \EEE$ and increasing $\sigma$, we have by uniform interior $\C^2$ estimates on the harmonic functions:
\[V(x)\geq q x_n+\varphi(x),\;\;\; W_i(x)\geq q_ix_n+\varphi(x),\]
in a neighbourhood of $x^0$ in $B\cap\Hn$, which we denote by $B_{x^0,\rho}\cap B\cap\Hn$. Then by the local uniform Hausdorff convergence of the graphs there exists $c^\eps\to 0$ such that
\[V^\eps(x)\geq q x_n+\varphi(x)-c^\eps,\;\;\; W_i^\eps(x)\geq q_ix_n+\varphi(x)-c^\eps,\]
for $x\in B_{x^0,\rho}\cap\Om^\eps$. This may be rewritten as
\begin{align*}
v^\eps(x)&\geq \alpha^\eps\left(x_n+\eps q x_n+\eps\varphi(x)-\eps c^\eps\right),\\
\sqrt{\ov{v}_i^\eps\un{v}_i^\eps}(x)&\geq \sqrt{\ov{\alpha}_i^\eps\un{\alpha}_i^\eps}\left(x_n+\eps\ov{q}_ix_n+\eps\varphi(x)-\eps c^\eps\right).
\end{align*}
Hence, up to changing $c^\eps$ into $2c^\eps+C\eps$, for some large enough constant $C$ (that does not depend on $\eps$), we have for any $x\in B_{x^0,\rho}\cap\Om^\eps$:
\begin{equation}\label{eq_aux_phi0}
v^\eps(x)\geq \alpha^\eps (1+\eps q)\psi_0(x),\qquad  \sqrt{\ov{v}_i^\eps\un{v}_i^\eps}(x)\geq \sqrt{\ov{\alpha}_i^\eps\un{\alpha}_i^\eps}(1+\eps q_i)\psi_0(x),
\end{equation}
where for any $t\geq 0$ we have defined
\[\psi_t(x)=x_n+\eps \varphi(x)- \eps c^\eps +t.\]
We can therefore consider the maximal $t^\eps\geq 0$ such that the previous set of inequalities \eqref{eq_aux_phi0} still holds \BBB with $\psi_t$ instead of $\psi_0$\EEE. There must be a contact point $x^\eps \in \ov{B_{x^0,\rho}\cap\Om^\eps}$ for one of the functions, and we may assume without loss of generality (up to changing $\BBB \overline p \EEE$ into $\BBB \overline p \EEE/2$, increasing $\sigma$ and reducing $\rho$ accordingly) that $x^\eps\to x^0$.\newline

Suppose $x^\eps\in \Om^\eps$ and that it is a contact point for $v^\eps$ (the same argument holds for the other functions $W_i^\eps$), and we then have by maximum principle
\[ (1+\eps q)\eps\frac{2\sigma n}{n+1}=(1+\eps q)\eps \Delta\varphi(x^\eps)\leq \frac{1}{\alpha^\eps}\Delta v^\eps(x^\eps)\leq \eps^2,\]
which is a contradiction for a small enough $\eps$. As a consequence $x^\eps\in\partial\Om^\eps$, so that there is equality in all of the above inequalities at $x^\eps$. Hence there exists a blow-up of $(v,\ov{v}_1,\hdots,\un{v}_m)$ at $x^\eps$, and comparing the derivatives at this blow-up \eqref{eq_geomean} we get

\[1\geq |\alpha^\eps|^2(1+\eps q)^2|\nabla\psi_{t^\eps}(x^\eps)|^2+\sum_{i=1}^{m}\ov{\alpha}_i^\eps\un{\alpha}_i^\eps(1+\eps q_i)^2|\nabla\psi_{t^\eps}(x^\eps)|^2\]
which after simplification becomes
\[|\nabla\psi_t(x^\eps)|^2\leq 1+o_{\eps\to 0}(\eps).\]
This is a contradiction since $|\nabla\psi_t(x^\eps)|^2=1+2p\eps +o_{\eps\to 0}(\eps)$. As a consequence we get $\partial_n h\leq 0$ on $B\cap\{x_n=0\}$ in the viscosity sense.\newline

\smallskip
\noindent
\BBB \textit{Inequality $\partial_n h\geq 0$.} \EEE Suppose  {\BBB by contradiction}, that this time $h(x)\leq \varphi(x)$ with $p<0$ and $\sigma<0$. As a consequence following the previous reasoning for some sequence $c^\eps\to 0$ and in some neighbourhood $B(x^0,\rho)\cap\Hn$ we have
\[v^\eps(x)\leq \alpha^\eps(1+\eps q)\psi_0(x),\;\;\;\ov{v}_i^\eps(x)\leq \ov{\alpha}_i^\eps(1+\eps\ov{q}_i)\psi_0(x),\;\;\;\un{v}_i^\eps(x)\leq \un{\alpha}_i^\eps(1+\eps\un{q}_i)\psi_0(x),\]
where $\psi_t$ is defined as previously. Consider the largest $t$ such that the inequalities
\[v^\eps(x)\leq \alpha^\eps(1+\eps q)\psi_t(x),\;\;\;\frac{1}{2}\left(\sqrt{\frac{\un{\alpha}_i^\eps}{\ov{\alpha}_i^\eps}}\ov{v}_i^\eps(x)+\sqrt{\frac{\ov{\alpha}_i^\eps}{\un{\alpha}_i^\eps}}\un{v}_i^\eps\right)\leq \sqrt{\ov{\alpha}_i^\eps\un{\alpha}_i^\eps}(1+\eps q_i)\psi_t(x)\]
are verified in $B(x^0,\rho)\cap\Hn$: at the largest $t$ there is some contact point $x^\eps$, and either $x^\eps\notin\Om^\eps$ by maximum principle as earlier (\BBB we use here the estimates \eqref{eq_estapriorialpha_i}\EEE) or $x^\eps\in\partial\Om^\eps$ and we have the viscosity condition
\[1\leq |\alpha^\eps|^2(1+\eps q)^2|\nabla\psi_{t^\eps}(x^\eps)|^2+\sum_{i=1}^{m}\ov{\alpha}_i^\eps\un{\alpha}_i^\eps(1+\eps q_i)^2|\nabla\psi_{t^\eps}(x^\eps)|^2\]
which after simplification becomes
\[|\nabla\psi_t(x^\eps)|^2(=1+2p\eps+o(\eps))\geq 1+o_{\eps\to 0}(\eps).\]
Since $p<0$ this is a contradiction for a small enough $\eps$.\newline

Now that the Neumann boundary condition \eqref{eq:Neum_visc} is verified in the viscosity sense, $h$ may be extended as a smooth harmonic function on $B_1$ by an even reflexion through $\partial \Hn$.

We may now conclude: $V$, $\ov{V}_i$, $\un{V}_i$ may be respectively extended as harmonic functions $V'$, $\ov{V}_i'$, $\un{V}_i'$ on $B_1$, with values in $[-5,5]$. Indeed we first write
\[V=\left(\alpha^2 V+\sum_{i=1}^{m}\ov{\alpha}_i\un{\alpha}_i\frac{\ov{V}_i+\un{V}_i}{2}\right)+\sum_{i=1}^{m}\ov{\alpha}_i\un{\alpha}_i\frac{(V-\ov{V}_i)+(V-\un{V}_i)}{2}.\]
The first term extends by even reflection thanks to \eqref{eq:Neum_visc}, and the second by odd reflection  (since $V-\ov{V}_i=V-\un{V}_i=0$ over $B_1\cap\{x_n=0\}$), and as a consequence $V$ extends harmonically with a bound $|V|\leq 3$. We then write for each $i$, $\ov{V}_i=V-(V-\ov{V}_i)$ and $\un{V}_i=V-(V-\un{V}_i)$, so that $\ov{V}_i,\un{V}_i$ extend harmonically in $B_1$ into functions bounded by $5$.   Recalling \eqref{eq:harmonic_2grad_est}  we find $c\in\R$, $z\in\R^{n-1}\times 0$, $q,\ov{q}_i,\un{q}_i\in\R$ such that for any $x\in B_\tau$,
\begin{align*}
\left|V(x)-c-z\cdot x'-q x_n\right|&\leq \frac{1}{8}\tau,\\
\left|\ov{V}_i(x)-c-z\cdot x'-\ov{q}_i x_n\right|&\leq \frac{1}{8}\tau,\\
\left|\un{V}_i(x)-c-z\cdot x'-\un{q}_i x_n\right|&\leq \frac{1}{8}\tau.
\end{align*}
Thus by Haudsdorff convergence of the graphs, for any small enough $\eps$ and any $x\in B_\tau$ we have
\begin{equation}\label{eq:flat_contrad}\left|\frac{v^\eps(x)-\alpha^\eps x_n}{\eps\alpha^\eps}-c-z\cdot x'-qx_n\right|\leq \frac{1}{6}\tau,\end{equation}
and the same holds accordingly for $\ov{v}_i^\eps$, $\un{v}_i^\eps$. Set now 
\[S^\eps=(1+\eps q)^2(\alpha^\eps)^2+\sum_{i=1}^{m}\ov{\alpha}_i^\eps\un{\alpha}_i^\eps(1+\eps \ov{q}_i)(1+\eps \un{q}_i).\]

Since $q\alpha^2 +\sum_{i=1}^{m}\ov{\alpha}_i^\eps\un{\alpha}_i^\eps\frac{\ov{q}_i+\un{q}_i}{2}=0$, then $|S^\eps-1|=o(\eps)$. Let now
\[{\alpha^\eps}'=\frac{(1+\eps q)\alpha^\eps}{ \sqrt{S^\eps}},\;\;\;{\ov{\alpha}_i^\eps}'=\frac{(1+\eps q_i) \ov{\alpha}_i^\eps}{ \sqrt{S^\eps}},\;\;\;{\un{\alpha}_i^\eps}'=\frac{(1+\eps q_i)\un{\alpha}_i^\eps}{ \sqrt{S^\eps}}, \]
and let
\[e'=\frac{e_n+\eps z}{\sqrt{1+\eps^2|z'|^2}},\;\;\;a'=\frac{c}{\tau}-\frac{1}{4}\eps,\;\;\;b'=\frac{c}{\tau}+\frac{1}{4}\eps.\]

Then \eqref{eq:flat_contrad} may be rewritten as

\begin{align*}
\left[(1+\eps q)e_n+\eps z'\right]\cdot x+\eps c-\frac{1}{6}\tau\leq&\frac{v^\eps(x)}{\alpha^\eps}\leq \left[(1+\eps q)e_n+\eps z'\right]\cdot x+\eps c+\frac{1}{6}\tau,\\
\left[(1+\eps \ov{q}_i)e_n+\eps z'\right]\cdot x+\eps c-\frac{1}{6}\tau\leq&\frac{\ov{v}_i^\eps(x)}{\ov{\alpha}_i^\eps}\leq \left[(1+\eps \ov{q}_i)e_n+\eps z'\right]\cdot x+\eps c+\frac{1}{6}\tau,\\
\left[(1+\eps \un{q}_i)e_n+\eps z'\right]\cdot x+\eps c-\frac{1}{6}\tau\leq&\frac{\un{v}_i^\eps(x)}{\un{\alpha}_i^\eps}\leq \left[(1+\eps \un{q}_i)e_n+\eps z'\right]\cdot x+\eps c+\frac{1}{6}\tau,\\
\end{align*}

for any $x\in B_\tau\cap\Om^\eps$, which simplifies as $\eps\to 0$ to

\[e'\cdot x+a'\leq \frac{v^\eps(x)}{{\alpha'}^\eps},\frac{\ov{v}_i^\eps(x)}{{\ov{\alpha}_i'}^\eps},\frac{\un{v}_i^\eps(x)}{{\un{\alpha}_i'}^\eps}\leq e'\cdot x+b',\ \forall x\in\Om^\eps\cap B_\tau\]
so that all functions verify the flatness improvement, which is a contradiction for small enough $\eps$. This concludes the proof in this case.

\smallskip
\noindent
\textbf{Case $m^\eps\to m$, $\delta>0$.} This case follows more closely \cite{KL18}; the only difference here is that the Neumann boundary condition verified at the limit is
\[\partial_{n}\left(\alpha^2V+\sum_{i=1}^{m}\frac{\ov{\alpha}_i^2\ov{V}_i+\un{\alpha}_i^2\un{V}_i}{2}\right)=0\text{ in }B_1\cap\{x_n=0\}.\]

\smallskip
\noindent
\textbf{Case $m^\eps\to \infty$.} In this case we treat $\delta>0$ and $\delta<0$ at once. Define $V^\eps,\ov{V}_i^\eps$ and $\un{V}_i^\eps$ as previously and thanks to Lemma \ref{lemma_compact}, we may change their order (in $i$) and assume a convergence to some limits $V,\ov{V}_i$ and $\un{V}_i$ as $\eps\to 0$ (in the sense of local Hausdorff convergence of the graphs) which is uniform in $i$. We still have the Dirichlet boundary condition $V-\ov{V}_i=V-\un{V}_i=0$ on $B_1\cap\{x_n=0\}$. Due to the estimates \eqref{eq_estapriorialpha_i} we have
\[\left|1-(\alpha^\eps)^2\right|\leq 4L^2|\delta^\eps|^\frac{1}{2}m^\eps\leq 2L^2(m^\eps)^{-1}\to 0.\]

We now prove the Neumann boundary condition (in the viscosity sense)
\begin{equation}\label{eq:Neum_V_visc}\partial_n V=0\text{ on }B\cap\{x_n=0\}.\end{equation}
We proceed as previously: letting $x^0\in B_1\cap\{x_n=0\}$, we pick a polynomial function defined as in \eqref{eq:polynom_h} with $p>0$ touching $V$ from below at a point $x^0$. Writing $\ov{V}_i(x)=V(x)+\ov{q}_i x_n+\mathcal{O}(|x-x^0|^2)$ (where the remainder term only depends on $1-|x^0|$ and $n$ but not on $i$, by uniform regularity of harmonic functions) and similarly for $\un{V}_i$, using the uniform convergence of the graphs we get that in a neighborhood $B_{x^0,\rho}\cap\Om^\eps$ of $x^0$ it holds for every $i$:
\begin{align*}
v^\eps(x)\geq \alpha \psi_0(x),&\;\;\;\ov{v}_i^\eps(x)\geq \ov{\alpha}_i(1+\eps \ov{q}_i)\psi_0(x),\;\;\;\un{v}_i^\eps(x)\geq \un{\alpha}_i(1+\eps \un{q}_i)\psi_0(x)\;\;\;\text{ if }\delta>0,\\
v^\eps(x)\geq \alpha \psi_0(x),&\;\;\;\sqrt{\ov{v}_i^\eps(x)\un{v}_i^\eps(x)}\geq \sqrt{\ov{\alpha}_i\un{\alpha}_i^\eps}\left(1+\eps \frac{\ov{q}_i+\un{q}_i}{2}\right)\psi_0(x)\;\;\;\text{ if }\delta<0,
\end{align*}
where for any $t\geq 0$ we have defined
\[\psi_t(x)=x_n+\eps \varphi(x)- \eps c^\eps +t\]
and $c^\eps\to 0^+$. Note that $(\ov{q}_i,\un{q}_i)$ are uniformly bounded by a constant $M$ only depending on $n$ and $1-|x^0|$.\newline

Taking then the largest $t^\eps\geq0$ such that these inequalities are verified over $B_{x^0,\rho}\cap\Om^\eps$ for every $i$, then there is a contact point $x^\eps$ either for the function $v^\eps$ or one of the $\ov{v}_i^\eps,\un{v}_i^\eps$, with $x^\eps\to x^0$. Then as previously by maximum principle the contact point $x^{\eps}$ lies in $\partial \Om^\eps$ when $\eps$ is small enough, and there is equality at $x^\eps$ in all the inequalities. Comparing the derivatives at $x^\eps$ we get (with the viscosity condition)
\begin{align*}
1&\geq |\alpha^\eps|^2|\nabla\psi_{t^\eps}(x^\eps)|^2+\sum_{i=1}^{m^\eps}\frac{1}{2}\left(|\ov{\alpha}_i^\eps|^2(1+\eps\ov{q}_i)^2+|\un{\alpha}_i^\eps|^2(1+\eps\un{q}_i)^2\right)|\nabla\psi_{t^\eps}(x^\eps)|^2\\
&\geq \left(1-2L^2m^\eps|\delta^\eps|^\frac{1}{2}(M\eps +M^2\eps^2)\right)|\nabla\psi_{t^\eps}(x^\eps)|^2\\
&\geq \left(1-\frac{M}{m^\eps}\eps+o_{\eps\to0}(\eps)\right)\left(1+\eps\partial_n\varphi(x^\eps)\right),
\end{align*}
which is a contradiction for small enough $\eps$ since $\partial_n\varphi(x^\eps)\to p>0$. This ensures $\partial_nV\leq0$ over $B_1\cap\{x_n=0\}$ in the viscosity sense.\bigbreak

Likewise we get $\partial_nV\geq0$ on $\{x_n=0\}$ in the viscosity sense. As a consequence $V$ verifies the Neumann boundary condition \eqref{eq:Neum_V_visc}. We may extend $V$ by an even reflexion and the $V-\ov{V}_i$, $V-\un{V}_i$ by odd reflexions, so that $V$, $\ov{V}_i,\un{V}_i$ extend as harmonic {\BBB functions} on $B_1$ with values in $[-3,3]$, and relying on \eqref{eq:harmonic_2grad_est} we obtain as previously a contradiction, finishing the proof in the case $m^\eps\to \infty$. \end{proof}

\begin{corollary}\label{prop_flatness}
Let $L\geq 1$ and $\delta\in\R$. Then there exists $\eps_{n}(L)>0$, $\gamma_n\in (0,1)$ verifying the following property. For any $m\in\mathbb{N}^*$, $\delta\in\R$ verifying $L^2m^2|\delta|^\frac{1}{2}\leq \frac{1}{8}$, and for any $\eps<\eps_n(L)$,  $(v,\ov{v}_1,\un{v}_1,\hdots,\ov{v}_m,\un{v}_m)\in\Sol_{m,\delta}(L)$ that is $\eps$-flat in the sense of Definition \ref{def_flat}, then there exists $g\in\C^{1,\gamma_n}\left(B^{n-1}_{1/2},[-\eps,\eps]\right)$ such that $\Vert g\Vert_{\C^{1,\gamma_n}\left(B^{n-1}_{1/2}\right)}\lesssim \eps$ and 
\[ \{v>0\}\cap \left(B^{n-1}_{1/2}\times \left[-1/2,1/2\right]\right)=\left\{(x',x_n)\in B^{n-1}_{1/2}\times \BBB(-1/2,1/2)\EEE:\ x_n \BBB > \EEE g(x')\right\}.\]
\end{corollary}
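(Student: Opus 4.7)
The plan is to obtain the conclusion by iterating Proposition \ref{prop_flatness_improv} at every free-boundary point in $B_{1/2}$, and then patching together the limiting normals to get a $\mathcal{C}^{1,\gamma_n}$ graph.

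First I would observe that the class $\Sol_{m,\delta}(L)$ and the flatness notion from Definition \ref{def_flat} are both scale-invariant: if $(v,\ov v_1,\un v_1,\ldots)\in\Sol_{m,\delta}(L)$ on $B_1$ is $\eps$-flat with parameters $(a,b,\alpha,\ldots)$ in the direction $e_n$, then the rescaled tuple $y\mapsto r^{-1}(v(x_0+ry),\ldots)$ belongs to $\Sol_{m,\delta}(L)$ on $B_1$ whenever $x_0\in\partial\{v>0\}\cap B_{1-r}$ and is $\eps$-flat in the same direction, provided the ball on which it lives is still inside the original $B_1$. Fix the constant $\tau_n$ from Proposition \ref{prop_flatness_improv} and the contraction factor $1/2$ of the flatness. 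Pick $\eps_n(L)$ small enough that all flatness hypotheses along the iteration are met and that a geometric series with ratio $1/2$ and initial term $\lesssim\eps$ does not leave the admissible flatness range.

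Next I would run the iteration at an arbitrary $x_0\in\partial\{v>0\}\cap B_{1/2}$. By hypothesis the original tuple is $\eps$-flat in $B_1$ in direction $e_n^{(0)}:=e_n$. Rescale around $x_0$ at scale $\tau_n$: by the invariance above we get a flat tuple in $B_1$ to which Proposition \ref{prop_flatness_improv} applies, producing a new direction $e_n^{(1)}$ with $|e_n^{(1)}-e_n^{(0)}|\lesssim\eps$ and flatness $\eps/2$ in $B_{\tau_n}$. Rescaling by $\tau_n$ again, we iterate to obtain directions $e_n^{(k)}\in\mathbb{S}^{n-1}$ and constants $\alpha^{(k)},\ov\alpha_i^{(k)},\un\alpha_i^{(k)}$ such that, in the ball $B_{\tau_n^k}(x_0)$, the tuple is $2^{-k}\eps$-flat in direction $e_n^{(k)}$ with $|e_n^{(k+1)}-e_n^{(k)}|\lesssim 2^{-k}\eps$. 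Consequently $(e_n^{(k)})_k$ is Cauchy, converging to a unit vector $\nu(x_0)$ with
\begin{equation*}
|\nu(x_0)-e_n^{(k)}|\lesssim 2^{-k}\eps,\qquad |\nu(x_0)-e_n|\lesssim\eps.
\end{equation*}
From the nested flatness $|v/\alpha^{(k)}-(x-x_0)\cdot e_n^{(k)}|\leq 2^{-k}\eps\tau_n^k$ in $B_{\tau_n^k}(x_0)$ it follows, by non-degeneracy together with the third line of \eqref{eq_defflatness}, that $\partial\{v>0\}\cap B_{\tau_n^k}(x_0)\subset\{|(x-x_0)\cdot e_n^{(k)}|\leq 2^{-k}\eps\tau_n^k\}$, so passing to the limit $\nu(x_0)$ is the normal to $\partial\{v>0\}$ at $x_0$ in a pointwise $C^1$ sense.

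Finally I would upgrade the pointwise $C^1$ property to $\mathcal{C}^{1,\gamma_n}$ regularity of the graph by the standard De~Silva-type interpolation. Given $x_0,y_0\in\partial\{v>0\}\cap B_{1/2}$ set $k$ to be the unique integer with $\tau_n^{k+1}\leq |x_0-y_0|<\tau_n^k$. Then the same iteration run at $y_0$ is compatible with the one at $x_0$ up to step $k$ (since $y_0\in B_{\tau_n^k}(x_0)$), which yields $|\nu(x_0)-\nu(y_0)|\lesssim 2^{-k}\eps\lesssim\eps|x_0-y_0|^{\gamma_n}$ with $\gamma_n=\log 2/\log(1/\tau_n)\in(0,1)$. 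Since the zero level is trapped in a slab of width $\eps\tau_n^k$ in every ball $B_{\tau_n^k}(x_0)$, one concludes in a classical way that $\partial\{v>0\}\cap(B_{1/2}^{n-1}\times[-1/2,1/2])$ is the graph of a function $g$ whose gradient equals the projection of $\nu$, and $\|g\|_{\mathcal{C}^{1,\gamma_n}}\lesssim\eps$.

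The main obstacle is bookkeeping the scale-invariance of the whole structure (the solution class $\Sol_{m,\delta}(L)$, the flatness parameters $\alpha,\ov\alpha_i,\un\alpha_i$, and the smallness thresholds $\eps_n(L)$), so that the iteration of Proposition \ref{prop_flatness_improv} can be carried out indefinitely without deteriorating constants; once this is set up, the Hölder comparison of the limiting normals at distinct boundary points is routine.
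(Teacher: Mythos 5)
Your proposal is correct and is essentially the paper's own proof: the paper simply states that the corollary follows by iterating the flatness improvement of Proposition \ref{prop_flatness_improv} in the classical De Silva fashion (citing \cite[Theorem 8.1]{V19}), which is exactly the dyadic iteration, geometric convergence of the directions $e_n^{(k)}$, and H\"older comparison of limiting normals that you spell out. The bookkeeping you flag (scale invariance of $\Sol_{m,\delta}(L)$ and of the flatness parameters, and the recentering at free boundary points) is indeed the only content beyond the cited classical argument, and your treatment of it is sound.
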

\begin{proof}
This step comes from iterating the flatness improvement Proposition \ref{prop_flatness_improv} as is done in \cite[Theorem 8.1]{V19}. 
\end{proof}

This implies that for $\delta$  small enough \BBB the boundary of any minimizer $\Om$ \EEE  can be written as a  $ \C^{3,\gamma}$  graph on the sphere. This is the object of next Lemma.

\begin{lemma}\label{lem_C2gammavect}
Suppose that $|\delta|\ll k^{-\left(6+\frac{8}{n}\right)}g_n(k)$ and let $\Om$ be a centered minimizer of \eqref{eq_funcvect}. Then there exists  some dimensional $\gamma=\gamma_n\in(0,1)$, $D_n>0$  and some $h\in \C^{3,\gamma}\left(\partial B,\left[-\frac{1}{2},\frac{1}{2}\right]\right)$ with  $\Vert  h\Vert_{ \C^{3,\gamma}(\partial B)}\leq D_n$  such that
\BBB \[\Om=\left\{s(1+h(x))x,\ s\in[0,1), \ x\in\partial B\right\}.\] \EEE
\end{lemma}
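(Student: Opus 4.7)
The plan is to adapt the two-step argument of Lemma \ref{lem_C2gamma} to the vectorial setting: first obtain $\mathcal{C}^{1,\gamma}$ regularity of $\partial\Om$ via Corollary \ref{prop_flatness}, then upgrade to $\mathcal{C}^{2,\gamma}$ by a partial hodograph transform.

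For the $\mathcal{C}^{1,\gamma}$ step, I would take a boundary point $x_0\in\partial\Om$ (after rotation, say $x_0=-e_n$ with inward normal $e_n$) and introduce, at scale $r$, the rescaled tuple
\[
v^r = (v)_{x_0,r}, \quad \ov v_i^r = (\ov v_i)_{x_0,r}, \quad \un v_i^r = (\un v_i)_{x_0,r},
\]
where $v,\ov v_i,\un v_i$ are built from $w_\Om$ and the eigenfunctions $u_k,\ldots,u_l$ as in Section \ref{ssect:harnack}. By Proposition \ref{prop_existenceregvect} one has $\|w_\Om-w_B\|_{\mathcal{C}^0(\R^n)}\lesssim (k^{1/2+1/n}|\delta|^{1/2})^{1/(n+1)}$, and combined with $\|(w_B)_{x_0,r}-x_n^+\|_{\mathcal{C}^0(B_1)}\lesssim r$, this yields the flatness bound
\[
\|v^r-\alpha x_n^+\|_{\mathcal{C}^0(B_1)}+\sum_i(\|\ov v_i^r-\ov\alpha_i x_n^+\|_{\mathcal{C}^0(B_1)}+\|\un v_i^r-\un\alpha_i x_n^+\|_{\mathcal{C}^0(B_1)})\lesssim r+r^{-1}(k^{1/2+1/n}|\delta|^{1/2})^{1/(n+1)},
\]
with $|\Delta v^r|+|\Delta\ov v_i^r|+|\Delta\un v_i^r|\lesssim r$ inside the positivity set (using $\lambda_i(\Om)\lesssim k^{2/n}$). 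Setting $\eps=\sqrt r$ and choosing $r$ small (but depending only on $n$), the assumption $|\delta|\ll k^{-(6+8/n)}g_n(k)$ guarantees that the smallness thresholds of Corollary \ref{prop_flatness} are satisfied (in particular $L^2m^2|\delta|^{1/2}\ll 1$ since $m\lesssim k$). Covering $\partial\Om$ by finitely many such balls yields $h\in\mathcal{C}^{1,\gamma}(\partial B)$ with $\|h\|_{\mathcal{C}^{1,\gamma}(\partial B)}\lesssim 1$ and $\partial\Om=\{(1+h(x))x:x\in\partial B\}$.

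For the $\mathcal{C}^{2,\gamma}$ upgrade, I would perform a partial hodograph transform exactly as in the proof of Lemma \ref{lem_C2gamma}, but now with $m$ eigenfunctions simultaneously. Fix a contact point, translate and rotate so it lies at $0$ with normal $e_n$, set $\alpha=\partial_n w_\Om(0)$, and define $\phi(x)=(x',w_\Om(rx)/(\alpha r))$, which by the $\mathcal{C}^{1,\gamma}$ regularity and the inverse function theorem is a diffeomorphism onto a neighbourhood of $C_1\cap\Hn$ for $r$ small. Write $\psi=\phi^{-1}=(x',v(x))$ and $v_i=(u_{k+i-1})_r\circ\psi$ for $i=1,\ldots,m$. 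The tuple $(v,v_1,\ldots,v_m)$ then satisfies an elliptic system analogous to \eqref{eq_hodograph}, the only change being that the oblique boundary condition now reads
\[
\frac{Q}{\alpha^2}(\partial_n v)^2-\bigl(1+\delta\textstyle\sum_{i=1}^m|\nabla v_i|^2\bigr)|\nabla' v|^2=1\quad\text{on }C_1\cap\partial\Hn,
\]
and each $v_i$ solves a Dirichlet problem $\mathrm{Tr}[A(\nabla v)\nabla^2 v_i]=\partial_n v_i/\alpha-\mu_i v_i$ in $C_1\cap\Hn$ with $v_i=0$ on $C_1\cap\partial\Hn$. By odd reflection and interior Schauder, each $v_i$ obeys $\|v_i\|_{\mathcal{C}^{2,\gamma}}\lesssim k^{1/2+4/n}$, so the coefficient $q:=\alpha^2(1+\delta\sum_i|\nabla v_i|^2)/Q$ has $\|\nabla q\|_{\mathcal{C}^{0,\gamma}}\lesssim m\cdot k^{1+6/n}|\delta|\lesssim k^{2+6/n}|\delta|\lesssim r^\gamma$ under the hypothesis on $|\delta|$ (using $m\lesssim k$).

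The remaining argument is identical to Lemma \ref{lem_C2gamma}: discrete tangential differentiation of the rewritten system \eqref{eq_v_2} together with oblique Schauder estimates \cite[Th.~6.26]{GT01} yield $\|\partial_i v\|_{\mathcal{C}^{1,\gamma}(C_{1/8}\cap\Hn)}\lesssim r^\gamma$ for $i<n$, and $\partial_{nn}v$ is then recovered algebraically from the interior equation. Covering $\partial\Om$ by a finite number of such cylinders produces $h\in\mathcal{C}^{2,\gamma}(\partial B)$ with $\|h\|_{\mathcal{C}^{2,\gamma}(\partial B)}\leq D_n$. The main subtlety, compared to the simple-eigenvalue case, is tracking how the multiplicity enters the estimate on $\|\nabla q\|_{\mathcal{C}^{0,\gamma}}$; the hypothesis $|\delta|\ll k^{-(6+8/n)}g_n(k)$ is exactly what is needed to absorb the extra factor $m\lesssim k$ both here and in the application of Corollary \ref{prop_flatness}.
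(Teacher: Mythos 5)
Your proposal is correct and follows essentially the same route as the paper: the $\mathcal{C}^{1,\gamma}$ step is obtained by verifying $\eps$-flatness of the rescaled tuple from Section \ref{ssect:harnack} and invoking the vectorial $\eps$-regularity result (Corollary \ref{prop_flatness}), and the $\mathcal{C}^{2,\gamma}$ upgrade is the same partial hodograph transform as in Lemma \ref{lem_C2gamma} with the oblique boundary condition now carrying the sum $\delta\sum_i|\nabla v_i|^2$. Your explicit tracking of the multiplicity factor $m\lesssim k$ in the flatness threshold and in $\|\nabla q\|_{\mathcal{C}^{0,\gamma}}$ is exactly the point the paper's (much terser) proof relies on.
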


\begin{proof}
\BBB

For the $\C^{1,\gamma}$ estimate on $h$ we proceed exactly as in the proof of Lemma \ref{lem_C1gamma}.
\BBB The $\C^{3,\gamma}$ bound on $h$ follows again from \cite[Theorem 2.4]{DS14}: indeed this time the optimality condition is
$$|\nabla w_\Om|^2+T(\Om)^2\delta\sum_{i=k}^{l}|\nabla u_i|^2=Q$$
where $Q$ is the constant defined in equation \eqref{eq:defQ_mult}, whence
$$|\nabla w_\Om|^2=\frac{Q}{1+T(\Om)^2\delta\sum_{i=k}^{l}\frac{|\nabla u_i|^2}{|\nabla w_\Om|^2}}.$$
By \cite[Theorem 2.4]{DS14} applied to each $\frac{u_i}{w_\Om}$ we get $\Vert \nabla w_\Om\Vert_{\C^{1,\gamma}(\partial\Om)}\lesssim 1$, which implies $\Vert h\Vert_{\C^{2,\gamma}(\partial\Om)}\lesssim 1$, and then $\Vert h\Vert_{\C^{3,\gamma}(\partial\Om)}\lesssim 1$ by iterating again.
\EEE

\end{proof}

\subsection{Minimality of the ball among nearly spherical sets.} The purpose of this subsection is to show the minimality of the ball for the functional $T^{-1}+\delta\sum_{i=k}^l\lambda_i$ among nearly spherical sets in the sense of Definition \ref{def:NS}. 
This time in the definition of a nearly spherical set (see Definition \ref{def:NS}) we rather take $\gamma$ and $D_n$ as in Lemma \ref{lem_C2gammavect} instead of Lemma \ref{lem_C2gamma}, although for simplicity we do not introduce additional definition and notations. {\BBB We will keep that same $\gamma$ for the rest of the section.}
{\BBB The minimality result is the following.}

\begin{proposition}\label{prop:minloc_vec}
Let $B_h$ be a nearly spherical  centered  set such that $\Vert h\Vert_{L^1(\partial B)}\ll k^{-1-\frac{4}{n}}g_n(k)$ and suppose that $|\delta|\ll k^{-\left(2+\frac{8}{n}\right)}g_n(k)$. Then we have
\[T(B_h)^{-1}+\delta\sum_{i=k}^l\lambda_i(B_h)\geq T(B)^{-1}+\delta\sum_{i=k}^l\lambda_i(B)\]
with equality if and only if $B_h=B$.
\end{proposition}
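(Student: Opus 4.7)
The plan is to follow closely the strategy used in Subsection \ref{subsec_NS} for a simple eigenvalue, the three ingredients to combine being: (i) the quantitative Saint-Venant estimate $T(B_h)\le T(B)-\frac{1}{32n^2}\|h\|_{H^{1/2}(\partial B)}^2$ from \cite{BDV15}; (ii) the fact that, although each individual $\lambda_i$ fails to be differentiable at the ball, the symmetric function $\sum_{i=k}^l \lambda_i$ is smooth along the path $t\mapsto B_{th}$, and its first derivative vanishes (modulo volume); (iii) a second-order bound for the sum of the same form as Proposition \ref{prop_controllambda_k}, with an extra multiplicity factor $m\lesssim k$.

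First I would apply Lemma \ref{lem:Kato} to the path $t\mapsto B_{th}$ to extract real-analytic families $\mu_i(t)$ and orthonormal eigenfunctions $u_i(t)$. Arguing exactly as in the first paragraph of the proof of Proposition \ref{prop_controllambda_k} (using $|\mu_i'(t)|\lesssim \|h\|_{L^1(\partial B)}\mu_i(t)^{2+n/2}$, which comes from elliptic regularity), the smallness assumption $\|h\|_{L^1(\partial B)}\ll k^{-1-4/n}g_n(k)$ guarantees that for every $|t|\le 1$ the cluster $\{\mu_k(t),\dots,\mu_l(t)\}$ stays at distance at least $\tfrac{1}{2}g_n(k)$ from every $\mu_j(t)$ with $j\notin\{k,\dots,l\}$; in particular it exhausts $\{\lambda_k(B_{th}),\dots,\lambda_l(B_{th})\}$. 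Next, since the $\lambda_k(B)$-eigenspace is $O(n)$-invariant, the density $\sum_{i=k}^l|\nabla u_i(0)|^2$ is a dimensional constant $c_{n,k,l}\lesssim k^{1+4/n}$ on $\partial B$. Combined with the formula for $\mu_i'(0)$ in Lemma \ref{lem:Kato} this gives $\frac{d}{dt}|_{t=0}\bigl(\sum_{i=k}^l \mu_i(t)+c_{n,k,l}|B_{th}|\bigr)=0$, and by Taylor's formula together with $|B_h|=|B|$ there exists $t^*\in[0,1]$ with
\[\sum_{i=k}^l\bigl(\lambda_i(B_h)-\lambda_i(B)\bigr)=\tfrac{1}{2}\Bigl(\sum_{i=k}^l \mu_i''(t^*)+c_{n,k,l}\tfrac{d^2}{ds^2}\Big|_{s=t^*}|B_{sh}|\Bigr).\]

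The main obstacle is then to bound $\bigl|\sum_{i=k}^l \mu_i''(t^*)\bigr|$, and I would proceed term by term as in Proposition \ref{prop_controllambda_k}. The curvature terms $\int_{\partial B_{t^*h}}|\nabla u_i(t^*)|^2[H_{t^*}(\zeta\cdot\nu_{t^*})^2-b_{t^*}(\zeta_{\tau_{t^*}},\zeta_{\tau_{t^*}})+2\zeta_{\tau_{t^*}}\cdot\nabla_{|\partial B_{t^*h}}(\zeta\cdot\nu_{t^*})]$ contribute $\lesssim k^{1+4/n}\|h\|_{H^{1/2}(\partial B)}^2$ per index, hence $\lesssim k^{2+4/n}\|h\|_{H^{1/2}}^2$ after summing $m\lesssim k$ indices. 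The delicate contribution is $\sum_{i=k}^l \int_{B_{t^*h}}\bigl(|\nabla u_i'(t^*)|^2-\mu_i(t^*)|u_i'(t^*)|^2\bigr)$: writing $u_i'(t^*)=z_i+w_i$ with $z_i$ the harmonic extension of $-(\zeta\cdot\nu_{t^*})\partial_{\nu_{t^*}}u_i(t^*)$ and $w_i\in H_0^1(B_{t^*h})$, and spectrally decomposing $w_i$ in the basis $(u_j(t^*))_j$, the denominators $\mu_j(t^*)-\mu_i(t^*)$ are controlled below by $\tfrac{1}{2}g_n(k)$ for $j\notin\{k,\dots,l\}$ thanks to Step~1; within the cluster the small denominators are absent because the analytic basis of Lemma \ref{lem:Kato} diagonalizes the first-order perturbation within the eigenspace, so that the in-cluster coefficients $\int w_i u_j$ for $i,j\in\{k,\dots,l\}$ stay bounded and yield an $O(\|h\|_{H^{1/2}}^2)$ contribution independent of $g_n(k)^{-1}$. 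Putting everything together, and using $m\lesssim k$, one obtains
\[\bigg|\sum_{i=k}^l \mu_i''(t^*)\bigg|\lesssim \frac{k^{2+8/n}}{g_n(k)}\|h\|_{H^{1/2}(\partial B)}^2.\]

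Finally, as at the end of Proposition \ref{prop_minloc}, the interpolation inequality between Hölder spaces converts the hypothesis $\|h\|_{L^1(\partial B)}\ll k^{-1-4/n}g_n(k)$ into $\|h\|_{\mathcal{C}^{2,\gamma/2}(\partial B)}\ll 1$, which authorizes the application of \eqref{eq:tors_coercive}. Combining this with the previous bound yields
\[T(B_h)^{-1}+\delta\sum_{i=k}^l\lambda_i(B_h)\ \ge\ T(B)^{-1}+\delta\sum_{i=k}^l\lambda_i(B)+\Bigl(\tfrac{1}{32n^2 T(B)^2}-C_n|\delta|\,\tfrac{k^{2+8/n}}{g_n(k)}\Bigr)\|h\|_{H^{1/2}(\partial B)}^2,\]
and the parenthesis is positive as soon as $|\delta|\ll k^{-(2+8/n)}g_n(k)$, with strict inequality unless $h\equiv 0$, yielding the claim.
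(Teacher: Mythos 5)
Your overall architecture is the right one and matches the paper's: the statement is reduced, exactly as in Proposition \ref{prop_minloc}, to a second-order bound of the form of Proposition \ref{prop_controllambda_k_vec}, obtained via Lemma \ref{lem:Kato}, the constancy of $\sum_{i=k}^l|\nabla u_i(0)|^2$ on $\partial B$, a Taylor expansion at some $t^*\in[0,1]$, and the coercivity \eqref{eq:tors_coercive} of the torsion. The curvature terms and the out-of-cluster spectral terms are handled correctly.

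However, there is a genuine gap at the one point where the vectorial case actually differs from the simple case, namely the in-cluster contributions to $\sum_{i=k}^l\mu_i''(t^*)$. You assert that the coefficients $\int_{\Om} v_i u_j$ for $i\neq j$ both in the cluster ``stay bounded because the Kato basis diagonalizes the first-order perturbation within the eigenspace.'' This does not hold. First, that diagonalization is a statement at $t=0$, whereas the Taylor remainder is evaluated at an arbitrary $t^*\in[0,1]$ where the basis $u_i(t^*)$ has no such property. Second, even at $t=0$ the in-cluster components of $u_i'$ are governed by denominators which are differences of \emph{first-order} eigenvalue corrections within the cluster; when these splittings are small compared to $\Vert h\Vert$ (which can happen for suitable $h$, and nothing in the hypotheses excludes it), $\left|\int_\Om v_iu_j\right|$ can be arbitrarily large relative to $\Vert h\Vert_{H^{1/2}(\partial B)}$, so no bound of the form $O(\Vert h\Vert_{H^{1/2}}^2)$ for $\mu_j\bigl(\int w_iu_j\bigr)^2-\mu_i\bigl(\int v_iu_j\bigr)^2$ can be extracted term by term. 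The paper's proof of Proposition \ref{prop_controllambda_k_vec} resolves this differently: writing out the spectral decomposition, the only dangerous in-cluster terms occur in the combination
\[\sum_{i,j\in I}(\mu_i-\mu_j)\left(\int_{\Om}v_iu_j\right)^2,\]
which vanishes \emph{identically} by antisymmetry, since the differentiated orthonormality relations $\int_{\Om}(v_iu_j+v_ju_i)=0$ give $\bigl(\int v_iu_j\bigr)^2=\bigl(\int v_ju_i\bigr)^2$ while $\mu_i-\mu_j$ changes sign under $i\leftrightarrow j$; the remaining in-cluster terms involve only $z_i$, which is controlled by $\Vert h\Vert_{H^{1/2}(\partial B)}$. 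You need this exact cancellation (or an equivalent argument); the boundedness claim as written is false and the proof does not close without it.
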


The strategy is the same as in Subsection \ref{subsec_NS}, with some differences due to the fact that we are considering multiple eigenvalues.  We will make use of Lemma \ref{lem:Kato} which still applies to multiple eigenvalues, and as in the case of Proposition \ref{prop_minloc} {\BBB we perform} a second order Taylor expansion of the functional $T^{-1}+\sum_{i=k}^l\lambda_i$: the main difference {\BBB is that even though} the eigenvalues $\lambda_k(B)=\ldots=\lambda_l(B)$ are multiple {\BBB(in which case each individual eigenvalue is not shape differentiable), the sum $\sum_{i=k}^l\lambda_i((\text{Id}+\zeta)(B))$ is still} smooth in $\zeta$ (and even analytic, see \cite[Theorem 2.6]{LamLan06}), and has a critical point at the ball.

\begin{proposition}\label{prop_controllambda_k_vec}
Let $B_h$ be a nearly spherical set such that  $\Vert h\Vert_{L^1(\partial B)}\ll k^{-1-\frac{4}{n}}g_n(k)$. Then it holds 
\[\left|\sum_{i=k}^{l}(\lambda_i(B_h)-\lambda_i(B))\right|\leq C_n \frac{k^{2+\frac{8}{n}}}{g_n(k)}\Vert h\Vert_{H^{1/2}(\partial B)}^2.\]
\end{proposition}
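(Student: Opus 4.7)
The plan is to follow the strategy of the proof of Proposition \ref{prop_controllambda_k} based on a second order Taylor expansion, the key new input being that while individual eigenvalues in a cluster are not differentiable at $B$ (nor in general at $B_{th}$ for $t\neq0$), Kato's analytic perturbation theory (Lemma \ref{lem:Kato}) provides real-analytic branches $(\mu_i(t),u_i(t))_{i\in\N^*}$ along the deformation $t\mapsto B_{th}$, whose sum $\sum_{i=k}^l\mu_i(t)$ coincides with $\sum_{i=k}^l\lambda_i(B_{th})$. First, I would establish spectral separation: reusing the derivative estimate $|\mu_i'(t)|\lesssim\|h\|_{L^1(\partial B)}\mu_i(t)^{2+n/2}$ from Proposition \ref{prop_controllambda_k}, the hypothesis $\|h\|_{L^1(\partial B)}\ll k^{-1-4/n}g_n(k)$ guarantees that for all $t\in[0,1]$, the cluster $\{\mu_i(t),\ i\in C\}$ (where $C:=\{k,\ldots,l\}$) stays at distance $\geq g_n(k)/2$ from the rest of the spectrum, so $\sum_{i\in C}\mu_i(1)=\sum_{i\in C}\lambda_i(B_h)$. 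Elliptic regularity then yields the uniform bounds $\|u_i(t)\|_{\mathcal{C}^{1,\gamma}}\lesssim k^{1/2+2/n}$ and $\|u_i(t)\|_{\mathcal{C}^{2,\gamma}}\lesssim k^{1/2+4/n}$ for $i\in C$, $t\in[0,1]$.

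The next step is to exploit that the first-order derivative of $\sum_{i\in C}\mu_i$ vanishes up to a volume correction. Since the eigenspace of $\lambda_k(B)$ is rotation invariant, the function $\sum_{i\in C}|\nabla u_i(0)|^2$ is rotation invariant (hence constant) on $\partial B$, equal to some $c_{n,k}\lesssim k^{1+2/n}$; applying the formula of Lemma \ref{lem:Kato} yields $\sum_{i\in C}\mu_i'(0)=-c_{n,k}\,\frac{d}{ds}\big|_{s=0}|B_{sh}|$. Since $|B_h|=|B|$, a Taylor expansion with mean value provides some $t\in[0,1]$ such that
\[\sum_{i\in C}\big(\lambda_i(B_h)-\lambda_i(B)\big)=\tfrac{1}{2}\sum_{i\in C}\mu_i''(t)+\tfrac{c_{n,k}}{2}\left.\tfrac{d^2}{ds^2}\right|_{s=t}|B_{sh}|,\]
reducing the proof to estimating $\sum_{i\in C}\mu_i''(t)$ on the fixed domain $\Om:=B_{th}$.

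I would estimate the curvature terms in Lemma \ref{lem:Kato}'s formula for $\mu_i''(t)$ exactly as in the simple eigenvalue case, obtaining contributions of order $k^{2+4/n}\|h\|_{H^{1/2}(\partial B)}^2$ after summing over $i\in C$ (using $\sum_{i\in C}\|\nabla u_i\|_\infty^2\lesssim k^{2+4/n}$). The delicate part is the coercivity term $2\sum_{i\in C}\int_{\Om}(|\nabla u_i'|^2-\mu_i|u_i'|^2)$. Decomposing $u_i'=z_i+w_i$ with $z_i$ harmonic and $w_i\in H_0^1(\Om)$, and expanding $w_i=\sum_j c_{ij}u_j$ with $c_{ij}(\mu_j-\mu_i)=\mu_i\int z_iu_j$, the off-cluster part ($j\notin C$) is treated by the spectral gap $|\mu_j-\mu_i|\geq g_n(k)/2$ for low modes and by the summable energy bound $\sum_j\mu_j(\int z_iu_j)^2=\|\nabla z_i\|_{L^2}^2\lesssim k^{1+8/n}\|h\|_{H^{1/2}}^2$ for high modes, leading to a contribution of the required size $k^{2+8/n}g_n(k)^{-1}\|h\|_{H^{1/2}}^2$.

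The main obstacle is the within-cluster sum, where $\mu_j-\mu_i$ is small (and vanishes at $t=0$) while $c_{ij}^2(\mu_j-\mu_i)$ formally involves the singular factor $\mu_i^2/(\mu_j-\mu_i)$. The key observation that resolves this is an exact cancellation from antisymmetrization: integrating $\int_{\Om}z_iu_j$ by parts twice (using $-\Delta u_j=\mu_ju_j$ and $u_j|_{\partial\Om}=0$) yields the identity
\[\int_\Om z_iu_j=-\frac{1}{\mu_j}\int_{\partial\Om}(\zeta\cdot\nu)\partial_\nu u_i\,\partial_\nu u_j=:-\frac{I_{ij}}{\mu_j},\qquad I_{ij}=I_{ji},\]
so that $\mu_j\int z_iu_j=\mu_i\int z_ju_i$. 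Substituting into $\sum_{i,j\in C,i\neq j}\frac{\mu_i\mu_j}{\mu_j-\mu_i}(\int z_iu_j)^2$ and antisymmetrizing in $i\leftrightarrow j$ makes the $1/(\mu_j-\mu_i)$ factor cancel, leaving the regular expression $-\sum_{i<j,\,i,j\in C}I_{ij}^2(\mu_i+\mu_j)/(\mu_i\mu_j)$. To bound this, I would use $\sum_{i,j\in C}I_{ij}^2\lesssim\|h\|_{L^2(\partial B)}^2\,k^{2+4/n}$, obtained via Cauchy–Schwarz together with the Pohozaev-type bound $\sum_{i\in C}\int_{\partial\Om}|\partial_\nu u_i|^2=2\sum_{i\in C}\mu_i\lesssim k^{1+2/n}$ and the $L^\infty$ bound on $\sum_{i\in C}|\partial_\nu u_i|^2$ near-constant on $\partial\Om$. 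This yields an intracluster bound $\lesssim k^{2+2/n}\|h\|_{H^{1/2}}^2$, dominated by the off-cluster term. Combining all estimates gives the announced bound.
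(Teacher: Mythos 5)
Your proposal reproduces the paper's architecture almost exactly — Kato's analytic branches via Lemma \ref{lem:Kato}, the spectral-separation argument keeping the cluster isolated along $t\mapsto B_{th}$, the observation that $\sum_{i\in C}|\nabla u_i(0)|^2$ is constant on $\partial B$ so the first variation is a multiple of the volume derivative, the mean-value Taylor step, and the splitting $u_i'=z_i+w_i$ — so the only genuine divergence is in how the singular within-cluster terms $\frac{\mu_i\mu_j}{\mu_j-\mu_i}\bigl(\int z_iu_j\bigr)^2$ are tamed. The paper never lets this denominator appear for $i,j$ both in the cluster: it rewrites the cross-terms as $\mu_j\bigl(\int z_iu_j\bigr)^2-(\mu_j-\mu_i)\bigl(\int v_iu_j\bigr)^2$ and kills the second piece by antisymmetry, using the orthonormality constraint $\int_\Om(v_iu_j+v_ju_i)=0$ from \eqref{eq:u_i'_bis}; the first piece is then controlled by Bessel's inequality $\sum_j\bigl(\int z_iu_j\bigr)^2\le\|z_i\|_{L^2}^2$. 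Your antisymmetrization via the boundary identity $\mu_j\int_\Om z_iu_j=\pm\int_{\partial\Om}(\zeta\cdot\nu)\,\partial_\nu u_i\,\partial_\nu u_j=\pm I_{ij}$ with $I_{ij}=I_{ji}$ achieves the same cancellation (the two symmetries are equivalent through $(\mu_j-\mu_i)\int v_iu_j=\mu_j\int z_iu_j$), and the resulting regular expression $I_{ij}^2(\mu_i+\mu_j)/(\mu_i\mu_j)$ is nothing but $\approx 2\mu_j\bigl(\int z_iu_j\bigr)^2$ since $\mu_i\approx\mu_j$ within the cluster, i.e.\ exactly the term the paper keeps.

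Two points need tightening. First, your bound $\sum_{i,j\in C}I_{ij}^2\lesssim k^{2+4/n}\|h\|_{L^2(\partial B)}^2$ rests on an asserted $L^\infty$ near-constancy of $\sum_{i\in C}|\partial_\nu u_i(t)|^2$ on $\partial B_{th}$ for $t\neq0$, which you do not prove; with only the elliptic-regularity bound $\sup_{\partial\Om}\sum_{i\in C}|\partial_\nu u_i|^2\lesssim m\,k^{1+4/n}$ one gets $\sum_{i,j}I_{ij}^2\lesssim k^{3+6/n}\|h\|^2$ and hence an intracluster contribution of order $k^{3+4/n}\|h\|_{H^{1/2}(\partial B)}^2$, which exceeds the target exponent $2+\frac{8}{n}$ as soon as $n\ge 5$. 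The fix is to drop the Pohozaev/sup route and use Bessel instead: $I_{ij}^2(\mu_i+\mu_j)/(\mu_i\mu_j)\lesssim\mu_j\bigl(\int z_iu_j\bigr)^2$ and $\sum_{i,j\in C}\mu_j\bigl(\int z_iu_j\bigr)^2\le\bigl(\max_{j\in C}\mu_j\bigr)\sum_{i\in C}\|z_i\|_{L^2(\Om)}^2\lesssim k^{2+6/n}\|h\|_{H^{1/2}(\partial B)}^2$, which is within the claimed bound. Second, at the intermediate time $t$ two branches in the cluster may satisfy $\mu_i(t)=\mu_j(t)$ with $i\neq j$, in which case $c_{ij}$ is not determined by $(\mu_j-\mu_i)c_{ij}=\mu_i\int z_iu_j$ and your intermediate expression is of the form $0/0$; you should either argue by continuity along the analytic branches or work from the start with the well-defined combination $\mu_j\bigl(\int z_iu_j\bigr)^2-(\mu_j-\mu_i)\bigl(\int v_iu_j\bigr)^2$ as the paper does.
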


\begin{proof}
We proceed as in the proof of Proposition \ref{prop_controllambda_k}: letting $(\mu_i(t),u_i(t))$ be given by Lemma \ref{lem:Kato}, since $\Vert h\Vert_{L^1(\partial B)}\ll k^{-1-\frac{4}{n}}g_n(k)$ we have by the same argument that $|\mu_i(t)-\mu_j(t)|\geq \frac{1}{2}g_n(k)$ for any $|t|\leq 1$ and $i,j$ such that $i\in\{k,\hdots, l\}$, $j\notin\{k,\hdots, l\}$. As a consequence, $u_k(t),\hdots,u_l(t)$ is an orthonormal basis of eigenfunctions of the sum of eigenspaces corresponding to $(\lambda_i(B_{th}))_{i=k,\hdots,l}$, and $(\mu_i(t))_{i=k,\hdots,l}$ is a permutation of $(\lambda_i(B_{th}))_{i=k,\hdots,l}$.\newline

We have
\[\left.\frac{d}{dt}\right|_{t=0}\sum_{i=k}^{l}\lambda_i(B_{th})=\sum_{i=k}^{l}\mu_i'(0)=-\int_{\partial B}\left(\sum_{i=k}^{l}|\nabla u_i(0)|^2\right)\zeta\cdot\nu.\]
Because of the structure of the eigenfunctions of the ball, we know that $\sum_{i=k}^{l}|\nabla u_i(0)|^2$ is a constant \BBB on the boundary of the ball (independent on $(\mu_i(t), u_i(t))$)\EEE, which we denote by $c_{n,k}$, and therefore
\[\left.\frac{d}{dt}\right|_{t=0}\left(\sum_{i=k}^{l}\lambda_i(B_{th})+c_{n,k}|B_{th}|\right)=0.\]
Also by elliptic regularity we have $c_{n,k}\lesssim k^{1+\frac{6}{n}}$.
Using a Taylor formula and since $|B|=|{ B_h}|$, there exists some $t\in [0,1]$ such that

\[\sum_{i=k}^{l}(\lambda_i({ B_h})-\lambda_i(B))=\frac{1}{2}\left(\sum_{i=k}^{l}\mu_i''(t)+c_{n,k}\left.\frac{d^2}{ds^2}\right|_{s=t}|B_{sh}|\right).\]

To reduce notations we fix $t$ and we do not write the dependency in $t$ in the rest of the proof. We thus set $\Om=B_{th}$, $u_i=u_i(t)$, $v_i:=u_i'(t)$ and write $\mu_i$, $\mu_i'$, $\mu_i''$ in place of $\mu_i(t)$, $\mu_i'(t)$, $\mu_i''(t)$. The expression of $\mu_i''$ takes the form
\begin{equation}\label{eq:expression_mu_iseconde}
\mu_i''=\int_{\Om}2\left(|\nabla v_i|^2-\mu_i v_i^2\right)+\int_{\partial \Om}|\nabla u_i|^2\left[H(\zeta\cdot\nu)^2-b( \zeta_{\tau},\zeta_{\tau})+2\zeta_\tau\cdot\nabla_{|\partial\Om}(\zeta\cdot\nu)\right]
\end{equation}
and each $v_i$ verifies
\begin{equation}\begin{cases}\label{eq:u_i'_bis}
	-\Delta v_i-\mu_i v_i=\mu_i' u_i & {\BBB\textrm{ in }}\Om,\\ v_i=-(\zeta\cdot\nu)\partial_\nu u_i & {\BBB \textrm{ on }}\partial\Om, \\ \int_{\Om}(v_i u_j+v_j u_i)=0,& \forall j\in\N^*,
\end{cases}\end{equation}
where the last line is a consequence of $\int_{B_{th}}u_i(t)u_j(t)=\delta_{ij}$ for all $t$.\newline

The ``geometric'' terms in \eqref{eq:expression_mu_iseconde} are estimated exactly as in the proof of Proposition \ref{prop_controllambda_k}. We thus have
\[\left|\int_{\partial \Om}|\nabla u_i|^2\left[H(\zeta\cdot\nu)^2- b(\zeta_{\tau},\zeta_{\tau})+2\zeta_\tau \cdot\nabla_{|\partial\Om}(\zeta\cdot\nu)\right]\right|\lesssim k^{1+\frac{6}{n}}\Vert h\Vert_{H^{1/2}(\partial B)}^2.\]

To estimate the first term of \eqref{eq:expression_mu_iseconde}, there is a difference with the case of a simple eigenvalue lying in the fact that we do not have (and do not expect) a good control of $\int_{\Om}v_iu_j$ when $i,j\in \{k,\hdots,l\}$. Refining the analysis we will see that these terms in fact cancel in the sum $\sum_{i =k}^l\mu_i''$.\newline

Set $I=\{k,\ldots,l\}$ and for each $i\in I$, write $v_i=z_i+w_i$ with $z_i$ the harmonic extension of $-(\partial_\nu u_i)\zeta\cdot\nu$ and $w_i\in H_0^1(\Om)$. The functions $z_i$ verify the same estimates as in the proof of Proposition \ref{prop_controllambda_k}:
\[\|z_i\|_{H^1(\Om)}\lesssim k^{\frac{1}{2}+\frac{4}{n}}\|h\|_{H^{1/2}(\partial B)},\ \|z_i\|_{L^2(\Om)}\lesssim k^{\frac{1}{2}+\frac{2}{n}}\|h\|_{H^{1/2}(\partial B)}.\]
The function $w_i$ verifies $-(\Delta+\mu_i) w_i=\mu_iz_i+\mu_i'u_i$ which ensures
\begin{equation}\label{eq:w_iu_j}\forall j\ne i,\ (\mu_j-\mu_i)\int_\Om w_iu_j=\mu_i\int_\Om z_iu_j\end{equation}
or, written differently,
\begin{equation}\label{eq:v_iu_j}\forall j\neq i,\ (\mu_j-\mu_i)\int_\Om v_iu_j=\mu_j\int_\Om z_iu_j.\end{equation}
We have 
\begin{align*}
\sum_{i\in I}\int_{\Om}\left(|\nabla v_i|^2-\mu_i|v_i|^2\right)&=\sum_{i\in I}\int_{\Om}\left(|\nabla z_i|^2+|\nabla w_i|^2-\mu_i v_i^2 \right)\\
&=\sum_{i\in I}\int_{\Om}|\nabla z_i|^2+\sum_{i\in I,j\in\N^*}\mu_j\left(\int_{\Om}w_iu_j\right)^2-\mu_i\left(\int_\Om v_i u_j\right)^2\\
&=\sum_{i\in I}\int_{\Om}|\nabla z_i|^2+\sum_{i\in I,j\notin I}\left(\frac{\mu_i^2}{\mu_j-\mu_i}-\mu_i\right)\left(\int_{\Om}z_iu_j\right)^2\\
&\hspace{1cm}+\sum_{i\in I,j\in I}\mu_j\left(\int_{\Om}z_iu_j\right)^2-(\mu_j-\mu_i)\left(\int_\Om v_i u_j\right)^2,
\end{align*}
where we used \eqref{eq:v_iu_j} in the last line.
Thanks to the orthogonality conditions from \eqref{eq:u_i'_bis}, we have $\sum_{i,j\in I}(\mu_i-\mu_j)\left(\int_{\Om}v_iu_j\right)^2=0$, hence we conclude
\begin{align*}
\left|\sum_{i\in I}\int_{\Om}\left(|\nabla v_i|^2-\mu_i|v_i|^2\right)\right|&\lesssim \sum_{i\in I}\Vert \nabla z_i\Vert_{L^2(\Om)}^2+g_n(k)^{-1}k^{\frac{4}{n}}\sum_{i\in I}\Vert z_i\Vert_{L^2(\Om)}^2\\
&\lesssim g_n(k)^{-1}k^{1+\frac{8}{n}}|I|\Vert h\Vert_{H^{1/2}(\partial B)}^2\\
&\lesssim g_n(k)^{-1}k^{{ 2+\frac{8}{n}}}\Vert h\Vert_{H^{1/2}(\partial B)}^2.
\end{align*}
As a consequence, $\left|\sum_{i=k}^{l}\mu_i''(t)\right|\lesssim k^{2+\frac{8}{n}}g_n(k)^{-1}\|h\|_{H^{1/2}(\partial B)}^2$ which ends the proof.

\end{proof}

\begin{proof}[Proof of Proposition \ref{prop:minloc_vec}]

This is done exactly as in the proof of Proposition \ref{prop_minloc}.

\end{proof}

\subsection{Conclusion}
\begin{proof}[Proof of Proposition \ref{main2_linvect}]
We proceed exactly as in the proof of Proposition \ref{main2_lin}. Since $|\delta|\ll k^{-\left(3+\frac{4}{n}\right)}$, by application of Proposition \ref{prop_existenceregvect} there exists a minimizer $\Om$ (which we can suppose to be centered) with $|\Om\Delta B|\lesssim k^{3+\frac{4}{n}}|\delta|$. By Lemma \ref{lem_C2gammavect}, since $|\delta|\ll k^{-6-\frac{8}{n}}g_n(k)$ we have $\Om=B_h$ where  $\Vert h\Vert_{ \C^{3,\gamma}(\partial B)}\leq D_n$, so that $\Om$ is a nearly spherical set. Since 
\[\Vert h\Vert_{L^1(\partial B)}\lesssim |\Om\Delta B|\lesssim k^{3+\frac{4}{n}}|\delta|.\]

then for $k^{3+\frac{4}{n}}|\delta|\ll k^{-2-\frac{8}{n}}g_n(k)$ (which is verified for $|\delta|\ll k^{-6-\frac{10}{n}}g_n(k)$), we can therefore apply Proposition \ref{prop:minloc_vec} to conclude that $\Om$ is a ball.

\end{proof}

\section{Discussion and consequences}\label{sec_disc}
\subsection{About the sharpness of the results}

We prove in the proposition below that the exponents $\alpha=1/2$ and $\alpha=1$ on the right-hand side of \eqref{ex1.03} given by Theorems \ref{main_sqrt} and \ref{main_lin} are sharp for every $k$ in dimension $n=2$, and that similarly \BBB Theorem \EEE \ref{main_linvect} is sharp for any $k\neq 2$. Proving it for any dimension would require a full second order analysis of the spectrum of smooth deformations of the ball in every dimension, in the spirit of the two dimensional work of Berger in \cite{Be15}.

\begin{proposition}\label{prop:sharp}
Let $n=2$, $k\geq 2$. There exists a constant $c_{k}>0$ and $\Om^\eps=\phi^\eps(B)$ a sequence of domains (in $\A$) with $\Vert\phi^\eps- \Id\Vert_{\C^1}\leq\eps$ such that
\begin{equation}\label{ex1.08}
\mbox{if } \lambda_k(B) \mbox{ is simple } |\lambda_k(\Om^\eps)-\lambda_k(B)|\geq c_{k}(\lambda_1({\BBB \Om^\eps})-\lambda_1(B)),
\end{equation}
\begin{equation}\label{ex1.09} \mbox{if } \lambda_k(B) \mbox{ is double } |\lambda_k(\Om^\eps)-\lambda_k(B)|\geq  c_{k}\left(\lambda_1({\BBB \Om^\eps})-\lambda_1(B)\right)^\frac{1}{2},
\end{equation}

\BBB
and moreover if $\lambda_k(B)$ is double and $k\neq 2$, there exists a sequence $(\Om^\eps)$ such that
\[\left|{\BBB \lambda_{k}(\Om^\eps)-\lambda_k(B)+\lambda_{k+1}(\Om^\eps)-\lambda_{k+1}(B)}\right|\geq c_k \left(\lambda_1(\Om^\eps)-\lambda_1(B)\right).\]
\end{proposition}
\begin{proof}
Suppose first $\lambda_k(B)$ is simple. Then, following \cite[Lemmas 1 and 3]{Be15} we get an explicit perturbation $\phi^\eps$ of the identity, expressed as a Fourier series, which is  preserving the area at the second order and  for which the second order term in the asymptotic developments  of both $\lb_1 (\phi^\eps (B))$ and  $\lb_k (\phi^\eps (B))$ are non vanishing, proving \eqref{ex1.08}.

If   $\lambda_k(B)$ is double, (for instance $\lambda_k(B)=\lambda_{k+1}(B)${\BBB, the other case $\lambda_k(B)=\lambda_{k-1}(B)$ being similar}), then for any vector field $\zeta\in\C^{\infty}_c(\R^2,\R^2)$ such that $\int_{\partial B}\zeta\cdot x=0$ the directional \BBB derivatives \EEE of $\lambda_k,\lambda_{k+1}$ in the direction $\zeta$ are respectively the first and second eigenvalues of the symmetric matrix
\[\begin{pmatrix}
-\int_{\partial B}|\nabla u_k|^2\zeta\cdot x & -\int_{\partial B}(\nabla u_k\cdot\nabla u_{k+1})(\zeta\cdot x) \\ -\int_{\partial B}(\nabla u_k\cdot\nabla u_{k+1})(\zeta\cdot x)  & -\int_{\partial B}|\nabla u_{k+1}|^2\zeta\cdot x 
\end{pmatrix}.\]
Moreover, since the functions $(u_k,u_{k+1})$ are not radial, we may choose a field $\zeta$ such that $\int_{\partial B}|\nabla u_k|^2\zeta\cdot x\neq 0$, which gives a non-zero matrix with two nonzero (opposite) eigenvalues. Letting $\Om^\eps=\frac{(\Id+\eps\zeta)(\Om)}{|(\Id+\eps\zeta)(\Om)|^{1/2}}$ we have $|\lambda_k(\Om^\eps)-\lambda_{k}(B)|>c\eps$ for some $c>0$ and small enough $\eps$, whereas $\lambda_1(\Om^\eps)-\lambda_1(B)<C\eps^2$.

\BBB
The last property is a consequence of the proof of \cite[Lemma 5]{Be15}: \BBB in this paper, the author defines $\Om^\eps=B_{h_{\eps}}$ with
$$h_\eps(\theta)=\sum_{n\in\mathbb{Z}}(\eps a_n+\eps^2b_n)e^{in\theta},$$
where all but a finite number of coefficients are non-zero, and the coefficients are chosen such that $h$ is real-valued. Let $k\neq 2$ such that $\lambda_k(B)=\lambda_{k+1}(B)$, then in the proof of \cite[Lemma 5]{Be15}, they prove the existence of a choice of coefficients $(a_n),(b_n)$, such that either $a_2$ or $a_3$ is non-zero and for $j\in\{k,k+1\}$ we have
$$|\Om^\eps|\lambda_j(\Om^\eps)=|B|\lambda_j(\Om)-c_k\eps^2+\mathcal{O}(\eps^3)$$
for some constant $c_k>0$. On the other hand, since $a_2$ or $a_3$ is non-zero, then for some constant $d_k>0$ we have
$$|\Om^\eps|\lambda_1(\Om^\eps)\geq |B|\lambda_1(B)+d_k\eps^2.$$
Thus, $\Om^\eps$ is an example of sequence satisfying the last property.\EEE
\end{proof}

\subsection{Proof of Corollary \ref{cor_reversekohlerjobin}: the reverse Kohler-Jobin inequality.}

The linear bound of Theorem \ref{main_lin} (or equivalently Proposition  \ref{main2_lin}) gives us a non-trivial conclusion on the reverse of the Kohler-Jobin inequality from Corollary  \ref{cor_reversekohlerjobin}. This answers, in full generality, the question raised in \cite{BBP21}.

\begin{proof}[Proof of Corollary \ref{cor_reversekohlerjobin}]
By Proposition \ref{main2_lin} there exists some $\delta_n>0$ such that $\A \ni \Om\mapsto T^{-1}(\Om)-\delta_n\lambda_1(\Om)$ is minimal on the ball. Let $p\geq 1$ and $\Om\in\A$ be such that $T(\Om)\lambda_1(\Om)^{\frac{1}{p}}>T(B)\lambda_1(B)^{\frac{1}{p}}$, then
\begin{align*}
\frac{\lambda_1(\Om)}{\lambda_1(B)}&> \left(\frac{T(B)}{T(\Om)}\right)^p\geq 1+p\left(\frac{T(B)}{T(\Om)}-1\right)\geq 1+\delta_n p T(B)\left(\lambda_1(\Om)-\lambda_1(B)\right)
\end{align*}
which implies $p<p_n:=(\delta_n T(B)\lambda_1(B))^{-1}$.
\end{proof}
Note \BBB that \EEE we do not have explicit information on the value of $p_n$ even in low dimension as the proof of Proposition \ref{main2_lin} relies on a contradiction and compactness argument at several points.

\subsection{Stability of more general spectral functionals and proof of Theorem \ref{main_perturb}}\label{ssect:stability}

The linear bound on a cluster of eigenvalues in Theorem \ref{main_linvect} is established for the sum $\sum_{i=k}^{l}\left[\lambda_i(\Om)-\lambda_i(B)\right]$; one could argue that this choice of function is arbitrary and we could, for instance, replace it by the geometric mean.  We thus consider in this section the general functionals verifying the hypotheses given by Theorem \ref{main_perturb}, and prove the stability result stated in this result.

\begin{proof} [Proof of Theorem \ref{main_perturb}]
It is useful to partition $\llbracket1,k\rrbracket$ {\BBB into} $(I_s)_{s=1,\hdots,p}$ {\BBB made of indices of} clusters of eigenvalues. Let $c_s$ be the common value of $\frac{\partial F}{\partial\lambda_i}(\lambda_1(B),\hdots,\lambda_k(B))$ for $i\in I_s$. Then there exists some $C>0$ such that for any $\lambda\in(\R_+^{*})^k$:

\[\left|F(\lambda_1,\hdots,\lambda_k)-F(\lambda_1(B),\hdots,\lambda_k(B))-\sum_{s=1}^{p}c_s\sum_{i\in I_s}\left[\lambda_i-\lambda_i(B)\right]\right|\leq C\sum_{i=k}^{l}(\lambda_i-\lambda_i(B))^2.\]
\BBB Applying \EEE Theorem \ref{main_linvect} to each $\sum_{i\in I_s}\left[\lambda_i(\Om)-\lambda_i(B)\right]$ and Theorem \ref{main_sqrt} to each $(\lambda_i(\Om)-\lambda_i(B))^2$: we get some constant $D>0$ such that

\begin{equation}\label{eq_Fnonlin}
\left|F(\lambda_1(\Om),\hdots,\lambda_k(\Om))-F(\lambda_1(B),\hdots,\lambda_k(B))\right|\leq D\left(T(\Om)^{-1}-T(B)^{-1}\right)T(\Om)^{-1}.
\end{equation}

Let $\delta\in\R$, consider $\Om\in\A$ a domain such that
\[T^{-1}(\Om)+\delta F(\lambda_1(\Om),\hdots,\lambda_k(\Om))\leq T^{-1} (B)+\delta F(\lambda_1(B),\hdots,\lambda_k(B))\]
for some $\delta\in\R$. Due to Proposition \ref{prop:growth_egv}, this gives   for some $C_{n,k}>0$  
\begin{align*}
T^{-1}(\Om)&\leq T^{-1}(B)+\delta F((\lambda_i(B))_{i=1,\hdots,k})+C|\delta|(1+|(\lambda_i(\Om))_{i=1,\hdots,k}|)\\
&\leq T^{-1}(B)+\delta F((\lambda_i(B))_{i=1,\hdots,k})+CC_{n,k}|\delta|(1+T^{-1}(\Om)).
\end{align*}
As a consequence, when $|\delta|$ is small enough then $T^{-1}(\Om)\leq 2T^{-1}(B)$. Equation \eqref{eq_Fnonlin} then gives
\[\left|F(\lambda_1(\Om),\hdots,\lambda_k(\Om))-F(\lambda_1(B),\hdots,\lambda_k(B))\right|\leq 2DT(B)^{-1}\left(T(\Om)^{-1}-T(B)^{-1}\right)\]
which gives the result when $|\delta|\leq (2D)^{-1}T(B)$.
\end{proof}
From the Kohler-Jobin  inequality, we get:
\begin{corollary}\label{main_perturb2}
Let $F$ be as {\BBB in Theorem \ref{main_perturb}}, then there exists $\delta_F>0$ such the functional
\[\Om\in\A\mapsto \lb_1(\Om) +\delta F(\lambda_1(\Om),\hdots,\lambda_k(\Om))\]
is minimal on the ball as soon as $|\delta |\le \delta_F$. 
\end{corollary}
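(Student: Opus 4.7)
The strategy is to deduce the $\lambda_1$-perturbation statement from its $T^{-1}$-counterpart (Theorem \ref{main_perturb}), using the Kohler-Jobin inequality \eqref{eq_kohlerjobin} to transfer torsional deviations to eigenfrequency deviations. I would split the argument along the dichotomy $\lambda_1(\Om)\leq 2\lambda_1(B)$ versus $\lambda_1(\Om)>2\lambda_1(B)$, treating the two regimes by entirely different means.

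\textbf{Regime close to the ball.} Assume $\lambda_1(\Om)\leq 2\lambda_1(B)$. By Theorem \ref{main_perturb}, fixing $\delta'\in\R$ with $|\delta'|\leq\delta_F$, one has
\[ T(\Om)^{-1}-T(B)^{-1}+\delta'\bigl(F(\lambda_1(\Om),\dots,\lambda_k(\Om))-F(\lambda_1(B),\dots,\lambda_k(B))\bigr)\geq 0.\]
On the other hand, the quantitative Kohler-Jobin inequality \eqref{eq_kohlerjobin}, combined with the a priori bound $\lambda_1(\Om)\leq 2\lambda_1(B)$, yields a dimensional constant $K_n>0$ such that
\[ T(\Om)^{-1}-T(B)^{-1}\leq K_n\bigl(\lambda_1(\Om)-\lambda_1(B)\bigr).\]
Combining and dividing by $K_n$, the substitution $\delta=\delta'/K_n$ gives
\[ \bigl(\lambda_1(\Om)-\lambda_1(B)\bigr)+\delta\bigl(F(\lambda(\Om))-F(\lambda(B))\bigr)\geq 0,\]
i.e.\ the desired minimality of the ball for the new functional, provided $|\delta|\leq\delta_F/K_n$.

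\textbf{Regime far from the ball.} Assume $\lambda_1(\Om)>2\lambda_1(B)$. Using the growth hypothesis $|F(\lambda)|\leq C(1+|\lambda|)$ together with the universal bound $\lambda_i(\Om)\leq C_{n,k}\lambda_1(\Om)$ from Proposition \ref{prop:growth_egv} (valid for $1\leq i\leq k$), one obtains a constant $M>0$ such that $|F(\lambda(\Om))|\leq M(1+\lambda_1(\Om))$. For $|\delta|$ sufficiently small (depending on $n$, $k$, and $\lambda_1(B)$, $F(\lambda(B))$), the estimate
\[ \lambda_1(\Om)+\delta F(\lambda(\Om))\geq \lambda_1(\Om)(1-M|\delta|)-M|\delta|\geq \tfrac{3}{2}\lambda_1(B)-M|\delta|\]
dominates $\lambda_1(B)+\delta F(\lambda(B))$, closing this regime trivially.

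\textbf{Conclusion.} Taking $\delta_F$ to be the minimum of the two thresholds above yields the claimed local minimality of the ball. The core analytical content is entirely contained in Theorem \ref{main_perturb}; the Kohler-Jobin step is a soft manipulation, and the far-regime argument is a coercivity remark. The only mild technical point is ensuring that the linear Kohler-Jobin comparison $T^{-1}-T(B)^{-1}\leq K_n(\lambda_1(\Om)-\lambda_1(B))$ is applied with a constant $K_n$ that is uniform in the regime $\lambda_1(\Om)\leq 2\lambda_1(B)$, which is exactly what \eqref{eq_kohlerjobin} delivers after this a priori truncation.
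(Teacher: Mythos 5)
Your proof is correct and follows exactly the route the paper intends: the paper gives no separate argument for this corollary beyond the phrase "From the Kohler-Jobin inequality, we get," and your write-up supplies precisely the missing details (the transfer via \eqref{eq_kohlerjobin} with the constant made uniform by the truncation $\lambda_1(\Om)\leq 2\lambda_1(B)$, and the trivial coercivity argument in the far regime using the linear growth of $F$).
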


As the reader noticed, in the whole paper we kept track of the dependence of constants in terms of $k$ the order of the involved eigenvalues; this allows us to consider spectral functionals which depend on an infinite number of eigenvalues, such as the trace of the heat kernel
\[Z_\Om(t):=\int_{\Om}K_t(x,x)dx=\sum_{k\geq 1}e^{-t\lambda_k(\Om)}.\]
In the following proposition we denote $a_n=\left(\frac{n}{n+2}\right)\frac{{4\pi^2}}{\om_n^{4/n}}$ so that $\lambda_k(\Om)\geq a_n k^\frac{2}{n}$ thanks to Proposition \ref{prop:growth_egv}.
\begin{proposition}\label{ex1.10}
Let $f\in\C^2(\R_+^*,\R)$ be a smooth function such that
\[B_n(f):=\sum_{i\geq 1}\left(i^{7+\frac{8}{n}}\sup_{\lb\geq a_n i^\frac{2}{n}}|f'(\lb)|+i^{6+\frac{10}{n}}\sup_{\lb\geq a_n i^\frac{2}{n}}|f''(\lb)|\right)<+\infty.\]
Then  there exists $C_n>0$ such that  for any $\Om\in\A$ we have
\[\sum_{i\geq 1}\left(f(\lambda_i(\Om))-f(\lambda_i(B))\right)\leq C_nB_n(f)(\lambda_1(\Om)-\lambda_1(B)).\]
In particular, this gives
\begin{equation}\label{ex1.11}
|Z_\Om(t)-Z_B(t)|\leq C_nt^{-(4n+3)}(\lambda_1(\Om)-\lambda_1(B)).
\end{equation}
\end{proposition}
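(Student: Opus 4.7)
The plan is to Taylor-expand $f$ around each $\lambda_i(B)$ and then combine three ingredients: the cluster control from Remark \ref{rem_mainlinvect} for the linear part of the expansion, the uniform square-root bound of Theorem \ref{main_sqrt} for the quadratic remainder, and a direct growth argument when $\lambda_1(\Om)$ is large compared to $\lambda_1(B)$. Writing $\Delta_i:=\lambda_i(\Om)-\lambda_i(B)$, I would decompose
\[\sum_{i\geq 1}(f(\lambda_i(\Om))-f(\lambda_i(B))) = L + R,\quad L:=\sum_i f'(\lambda_i(B))\Delta_i,\quad R:=\tfrac{1}{2}\sum_i f''(\xi_i)\Delta_i^2,\]
where $\xi_i$ lies between $\lambda_i(B)$ and $\lambda_i(\Om)$.

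For the linear term $L$, the key step is a summation by parts over clusters of equal eigenvalues of $B$. Let $(\mu_j)_{j\geq 1}$ be the strictly increasing distinct eigenvalues of $B$, with clusters $I_j=\{k_j,\hdots,l_j\}$ verifying $\lambda_i(B)=\mu_j$ for $i\in I_j$, and set $S_N:=\sum_{i=1}^N \Delta_i$. Then $L=\sum_j f'(\mu_j)(S_{l_j}-S_{l_{j-1}})$, and Abel summation (with vanishing boundary term at infinity, which follows from $B_n(f)<\infty$ together with the crude bound $|S_{l_N}|\lesssim l_N^{3+4/n}\lambda_1(\Om)^{1/2}(T(\Om)^{-1}-T(B)^{-1})^{1/2}$ from Theorem \ref{main_sqrt}) yields
\[L=\sum_j\bigl[f'(\mu_j)-f'(\mu_{j+1})\bigr]\,S_{l_j}.\]
I would bound $|f'(\mu_j)-f'(\mu_{j+1})|\leq (\mu_{j+1}-\mu_j)\sup_{[\mu_j,\mu_{j+1}]}|f''|$ and apply Remark \ref{rem_mainlinvect} with $k=1$, $l=l_j$ to get $|S_{l_j}|\lesssim l_j^{6+10/n}(\mu_{j+1}-\mu_j)^{-1}(T(\Om)^{-1}-T(B)^{-1})$. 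Crucially, the gap factors cancel, leaving $|L|\lesssim B_n(f)(T(\Om)^{-1}-T(B)^{-1})$ after using $\mu_j\geq a_n l_j^{2/n}$ (Proposition \ref{prop:growth_egv}).

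For the quadratic term $R$, Theorem \ref{main_sqrt} gives $\Delta_i^2\lesssim i^{4+8/n}\lambda_1(\Om)|\Om|(T(\Om)^{-1}-T(B)^{-1})$, and $|f''(\xi_i)|\leq\sup_{\lambda\geq a_n i^{2/n}}|f''(\lambda)|$ since $\xi_i\geq a_n i^{2/n}$; summing (using $4+8/n\leq 6+10/n$) yields $|R|\lesssim \lambda_1(\Om)B_n(f)(T(\Om)^{-1}-T(B)^{-1})$. Now I split into two regimes: if $\lambda_1(\Om)\leq 2\lambda_1(B)$, then $\lambda_1(\Om)\lesssim 1$ and Kohler-Jobin's inequality \eqref{eq_kohlerjobin} converts the torsion deviation into $\lambda_1(\Om)-\lambda_1(B)$ with a dimensional constant; if $\lambda_1(\Om)>2\lambda_1(B)$, one bypasses the Taylor expansion altogether and estimates $|f(\lambda_i(\Om))-f(\lambda_i(B))|\leq \sup_{\lambda\geq a_n i^{2/n}}|f'|\cdot |\Delta_i|$ with $|\Delta_i|\lesssim i^{2/n}\lambda_1(\Om)\lesssim i^{2/n}(\lambda_1(\Om)-\lambda_1(B))$, which uses the $|f'|$-part of $B_n(f)$ and produces the desired bound.

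The heat-trace application follows by explicit estimation of $B_n(e^{-t\cdot})$: both series are controlled by integrals of the form $\int_0^\infty x^\alpha t^\beta e^{-ta_n x^{2/n}}dx$, and the substitution $u=ta_n x^{2/n}$ yields powers of $t$; the dominant $|f'|$-term produces the exponent $4n+3$ (the $|f''|$ term is of lower order in $1/t$). The main obstacle is the Abel summation step: one must recognize that the spectral-gap factor in the denominator of Remark \ref{rem_mainlinvect} is precisely compensated by the gap that multiplies $\sup|f''|$ in estimating $f'(\mu_j)-f'(\mu_{j+1})$. This cancellation is what makes the potentially very small individual value of $g_n(k)$ harmless once things are summed over clusters, and is what permits a linear bound with a constant depending only on the scale-free quantity $B_n(f)$.
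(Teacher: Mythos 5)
Your proof is correct and rests on the same two pillars as the paper's: a second-order Taylor expansion whose remainder is absorbed by Theorem \ref{main_sqrt}, and a summation by parts on the linear term in which the spectral gap appearing in the denominator of Remark \ref{rem_mainlinvect} is cancelled by the same gap coming from $|f'(\mu_j)-f'(\mu_{j+1})|\leq(\mu_{j+1}-\mu_j)\sup|f''|$. Where you differ is in the bookkeeping of the Abel summation: you perform a single global summation by parts against the partial sums $S_{l_j}=\sum_{i\leq l_j}\Delta_i$ anchored at $i=1$, whereas the paper first partitions $\N^*$ into blocks $I_p$ with comparable endpoints and gaps at least $c_n i^{2/n-1}$ at the block boundaries (which requires the auxiliary lemma $l\leq D_nk$), and only then sums by parts within each block against the tail sums $J_{p,s}$, with a separate ``anchor'' term $f'(\lambda_{i_1}(B))\sum_{i\in I_p}\Delta_i$ controlled by the $|f'|$-part of $B_n(f)$. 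Your version is leaner — it dispenses with the block construction and its lemma, and (apart from the boundary term at infinity, which you correctly dispose of) charges essentially the whole linear term to the $|f''|$-part of $B_n(f)$. One small point to make explicit: applying Remark \ref{rem_mainlinvect} with $k=1$ puts $\min\{\mu_{j+1}-\mu_j,\lambda_1(B)\}$ in the denominator, and when $\mu_{j+1}-\mu_j>\lambda_1(B)$ (which does happen, e.g.\ for $j=1$ in dimension $2$) the exact cancellation fails; there you should instead bound $|f'(\mu_j)-f'(\mu_{j+1})|\leq 2\sup_{\lb\geq a_nl_j^{2/n}}|f'|$ and use $l_j^{6+\frac{10}{n}}\lambda_1(B)^{-1}\lesssim l_j^{7+\frac{8}{n}}$, so the term is absorbed by the $|f'|$-part of $B_n(f)$. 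With that patch the argument is complete, and the heat-trace exponent computation matches the paper's.
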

Note that the hypotheses above apply  also to $f(\lambda)=\lambda^{-s}$ for any $s>4n+3$.
To prove Proposition \ref{ex1.10}, we start with the following Lemma:
\begin{lemma}\label{lem_separation_eigen}
There exists constants $c_n,D_n>0$ such that for any $k\leq l$ satisfying $\lambda_{i+1}(B)-\lambda_i(B)\leq c_ni^{\frac{2}{n}-1}$ for every $i\in\{k,\hdots,l\}$, we have $l\leq D_n k$.
\end{lemma}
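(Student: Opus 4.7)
The plan is to telescope the hypothesis across consecutive eigenvalues of the ball and combine the resulting upper bound on $\lambda_l(B)-\lambda_k(B)$ with the two-sided Weyl-type estimates \eqref{est_eigen}. More precisely, summing the assumed spectral gap bound from $i=k$ to $i=l-1$ gives
\[
\lambda_l(B)-\lambda_k(B)=\sum_{i=k}^{l-1}\bigl(\lambda_{i+1}(B)-\lambda_i(B)\bigr)\leq c_n\sum_{i=k}^{l-1}i^{\frac{2}{n}-1}\leq c_n'\, l^{\frac{2}{n}},
\]
where the last inequality uses the comparison with the integral $\int_{0}^{l}x^{2/n-1}dx=\frac{n}{2}l^{2/n}$ (valid uniformly for $n\geq 2$, treating the cases $n=2$ and $n\geq 3$ separately if needed, since the summand is non-increasing in the latter case).

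Next, I would invoke Proposition \ref{prop:growth_egv} applied at $\Om=B$, which yields the lower bound $\lambda_l(B)\geq a_n l^{2/n}$ as well as the upper bound $\lambda_k(B)\leq (1+\tfrac{4}{n})\lambda_1(B)k^{2/n}=:C_n k^{2/n}$. Plugging these into the previous display gives
\[
a_n l^{\frac{2}{n}}\leq \lambda_l(B)\leq \lambda_k(B)+c_n'l^{\frac{2}{n}}\leq C_n k^{\frac{2}{n}}+c_n' l^{\frac{2}{n}}.
\]

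Choosing $c_n$ small enough (so that the resulting $c_n'$ satisfies $c_n'\leq a_n/2$), the $l^{2/n}$ terms can be absorbed on the left-hand side, yielding $\tfrac{a_n}{2}l^{2/n}\leq C_n k^{2/n}$, hence $l\leq D_n k$ with $D_n=(2C_n/a_n)^{n/2}$. There is no real obstacle here: the argument is entirely elementary once the telescoping identity is combined with the standard upper and lower bounds \eqref{est_eigen}. The only minor subtlety is the case analysis $n=2$ versus $n\geq 3$ in estimating the sum, but both give the same final bound up to a dimensional constant.
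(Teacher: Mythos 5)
Your argument is correct and is essentially identical to the paper's own proof: telescoping the gap hypothesis, bounding the sum $\sum_{i=k}^{l-1}i^{2/n-1}$ by $\tfrac{n}{2}l^{2/n}$, and combining the two-sided Weyl bounds of Proposition \ref{prop:growth_egv} with a choice of $c_n$ (the paper takes $c_n=a_n/n$) small enough to absorb the $l^{2/n}$ term. Nothing further is needed.
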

\begin{proof}
Let $c_n:=\frac{a_n}{n}$, then for any such $k,l$ we have
\[a_n l^\frac{2}{n}\leq \lambda_l(B)\leq \lambda_k(B)+c_n\sum_{i=k}^{l-1}i^{\frac{2}{n}-1}\leq \left(1+\frac{4}{n}\right)\lambda_1(B)k^\frac{2}{n}+\frac{nc_n}{2}l^\frac{2}{n}\]
so with our choice of $c_n$:
\[\frac{l}{k}\leq \left(\left(1+\frac{4}{n}\right)\frac{2\lambda_1(B)}{a_n}\right)^\frac{n}{2}=:D_n.\]
\end{proof}
\begin{proof}[Proof of Proposition \ref{ex1.10}.]
Let us first prove an a priori estimate on $\lambda_1(\Om)$. Indeed by application of Theorem \ref{main_sqrt} we have for any $\Om{\BBB \in\A,}$
\begin{align*}
\sum_{i\geq 1}f(\lambda_i(\Om))-f(\lambda_i(B))&\lesssim \sum_{i\geq 1}\sup_{t\geq a_n i^\frac{2}{n}}|f'(t)| i^{2+\frac{4}{n}}\lambda_1(\Om)^\frac{1}{2}(\lambda_1(\Om)-\lambda_1(B))^\frac{1}{2}\\
&\lesssim B_n(f)(\lambda_1(\Om)-\lambda_1(B))\text{ if }\lambda_1(\Om)\geq 2\lambda_1(B)
\end{align*}
We now suppose without loss of generality that $\lambda_1(\Om)\leq 2\lambda_1(B)$. Using lemma \ref{lem_separation_eigen} we split $\N^*$ into a partition of intervals $(I_p)_{p\in\N^*}$ such that $\sup(I_p)\leq C_n\inf(I_p)$ and for any $i=\sup(I_p)$ we have $\lambda_{i}(B)<\lambda_{i+1}(B)-c_ni^{\frac{2}{n}-1}$: for this we define $i$ to be in the same interval as $i+1$ as soon as
\[\lambda_{i+1}(B)-\lambda_i(B)<c_n i^{\frac{2}{n}-1}\]
where $c_n$ is the constant lemma \ref{lem_separation_eigen}. As a consequence using Remark \ref{rem_mainlinvect} for each $I_p$ we have a stability result
\[\left|\sum_{i\in I_p}\lambda_i(\Om)-\lambda_i(B)\right|\lesssim (\inf I_p)^{7+\frac{8}{n}}(\lambda_1(\Om)-\lambda_1(B))\]
Now each $I_p$ is split into an (ordered) partition of intervals $(I_{p,s})_{s=1,\hdots,s_p}$ such that $\lambda_i(B)=\lambda_j(B)$ if and only if there exists $p,s$ such that $i,j\in I_{p,s}$. Let $J_{p,s}=\cup_{s'\geq s}I_{p,s'}$, the stability result on $J_{p,s}$ gives
\[\left|\sum_{i\in J_{p,s}}\lambda_i(\Om)-\lambda_i(B)\right|\lesssim \frac{(\inf I_p)^{6+\frac{10}{n}}(\lambda_1(\Om)-\lambda_1(B))}{\lambda_{\min J_{p,s}}(B)-\lambda_{\min J_{p,s}-1}(B)}.\]
We write
$$
\sum_{i\geq 1}\left(f(\lambda_i(\Om))-f(\lambda_i(B))\right) \leq \sum_{i\geq 1}f'(\lambda_i(B))\left(\lambda_i(\Om)-\lambda_i(B)\right)+\frac{1}{2}\sum_{i\geq 1}\sup_{t\geq a_n i^\frac{2}{n}}|f''(t)|(\lambda_i(\Om)-\lambda_i(B))^2.
$$
The second term is estimated using Theorem  \ref{main_sqrt}:
\[\frac{1}{2}\sum_{i\geq 1}\sup_{t\geq a_n i^\frac{2}{n}}|f''(t)|(\lambda_i(\Om)-\lambda_i(B))^2\lesssim \left(\sum_{i\geq 1}i^{4+\frac{8}{n}}\sup_{t\geq a_n i^\frac{2}{n}}|f''(t)|\right)(\lambda_1(\Om)-\lambda_1(B)).\]
The first term is split as $\sum_{p\geq 1}\sum_{i\in I_p}f'(\lambda_i(B))\left(\lambda_i(\Om)-\lambda_i(B)\right)$ and, denoting $i_s=\inf(I_{p,s})(=\inf(J_{p,s})$, each term is bounded by
\begin{align*}
\sum_{i\in I_p}f'(\lambda_i(B))\left(\lambda_i(\Om)-\lambda_i(B)\right)&=f'(\lambda_{i_1}(B))\sum_{i\in I_p}\left(\lambda_i(\Om)-\lambda_i(B)\right)\\
&+\sum_{s=2}^{s_p}\left(\left[f'(\lambda_{i_{s}}(B))-f'(\lambda_{i_{s-1}}(B))\right]\sum_{i\in J_{p,s}}\left(\lambda_i(\Om)-\lambda_i(B)\right)\right)\\
&\lesssim f'(\lambda_{i_1}(B))(\inf I_p)^{7+\frac{8}{n}}(\lambda_1(\Om)-\lambda_1(B))\\
&+\sum_{s=2}^{s_p}\sup_{t\geq a_n i_{s-1}^\frac{2}{n}}|f''(t)|(\lambda_{i_s}(B)-\lambda_{i_{s}-1}(B))\frac{(\inf I_p)^{6+\frac{10}{n}}(\lambda_1(\Om)-\lambda_1(B))}{\lambda_{i_s}(B)-\lambda_{i_{s}-1}(B)}\\
&\lesssim f'(\lambda_{i_1}(B))(\inf I_p)^{7+\frac{8}{n}}(\lambda_1(\Om)-\lambda_1(B))\\
&+\sum_{s=2}^{s_p}\sup_{t\geq a_n i_{s-1}^\frac{2}{n}}|f''(t)|(\inf I_p)^{6+\frac{10}{n}}(\lambda_1(\Om)-\lambda_1(B)).
\end{align*}
Summing this for $p\in\N^*$ we find $\sum_{i\geq 1}\left(f(\lambda_i(\Om))-f(\lambda_i(B))\right)\leq C_nB_n(f)(\lambda_1(\Om)-\lambda_1(B))$ for some $C_n>0$, \BBB thus proving the first claim. \EEE

\BBB The bound \eqref{ex1.11} follows by direct estimates of $B_n(f_t)$ for the function $f_t(\lb)=e^{-t\lb}$. Indeed,

\begin{align*}
B_n(f_t)&=\sum_{i\geq 1}\left(i^{7+\frac{8}{n}}t\exp\left(-a_n t i^\frac{2}{n}\right)+i^{6+\frac{10}{n}}t^2\exp\left(-a_n t i^\frac{2}{n}\right)\right)\\
&=t^{-3-4n}\sum_{i\geq 1}t^\frac{n}{2}\left(t^\frac{n}{2}i\right)^{7+\frac{8}{n}}\exp\left(-a_n \left(t^\frac{n}{2}i\right)^\frac{2}{n}\right)+t^{-4-2n}\sum_{i\geq 1}t^\frac{n}{2}\left(t^\frac{n}{2}i\right)^{6+\frac{10}{n}}\exp\left(-a_n \left(t^\frac{n}{2}i\right)^\frac{2}{n}\right)
\end{align*}
and both sums converge to a finite limit as $t\to 0$, and to $0$ when $t\to +\infty$. \BBB We can therefore apply the previous estimate to deduce \eqref{ex1.11}. \EEE

\end{proof}

\noindent{\bf Acknowledgements: }
The authors would like to thank G. Buttazzo, L. Briani and S. Guarino Lo Bianco for providing an early version of \cite{BBG22} and for fruitful discussions. This work was partially supported by the project ANR STOIQUES (ANR-24-CE40-2216) financed by the French Agence Nationale de la Recherche (ANR).

\bibliographystyle{plain}

\end{document}